\title{Difference sheaves and torsors}
\author[M. CHA{\L}UPNIK]{Marcin Cha{\l}upnik$^{\dagger}$}
\thanks{$^{\dagger}$ Supported by the Narodowe Centrum Nauki  grant no. 2015/19/B/ST1/01150}
\address{$^{\dagger}$ Instytut Matematyki\\Uniwersytet Warszawski\\Warszawa\\Poland}
\email{mchal@mimuw.edu.pl}
\author[P. KOWALSKI]{Piotr Kowalski$^{\spadesuit}$}
\thanks{$^{\spadesuit}$ Supported by the Narodowe Centrum Nauki grants no. 2015/19/B/ST1/01150, 2015/19/B/ST1/01151, 2018/31/B/ST1/00357, and by T\"{u}bitak 1001 grant no. 119F397.}
\address{$^{\spadesuit}$ Instytut Matematyczny\\
Uniwersytet Wroc{\l}awski\\
Wroc{\l}aw\\
Poland}
\email{pkowa@math.uni.wroc.pl} \urladdr{http://www.math.uni.wroc.pl/\textasciitilde pkowa/ }
\thanks{2010 \textit{Mathematics Subject Classification} 12H10, 18F20, 20G10.}
\thanks{\textit{Key words and phrases}. Grothendieck topology, torsor, difference sheaf, difference cohomology.}
\DeclareMathOperator{\gl}{GL} \DeclareMathOperator{\aut}{Aut} \DeclareMathOperator{\id}{id}
 \DeclareMathOperator{\fr}{Fr} 
\DeclareMathOperator{\im}{im}  \DeclareMathOperator{\gal}{Gal}
 \DeclareMathOperator{\alg}{alg}
\DeclareMathOperator{\coli}{\underrightarrow{\lim}}
\DeclareMathOperator{\spec}{Spec}\DeclareMathOperator{\rat}{rat}
\DeclareMathOperator{\sep}{sep}
\DeclareMathOperator{\pic}{Pic}
\DeclareMathOperator{\modd}{Mod}
\DeclareMathOperator{\coker}{coker}
\DeclareMathOperator{\Hom}{Hom}
\DeclareMathOperator{\op}{op}\DeclareMathOperator{\as}{AS}
\DeclareMathOperator{\Ob}{Ob}
\newtheorem{theorem}{Theorem}[section]
\newtheorem{prop}[theorem]{Proposition}
\newtheorem{lemma}[theorem]{Lemma}
\newtheorem{cor}[theorem]{Corollary}
\theoremstyle{definition}
\newtheorem{definition}[theorem]{Definition}
\newtheorem{example}[theorem]{Example}
\newtheorem{remark}[theorem]{Remark}
\newtheorem{assumption}[theorem]{Assumption}
\begin{document}
\newcommand{\lili}{\underleftarrow{\lim }}
\newcommand{\coco}{\underrightarrow{\lim }}
\newcommand{\twoc}[3]{ {#1} \choose {{#2}|{#3}}}
\newcommand{\thrc}[4]{ {#1} \choose {{#2}|{#3}|{#4}}}
\newcommand{\Zz}{{\mathds{Z}}}
\newcommand{\Ff}{{\mathds{F}}}
\newcommand{\Cc}{{\mathds{C}}}
\newcommand{\Rr}{{\mathds{R}}}
\newcommand{\Nn}{{\mathds{N}}}
\newcommand{\Qq}{{\mathds{Q}}}
\newcommand{\Kk}{{\mathds{K}}}
\newcommand{\Pp}{{\mathds{P}}}
\newcommand{\ddd}{\mathrm{d}}
\newcommand{\Aa}{\mathds{A}}
\newcommand{\dlog}{\mathrm{ld}}
\newcommand{\ga}{\mathbb{G}_{\rm{a}}}
\newcommand{\gm}{\mathbb{G}_{\rm{m}}}
\newcommand{\gaf}{\widehat{\mathbb{G}}_{\rm{a}}}
\newcommand{\gmf}{\widehat{\mathbb{G}}_{\rm{m}}}
\newcommand{\ka}{{\bf k}}
\newcommand{\ot}{\otimes}
\newcommand{\si}{\mbox{$\sigma$}}
\newcommand{\ks}{\mbox{$({\bf k},\sigma)$}}
\newcommand{\kg}{\mbox{${\bf k}[G]$}}
\newcommand{\ksg}{\mbox{$({\bf k}[G],\sigma)$}}
\newcommand{\ksgs}{\mbox{${\bf k}[G,\sigma_G]$}}
\newcommand{\cks}{\mbox{$\mathrm{Mod}_{({A},\sigma_A)}$}}
\newcommand{\ckg}{\mbox{$\mathrm{Mod}_{{\bf k}[G]}$}}
\newcommand{\cksg}{\mbox{$\mathrm{Mod}_{({A}[G],\sigma_A)}$}}
\newcommand{\cksgs}{\mbox{$\mathrm{Mod}_{({A}[G],\sigma_G)}$}}
\newcommand{\catgrs}{\mbox{$\modd_{{\bf G}}^{\sigma}$}}
\newcommand{\crats}{\mbox{$\mathrm{Mod}^{\rat}_{(\mathbf{G},\sigma_{\mathbf{G}})}$}}
\newcommand{\crat}{\mbox{$\mathrm{Mod}_{\mathbf{G}}$}}
\newcommand{\cratinv}{\mbox{$\mathrm{Mod}^{\rat}_{\mathbb{G}}$}}
\newcommand{\ra}{\longrightarrow}
\newcommand{\mc}{\mathcal}
\newcommand{\mcf}{{\mathcal F}}
\newcommand{\st}{{\mathcal O}_X}
\newcommand{\sx}{\mathrm{Sh}({\mathbf{C}}(X))}
\newcommand{\psx}{\mathrm{PSh}({\mathbf{C}}(X))}
\newcommand{\sdx}{\mathrm{Sh}_{\sigma}({\mathbf{C}}(X))}
\newcommand{\dx}{\mathrm{Sh}_{\sigma}(X)}
\newcommand{\rx}{\mathrm{Sh}^{\sigma}(X)}
\newcommand{\shx}{\mathrm{Sh}_{\sigma}({\mathbf{C}}(X))}
\newcommand{\pshx}{\mathrm{PSh}_{\sigma}({\mathbf{C}}(X))}
\newcommand{\shxr}{\mathrm{Sh}^{\sigma}({\mathbf{C}}(X))}
\newcommand{\pshxr}{\mathrm{PSh}^{\sigma}({\mathbf{C}}(X))}

\begin{abstract}
We develop sheaf theory in the context of difference algebraic geometry. We introduce categories of difference sheaves and develop the appropriate cohomology theories. As specializations, we get difference Galois cohomology, difference Picard group and a good theory of difference torsors.
\end{abstract}

\maketitle

\tableofcontents


\section*{Introduction}\label{secintro}
In this paper, we develop a theory of difference sheaves. The adjective ``difference'' refers here to the situation when one turns the attention from the objects of a certain category to the pairs consisting of these objects together with a chosen endomorphism. For example, a difference ring is a ring with a fixed ring endomorphism. \emph{Difference algebra} (that is, the theory of difference rings) was initiated by Ritt and pursued further by Cohn \cite{cohn}. \emph{Difference algebraic geometry} is a global version of difference algebra and it was developed in the seminal paper of Hrushovski \cite{HrFro}. The methods of difference algebraic geometry have been recently successfully applied (usually, via the corresponding model theory) to the number of topics including algebraic dynamics (\cite{ChaHr1, ChaHr2, medsc}) and diophantine geometry (\cite{HrFro, Hr9}).

In the context of difference algebra, there is a natural abelian category of difference modules (see e.g. Chapter 3 in \cite{levin}). As far as we know, a global analogue of the notion of a difference module has not been considered yet. In this paper, we introduce difference sheaves, which is the desired global analogue of the notion of a difference module.

Our initial motivation came from a result of Anand Pillay and the second author (\cite{KP4}) saying that
difference fields of the form $(\Ff_p^{\alg},x\mapsto x^p)$ are \emph{linearly closed}, i.e. any finite-dimensional $\sigma$-module over such a difference field is isomorphic to a trivial one.
The proof of this result actually shows that for any difference field $(\ka,\sigma)$, the set of isomorphism classes of one-dimensional $\sigma$-modules,
which may be thought of as the ``difference Picard group'' of the difference field
$(\ka,\sigma)$, corresponds to the cokernel of the ``Artin-Schreier map'' $x\mapsto \sigma(x)x^{-1}$. Since this cokernel coincides with a certain group of coinvariants, the above classification has a homological flavour. These observations led us to the idea of introducing a suitable category of difference sheaves, whose cohomology would classify difference torsors (a similar result appeared in \cite{BW}, however, without an ambient abelian category).

In various difference situations, it is natural and sufficient to consider objects equipped with automorphisms instead of just endomorphisms, which  results in a much simpler theory. Thanks to the uniqueness of the inversive closure, such a situation occurs naturally in the case of the model theory of difference fields (see e.g. \cite{acfa1}).  However, the main and motivating  example of a difference scheme comes from the observation that any scheme over a finite field is naturally equipped with the absolute Frobenius endomorphism. Thus, the category of difference schemes is a natural tool for studying positive characteristic schemes. This is why we do not assume that our endomorphisms are invertible. Such a choice is a source of a good portion of technical difficulties which we encounter in our theory, but in the process of dealing with those difficulties some interesting phenomena can be observed as well.

We describe now our approach and the results of the paper in more detail.
We fix a scheme $X$  with an endomorphism $\sigma$ and  we choose  a Grothendieck topology $\mathbf{C}(X)$ on $X$. We would like to build an abelian category out
of the category of sheaves on $\mathbf{C}(X)$, which would take into account the action of $\sigma$ on sheaves.  This is why the direct image functor $\sigma_*$ should be involved in the definition as well. Hence, our difference sheaf category should not be just the category of sheaves with endomorphisms, which is considered in a greater generality in \cite[page 175]{GM}. Guided by the affine case of difference rings (see Section \ref{shmodsec}), the most natural choice is to say that a difference sheaf on $(X,\sigma)$ is a sheaf $\mathcal{F}$ together with a sheaf morphism
$\sigma_{\mathcal{F}}:\mathcal{F}\to\sigma_*({\mathcal{F}})$. On the other hand, for certain applications like those concerning vector bundles, it is more convenient to work with the sheaf $\sigma^*(\mathcal{F})$ instead of the sheaf $\sigma_*(\mathcal{F})$. However, the approach mentioned above allows one to deal with this situation as well, since an equivalent difference information is encoded
in the adjunct morphism $\widetilde{\sigma}_{\mathcal{F}}: \sigma^*(\mathcal{F})\to \mathcal{F}$. This is our initial choice: we investigate in Sections \ref{secleftcat}--\ref{secdiftor} the category
$\mathrm{Sh}_{\sigma}(\mathbf{C}(X))$ of {\em left difference sheaves of abelian groups on $\mathbf{C}(X)$} consisting
of pairs $(\mathcal{F},\sigma_{\mathcal{F}})$ as above with the obvious morphisms (see Definition \ref{defdifs}).

Establishing the basic properties of the category $\mathrm{Sh}_{\sigma}(\mathbf{C}(X))$ is not an entirely trivial task. If we consider difference sheaves of $\mathcal{O}_X$-modules in the affine case, then this problem can be quickly solved
 by observing that the category of difference modules is equivalent to the category of modules over the ring of twisted polynomials.
However, this last ring is noncommutative and the approach above cannot be applied to the non-affine case.  We bypass this difficulty by introducing the {\em diference site} $\mathbf{C}_{\sigma}(X)$ (see Section \ref{secds}) and then we show (see Theorem \ref{toposprop2}) that the category $\mathrm{Sh}_{\sigma}(\mathbf{C}(X))$ is equivalent to
the category $\mathrm{Sh}(\mathbf{C}_{\sigma}(X))$ of the usual sheaves on the site $\mathbf{C}_{\sigma}(X)$. Therefore, sites emerge naturally in our context even in the case when our chosen Grothendieck topology $\mathbf{C}(X)$ on $X$ is just the Zariski topology.
Thanks to the interpretation above, we immediately obtain that $\mathrm{Sh}_{\sigma}(\mathbf{C}(X))$  is a Grothendieck category, hence it is abelian with enough injectives and we can use the standard tools of homological algebra. Thus, we develop the theory of {\em difference sheaf cohomology} in Section \ref{secleftdc}. In particular, we introduce difference \v Cech cohomology (both for presheaves and sheaves), which is crucial later for classifying difference torsors. After establishing its basic properties, we demonstrate the adequacy of our theory in Section \ref{secdiftor} by showing that the difference cohomology groups, analogously to the classical context, classify difference bundles and some more general difference torsors as well. Then, we look more closely at the classical special cases such as the Picard group (Theorem \ref{difpicthm}) and Galois cohomology in the isotrivial context.

Quite surprisingly, the formalism discussed above is not sufficient for classifying difference torsors over arbitrary difference group schemes. More precisely, it applies to difference group schemes ``defined over constants'' (see Section \ref{igs}), which cover important classical examples (like the Picard group), but, in general, the attempts to classify difference $G$-torsors by  cohomology of left difference sheaves break down. The technical reason for this is that the sheaf $\mathcal{R}(G)$ represented by a difference group scheme $G$ is  not a left difference sheaf, which is another manifestation of the noncommutative nature of difference algebraic geometry. In fact, the structure carried by $\mathcal{R}(G)$ is somewhat dual. That is, in the case when the site $\mathbf{C}(X)$ is chosen to be the big flat site, we have a sheaf map
$$\mathcal{R}(G)\longrightarrow \sigma^*(\mathcal{R}(G)).$$
This observation prompted us to introduce the notion of a {\em right difference sheaf} as a pair $(\mathcal{F},\sigma^{\mathcal{F}})$ consisting of a sheaf $\mathcal{F}$ together with a sheaf morphism
$\sigma^{\mathcal{F}}:\mathcal{F}\to\sigma^*({\mathcal{F}})$. In Sections \ref{secrds}--\ref{rdstsec}, we develop a theory of such difference sheaves.
Despite the apparent symmetry with the category $\mathrm{Sh}_{\sigma}(\mathbf{C}(X))$, the category
$\mathrm{Sh}^{\sigma}(\mathbf{C}(X))$ of right difference sheaves is much harder to handle. Although we
eventually show that it is a Grothendieck category (by a mixture of ad hoc arguments), we are not able to develop a theory of right difference sheaves, which would be fully parallel to the theory of left difference sheaves. For example, due to the problems with comparing right difference sheaves and presheaves, we only consider the right difference \v Cech complex for sheaves, which is not fully satisfactory from the conceptual point of view. Some parts of the structure of the category $\mathrm{Sh}^{\sigma}(\mathbf{C}(X))$ are quite elusive as well. For example, $\mathrm{Sh}^{\sigma}(\mathbf{C}(X))$ is a Grothendieck category, so it has all limits. But even infinite products seem not to restrict to ordinary products of sheaves; at least, we do not see any reasonable right difference structure on the infinite product (taken in the category of sheaves) of right difference sheaves. Nevertheless, we achieve our main goal and in Section \ref{rdstsec} we classify arbitrary difference torsors by right difference cohomology groups (Theorem \ref{rdiso}) and introduce higher difference Galois cohomology (Definition \ref{hdgc} and Theorem \ref{wibdesc}).

After finishing this paper, it was pointed out to us by Ivan Toma\v{s}i\'{c} that our left difference sheaves correspond to certain objects, which were identified by him in topos-theoretic terms (see \cite[Section 22.1]{Tomasic}). It remains to be seen whether our right difference sheaves have a similar interpretation.

To summarize, in Section \ref{secconv}, we set our notation and our categorical set-up. Afterwards, the article consists of two formally independent parts. In Sections \ref{secleftcat}--\ref{secdiftor}, we investigate left difference sheaves.
By constructing the difference site, we obtain a well behaved theory which is suited for classical applications.
In Sections \ref{secrds}--\ref{rdstsec}, we turn to the theory of right difference sheaves, which allows a description of difference torsors in full generality.

We would like to thank Ivan Toma\v{s}i\'{c} for telling us about his work (mentioned in the paragraph above) and Michael Wibmer for pointing out to us some inaccuracies in Section \ref{left1}.

\section{Notation and conventions}\label{secconv}
For convenience of the reader, in this subsection we fix some of our notation and make one global assumption.
\begin{itemize}
\item All the adjunction bijections will be denoted by $f\mapsto \widetilde{f}$ and, usually, $\widetilde{f}$ will denote the left adjunct of $f$.

\item We denote the contravariant functor represented by the object $X$ in the category $\mathcal{C}$ by:
$$\mathcal{R}(X):C^{\op}\ra \mathbf{Set}.$$

\item We take the definition of a \emph{site} from \cite{tamme}, so, a site $\mathbf{S}$ is a pair $(\mathcal{C},J)$ such that $\mathcal{C}$ is a category and $J$ is a Grothendieck topology on $\mathcal{C}$, that is: a set of \emph{coverings} that is families of morphisms $(U_i\to U)_{i\in I}$ in $\mathcal{C}$ satisfying the conditions (T1), (T2), (T3) from \cite[Definition I.1.2.1]{tamme}. We also take the definition of a\emph{ morphism of sites} (or of topologies) from \cite[Definition I.1.2.2]{tamme}, although we choose the directions of arrows opposite to the one from \cite[Definition I.1.2.2]{tamme}.


\end{itemize}
In order to resolve the known set theoretic issues concerning sheaf categories on sites, we restrict all our constructions to a Grothendieck universe \cite{sga41}. Thus all sites we consider are small.
A reader not wishing to accept the existence of strongly inaccessible cardinals (which is necessary to make the Grothendieck approach working) is advised to consult e. g.
 \cite[\href{https://stacks.math.columbia.edu/tag/020M}{Section 020M}]{stacks}, where a simpler procedure, sufficient for the practical purposes, is described.

Let $\mathbf{S}$ be a site. The collection of sheaves on $\mathbf{S}$ forms a category, which we denote by $\mathrm{Sh}({\mathbf{S}})$. Similarly for the presheaves on $\mathbf{S}$ and the category $\mathrm{PSh}({\mathbf{S}})$.
Throughout the paper we make the following assumption.
\begin{assumption}\label{mainass}
We fix a scheme $X$, a scheme morphism $\sigma: X\to X$, and a ``site on $X$'', that is a site $\mathbf{C}(X):=(\mathcal{C},J)$ such that the following holds.
\begin{enumerate}
\item The category $\mathcal{C}$ is a subcategory of $\mathbf{Sch}_X$.

\item The category $\mathcal{C}$ is closed under any base extensions in $\mathbf{Sch}_X$ and all finite inverse limits exist in $\mathcal{C}$.

\item The Grothendieck topology $J$ on $\mathcal{C}$ is \emph{subcanonical}, that is all representable presheaves on $\mathcal{C}$ are sheaves with respect to $J$.

\item The fiber product functor (well-defined thanks to item $(2)$ above):
$$\sigma^{-1}:\mathcal{C}\ra \mathcal{C}$$
induces a morphism of sites, which we denote by the same letter as the fixed self-morphism of $X$:
$$\sigma:\mathbf{C}(X)\ra \mathbf{C}(X).$$
\end{enumerate}
\end{assumption}
By Assumption \ref{mainass}(4), we have the direct image and inverse image functors induced by $\sigma$ on the categories $\mathrm{Sh}({\mathbf{C}}(X))$ and  $\mathrm{PSh}({\mathbf{C}}(X))$, which we denote as follows:
$$\sigma_*,\sigma^*:\mathrm{PSh}({\mathbf{C}}(X))\ra \mathrm{PSh}({\mathbf{C}}(X));\ \ \ \ \
\sigma_*,\sigma^*:\mathrm{Sh}({\mathbf{C}}(X))\ra \mathrm{Sh}({\mathbf{C}}(X)).$$
The functor $\sigma_*$ is right adjoint to $\sigma^*$, so $\sigma_*$ commutes with inverse limits and $\sigma^*$ commutes with direct limits.
We would like to emphasize that, thanks to the second clause in Assumption \ref{mainass}(3), the functor $\sigma^*:\mathrm{Sh}({\mathbf{C}}(X))\to \mathrm{Sh}({\mathbf{C}}(X))$ is exact (see \cite[Proposition II.2.6(a)]{milne1etale} and the comment above \cite[Remark II.3.1]{milne1etale}). Since $\sigma^*$ commutes with kernels and finite products (which are the same as finite coproducts in any abelian category), then, by the standard arguments, $\sigma^*$ commutes with finite inverse limits.

To ease the notation, we sometimes identify an object in $\mathcal{C}$, which is a morphism, with its domain. We will mainly consider the Zariski site ${\bf Z}(X)$, the \'etale site ${\bf E}(X)$, and the big flat site ${\bf F}(X)$
as in \cite{milne1etale}, in particular all the morphisms of schemes considered here are locally of finite type. Each of these three sites satisfies Assumption \ref{mainass} with respect to any (fixed) $\sigma$.

For convenience, we introduce the following ``pullback diagram notation''. For any scheme morphism $u:U\to X$ (usually coming from the chosen $\mathbf{C}(X)$), and any morphism $\tau:X\to X$ (usually of the form $\tau=\sigma^n$ for some $n>0$):
\begin{equation*}
\xymatrix{{}^{\tau}U \ar[d]^{{}^{\tau}u} \ar[rr]^{\tau_U} & & U \ar[d]^{u}\\
X \ar[rr]^{\tau} & & X,}
\end{equation*}
where we have:
$${}^{\tau}U:=(X,\tau)\times_XU.$$

\section{Left difference sheaves}\label{secleftcat}
In this section, we introduce the first version of the category of difference sheaves. We will call the objects of this category \emph{left} difference sheaves. As pointed out in Introduction, this category has good properties and an elegant description, but it does not cover all the natural examples, hence the need for the category of \emph{right} difference sheaves, which will be introduced later in Section \ref{secrds}.

We recall that we are working under Assumption \ref{mainass}.

\subsection{Definitions and examples}
We define below the main object of this part of the paper.
\begin{definition}\label{defdifs}
\begin{enumerate}
\item We call a (pre)sheaf $\mathcal{F}$ of abelian groups on $\mathbf{C}(X)$ together with a (pre)sheaf morphism
$$\sigma_{\mathcal{F}}: \mathcal{F} \ra \sigma_*(\mathcal{F}),$$
a {\it left difference (pre)sheaf of abelian groups on $\mathbf{C}(X)$}.

\item  A morphism between left difference
(pre)sheaves $(\mathcal{F},\sigma_{\mathcal{F}})$ and
	$(\mathcal{G},\sigma_{\mathcal{G}})$ is a (pre)sheaf morphism $\alpha:\mathcal{F}
	\to \mathcal{G}$ such that
$$\sigma_{\mathcal{G}}\circ\alpha=\sigma_{*}(\alpha)\circ \sigma_{\mathcal{F}}.$$

\item The difference (pre)sheaves of abelian groups on $X$ with their morphisms form a category, which we denote $\shx$
	(resp. $\pshx$).
\end{enumerate}
\end{definition}
For a left difference sheaf $(\mathcal{F},\si_{\mcf})$, it will be often more convenient to work with the morphism
$$\widetilde{\sigma}_{\mathcal{F}}: \si^*(\mathcal{F}) \ra \mathcal{F},$$
which is adjoint to the morphism $\si_{\mathcal{F}}$. When we just say ``difference sheaf'', we always mean ``left difference sheaf''.
\begin{example}\label{moexam}
The most obvious example is $(\mathcal{O}_X,\sigma^{\sharp})$, since the scheme morphism $\sigma:X\to X$ is actually a pair $(\sigma,\sigma^{\sharp})$, where
$$\sigma^{\sharp}:\mathcal{O}_X\ra \sigma_*(\mathcal{O}_X).$$
\end{example}
\begin{remark}\label{leftoa}
Let us denote the standard sheafification (resp. forgetful) functor by $a$ (resp. by $o$). We note here that we still have the forgetful and the sheafification functors (this will be used in the proof of Proposition \ref{quickss}) in the left difference context, that is:
\begin{enumerate}
\item if $(\mathcal{F},\sigma_F)$ is a left difference presheaf, then $a(\mathcal{F})$ has a natural left difference sheaf structure;

\item if $(\mathcal{F},\sigma_F)$ is a left difference sheaf, then $o(\mathcal{F})$ has a natural left difference presheaf structure;

\item the difference sheafification functor is left adjoint to the difference forgetful functor.
\end{enumerate}
\end{remark}
\begin{proof}
For item $(1)$, we use the natural morphism:
$$(a\circ \sigma_*)(\mathcal{F}) \ra (\sigma_*\circ a)(\mathcal{F}),$$
and item $(2)$ is obvious, since $\sigma_*$ commutes with the forgetful functor $o$.

Regarding item $(3)$, it is easy to see that the standard adjunction bijection between the sheafification and the forgetful functors preserve the difference morphisms in the difference case, hence it gives an adjunction of the corresponding difference functors.
\end{proof}
We will discuss several examples starting with a straightforward one.
\begin{example}\label{constex}
Let $A$ be an abelian group and let $A_X$ be the constant sheaf. Since $\si^*(A_X) =A_X$ in any topology, the map adjoint to the identity makes $A_X$ into a left difference sheaf.

Moreover, since $A\mapsto A_X$ is a functor (left adjoint to the global section functor), for any group endomorphism $f:A\to A$, we obtain the morphism of sheaves:
 $$f_A:\sigma^*(A_X)=A _X\ra A_X,$$
and $(A_X,f_A)$ becomes a left difference sheaf. This construction will be used in Section \ref{secldifcoh}.
\end{example}
\subsubsection{Sheaves of modules}\label{shmodsec}
We consider now more complicated examples coming from difference modules.
\begin{definition}
By a  {\it left difference sheaf of $\st$-modules}, we mean a sheaf of $\st$-modules (in the Zariski topology)
$\mathcal{F}$ together with a morphism
$$\sigma_{\mathcal{F}}:\mathcal{F}\ra \sigma_*(\mathcal{F})$$
 of  $\st$-modules.
\end{definition}
In the above definition, $\sigma_*(\mathcal{F})$ (the direct image in the category of sheaves of abelian groups) has a natural structure of an $\st$-module making it the direct image in the category of $\st$-modules.
However, one should remember that when we want to take the adjoint point of view, the map
$\sigma_{\mathcal{F}}:\mathcal{F}\to \sigma_*(\mathcal{F})$ of $\st$-modules corresponds
to the map of $\st$-modules $\widetilde{\sigma}_{\mathcal{F}}:\sigma^{\diamond}(\mathcal{F})\to \mathcal{F}$, where $$\sigma^{\diamond}(\mathcal{F}):=\sigma^*(\mathcal{F})\ot_{\st}\st$$
is the inverse image in the category of $\st$-modules.

We introduce here a (rather obvious) notion of a tensor product of difference sheaves of $\st$-modules, which will be used in Section \ref{secdbdpg}.
Since the functor $\si^{\diamond}$ commutes with the tensor product (on sheaves of $\st$-modules), there is a natural difference $\st$-module structure on the tensor product given by the following composition:
\begin{equation*}
\xymatrix{\si^{\diamond}\left(\mathcal{F}\otimes \mathcal{G}\right) \ar[rr]^{\cong\ \ \ \ } & & \si^{\diamond}(\mathcal{F})\otimes \si^{\diamond}(\mathcal{G})
 \ar[rr]^{\ \ \ \ \ \widetilde{\sigma}_{\mathcal{F}}\otimes \widetilde{\sigma}_{\mathcal{G}}} & & \mathcal{F}\otimes  \mathcal{G},}
\end{equation*}
and for any left difference sheaves of $\st$-modules $(\mathcal{F},\sigma_{\mathcal{F}})$ we have:
$$(\mathcal{F},\sigma_{\mathcal{F}})\otimes (\mathcal{O}_X,s^{\sharp})\cong (\mathcal{F},\sigma_{\mathcal{F}}).$$
We assume now---and always in the context when we talk about sheaves of $\mathcal{O}_X$-modules---that the topology $J$ in our working site $\mathbf{C}(X)$ is contained in the topology of the big flat site $\mathbf{F}(X)$. It is well known (see e.g. \cite[Example II.1.2.(d)]{milne1etale} and \cite[Corollary II.1.6]{milne1etale}) that
 any quasi-coherent $\st$-module $\mathcal{F}$  can be extended to a sheaf
 $\mathcal{F}_{\mathbf{C}}$ of abelian groups on $\mathbf{C}(X)$. Explicitly, for
 $(u: U\to X)\in \mathrm{Ob}(\mathcal{C})$, we put:
 \[\mathcal{F}_{\mathbf{C}}(u):=u^{\diamond}(\mathcal{F})(U).\]
For a left difference quasicoherent sheaf of $\st$-modules $(\mathcal{F},\si_{\mathcal{F}})$, we would like to equip $\mathcal{F}_{\mathbf{C}}$ with a structure of a left difference sheaf of abelian groups. First, we functorially extend the morphism $\widetilde{\sigma}_{\mathcal{F}}:\si^{\diamond}(\mathcal{F})\to \mathcal{F}$ of sheaves on the Zariski site
 	to a morphism
 	$(\si^{\diamond}(\mathcal{F}))_{\mathbf{C}}\to \mathcal{F}_{\mathbf{C}}$ of sheaves on the site $\mathbf{C}(X)$. It suffices to construct a morphism
 $$\sigma^*(\mathcal{F}_{\mathbf{C}})\ra (\si^{\diamond}(\mathcal{F}))_{\mathbf{C}}$$
 or, equivalently, a morphism
$$\mathcal{F}_{\mathbf{C}}\ra \si_*\left((\si^{\diamond}\mathcal{F})_{\mathbf{C}}\right).$$
To this end, let us consider any $(u: U\to X)\in \mathrm{Ob}(\mathcal{C})$. We have:
\begin{IEEEeqnarray*}{rCl}
\si_*((\si^{\diamond}(\mathcal{F})_{\mathbf{C}})(u) & = & (\si^{\diamond}(\mathcal{F}))_{\mathbf{C}}({}^{\sigma}u) \\
 & = & 	\left(({}^{\sigma}u)^{\diamond}\circ \si^{\diamond}\right)(\mathcal{F})({}^{\sigma}U)\\
 & = & (\sigma_U^{\diamond}\circ u^{\diamond})(\mathcal{F})({}^{\sigma}U) \\
& = & \left((\sigma_U)_*\circ\sigma_U^{\diamond}\circ u^{\diamond}\right)(\mathcal{F})(U),
\end{IEEEeqnarray*}
where the notations ${}^{\sigma}u,{}^{\sigma}U$ were introduced at the end of Section \ref{secintro}. Since we have:
 \[\mathcal{F}_{\mathbf{C}}(u)=u^{\diamond}(\mathcal{F})(U),\]
the required morphism comes from the unit morphism
 $$\id_U\longrightarrow (\sigma_U)_*\circ\sigma_U^{\diamond}$$
 in the category of $\mathcal{O}_U$-modules after forgetting the module structure.

 We can obtain now many examples of difference sheaves on affine schemes
 by applying the construction of the associated sheaf in the difference context.
 We recall (see for example \cite{cohn}) that for a commutative ring with endomorphism $(R,s)$ a \emph{difference $R$-module} is  an $R$-module $M$ together with an $R$-linear $s_M: M\to M^{(1)}$, where $M^{(1)}$
 	stands for $M$ with the $R$-action twisted by $s$ (such a pair $(M,s_M)$ was called a {\em right} difference $R$-module in \cite{CK2}, which does not quite fit to the terminology here, however, see Section  \ref{etaleexam}).
 	If we set $X:=\spec(R)$ and $\sigma:=\spec(s)$, then, by \cite[Prop. II.5.2(d)]{ha}, we have an isomorphism of
 	$\mathcal{O}_{X}$-modules:
 	$$\sigma_*(\widetilde{M})\cong \widetilde{M^{(1)}},$$
 	and we get a left difference structure on $\widetilde{M}$. By \cite[Prop. II.5.2(b)]{ha}, the functor which associates a left difference sheaf of modules to a difference module commutes with the tensor product operation, which will be used in Section \ref{secdpaffine}.
\subsubsection{Isotrivial group schemes}\label{igs}
We discuss now representable difference sheaves.
We say that a commutative  group scheme $G$ over $X$  is \emph{defined over constants of $\sigma$}, when
 there is a scheme map $t:X\to F$ such that $\sigma\circ t=t$ and
 $$G\cong X\times_{F}G_F ,$$
where $G_F$ is a group scheme over $F$.
Then we have
\begin{lemma}\label{defoverc}
If $G$ is defined over constants, then ${}^{\sigma}G\cong G$.
\end{lemma}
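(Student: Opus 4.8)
The plan is to unwind the definition of ${}^{\sigma}G = (X,\sigma)\times_X G$ and use the hypothesis that $G$ is a pullback of a group scheme $G_F$ along a map $t\colon X\to F$ satisfying $\sigma\circ t = t$. The key observation is that pullback squares compose: if $G \cong X\times_F G_F$, then forming ${}^{\sigma}G$ means pulling back the structure map $G\to X$ along $\sigma\colon X\to X$, and the resulting scheme is the fiber product of $X\xrightarrow{\sigma} X\xrightarrow{t} F$ with $G_F\to F$. The condition $\sigma\circ t = t$ says exactly that the composite $X\xrightarrow{\sigma} X\xrightarrow{t} F$ equals $t$ itself, so that last fiber product is again $X\times_F G_F \cong G$.

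Concretely, I would argue as follows. Write the defining pullback square for $G$ with vertical maps the structure morphism $G\to X$ and $G_F\to F$, horizontal maps $t$ on the base and the induced map $G\to G_F$ on top. Then ${}^{\sigma}G := (X,\sigma)\times_X G$ sits in a pullback square over $\sigma\colon X\to X$. Pasting these two pullback squares horizontally (the standard ``pullback of a pullback'' lemma, valid in $\mathbf{C}$ since finite inverse limits exist by Assumption \ref{mainass}(2)), we obtain that ${}^{\sigma}G$ is a pullback of $G_F\to F$ along the composite $t\circ\sigma\colon X\to F$. Since $t\circ\sigma = t$ by hypothesis, this pullback is canonically identified with $X\times_F G_F\cong G$. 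One should also check that this identification is a morphism of group schemes over $X$, but this is automatic: all the maps involved are the canonical projection maps, and the group structure on both ${}^{\sigma}G$ and $G$ is inherited from $G_F$ via base change, so the universal property of the fiber product delivers a group-scheme isomorphism.

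I do not expect any real obstacle here; the statement is essentially a formal consequence of the compatibility of fiber products with composition, together with the single equation $\sigma\circ t = t$. The only mild point of care is to make sure one is pulling back along the correct leg (the $\sigma$ on the base of the ${}^{\sigma}(-)$ square, per the pullback diagram notation fixed at the end of Section~\ref{secintro}, with $\tau=\sigma$), and to keep track of the fact that ``defined over constants'' gives an isomorphism $G\cong X\times_F G_F$ rather than an equality, so the final identification ${}^{\sigma}G\cong G$ is a composite of two canonical isomorphisms. If one wanted to be fully explicit one could instead verify the claim on $T$-points for $T\in\mathbf{C}$: $({}^{\sigma}G)(T) = \{(x,g): \sigma\circ x = u\circ g\} $ where $u\colon G\to X$; using $G(T) = X(T)\times_{F(T)} G_F(T)$ and $\sigma\circ t = t$ this set is in natural bijection with $G(T)$, functorially in $T$, which by Yoneda gives the isomorphism of schemes, and visibly respects the group laws.
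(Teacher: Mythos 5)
Your argument is correct and is essentially the paper's own proof: the paper's computation is exactly the chain ${}^{\sigma}G \cong (X,\sigma)\times_X ((X,t)\times_F G_F) \cong (X,t\circ\sigma)\times_F G_F = (X,t)\times_F G_F = G$, i.e.\ the composition of pullback squares plus the single identity $t\circ\sigma = t$. The extra remarks on the group structure and the $T$-points/Yoneda verification are fine but not needed beyond what the paper records.
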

\begin{proof}
We show that ${}^{\sigma}G\cong G$ by the following easy computation:
\begin{IEEEeqnarray*}{rCl}
{}^{\sigma}G & = & (X,\sigma)\times_X G \\
 &\cong & (X,\sigma)\times_X \left((X,t)\times_F G_F\right)\\
 &\cong & (X,t\circ \sigma)\times_F G_F\\
& = & (X,t)\times_F G_F\\
& = & G. \ \ \ \ \ \ \ \ \ \ \ \ \ \ \ \ \ \ \ \ \ \ \ \ \ \ \ \ \ \ \ \ \ \ \ \ \ \ \ \ \ \ \ \ \ \ \ \ \ \ \ \ \ \ \ \ \ \ \ \ \ \ \ \ \ \ \ \qedhere
\end{IEEEeqnarray*}
\end{proof}
Let  $G$ be a commutative group scheme over $X$ which is defined over constants. We recall from Section \ref{secintro} that $\mathcal{R}(G)$ denotes the representable sheaf of abelian groups given by $G$. Then, by \cite[Remark II.3.1.(e)]{milne1etale}, we have a natural map
$$\phi_G:\sigma^*(\mathcal{R}(G))\ra \mathcal{R}({}^{\sigma}G)\cong \mathcal{R}(G)$$
giving $\mathcal{R}(G)$ a left difference sheaf structure.

\subsubsection{Zariski sheaves on $\spec(\ka)$}\label{zarexam}
In this example and in the next one, the whole category of difference sheaves can be easily described.

Let $\ka$ be a field with an endomorphism $s$, $X=\spec(\ka)$ and $\si=\spec(s)$. We consider the Zariski topology on $X$, that is $\mathbf{C}(X)=\mathbf{Z}(X)$. Then the  category ${\mathrm{Sh}_{\sigma}({\mathbf{C}}(X))}$ does not depend neither on $s$ nor on $\ka$, and it is just equivalent with the category of commutative groups with endomorphisms, which is the same as the category of $\Zz[x]$-modules. We often say ``$\Zz[\sigma]$-module'' instead of ``$\Zz[x]$-module'' in this context.

\subsubsection{\'{E}tale sheaves on $\spec(\ka)$}\label{etaleexam}
Let us still take $X$ and $\sigma$ as in Section \ref{zarexam}, however, instead of the Zariski topology, we consider the \'etale topology on $X$ (that is $\mathbf{C}(X)=\mathbf{E}(X)$), so the situation becomes more interesting.
Firstly, by \cite[Theorem II.1.9]{milne1etale} there is an equivalence of abelian categories
$$\Psi: \mathrm{Sh}({\bf E}(X))\simeq \gal(\ka)-\mathrm{Mod},\ \ \ \ \Psi(\mathcal{F})=\coli_{K/\ka}\mathcal{F}(\spec(K)),$$
where $K/\ka$ is a finite separable field extension (in a fixed separable closure of $\ka$) and $ \gal(\ka)-\mathrm{Mod}$ is the category of continuous left discrete modules over $\gal(\ka):=\gal(\ka^{\sep}/\ka)$. Let $\bar{s}: \ka^{\sep}\to \ka^{\sep}$ be a field endomorphism extending $s$, and let $\tau\in \gal(\ka)$. Since we have:
 $$\tau(\bar{s}(\ka^{\sep}))\subseteq (s(\ka))^{\sep}=\bar{s}(\ka^{\sep}),$$
 we can define the following ``restriction along $s$'' continuous group homomorphism:
 $$\phi: \gal(\ka)\ra \gal(\ka),\ \ \ \ \phi(\tau)=(\bar{s})^{-1}\circ \tau \circ \bar{s},$$
 which does not depend on the choice of $\bar{s}$. By \cite[Remark II.3.1.(e)]{milne1etale}, for any $\mathcal{F}\in \mathrm{Sh}({\bf E}(X))$ we get:
 $$\Psi(\si^*(\mathcal{F}))\cong 	\Psi(\mathcal{F})^{(1)},$$
 where for a continuous $\gal(\ka)$-module $M$, $M^{(1)}$ is the $\gal(\ka)$-module with the $\gal(\ka)$-structure twisted by $\phi$. Thus we see that
 in this case, the category $\shx$ is equivalent to the category of (the continuous versions of) left difference $(\gal(\ka),\phi)$-modules in the sense of \cite{CK2}, therefore in this case (unlike in the situation considered in Section \ref{shmodsec}) the terminology choices made in this paper coincide with those from \cite{CK2}.

In the case when $s$ is an automorphism (or, more generally, when the field extension $s(\ka)\subseteq \ka$ is algebraic), we can choose $\bar{s}$ to be an automorphism of $\ka^{\sep}$, and then the formula defining $\phi$ comes from an inner automorphism of the group $\aut(\ka^{\sep})$.


\subsection{Difference site}\label{secds}
In order to quickly obtain the standard properties of the categories
$\shx$ and $\pshx$, we shall interpret them as the categories of
abelian (pre)sheaves on a certain site $\mathbf{C}_{\sigma}(X)=(\mathcal{C}_{\sigma},J_{\sigma}$) (recall that our original site $\mathbf{C}(X)$ is of the form $(\mathcal{C},J)$, see Assumption \ref{mainass}), which we call the {\it difference site}.
We describe the underlying category $\mathcal{C}_{\sigma}$ first.

The objects do not change, that is
 $\Ob(\mathcal{C}_{\sigma}):=\Ob(\mathcal{C})$. To describe the morphisms in
the category $\mathcal{C}_{\sigma}$, let us take two objects $u:U\to X, v: V\to X$. We define:
$${\rm Hom}_{\mathcal{C}_{\sigma}}(u,v):=\{(f,n)\in {\rm Hom}_{\mathrm{Sch}}(U,V)\times \Nn\ |\ v\circ f=\si^n\circ u\}.$$
The composition is defined as follows:
$$\circ:{\rm Hom}_{\mathcal{C}_{\sigma}}(u,v)\times {\rm Hom}_{\mathcal{C}_{\sigma}}(t,u)\ra {\rm Hom}_{\mathcal{C}_{\sigma}}(t,v),$$
$$(f,n)\circ (g,m):=(f\circ g,n+m).$$
It is easy to see that the above composition map is well-defined, associative, and we clearly have:
$$\id_u^{\mathcal{C}_{\sigma}}=\left(\id_u^{\mathcal{C}},0\right),$$
hence $\mathcal{C}_{\sigma}$ is a category.
\begin{remark}\label{diftoprem}
We comment here on several obvious properties of the category $\mathcal{C}_{\sigma}$, which was defined above.
\begin{enumerate}

\item The category $\mathcal{C}$ can be considered as a subcategory of $\mathcal{C}_{\sigma}$, since a scheme morphism
$f:U\to V$ belongs to $\mathcal{C}$ if and only if $(f,0)$ is a morphism in $\mathcal{C}_{\sigma}$. Formally, we have a  faithful functor
$$i:\mathcal{C}\ra \mathcal{C}_{\sigma},\ \ \ \ i(f)=(f,0),$$
which is the identity on $\mathrm{Ob}(\mathcal{C})$. (Although it is obvious from the formal point of view, we still would like to point out here that we never identify $(f,n)$ with $(g,m)$ for $n\neq m$, hence $\mathcal{C}_{\sigma}\neq \mathcal{C}$, even in the case when $\si=\id_X$.)

\item
For each $V\in \Ob(\mathcal{C})$,  we have the following scheme morphism (see the end of Section \ref{secconv}):
$$\sigma_V:{}^{\sigma}V\to V,$$
 hence  the pair  $(\sigma_V,1)$ is a morphism in $\mathcal{C}_{\sigma}$, which will be abbreviated just to $\sigma_V$. Let us now set
 $$\sigma^n_V:= \sigma_{{}^{\sigma^{n-1}}V}\circ\ldots\circ\sigma_{{}^{\sigma}V}\circ \sigma_V$$
 for $n\in\Nn$. We point out that we slightly abuse notation here, since the domain $\sigma^n_V$ is only canonically isomorphic to the pullback along
 $\sigma^n$. Thanks to this choice, we have the equality
 \[
 \sigma^m_{{}^{\sigma^n}V}\circ\sigma^n_V=\sigma^{n+m}_V.
 \]
 Then, by the universal property of pullback,
 any morphism in $\mathcal{C}_{\sigma}$, given by $(f,n): U\to V$, uniquely factorizes as:
$$(f,n)=(\sigma^n_V,n)\circ (f',0)=\sigma^n_V\circ f'$$
for a certain $f':U\to {}^{\sigma^n}V$, which is a morphism in $\mathcal{C}$. Therefore the morphisms in $\mathcal{C}_{\sigma}$ are generated by the morphisms from $\mathcal{C}$ and the morphisms of the form $\sigma^n_V$.
This may be thought of as an analogue of the fact
that the category of difference modules is equivalent to the category of modules over the ring of twisted polynomials, which is generated by the monomials $t^i r$. In order to extend this analogy, let us see how the above decomposition behaves with respect to the composition in $\mathcal{C}_{\sigma}$.
In order to factorize the composite:
\begin{equation*}
\xymatrix{U \ar[r]^{(f,n)\ } & V \ar[r]^{(g,m)\ } & W,}
\end{equation*}
we need to interchange $\sigma^n_V$ and the corresponding $g'$. To achieve this, we note the following identity in $\mathcal{C}_{\sigma}$ which will be crucial in the proof of Proposition \ref{toposprop}:
\[
g\circ \sigma^n_V=\sigma^n_W\circ\sigma^n(g).
\]
Applying this identity to the factorization above, we obtain the formula:
\[
\sigma^m_W\circ g'\circ \sigma^n_V\circ f'=\sigma^{n+m}_W\circ\sigma^n(g')\circ f',
\]
which is analogous to the formula for multiplying twisted polynomials.

\item At last, we would like to note that
the pullbacks of morphisms from $\mathcal{C}_{\sigma}$ along morphisms from $\mathcal{C}$ exist (by Assumption \ref{mainass}(2)) in the category $\mathcal{C}_{\sigma}$.
\end{enumerate}
\end{remark}
Before the next result, let us mention that to consider the notion of a presheaf, we do not need any choice of a Grothendieck topology on a category.
\begin{prop}\label{toposprop}
There is an equivalences of categories
\[\Psi:\mathrm{PSh}(\mathcal{C}_{\sigma})\ra \pshx.\]
\end{prop}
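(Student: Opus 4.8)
The plan is to define $\Psi$ on objects by sending a presheaf $\mathcal{P}$ on $\mathcal{C}_\sigma$ to the pair $(\mathcal{P}\circ i^{\op}, \sigma_{\mathcal{P}})$, where $i:\mathcal{C}\to\mathcal{C}_\sigma$ is the faithful inclusion from Remark \ref{diftoprem}(1), so that $\mathcal{F}:=\mathcal{P}\circ i^{\op}$ is an ordinary presheaf on $\mathbf{C}(X)$, and the difference structure $\sigma_{\mathcal{F}}:\mathcal{F}\to\sigma_*(\mathcal{F})$ is built from the morphisms $\sigma_V:{}^{\sigma}V\to V$ in $\mathcal{C}_\sigma$. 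Concretely, for $(v:V\to X)\in\Ob(\mathcal{C})$ we have $\sigma_*(\mathcal{F})(v)=\mathcal{F}({}^{\sigma}v)=\mathcal{P}({}^{\sigma}v)$, and the map $\mathcal{P}(\sigma_V):\mathcal{P}(v)\to\mathcal{P}({}^{\sigma}v)$ gives the required component $(\sigma_{\mathcal{F}})_v$. The key compatibility needed to see that this is natural in $V$ — i.e.\ that $\sigma_{\mathcal{F}}$ really is a morphism of presheaves $\mathcal{F}\to\sigma_*(\mathcal{F})$ — is exactly the identity $g\circ\sigma^n_V=\sigma^n_W\circ\sigma^n(g)$ highlighted in Remark \ref{diftoprem}(2); applying $\mathcal{P}$ turns this into the commuting square expressing naturality. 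A morphism $\alpha:\mathcal{P}\to\mathcal{Q}$ of presheaves on $\mathcal{C}_\sigma$ restricts along $i$ to a morphism $\mathcal{F}\to\mathcal{G}$, and the condition $\sigma_{\mathcal{G}}\circ\alpha=\sigma_*(\alpha)\circ\sigma_{\mathcal{F}}$ from Definition \ref{defdifs}(2) is just the naturality of $\alpha$ evaluated on the morphisms $\sigma_V$; so $\Psi$ is a well-defined functor.

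Next I would construct the inverse (or quasi-inverse) functor $\Phi:\pshx\to\mathrm{PSh}(\mathcal{C}_\sigma)$. Given $(\mathcal{F},\sigma_{\mathcal{F}})$, iterating $\sigma_{\mathcal{F}}$ and using the adjunction/composition bookkeeping yields, for each $n$, a map $\mathcal{F}(v)\to\sigma^n_*(\mathcal{F})(v)=\mathcal{F}({}^{\sigma^n}v)$; call it $\sigma^{(n)}_{\mathcal{F},v}$, defined inductively by $\sigma^{(0)}=\id$ and $\sigma^{(n+1)}_{\mathcal{F},v}=\sigma_*^{n}(\sigma_{\mathcal{F}})_v\circ\sigma^{(n)}_{\mathcal{F},v}$, which is well-defined because of the equality $\sigma^m_{{}^{\sigma^n}V}\circ\sigma^n_V=\sigma^{n+m}_V$ recorded in Remark \ref{diftoprem}(2). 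Then set $\Phi(\mathcal{F})(u):=\mathcal{F}(u)$ on objects, and on a morphism $(f,n):u\to v$ in $\mathcal{C}_\sigma$ — which by Remark \ref{diftoprem}(2) factors uniquely as $\sigma^n_v\circ f'$ with $f':u\to{}^{\sigma^n}v$ in $\mathcal{C}$ — define $\Phi(\mathcal{F})(f,n):\mathcal{F}(v)\to\mathcal{F}(u)$ to be $\mathcal{F}(f')\circ\sigma^{(n)}_{\mathcal{F},v}$. Functoriality of $\Phi(\mathcal{F})$ is precisely where the twisted-polynomial multiplication formula $\sigma^m_W\circ g'\circ\sigma^n_V\circ f'=\sigma^{n+m}_W\circ\sigma^n(g')\circ f'$ of Remark \ref{diftoprem}(2) is used: applying $\mathcal{F}$ to it, together with the naturality of $\sigma_{\mathcal{F}}$, shows $\Phi(\mathcal{F})$ respects composition. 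On morphisms $\alpha$, set $\Phi(\alpha)_u:=\alpha_u$; the difference-morphism condition guarantees this commutes with the $\sigma^{(n)}$'s and hence is natural on all of $\mathcal{C}_\sigma$.

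Finally I would check $\Psi\circ\Phi\cong\id$ and $\Phi\circ\Psi\cong\id$. In one direction this is immediate: $\Psi(\Phi(\mathcal{F},\sigma_{\mathcal{F}}))$ has underlying presheaf $\mathcal{F}$ (restriction along $i$ undoes the extension) and difference structure $\mathcal{P}(\sigma_V)=\sigma^{(1)}_{\mathcal{F},V}=(\sigma_{\mathcal{F}})_V$, so we recover $(\mathcal{F},\sigma_{\mathcal{F}})$ on the nose. In the other direction, for a presheaf $\mathcal{P}$ on $\mathcal{C}_\sigma$ the presheaf $\Phi(\Psi(\mathcal{P}))$ agrees with $\mathcal{P}$ on objects, and on a general morphism $(f,n)$ one uses the factorization $(f,n)=\sigma^n_v\circ f'$ plus $\mathcal{P}(\sigma^n_v)=\sigma^{(n)}_{\Psi(\mathcal{P}),v}$ (which follows from $\sigma^n_V=\sigma_{{}^{\sigma^{n-1}}V}\circ\cdots\circ\sigma_V$ and the inductive definition of $\sigma^{(n)}$) to conclude $\Phi(\Psi(\mathcal{P}))(f,n)=\mathcal{P}(f,n)$; these identifications are manifestly natural in $\mathcal{P}$.

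The main obstacle I anticipate is purely bookkeeping rather than conceptual: keeping straight the canonical-vs-strict distinction for the objects ${}^{\sigma^n}V$ and the morphisms $\sigma^n_V$ (the paper already flags in Remark \ref{diftoprem}(2) that $\sigma^n_V$ is only \emph{canonically} isomorphic to the genuine pullback along $\sigma^n$), and verifying that the inductively defined iterates $\sigma^{(n)}_{\mathcal{F}}$ are compatible with the chosen normalization $\sigma^m_{{}^{\sigma^n}V}\circ\sigma^n_V=\sigma^{n+m}_V$ so that $\Phi(\mathcal{F})$ is strictly functorial and not merely functorial up to coherent isomorphism. Once the normalization conventions from Remark \ref{diftoprem} are adopted throughout, every verification reduces to applying $\mathcal{P}$ (or $\mathcal{F}$) to an already-established identity in $\mathcal{C}_\sigma$ together with naturality, so no genuinely new input is required.
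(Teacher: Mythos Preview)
Your proposal is correct and follows essentially the same approach as the paper's own proof: both define $\Psi$ by restricting along $i$ and extracting the difference structure from $\mathcal{P}(\sigma_V)$, and both define the quasi-inverse $\Phi$ by using the unique factorization $(f,n)=\sigma^n_V\circ f'$ from Remark \ref{diftoprem}(2) to set $\Phi(\mathcal{G})(f,n)=\mathcal{G}(f')\circ\sigma^{(n)}_{\mathcal{G}}$. Your write-up is somewhat more explicit than the paper's (which simply declares the quasi-inverse check ``immediate''), in particular spelling out the inductive definition of $\sigma^{(n)}_{\mathcal{F}}$ and the verification of both composites, but the underlying argument is identical.
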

\begin{proof} Let $\mathcal{F}\in\mathrm{PSh}(\mathcal{C}_{\sigma})$. For $U\in \mathrm{Ob}(\mathcal{C})$,
we put $\Psi(\mathcal{F})(U):=\mathcal{F}(U)$. Obviously, $\Psi(\mathcal{F})\in \psx$ and in order to define a morphism of presheaves on $\mathcal{C}$
$$\sigma_{\Psi(\mathcal{F})}:\Psi(\mathcal{F})\ra \sigma_*(\Psi(\mathcal{F})),$$
we need a natural family of maps
$$\mc{F}(U)\ra \mc{F} \left({}^{\sigma}U\right)$$
for each $U\in \Ob(\mathcal{C})$. However, the fiber product structure map $\sigma_U:{}^{\sigma}U\to U$ is a morphism in the category $\mathcal{C}_{\sigma}$ (see Remark \ref{diftoprem}(2), formally the pair $(\sigma_U,1)$ is such a morphism) and it induces a morphism $\mc{F}(U)\to \mc{F}({}^{\sigma}U)$. Hence, we found a natural difference
presheaf structure on $\Psi(\mathcal{F})$, since the required naturality condition is that for a morphism $f:U\to V$ in $\mathcal{C}$, we have:
\[\mathcal{F}(\sigma_U)\circ\mathcal{ F}(f)=\mathcal{F}(\sigma(f))\circ\mathcal{ F}(\sigma_V),\]
which follows from the application of $\mathcal{F}$ to the identity
\[f\circ\sigma_U=\sigma_V\circ \sigma(f)\]
established in Remark \ref{diftoprem}(2).

On the other hand, for $\mathcal{G}\in\pshx$ we form $\Phi(\mathcal{G})\in \mathrm{PSh}(\mathcal{C}_{\sigma})$
by extending $\mathcal{G}$ to the morphisms in the category $\mathcal{C}_{\sigma}$ using the following formula:
\[
\Phi(\mathcal{G})(\sigma^n_U\circ f):=\mathcal{G}(f)\circ \sigma^n_{G}.
\]
The fact that we obtain a presheaf follows from the commutativity relation from Remark \ref{diftoprem}(2) again. It is immediate to check that $\Phi$ is a quasi-inverse to $\Psi$, hence $\Psi$ is an equivalence of categories.
\end{proof}
\noindent
We would like to make now the category $\mathcal{C}_{\sigma}$ into a site, that is, to specify a Grothendieck topology $J_{\sigma}$ on $\mathcal{C}_{\sigma}$. We define $J_{\sigma}:=J$, that is a family of $\mathcal{C}_{\sigma}$-morphisms $\{f_i\}_i$ is in $J_{\sigma}$, if it is in $J$ in the sense of Remark \ref{diftoprem}(1) (in particular, all the $f_i's$ are morphism in the category $\mathcal{C}$). We define $\mathbf{C}_{\sigma}(X)$ as $(\mathcal{C}_{\sigma},J_{\sigma})$ and we have the following.
\begin{prop}\label{sistopos}
The pair $\mathbf{C}_{\sigma}(X)$ is a site and the functor $i$ from Remark \ref{diftoprem}(1) induces a morphism of sites:
$$i:\mathbf{C}_{\sigma}(X)\ra \mathbf{C}(X).$$
\end{prop}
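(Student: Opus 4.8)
The plan is to verify the two assertions separately: first that $\mathbf{C}_{\sigma}(X) = (\mathcal{C}_{\sigma}, J_{\sigma})$ satisfies the site axioms (T1)--(T3) of \cite[Definition I.1.2.1]{tamme}, and then that $i$ is a morphism of sites in the sense of \cite[Definition I.1.2.2]{tamme} (with our convention on the direction of arrows).

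For the first part, the key observation is that $J_{\sigma} := J$, so every covering family in $J_{\sigma}$ consists of morphisms lying in the subcategory $\mathcal{C} \subseteq \mathcal{C}_{\sigma}$, i.e.\ of morphisms of the form $(f,0)$. Axiom (T1) (isomorphisms cover) is immediate since $\id_u = (\id_u^{\mathcal{C}}, 0)$ and more generally an isomorphism in $\mathcal{C}_{\sigma}$ between objects $u,v$ is necessarily of the form $(f,0)$ with $f$ an isomorphism in $\mathcal{C}$, so it is already a covering in $J$. For (T2) (stability under base change) and (T3) (local character / transitivity), I would argue that the relevant pullbacks and composites never leave $\mathcal{C}$: a covering $\{(f_i,0)\colon U_i \to U\}$ pulled back along an arbitrary morphism $(g,n)\colon V \to U$ of $\mathcal{C}_{\sigma}$ decomposes, using Remark \ref{diftoprem}(2), as $(g,n) = \sigma^n_U \circ (g',0)$ with $g'$ in $\mathcal{C}$; the pullback of $\{(f_i,0)\}$ along $\sigma^n_U$ is (up to the canonical identification of $\sigma^n_U$'s domain with the $\sigma^n$-pullback) again a family of $\mathcal{C}$-morphisms — here I use Assumption \ref{mainass}(2), that $\mathcal{C}$ is closed under base change in $\mathbf{Sch}_X$, so in particular the fibre products ${}^{\sigma^n}U_i$ lie in $\mathcal{C}$ — and then pulling back further along $g'$ stays in $\mathcal{C}$ by the same assumption. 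Hence the pulled-back family consists of $\mathcal{C}$-morphisms and is a $J$-covering because $J$ is a Grothendieck topology on $\mathcal{C}$; this is exactly a $J_{\sigma}$-covering. The transitivity axiom (T3) is even more direct: if $\{(f_i,0)\colon U_i \to U\}$ is in $J_{\sigma} = J$ and for each $i$ we have $\{(g_{ij},0)\colon U_{ij} \to U_i\}$ in $J_{\sigma}=J$, then all composites $(f_i \circ g_{ij}, 0)$ lie in $\mathcal{C}$ and the composed family is a $J$-covering by (T3) for $J$, hence a $J_{\sigma}$-covering. Thus $(\mathcal{C}_{\sigma}, J_{\sigma})$ is a site.

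For the second part, recall that a morphism of sites $i\colon \mathbf{C}_{\sigma}(X) \to \mathbf{C}(X)$ (in the convention chosen above Assumption \ref{mainass}) is a functor $i\colon \mathcal{C} \to \mathcal{C}_{\sigma}$ — the faithful functor of Remark \ref{diftoprem}(1), $i(f) = (f,0)$, identity on objects — that is continuous, i.e.\ sends $J$-coverings to $J_{\sigma}$-coverings, is compatible with the fibre products appearing in the covering axioms, and such that the pullback-of-presheaves functor $i^p$ preserves sheaves (equivalently, by \cite[Proposition I.1.2.3]{tamme} or the discussion around it, the conditions reduce to $i$ being continuous and commuting with the relevant finite limits). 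That $i$ sends a $J$-covering $\{f_i \colon U_i \to U\}$ to the $J_{\sigma}$-covering $\{(f_i,0)\}$ is true by the very definition $J_{\sigma} := J$. Compatibility with base change is the statement that for a covering and a morphism $g\colon V \to U$ in $\mathcal{C}$, the fibre product $U_i \times_U V$ formed in $\mathcal{C}$ agrees with the one formed in $\mathcal{C}_{\sigma}$ and $i$ carries the projection squares to projection squares; this holds because $i$ is the identity on objects and on morphisms of $\mathcal{C}$, and because, as noted in Remark \ref{diftoprem}(3), pullbacks along $\mathcal{C}$-morphisms computed in $\mathcal{C}_{\sigma}$ coincide with those computed in $\mathcal{C}$. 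Together these give that $i$ is a morphism of sites.

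I do not expect a serious obstacle here; the proposition is essentially bookkeeping once one has set up $\mathcal{C}_{\sigma}$ and $J_{\sigma}$ carefully, and the only place requiring any care is making sure that in the base-change axiom (T2) the pullback of a covering family along a \emph{general} $\mathcal{C}_{\sigma}$-morphism $(g,n)$ — not merely a $\mathcal{C}$-morphism — still lands in $J_{\sigma}$; this is handled by the factorization $(g,n) = \sigma^n_U \circ (g',0)$ from Remark \ref{diftoprem}(2) together with the stability of $\mathcal{C}$ under base change from Assumption \ref{mainass}(2), reducing everything to the known fact that $J$ is a topology on $\mathcal{C}$.
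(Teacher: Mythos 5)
Your proof is correct and follows essentially the same route as the paper's: verify the three site axioms using $J_{\sigma}=J$, the fact that the only isomorphisms in $\mathcal{C}_{\sigma}$ are those coming from $\mathcal{C}$, and the existence of the relevant pullbacks (Remark \ref{diftoprem}(3)), and then observe that $i$ preserves coverings and fibre products. You are in fact more explicit than the paper about base change along a general morphism $(g,n)$ via the factorization $(g,n)=\sigma^n_U\circ(g',0)$; the one point to tighten is that the intermediate family $\left({}^{\sigma^n}U_i\to{}^{\sigma^n}U\right)$ is a $J$-covering not merely ``because $J$ is a Grothendieck topology on $\mathcal{C}$'' (the map $\sigma^n_U$ is not a morphism over $X$, so the base-change axiom for $J$ does not apply to it), but because Assumption \ref{mainass}(4) makes $\sigma^{-1}$ a morphism of sites and hence covering-preserving.
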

\begin{proof}
We will check the three properties defining the notion of a site from Section \ref{secconv}. The first property follows immediately from the corresponding property of $\mathbf{C}(X)$. The second follows from the fact that the only isomorphisms in $\mathcal{C}_{\sigma}$ are those coming from $\mathcal{C}$.
Since the pullbacks along the morphisms $(f,0)$ exist in $\mathcal{C}_{\sigma}$ (Remark \ref{diftoprem}(3)) and are preserved by the functor $i$, the third property for $\mathbf{C}_{\sigma}(X)$ follows from the corresponding property for $\mathbf{C}(X)$.

The functor $i$ is a morphism of sites, since it takes coverings in $J$ to coverings in $J_{\sigma}$ and it preserves fiber products.
\end{proof}
Using Proposition \ref{toposprop} and Proposition \ref{sistopos}, we immediately obtain the following.
\begin{theorem}\label{toposprop2}
There is an equivalences of categories
\[\Psi:\mathrm{Sh}(\mathbf{C}_{\sigma}(X))\ra \shx.\]
\end{theorem}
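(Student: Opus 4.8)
The plan is to upgrade the presheaf-level equivalence $\Psi\colon\mathrm{PSh}(\mathcal{C}_{\sigma})\to\pshx$ of Proposition \ref{toposprop} to sheaves, using that the Grothendieck topology $J_\sigma$ on $\mathcal{C}_\sigma$ is literally the same data as $J$ on $\mathcal{C}$ (Proposition \ref{sistopos}), with coverings consisting only of morphisms of the form $(f_i,0)$ coming from $\mathcal{C}$. So the first step is to check that $\Psi$ and its quasi-inverse $\Phi$ from Proposition \ref{toposprop} both preserve the sheaf condition. Concretely, given $\mathcal{F}\in\mathrm{PSh}(\mathcal{C}_\sigma)$ and a covering $(U_i\to U)_{i\in I}$ in $J_\sigma$, the objects $U_i$, $U$, and the fibre products $U_i\times_U U_j$ (which exist in $\mathcal{C}_\sigma$ and agree with those in $\mathcal{C}$ by Remark \ref{diftoprem}(3) and Assumption \ref{mainass}(2)) lie in $\mathcal{C}$, and the relevant equalizer diagram
$$\mathcal{F}(U)\longrightarrow\prod_i\mathcal{F}(U_i)\rightrightarrows\prod_{i,j}\mathcal{F}(U_i\times_U U_j)$$
involves only values and restriction maps along $\mathcal{C}$-morphisms. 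Since $\Psi(\mathcal{F})(V)=\mathcal{F}(V)$ for $V\in\Ob(\mathcal{C})$ and the restriction maps of $\Psi(\mathcal{F})$ along $\mathcal{C}$-morphisms are exactly those of $\mathcal{F}$, the sheaf condition for $\mathcal{F}$ with respect to $J_\sigma$ is \emph{verbatim} the sheaf condition for $\Psi(\mathcal{F})$ with respect to $J$. Hence $\mathcal{F}\in\mathrm{Sh}(\mathbf{C}_\sigma(X))\iff\Psi(\mathcal{F})\in\sx$; and since $\Psi(\mathcal{F})$ sheaf certainly entails the underlying abelian presheaf being a sheaf, $\Psi(\mathcal{F})$ automatically lies in $\shx$ rather than merely $\pshx$.

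Next I would run the same argument for $\Phi$ in the other direction, or—more economically—simply observe that since $\Phi$ is quasi-inverse to $\Psi$ on presheaves and $\Psi$ reflects and preserves the sheaf property, $\Phi$ restricts to a quasi-inverse on the full subcategories of sheaves. Thus $\Psi$ restricts to an equivalence $\mathrm{Sh}(\mathbf{C}_\sigma(X))\to\shx$, which is the assertion. One can alternatively phrase this as: the morphism of sites $i\colon\mathbf{C}_\sigma(X)\to\mathbf{C}(X)$ of Proposition \ref{sistopos} induces $i_*,i^*$ on sheaves, but the point here is the sharper, hands-on comparison above rather than abstract topos nonsense.

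I do not expect a serious obstacle: the key technical content has already been absorbed into Proposition \ref{toposprop} (the presheaf equivalence, where the identities from Remark \ref{diftoprem}(2) did the real work) and into Proposition \ref{sistopos} (that $(\mathcal{C}_\sigma,J_\sigma)$ is genuinely a site). The only thing to be careful about is making explicit that every scheme and every fibre product appearing in a $J_\sigma$-covering's descent datum is the \emph{same} object as in the corresponding $J$-covering—this is where Remark \ref{diftoprem}(3) is invoked—so that the two equalizer diagrams coincide on the nose and no separate sheafification or comparison map is needed. Given that, the statement follows immediately, as the excerpt's phrasing (``we immediately obtain'') already signals.
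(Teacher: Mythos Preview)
Your proposal is correct and is essentially the same argument as the paper's, just unpacked more explicitly. The paper condenses your direct verification that the equalizer diagrams coincide into the single observation that the direct image functor $i_*$ associated to the site morphism $i$ of Proposition~\ref{sistopos} is precisely the forgetful functor (forgetting the difference structure); since $J_\sigma=J$, being a sheaf on $\mathbf{C}_\sigma(X)$ is equivalent to the underlying presheaf on $\mathcal{C}$ being a sheaf, and hence the presheaf equivalence $\Psi$ restricts to sheaves. You even mention this alternative phrasing at the end of your proposal, so you have identified exactly the paper's route.
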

\begin{proof}
It is clear that for the morphism of sites $i$ from Proposition \ref{sistopos}, the corresponding direct image functor
$$i_*: \shx\to \mathrm{Sh}(X)$$
is the forgetful functor (forgetting the difference structure). Hence, this forgetful functor takes sheaves to sheaves, which implies that the equivalence of the presheaf categories $\Psi$ from Proposition \ref{toposprop} takes sheaves to sheaves, so it gives an equivalence of categories we were looking for.
\end{proof}
The interpretation above allows us to obtain all the standard properties of the categories $\pshx$ and $\shx$.
\begin{cor}
The categories $\pshx,\shx$ are Grothendieck categories. In particular, they are abelian, they have all   limits
and colimits, and they have enough injectives.
\end{cor}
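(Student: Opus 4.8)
The plan is to deduce this entirely from the two identifications established above, together with the standard fact that being a Grothendieck category is invariant under equivalences of categories. By Proposition \ref{toposprop} we have $\pshx\simeq\mathrm{PSh}(\mathcal{C}_{\sigma})$ and by Theorem \ref{toposprop2} we have $\shx\simeq\mathrm{Sh}(\mathbf{C}_{\sigma}(X))$, so it suffices to verify the assertion for these two ordinary (pre)sheaf categories and then to read off the ``in particular'' part from the general theory of Grothendieck categories.

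First I would observe that $\mathcal{C}_{\sigma}$ is a small category: its objects are exactly those of $\mathcal{C}$, which lie in the fixed Grothendieck universe, and each $\mathrm{Hom}_{\mathcal{C}_{\sigma}}(u,v)$ was defined as a subset of $\mathrm{Hom}_{\mathrm{Sch}}(U,V)\times\Nn$, hence is again small. Consequently $\mathrm{PSh}(\mathcal{C}_{\sigma})$ is the category of functors from the small category $\mathcal{C}_{\sigma}^{\op}$ into the Grothendieck category $\mathbf{Ab}$, and such a functor category is itself Grothendieck, with all limits and colimits computed objectwise (so that filtered colimits are exact) and with a small generating set given by the free abelian presheaves on the representable presheaves. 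For the sheaf case, $\mathbf{C}_{\sigma}(X)$ is a genuine (small) site by Proposition \ref{sistopos}, and the category of abelian sheaves on a small site is classically a Grothendieck category: it is a reflective subcategory of the presheaf category via the exact sheafification functor, limits are computed as in presheaves, colimits by sheafifying presheaf colimits, filtered colimits remain exact because sheafification is exact, and the sheafifications of the representables form a generating set; here I would simply cite a standard reference such as \cite{tamme} or \cite{sga41}. The ``in particular'' clause then follows from the definition and basic theory of Grothendieck categories: abelianness and cocompleteness are part of the definition, completeness follows (for instance from the special adjoint functor theorem), and there are enough injectives by Grothendieck's theorem.

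I do not expect any genuine obstacle; the only point deserving a word of care is set-theoretic, namely that replacing $\mathbf{C}(X)$ by the difference site $\mathbf{C}_{\sigma}(X)$ keeps us inside the chosen universe---which is clear, since we only adjoin an $\Nn$-valued label to the morphisms---so that the smallness hypotheses needed for the quoted statements about presheaf and sheaf categories remain in force.
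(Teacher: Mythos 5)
Your proof is correct and follows exactly the route the paper intends: the corollary is stated as an immediate consequence of the equivalences in Proposition \ref{toposprop} and Theorem \ref{toposprop2}, reducing everything to the standard fact that abelian (pre)sheaf categories on a small site are Grothendieck. Your additional remarks on smallness of $\mathcal{C}_{\sigma}$ are a harmless (and correct) elaboration of what the paper leaves implicit.
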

Since the forgetful functor is of the form $i_*$ (see the proof of Theorem \ref{toposprop2}), some of its properties follow directly from sites generalities.  However, the situation is quite special and this functor enjoys some extra properties, which are listed below.
\begin{prop}\label{improve}
The  functor
$$i_*:\mathrm{Sh}(\mathbf{C}_{\sigma}(X))\ra \mathrm{Sh}(\mathbf{C}(X))$$
has the following  properties.
\begin{enumerate}
\item A difference presheaf $\mathcal{F}$ is a sheaf if and only if $i_*(\mathcal{F})$ is a sheaf (we use here the presheaf version of the functor $i_*$).

\item The  functor $i_*$ has a left adjoint functor $i^*$  and a right adjoint functor $i^{!}$.

\item The functor $i_*$  preserves all   limits and colimits, in particular it is exact.


\item A sequence of difference sheaves
$$0\ra \mathcal{F}\ra \mathcal{G}\ra\mathcal{H}\ra 0$$
is exact if and only if the sequence of sheaves
$$0\ra i_*(\mathcal{F})\ra i_*(\mathcal{G})\ra i_*(\mathcal{H})\ra 0$$
is exact.

\item The functor $i^*$ is exact.

\item The functors $i_*$, $i^!$ preserve injectives. Moreover, any difference sheaf embeds into
$i^!(\mcf)$ for some injective sheaf $\mcf$.
\end{enumerate}
Moreover, the facts analogous to items $(2)$--$(6)$ also hold for the category of difference presheaves instead of difference sheaves.
\end{prop}
\begin{proof}
Item (1) immediately follows from the fact that the families of coverings in the sites
$\mathbf{C}(X)$ and $\mathbf{C}_{\sigma}(X)$ are the same.

We prove items (2)--(5) for sheaves, the proofs for presheaves are similar.
For item (2),  we remark that the existence of $i^*$ follows from site generalities and also that
the existence of $i^!$ may be derived from the Special Adjoint Functor Theorem (see \cite[Section V.8]{maclane}). However, we will provide a simple explicit construction of both these functors utilizing a general difference idea, since this construction will be useful later in the proof of Proposition \ref{cechder}. For $\mc{F}\in \mathrm{Sh}(X)$ we define:
	\[
	i^{!}(\mc{F}):=\prod_{j=0}^{\infty} \sigma^j_{*}(\mathcal{F})
	\]
	with the map
	\[\sigma_{i^{!}(\mc{F})}:i^{!}(\mc{F})=\prod_{j=0}^{\infty} \sigma^j_{*}(\mathcal{F})\ra
	\sigma_{*}(i^{!}(\mc{F}))=\prod_{j=1}^{\infty} \sigma^j_{*}(\mathcal{F})\]
	being the obvious projection onto the subproduct (we use here the fact that the $\sigma_*$ commutes with products, see Section \ref{secconv}). Let
		$$\alpha=\prod_{j=0}^{\infty}\alpha_j:\mathcal F\ra i^{\!}(\mathcal{G})$$
		be a morphism of difference sheaves. Then the condition $\sigma_{i^{\!}(\mathcal{G})}
		\circ \alpha=\sigma_{*}(\alpha)\circ \sigma_{\mathcal{F}}$ is equivalent to the condition
		$\alpha_{j+1}=\sigma_{*}(\alpha_j)\circ \sigma_{\mathcal{F}}$ for all $j\geqslant 0$, which means
		that:
$$\alpha_{j}=\sigma^j_{*}(\alpha_0)\circ \sigma^{j-1}_*(\sigma_{\mathcal{F}})\circ\ldots \circ\sigma_*(\sigma_{\mathcal{F}})\circ \sigma_{\mathcal{F}}$$
for all $j\geqslant 0$.
		This shows our adjunction:
		\[
		\Hom_{\sdx}\left(\mathcal{F},i^{!}(\mathcal{G})\right)\cong
		\Hom_{\sx}\left(i_*(\mathcal{F}),\mathcal{G}\right),
		\]
Analogously, we  define the functor $i^*$ by the formula:
\[
i^{*}(\mc{F}):=\bigoplus_{j=0}^{\infty} \sigma^{j*}(\mathcal{F})
\]
with the map
	\[\sigma^{i^{*}(\mc{F})}:\sigma^*(i^{*}(\mc{F}))=\bigoplus_{j=1}^{\infty} \sigma^{j*}(\mathcal{F})\ra
	i^{*}(\mc{F})=\bigoplus_{j=0}^{\infty} \sigma^{j*}(\mathcal{F})\]
	being the obvious embedding  of the subsum (this time we use that the functor $\sigma^*$ commutes with sums). The proof of the adjunction is analogous to the proof for $i^!$.

Item (3) is a formal consequence of item (2).

Item  (4) follows from the exactness of $i_*$ (hence it commutes with cohomology) and the fact that it does not kill objects.

Item $(5)$ (the exactness of $i^*$) follows from the construction of $i^*$ above and item (4).

For item (6), preserving injectives by $i^*$ and $i^!$ follows from the fact that they have exact left adjoints.
Finally, for an arbitrary $\mathcal{G}\in\mathrm{Sh}(\mathbf{C}_{\sigma}(X))$ we take an embedding
$\alpha: i_*(\mathcal{G})\ra\mcf$ into an injective sheaf. Then, the desired embedding is the following composition:
\begin{equation*}
\xymatrix{\mathcal{G} \ar[r]^{}  & i^! i_*(\mathcal{G}) \ar[r]^{i^!(\alpha)}  & i^!(\mcf),}
\end{equation*}
where the first map is the unit of adjunction. Indeed, the monomorphicity of the unit map follows from the construction of the functor $i^*$ above and the functor $i^!$ preserves monomorphisms, because it has a left adjoint.
\end{proof}

\section{Difference cohomology}\label{secleftdc}
In this section, we introduce the relevant cohomology theories, that is: the
sheaf cohomology (for difference sheaves) and the \v Cech cohomology (for difference presheaves).
Since our constructions do not depend on the specific properties of the underlying site $\mathbf{C}(X)$, we often remove $\mathbf{C}$ from our notation and, for example, abbreviate $\sdx$ just to $\dx$.

\subsection{Difference sheaf cohomology}\label{secldifcoh}
For any $\mcf\in\mathrm{Sh}(X)$, we have $\Gamma(\si_*(\mcf))=\Gamma(\mcf)$. Therefore, if
$(\mcf,\sigma_{\mathcal{F}})\in \dx$ then $\si_{\mcf}(X)$ acts on $\Gamma(\mcf)$.
We define now the {\it difference global section functor} as follows:
$$\Gamma^{\sigma}: \dx \ra \mathbf{Ab},\ \ \ \ \ \ \Gamma^{\sigma}(\mc{F}):=\mc{F}(X)^{\sigma_{\mcf}},$$
where $\mc{F}(X)^{\sigma_{\mcf}}$ denotes the invariants of the action of $\sigma_{\mc{F}}$ on $\Gamma(\mc{F})$. For a difference sheaf $(\mcf,\sigma_{\mathcal{F}})$, we often omit $\sigma_{\mathcal{F}}$ from the notation and simply write $\mcf\in \dx$.

Let $\Zz_X$ denote the difference constant sheaf on $X$ (see Example \ref{constex}). It is easy to see that there is a natural isomorphism:
\[
\Gamma^{\sigma}\cong \rm{Hom}_{\dx}(\Zz_X,-).
\]
Hence the functor $\Gamma^{\sigma}$ is left exact, and we can consider its right derived functors.
\begin{definition}
For $\mc{F}\in \dx$, we define the \emph{$n$-th difference sheaf cohomology}:
\[H^n_{\sigma}(X,\mc{F}):=R^n\Gamma^{\sigma}(X)={\rm Ext}^n_{\dx}(\Zz_X,\mc{F}).
\]
\end{definition}
We point out below a connection between the difference sheaf cohomology and the topological version of the discrete difference group cohomology, which was introduced in \cite{CK2}.
\begin{example}\label{sreptoet}
If $\mathbf{C}(X)$ and $\sigma$ are as in Section \ref{etaleexam}, then we have:
\[
H^n_{\sigma}(X,\mc{F})\cong H^n_{\sigma-\mathrm{top}}\left((\gal(\ka),\phi),(\Psi(\mathcal{F}),\Psi(\widetilde{\sigma_\mathcal{F}}))\right).
\]
\end{example}

In order to relate the difference sheaf cohomology to the ordinary sheaf cohomology, we need the result below.
\begin{lemma}\label{dscobv}
For $\mc{F}\in \dx$, we have the following.
\begin{enumerate}

\item The functor $\Gamma^{\sigma}$ can be factorized as the composition:
\begin{equation*}
\xymatrix{\dx \ar[r]^{\Gamma\ } & \modd_{\Zz[\sigma]} \ar[r]^{(-)^{\sigma}\ } & \modd_{\Zz}.}
\end{equation*}

\item The functor
$$\Gamma: \dx{\ra}\modd_{\Zz[\sigma]}$$
 preserves injectives.
\end{enumerate}
\end{lemma}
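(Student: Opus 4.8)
The plan is to unwind the definitions. For item (1), the point is that the difference global section functor $\Gamma^{\sigma}$ was essentially defined as a two-step process already: one first takes $\Gamma(\mathcal{F})$, which carries the action of $\sigma_{\mathcal{F}}(X)$ (using $\Gamma(\sigma_*(\mathcal{F})) = \Gamma(\mathcal{F})$, as noted just before the definition of $\Gamma^\sigma$), and then one passes to the invariants under that action. So I would first check that $\Gamma$ lands in $\modd_{\Zz[\sigma]}$: given $(\mathcal{F}, \sigma_{\mathcal{F}}) \in \dx$, the morphism $\sigma_{\mathcal{F}}: \mathcal{F} \to \sigma_*(\mathcal{F})$ induces on global sections an endomorphism of the abelian group $\Gamma(\mathcal{F})$, hence a $\Zz[\sigma]$-module structure, and a morphism of difference sheaves induces a $\Zz[\sigma]$-linear map by the compatibility condition in Definition \ref{defdifs}(2). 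Then I would observe that the invariants functor $(-)^{\sigma}: \modd_{\Zz[\sigma]} \to \modd_{\Zz}$ is exactly $\Hom_{\Zz[\sigma]}(\Zz, -)$, and that the composite $(-)^{\sigma} \circ \Gamma$ applied to $\mathcal{F}$ is $\Gamma(\mathcal{F})^{\sigma_{\mathcal{F}}} = \Gamma^{\sigma}(\mathcal{F})$ by definition. This is essentially immediate from the constructions; the only thing to spell out is the naturality, which follows from functoriality of $\Gamma$.

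For item (2), I would use the adjointness characterization. The functor $\Gamma: \dx \to \modd_{\Zz[\sigma]}$ has a left adjoint: by the equivalence $\dx \simeq \mathrm{Sh}(\mathbf{C}_\sigma(X))$ and the analogue of Example \ref{constex}, the construction $M \mapsto M_X$ sending a $\Zz[\sigma]$-module $M$ (i.e. an abelian group with an endomorphism $f$) to the difference constant sheaf $(M_X, f_M)$ should be left adjoint to $\Gamma$, since on the underlying-sheaf level $M \mapsto M_X$ is left adjoint to $\Gamma$ and the difference structures match up (a $\Zz[\sigma]$-linear map $M \to \Gamma(\mathcal{F})$ corresponds to a morphism of difference sheaves $M_X \to \mathcal{F}$). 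This left adjoint is exact — the constant sheaf functor is exact in all the topologies we consider, since $\sigma^*(M_X) = M_X$ and exactness can be checked on stalks / on the underlying sheaf — and a right adjoint to an exact functor preserves injectives. Alternatively, one can combine the fact that the forgetful functor $o: \dx \to \mathrm{Sh}(X)$ and the underlying global sections are compatible, but the exact-left-adjoint argument is cleanest.

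I expect no serious obstacle here: the lemma is a bookkeeping statement reconciling the direct definition of $\Gamma^\sigma$ with its factorization, and the injectivity-preservation is a standard consequence of having an exact left adjoint. The one point requiring a little care is verifying that the candidate left adjoint $M \mapsto (M_X, f_M)$ really is left adjoint to $\Gamma$ at the level of \emph{difference} sheaves — i.e. that the adjunction bijection $\Hom_{\mathrm{Sh}(X)}(M_X, \mathcal{F}) \cong \Hom_{\Zz}(M, \Gamma(\mathcal{F}))$ restricts to a bijection on morphisms respecting the endomorphisms on both sides — but this is exactly the same type of check as in Remark \ref{leftoa}(3) for sheafification versus forgetting, and it reduces to the naturality of the classical adjunction.
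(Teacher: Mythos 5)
Your proposal is correct and follows essentially the same route as the paper: item (1) is the immediate unwinding of the definition of $\Gamma^{\sigma}$, and item (2) is proved by exhibiting the difference constant sheaf construction $M\mapsto (M_X,f_M)$ from the moreover part of Example \ref{constex} as an exact left adjoint to $\Gamma:\dx\to\modd_{\Zz[\sigma]}$ and invoking the standard fact that a functor with an exact left adjoint preserves injectives. The only cosmetic difference is how exactness of that left adjoint is justified: the paper deduces it from exactness of sheafification together with Proposition \ref{improve}, while you check it on the underlying sheaf (your parenthetical appeal to stalks should be dropped for general sites, but the underlying-sheaf argument suffices).
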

\begin{proof}
Item (1) is obvious. For item (2), it is easy to see that the construction from the moreover part of Example \ref{constex}
provides a left adjoint functor to the functor $\Gamma: \dx{\ra}\modd_{\Zz[\sigma]}$. Since the sheafification functor is exact, the constant difference sheaf functor described above is exact as well using Proposition \ref{improve}(5). Since any functor having exact left adjoint preserves injectives, item (2) follows. \end{proof}
We provide now a description of the difference cohomology, which will allow us to relate it to the usual sheaf cohomology and to the difference \v Cech cohomology (to be introduced in the next subsection). Let $\mcf\in\dx$  and  $\mcf\to I^{*}$ be  its injective resolution in $\dx$.
Then  $\sigma_{I^{*}}$ acts on $\Gamma(I^{*})$ and we define the bicomplex $D^{**}(\mathcal{F})$  with only two
non-trivial rows, both being $\Gamma(I^{*})$, with the vertical (upward) differential
$\pm(\id-\sigma_{I^{*}})$, and with the horizontal differential coming from the injective resolution $I^*$.
\begin{prop}\label{dcpxl}
 There is a natural in $\mathcal{F}$ isomorphism:
\[ H^*_{\sigma}(X,\mathcal{F})\cong H^*({\rm Tot}(D(\mathcal{F}))).\]	
\end{prop}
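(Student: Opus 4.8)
The plan is to compute $H^*_\sigma(X,\mathcal F)$ as the cohomology of the total complex of the bicomplex $D^{**}(\mathcal F)$ by exhibiting the latter as computing $\mathrm{Ext}^*_{\dx}(\Zz_X,\mathcal F)$, and the key is to realize the ``two-row'' bicomplex as coming from a $2$-term resolution of $\Zz_X$ by free-ish objects. First I would recall, via Example \ref{constex} (its ``moreover'' part), the functor $A\mapsto (A_X,f_A)$ from $\Zz[\sigma]$-modules to $\dx$, which is exact (Lemma \ref{dscobv}(2) and its proof) and left adjoint to $\Gamma\colon\dx\to\modd_{\Zz[\sigma]}$; denote it $(-)_X$. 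Applying $(-)_X$ to the standard $2$-term free resolution of the trivial $\Zz[\sigma]$-module $\Zz$,
\begin{equation*}
0\ra \Zz[\sigma]\xrightarrow{\ \cdot(\sigma-1)\ }\Zz[\sigma]\xrightarrow{\ \varepsilon\ }\Zz\ra 0,
\end{equation*}
yields a resolution $0\to (\Zz[\sigma])_X\to (\Zz[\sigma])_X\to \Zz_X\to 0$ in $\dx$, exact because $(-)_X$ is exact. So $H^*_\sigma(X,\mathcal F)=\mathrm{Ext}^*_{\dx}(\Zz_X,\mathcal F)$ can be computed by applying $\mathrm{Hom}_{\dx}(-,I^\bullet)$ to this length-$1$ complex and taking the total complex, where $\mathcal F\to I^\bullet$ is an injective resolution in $\dx$.

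The next step is to identify that resulting double complex with $D^{**}(\mathcal F)$. For each injective $I^j$ we have, by adjunction, $\mathrm{Hom}_{\dx}((\Zz[\sigma])_X,I^j)\cong\mathrm{Hom}_{\modd_{\Zz[\sigma]}}(\Zz[\sigma],\Gamma(I^j))\cong\Gamma(I^j)$, naturally in $I^j$; and under this identification the map induced by $\cdot(\sigma-1)$ becomes the action of $\pm(\id-\sigma_{I^j})$ on $\Gamma(I^j)$ — this is exactly the vertical differential of $D^{**}(\mathcal F)$, while the horizontal differential is the one induced by $I^\bullet$, again exactly as in the definition of $D^{**}(\mathcal F)$. (Here one uses $\Gamma(\sigma_*(\mathcal G))=\Gamma(\mathcal G)$, noted at the start of Section \ref{secldifcoh}, so that $\sigma_{I^j}$ genuinely acts on $\Gamma(I^j)$.) Since $\mathrm{Ext}^*_{\dx}(\Zz_X,\mathcal F)$ is computed as the cohomology of $\mathrm{Tot}$ of $\mathrm{Hom}_{\dx}(P_\bullet, I^\bullet)$ for any projective — here, $(-)_X$-image-of-free, which is enough since these objects are $\mathrm{Hom}(-,I^j)$-acyclic as $I^j$ is injective — resolution $P_\bullet\to\Zz_X$, we conclude $H^*_\sigma(X,\mathcal F)\cong H^*(\mathrm{Tot}(D(\mathcal F)))$, and naturality in $\mathcal F$ is inherited from naturality of all the identifications and of the comparison of injective resolutions.

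The main obstacle, and the point that needs the most care, is the justification that $\mathrm{Hom}_{\dx}(-,I^\bullet)$ applied to the $2$-term resolution $(\Zz[\sigma])_X\to(\Zz[\sigma])_X\to\Zz_X$ computes $\mathrm{Ext}^*_{\dx}(\Zz_X,\mathcal F)$: the objects $(\Zz[\sigma])_X$ need not be projective in $\dx$, so one cannot directly invoke the balancing of $\mathrm{Ext}$. The clean fix is to keep the injective resolution $I^\bullet$ of $\mathcal F$ on the other side: for a fixed injective object $J$, the functor $\mathrm{Hom}_{\dx}(-,J)$ is exact, so applying it to the exact $2$-term complex augmented by $\Zz_X$ gives an exact sequence, and a standard double-complex/spectral-sequence argument (degenerate $E_1$ in the direction of the resolution of $\mathcal F$, since $\mathrm{Hom}_{\dx}(-,I^\bullet)$ has cohomology $\mathrm{Ext}^*_{\dx}(-,\mathcal F)$ in each fixed resolution-degree, and $(\Zz[\sigma])_X$ being $\mathrm{Hom}(-,I^j)$-acyclic it computes only $\mathrm{Hom}$) collapses to give the claim; alternatively one notes that $(-)_X$ has an exact left adjoint? — no, rather $\Gamma$ has $(-)_X$ as a left adjoint, so $(-)_X$ of a free module is a summand-friendly object and in fact one checks directly $\mathrm{Ext}^i_{\dx}((\Zz[\sigma])_X, \mathcal G)\cong\mathrm{Ext}^i_{\modd_{\Zz[\sigma]}}(\Zz[\sigma],\Gamma(\text{inj. res. of }\mathcal G))=0$ for $i>0$ using Lemma \ref{dscobv}(2). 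Either route works; I would present the spectral sequence of the double complex $\mathrm{Hom}_{\dx}(P_\bullet,I^\bullet)$ as the cleanest, since it simultaneously delivers the isomorphism with $H^*(\mathrm{Tot}(D(\mathcal F)))$ and its naturality.
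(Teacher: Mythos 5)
Your route is viable and genuinely different from the paper's. The paper proves the statement by showing that $\mathcal{F}\mapsto H^*(\mathrm{Tot}(D(\mathcal{F})))$ is a universal $\delta$-functor: exactness of $\mathcal{F}\mapsto D^{**}(\mathcal{F})$ gives the long exact sequences, and vanishing in positive degrees on injectives is extracted from the first spectral sequence of $D^{**}(\mathcal{F})$ together with Proposition \ref{improve}(6), reducing everything to $\mathrm{Ext}^{>0}_{\Zz[\sigma]}(\Zz,\Gamma(\mathcal{F}))=0$, i.e.\ Lemma \ref{dscobv}(2). You instead make the same algebraic input explicit on the other side of the $\mathrm{Ext}$: applying the exact left adjoint $(-)_X$ of $\Gamma$ to $0\ra\Zz[\sigma]\stackrel{\cdot(\sigma-1)}{\ra}\Zz[\sigma]\ra\Zz\ra 0$ yields a two-term resolution $P_\bullet\ra\Zz_X$ in $\dx$, and the adjunction correctly identifies $\Hom_{\dx}(P_\bullet,I^\bullet)$ with $D^{**}(\mathcal{F})$, differentials included. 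This is a legitimate alternative; it avoids the effaceability check and explains structurally why the bicomplex has exactly two rows.

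There is, however, a genuine error in your justification of the final step. The claim that $(\Zz[\sigma])_X$ is $\Hom_{\dx}(-,\mathcal{F})$-acyclic, i.e.\ that $\mathrm{Ext}^{i}_{\dx}((\Zz[\sigma])_X,\mathcal{G})=0$ for $i>0$, is false: by the adjunction, $\Hom_{\dx}((\Zz[\sigma])_X,-)\cong\Gamma\circ i_*$, and since $i_*$ is exact and preserves injectives (Proposition \ref{improve}(3),(6)), one gets $\mathrm{Ext}^{i}_{\dx}((\Zz[\sigma])_X,\mathcal{G})\cong H^i(X,i_*\mathcal{G})$, the ordinary sheaf cohomology of the underlying sheaf, which does not vanish in general (take the \'etale site of a field and a sheaf with nontrivial Galois cohomology, e.g.\ $\mu_2$). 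Your conflation of this with $\mathrm{Ext}^i_{\Zz[\sigma]}(\Zz[\sigma],\Gamma(\mathcal{G}))$ silently interchanges $\Gamma$ with the passage to derived functors. Consequently the spectral sequence you name --- the one that first takes cohomology in the $I^\bullet$-direction --- does \emph{not} degenerate onto its bottom row; its $E_1$-page has two nontrivial rows (this is in fact the spectral sequence behind Theorem \ref{mainss}). The collapse you actually need is in the other direction and you do state its key ingredient: for each injective $I^j$ the functor $\Hom_{\dx}(-,I^j)$ is exact, so each column $\Hom_{\dx}(P_\bullet,I^j)$ has cohomology $\Hom_{\dx}(\Zz_X,I^j)$ concentrated in degree $0$, whence $H^*(\mathrm{Tot})\cong\mathrm{Ext}^*_{\dx}(\Zz_X,\mathcal{F})$. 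Delete the false acyclicity claim, and run only this second spectral sequence; with that correction the argument is complete.
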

\begin{proof}
Since we have
$$H^0_{\sigma}(X,\mathcal{F})\cong H^0({\rm Tot}(D(\mathcal{F}))),$$
it is enough to show that  $H^*({\rm Tot}(D(\mathcal{F})))$ form a universal $\delta$-functor. By a functorial choice of the injective resolution $I^*$,  one  obtains the following functor:
\[\dx\ni \mcf\mapsto D^{**}(\mcf)\in \mathbf{Bicplx}\]
which is exact. Hence, a short exact sequence in $\dx$ gives rise to a long exact sequence of cohomology groups. It remains to show that
$H^s({\rm Tot}(D(\mathcal{F})))=0$ for $s>0$ and an injective $\mcf$. To this end, we look at the first spectral sequence (see e.g. \cite[Example 3, Appendix B]{milne1etale}) of the bicomplex $D^{**}(\mcf)$ for an injective $\mcf$. By Proposition \ref{improve}(6),
$i_*(\mcf)$ is injective, therefore, after taking the horizontal cohomology we are left with the complex:
\begin{equation*}
\xymatrix{0 \ar[r]^{} & \Gamma(\mcf) \ar[r]^{\id-\sigma_{\mcf}} \ar[r]^{} & \Gamma(\mcf) \ar[r]^{} & 0.}
\end{equation*}
Since the cohomology of this complex computes the groups
${\rm Ext}^*_{\Zz[\sigma]}(\Zz,\Gamma(\mcf))$, the result follows from Lemma \ref{dscobv}(2).
\end{proof}
In the following theorem (a sheaf analogue of \cite[Theorem 3.5]{CK2}),
$H^n(X,\mathcal{F})^{\sigma}$ and $H^n(X,\mathcal{F})_{\sigma}$ stands
for,  respectively, invariants and coinvariants, of the action
induced on cohomology by the action of $\si_{\mcf}$ on $\Gamma(\mcf)$.
\begin{theorem}\label{mainss}
For any $n\geqslant  0$, there is a short exact sequence:
\[0\ra H^{n-1}(X,\mathcal{F})_{\sigma}\ra H^n_{\sigma}(X,\mathcal{F})\ra H^n(X,\mathcal{F})^{\sigma}\ra 0,\]
where $H^{-1}(X,\mathcal{F}):=0$.
\end{theorem}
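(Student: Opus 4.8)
The plan is to read the short exact sequence off the two-row bicomplex $D^{**}(\mathcal{F})$ provided by Proposition \ref{dcpxl}. Fix a functorial injective resolution $\mathcal{F}\to I^{*}$ in $\dx$, so that both rows of $D^{**}(\mathcal{F})$ are $\Gamma(I^{*})$, with horizontal differential induced by $I^{*}$ and vertical differential $\pm(\id-\sigma_{I^{*}})$ joining them. In the total complex ${\rm Tot}(D(\mathcal{F}))$, the row sitting in the higher vertical degree is a subcomplex: the vertical differential maps it to zero (there are only two rows) and the horizontal differential preserves it. Quotienting by it leaves the other copy of $\Gamma(I^{*})$, so, up to the evident degree shift, we obtain a short exact sequence of complexes
\[
0\ra \Gamma(I^{*})[-1]\ra {\rm Tot}(D(\mathcal{F}))\ra \Gamma(I^{*})\ra 0.
\]

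Next I would pass to the associated long exact cohomology sequence and identify the terms. By Proposition \ref{improve}(3) and (6), $i_{*}(I^{*})$ is an injective resolution of the underlying sheaf of $\mathcal{F}$, so $H^{n}(\Gamma(I^{*}))\cong H^{n}(X,\mathcal{F})$, and the action of $\sigma_{I^{*}}$ on $\Gamma(I^{*})$ induces on this group precisely the $\sigma_{\mathcal{F}}$-action occurring in the statement; meanwhile $H^{n}({\rm Tot}(D(\mathcal{F})))\cong H^{n}_{\sigma}(X,\mathcal{F})$ by Proposition \ref{dcpxl}, and $H^{n}(\Gamma(I^{*})[-1])\cong H^{n-1}(X,\mathcal{F})$. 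Hence the long exact sequence reads
\[
\cdots\ra H^{n-1}(X,\mathcal{F})\ra H^{n}_{\sigma}(X,\mathcal{F})\ra H^{n}(X,\mathcal{F})\xrightarrow{\ \delta_{n}\ } H^{n}(X,\mathcal{F})\ra H^{n+1}_{\sigma}(X,\mathcal{F})\ra\cdots,
\]
with the convention $H^{-1}(X,\mathcal{F})=0$ taking care of the case $n=0$.

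The single point that needs an argument is the identification $\delta_{n}=\pm(\id-\sigma)$. This is the standard zig-zag: a horizontal cocycle $c\in\Gamma(I^{n})$ representing a class in the quotient lifts to $c$ in the top row, and its total differential equals $\pm(\id-\sigma_{I^{n}})(c)$, which already lies in the subcomplex; therefore $\delta_{n}[c]=\pm\bigl[(\id-\sigma_{I^{n}})(c)\bigr]=\pm(\id-\sigma)[c]$. Granting this, $\ker\delta_{n}=H^{n}(X,\mathcal{F})^{\sigma}$ and $\coker\delta_{n}=H^{n}(X,\mathcal{F})_{\sigma}$, so the long exact sequence splits into the desired short exact sequences
\[
0\ra H^{n-1}(X,\mathcal{F})_{\sigma}\ra H^{n}_{\sigma}(X,\mathcal{F})\ra H^{n}(X,\mathcal{F})^{\sigma}\ra 0,
\]
and for $n=0$ this recovers $H^{0}_{\sigma}(X,\mathcal{F})=\Gamma^{\sigma}(\mathcal{F})$. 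I do not expect a genuine obstacle here; the only care required is the bookkeeping of signs and of the degree shift in the two-row total complex, together with the (immediate) check that the $\sigma$-action in the statement agrees with the one induced by $\sigma_{I^{*}}$. The same conclusion could alternatively be extracted from the spectral sequence of the two-row bicomplex $D^{**}(\mathcal{F})$, but the short exact sequence of complexes above is the most direct route.
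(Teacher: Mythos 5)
Your argument is correct and is in substance the paper's own proof: the paper simply invokes the second page of the first spectral sequence of the two-row bicomplex $D^{**}(\mathcal{F})$, which is exactly the long exact sequence of your short exact sequence of complexes $0\to\Gamma(I^{*})[-1]\to{\rm Tot}(D(\mathcal{F}))\to\Gamma(I^{*})\to 0$ with connecting map $\pm(\id-\sigma)$. Your unwinding of the degeneration (identifying the connecting map and splitting the long exact sequence into kernels and cokernels) is just the explicit form of the same computation.
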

\begin{proof}
We immediately obtain this short exact sequences from the second page of the first spectral sequence associated to the bicomplex $D^{**}(\mcf)$ defined above. \end{proof}
As an example, we compute difference cohomology with constant coefficients.
\begin{example}\label{cocoef}
Let $A$ be a commutative group and $f:A\to A$ be a group endomorphism. By Example \ref{constex}, $(A,f_A)$ is a difference sheaf. By Theorem \ref{mainss}, we obtain the following short exact sequence:
\[0\ra H^{n-1}(X,A)_{\sigma}\ra H^n_{\sigma}(X,A)\ra H^n(X,A)^{\sigma}\ra 0.\]
In the case when $f$ is the identity map, we obtain above the (co-)invariants of the action of $\sigma:X\to X$ on cohomology. We would like to single out one more special case. If we assume that:
\begin{itemize}
\item $X$ is defined over $\Ff_p$ (finite field with $p$ elements);

\item $\sigma=\fr_X$ is the geometric Frobenius morphism on $X$;

\item $\mathbf{C}(X)=\mathbf{E}(X)$ (\'{e}tale topology);

\item $A$ is a vector space over a field $\ka$;

\item $f(x)=\alpha x$ is a scalar multiplication for a fixed $\alpha\in \ka$;
\end{itemize}
then we are in the situation of Lefschetz trace formula, see \cite[Section V.2]{milne1etale}.
\end{example}

\subsection{Difference \v Cech cohomology}\label{leftcechsec}
Let $(\mcf,\si_{\mcf})$ be a difference  presheaf and
$\mc{U}=(U_i\to X)_{i\in I}$ be a covering of $X$. Then $\si_{\mcf}$ gives rise to a map between the \v Cech complexes
\[\check{C}^*(\mc{U},\mcf)\ra \check{C}^*({}^{\sigma}\mc{U},\mcf).
\]
The composition of this map with the obvious restriction map is denoted as follows:
\[\check{\sigma}_{\mcf}:\check{C}^*(\mc{U},\mcf)\ra \check{C}^*(\mc{U}\cap{}^{\sigma}\mc{U},\mcf),
\]
where $\mc{U}\cap{}^{\sigma}\mc{U}$ denotes the covering of $X$ consisting of $V\times_X{}^{\sigma}U$ for $V,U\in \mathcal{U}$. We introduce the following notation
\[\mathrm{res}:\check{C}^*(\mc{U},\mcf)\ra \check{C}^*(\mc{U}\cap{}^{\sigma}\mc{U},\mcf)
\]
for the restriction map from $\mc{U}$ to $\mc{U}\cap{}^{\sigma}\mc{U}$.

Analogously to the bicomplex $D^{**}(\mathcal{F})$ from the end of Section \ref{secldifcoh}, we define the bicomplex
$\check{C}^{**}_{\sigma}(\mathcal{U},\mc{F})$ by putting:
$$\check{C}^{*0}_{\sigma}(\mathcal{U},\mc{F}):=\check{C}^{*}(\mathcal{U},\mc{F}),$$
$$\check{C}^{*1}_{\sigma}(\mathcal{U},\mc{F}):=\check{C}^{*}(\mathcal{U}\cap{}^{\sigma}\mc{U},\mc{F}),$$
and trivial elsewhere. The horizontal differential comes from the standard \v{C}ech complex and the vertical differential is the map $\pm({\rm res}-\check{\sigma}_{\mcf})$. We call this bicomplex the \emph{difference \v Cech bicomplex} and we call its cohomology the \emph{difference \v Cech cohomology}, which will be denoted by
$\check{H}^*_{\sigma}(\mc{U},\mcf)$.

Similarly as in the case of the bicomplex $D^{**}(\mathcal{F})$, assigning the difference \v Cech bicomplex is an exact functor, hence the following functor
\[\mathrm{PSh}_{\sigma}(X)\ni \mcf\mapsto \check{H}^0_{\sigma}(\mc{U},\mcf)\in \mathbf{Ab}.\]
is left exact. We show below that, similarly to the classical context, the difference \v Cech cohomology groups coincide with the right derived functors of this functor.
\begin{prop}\label{cechder}
There is a natural isomorphism
\[
\check{H}^s_{\sigma}(\mc{U},\mcf)\cong R^s\left(\check{H}^{0}_{\sigma}(\mathcal{U},-)\right)(\mc{F}).
\]
\end{prop}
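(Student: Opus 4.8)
The plan is to use the standard "universal $\delta$-functor" strategy, exactly mirroring the proof of Proposition \ref{dcpxl}, but carried out on the level of \v{C}ech theory. First I would observe that the source and target of the claimed isomorphism agree in degree $0$: both compute $\check{H}^0_\sigma(\mathcal{U},\mathcal{F})$ by construction (the functor whose derived functors we take is literally $\mcf\mapsto\check{H}^0_\sigma(\mathcal{U},\mcf)$, and it is left exact as noted right before the statement). So it suffices to show that the sequence of functors $\mcf\mapsto \check{H}^s_\sigma(\mathcal{U},\mcf)$ forms a universal $\delta$-functor on $\pshx$. Since a functorial choice of the difference \v{C}ech bicomplex $\mcf\mapsto \check{C}^{**}_\sigma(\mathcal{U},\mcf)$ is exact (already remarked in the text, using that the ordinary \v{C}ech complex functor on presheaves is exact and that $\mathrm{res}-\check\sigma_{\mcf}$ is natural), a short exact sequence of difference presheaves yields a short exact sequence of total complexes, hence a long exact cohomology sequence; this gives the $\delta$-functor structure. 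Effaceability then upgrades it to universality.

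The key step is therefore effaceability: I must exhibit, for each difference presheaf $\mcf$, a monomorphism $\mcf\hookrightarrow \mathcal{J}$ with $\check{H}^s_\sigma(\mathcal{U},\mathcal{J})=0$ for all $s>0$. The natural candidate is to use the "co-induced'' difference presheaf $i^!(o(\mcf))$ built in the presheaf version of Proposition \ref{improve}, i.e. $\prod_{j\ge 0}\sigma^j_*(\mcf)$ as a presheaf, equipped with the shift map; by Proposition \ref{improve}(6) (presheaf version) every difference presheaf embeds into $i^!(\mathcal{P})$ for some presheaf $\mathcal{P}$, and it suffices to check the vanishing for such $i^!(\mathcal{P})$. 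To compute $\check{H}^s_\sigma(\mathcal{U},i^!(\mathcal{P}))$ I would run the first spectral sequence of the bicomplex $\check{C}^{**}_\sigma(\mathcal{U},i^!(\mathcal{P}))$, taking horizontal (ordinary \v{C}ech) cohomology first. Here I expect the main obstacle: one needs that taking the horizontal cohomology of $\check{C}^{**}_\sigma(\mathcal{U},i^!(\mathcal{P}))$ collapses the bicomplex to the two-term complex
\[
\xymatrix{0 \ar[r] & \check{H}^0(\mathcal{U},i^!(\mathcal{P})) \ar[rr]^{\mathrm{res}-\check\sigma} & & \check{H}^0\bigl(\mathcal{U}\cap{}^{\sigma}\mathcal{U},i^!(\mathcal{P})\bigr) \ar[r] & 0}
\]
concentrated in the two bottom rows — which requires the higher \v{C}ech cohomology of $i^!(\mathcal P)$ on both coverings $\mathcal U$ and $\mathcal U\cap{}^\sigma\mathcal U$ to vanish. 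This is where the presheaf setting is delicate: ordinary \v{C}ech cohomology of an arbitrary presheaf need not vanish in positive degrees. The resolution is that $i^!(\mathcal P)=\prod_j\sigma^j_*(\mathcal P)$ and $\check{C}^*(\mathcal U,-)$ commutes with products and with the direct-image functors $\sigma^j_*$ (which on \v{C}ech complexes amount to reindexing the covering, as in the definition of $\check\sigma_{\mcf}$); so it suffices that $\mathcal P$ itself be taken to be an injective presheaf (equivalently, use that the embedding can be chosen into $i^!$ of an \emph{injective} presheaf, paralleling Proposition \ref{improve}(6)), and injective presheaves have vanishing higher \v{C}ech cohomology on every covering because $\check C^*(\mathcal U,-)$ is an exact functor on presheaves with $\check H^0=\Gamma$-type term and injectives are $\Gamma$-acyclic.

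Once the spectral sequence has collapsed as above, its $E_2$-page — in fact $E_\infty$-page — is concentrated in two spots, so $\check{H}^s_\sigma(\mathcal U,i^!(\mathcal P))$ for $s>0$ is read off directly and is computed by the cohomology of the displayed two-term complex in degrees $0$ and $1$ only; hence it vanishes for $s\ge 2$ automatically, and for $s=1$ one checks it vanishes too, using that the identification $\check H^0\bigl(\mathcal U\cap{}^\sigma\mathcal U, i^!(\mathcal P)\bigr)\cong \sigma_*\bigl(\check H^0(\mathcal U, i^!(\mathcal P))\bigr)$ (induced by $\check\sigma$) turns $\mathrm{res}-\check\sigma$ into the two-term complex computing $\mathrm{Ext}^*_{\Zz[\sigma]}(\Zz,-)$ of $\check H^0(\mathcal U,i^!(\mathcal P))$, which is acyclic in positive degrees because $\check H^0(\mathcal U,-)$ applied to $i^!(\mathcal P)$ carries a free, hence $\Zz[\sigma]$-acyclic, structure — this last point is exactly the \v{C}ech analogue of Lemma \ref{dscobv}(2) and of the argument closing the proof of Proposition \ref{dcpxl}. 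Assembling: $\check{H}^s_\sigma(\mathcal U,-)$ is an exact $\delta$-functor, effaceable, agreeing with $R^s\check{H}^0_\sigma(\mathcal U,-)$ in degree $0$, hence the two agree in all degrees, and the isomorphism is natural by the functoriality of the whole construction.
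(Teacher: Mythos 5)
Your skeleton is the same as the paper's: show the difference \v Cech cohomology groups form an exact $\delta$-functor agreeing with $R^0$ in degree $0$, and then prove effaceability by checking acyclicity on objects of the form $i^!(\mathcal{P})$ with $\mathcal{P}$ an injective presheaf (the moreover clause of Proposition \ref{improve}(6)). The horizontal collapse you perform is also sound, once cleaned up: $\check{C}^{*}(\mathcal{U},i^!(\mathcal{P}))\cong\prod_{j}\check{C}^{*}({}^{\sigma^j}\mathcal{U},\mathcal{P})$, products of abelian groups are exact, and injective presheaves have vanishing higher \v Cech cohomology on every covering (\cite[Theorem I.2.2.3]{tamme}); so the $E_1$-page of the first spectral sequence is concentrated in the single column of $\check{H}^0$'s.

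The gap is in your last step. You identify $\check{H}^0\bigl(\mathcal{U}\cap{}^{\sigma}\mathcal{U},i^!(\mathcal{P})\bigr)$ with ``$\sigma_*$ of'' $\check{H}^0\bigl(\mathcal{U},i^!(\mathcal{P})\bigr)$ and then quote the $\mathrm{Ext}_{\Zz[\sigma]}$ acyclicity of (co)induced modules, as in Lemma \ref{dscobv}(2). But that identification is precisely what fails at the presheaf level: unlike the global section functor, for which $\Gamma(\sigma_*\mcf)=\Gamma(\mcf)$ canonically, the functors $\check{H}^0(\mathcal{U},-)$ and $\check{H}^0(\mathcal{U}\cap{}^{\sigma}\mathcal{U},-)$ are computed on different coverings and give genuinely different groups for a presheaf, and $i^!(\mathcal{P})$ is not a sheaf. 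Concretely, the vertical differential on $E_1$ is a map $\prod_j A_j\ra\prod_j B_j$, $(a_j)\mapsto\bigl(r_j(a_j)-s_j(a_{j+1})\bigr)$, where $A_j=\check{H}^0({}^{\sigma^j}\mathcal{U},\mathcal{P})$, $B_j=\check{H}^0({}^{\sigma^j}\mathcal{U}\cap{}^{\sigma^{j+1}}\mathcal{U},\mathcal{P})$, and $r_j,s_j$ are restrictions along two different refinements; this is not $\mathrm{id}-\mathrm{shift}$ on a single $\Zz[\sigma]$-module, and its surjectivity hinges on the surjectivity of the cross maps $s_j\colon A_{j+1}\ra B_j$, which your argument never establishes. (It is true, but it is a fact one must prove: the refinement map on $H_0$ of the integral \v Cech complexes is a monomorphism of presheaves, essentially because all pairs of maps into members of a covering agree over $X$ and hence factor through the fibre product, and $\Hom_{\mathrm{PSh}}(-,\mathcal{P})$ turns monomorphisms into epimorphisms for injective $\mathcal{P}$.) The paper avoids this issue by never leaving the representing objects: it rewrites both rows of the bicomplex as $\Hom_{\mathrm{PSh}}(-,\mathcal{P})$ of an explicit bicomplex of free presheaves $\Zz_{U^n_{\mathbf{i}}}$, evaluates on an arbitrary $V$, and reduces everything to the injectivity of $1_n\mapsto 1_n-1_{n+1}$ on $\bigoplus_n\Zz$ — one elementary computation that handles the horizontal collapse and the vertical step simultaneously. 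You should either adopt that route or supply the missing surjectivity of the maps $s_j$.
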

\begin{proof}
It is  straightforward that the cohomology groups of difference \v {C}ech bicomplex form a $\delta$-functor, so it is enough to show that the higher cohomology group functors  vanish on injective cogenerators. By the moreover clause in Proposition \ref{improve}(6), we can focus on the difference sheaves of the form $i^!(\mcf)$ for an injective $\mcf\in \rm{PSh}(X)$. To simplify the notation, we abbreviate below $U_{i_1}\times_X U_{i_2}$ to $U_{i_1 i_2}$, ${}^{\sigma^n}(U_i)$
 to $U_i^n$, and $\mathbf{i}=(i_1,\ldots ,i_k)$.

To some extent, we follow the lines of the proof of  \cite[Theorem I.2.2.3]{tamme}, in particular we will use the presheaves of the form $\Zz_U$. By the explicit description of the difference sheaf $i^!(\mcf)$ given in the proof of Proposition \ref{improve}, we have the following:
 \begin{IEEEeqnarray*}{rCl}
\check{C}^{k0}_{\sigma}\left(\mathcal{U},i^!(\mc{F})\right) & = & \prod_{I^k} i^!(\mcf)(U_{\mathbf{i}}) \\
 &\cong &  \prod_{I^k\times\Nn}\mcf(U_{\mathbf{i}}^n)\\
 &\cong & \Hom_{\mathrm{PSh}(X)}(\bigoplus_{I^k\times\Nn}\Zz_{U_{\mathbf{i}}^n},\mcf).
\end{IEEEeqnarray*}
We similarly obtain:
 \[\check{C}^{k1}_{\sigma}\left(\mathcal{U},i^!(\mc{F})\right)\cong \Hom_{\mathrm{PSh}(X)}(\bigoplus_{I^k\times I^k\times\Nn}
 \Zz_{U_{\mathbf{i}_1}^n\times_X U_{\mathbf{i}_2}^{n+1}},\mcf).
 \]
The vertical differential is induced by the restriction maps coming from the projections in
the pull-backs $U_{\mathbf{i}_1}^n\times_X  U_{\mathbf{i}_2}^{n+1}$.

Since $\mcf$ is injective, it suffices to show that the bicomplex with the following two non-trivial rows:
$$\left(\bigoplus_{I^k\times\Nn}\Zz_{U_{\mathbf{i}}^n},\bigoplus_{I^k\times I^k\times\Nn}
 \Zz_{U_{\mathbf{i}_1}^n\times_X U_{\mathbf{i}_2}^{n+1}}\right)_{k\in \Nn}$$
has no higher homology. To this end, it suffices to show this for its evaluation on any $V\in \mathbf{C}(X)$. Similarly as in the proof of  \cite[Theorem I.2.2.3]{tamme}, after this evaluation the non-trivial rows of our bicomplex have the following form:
$$\left(\bigoplus_{n=0}^{\infty}\bigoplus_{\Sigma_n^k}\Zz,\bigoplus_{n=0}^{\infty}\bigoplus_{\Sigma_n^k\times \Sigma_{n+1}^k}\Zz\right)_{k\in \Nn},$$
where
$$\Sigma_n:=\coprod_I \Hom_{\mathbf{C}(X)}(V,U_i^n),$$
and  $\Sigma_n^k$ is the $k$-th Cartesian power.

We look at the first spectral sequence associated to this last bicomplex above. We observe
 that the horizontal differential preserves the index $n$, hence this bicomplex splits into a direct sum of usual \v Cech complexes associated to the coverings $\mathcal{U}^n$ in the $0$-th row and to the coverings $\mathcal{U}^n\times_X\mathcal{U}^{n+1}$ in the first row. Thus we get
 \[H^{00}_{\phi, \mathrm{hor}}=\bigoplus_{n=0}^{\infty} \Zz,\ \ \ \ \ H^{01}_{\phi, \mathrm{hor}}=\bigoplus_{n=0}^{\infty} \Zz\]
 and $H^{0*}_{\phi ,\mathrm{hor}}$ is obtained from $C^{0*}_{\phi, \mathrm{hor}}$ by identifying all the copies of
 $\Zz$ (this is how the \v Cech differential acts). Thus the vertical differential map
 \[
 \bigoplus_{n=0}^{\infty} \Zz\ra \bigoplus_{n=0}^{\infty} \Zz
 \]
 sends $1_n$, the generator of the $n$-th copy  of $\Zz$, to $1_n-1_{n+1}$. Hence, this last map is monomorphic and there is no higher homology.
\end{proof}
Similarly to the classical context again, the difference \v Cech cohomology and the difference sheaf cohomology are related via a spectral sequence. For a difference presheaf $\mcf$, we define:
$$\mathcal{H}^q(\mathcal{F})(U):=H^q(X,\mcf|_{U}).$$
Then $\mathcal{H}^q(\mathcal{F})$ is a difference presheaf and we have the following.
 \begin{prop}\label{quickss}
For any  difference sheaf $\mathcal{F}$, there is a spectral sequence:
\[E^{pq}_2=\check{H}_{\sigma}^p(\mathcal{U},\mc{H}^q(\mcf)) \Rightarrow  H^{p+q}_{\sigma}(X,\mcf).\]
\end{prop}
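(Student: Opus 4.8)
The plan is to mimic the classical derivation of the \v{C}ech-to-derived-functor spectral sequence (as in \cite[Theorem III.2.7]{milne1etale} or \cite[Theorem I.3.4.4]{tamme}), but carried out in the difference setting, using the bicomplex machinery already set up in Sections \ref{secldifcoh}--\ref{leftcechsec}. First I would recall from Proposition \ref{cechder} that $\check{H}^*_{\sigma}(\mathcal{U},-)$ is the derived functor of $\mathcal{F}\mapsto \check{H}^0_{\sigma}(\mathcal{U},\mathcal{F})$ on difference presheaves, and that (as in the classical case) for a difference \emph{sheaf} $\mathcal{F}$ one has $\check{H}^0_{\sigma}(\mathcal{U},\mathcal{F})=\Gamma^{\sigma}(X,\mathcal{F})$, since a difference sheaf satisfies the sheaf condition on the covering $\mathcal{U}$ in each of the two rows and the vertical $\mathrm{res}-\check{\sigma}_{\mathcal{F}}$ then computes precisely the $\sigma$-invariants. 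So the functor $\mathcal{F}\mapsto\check{H}^0_{\sigma}(\mathcal{U},\mathcal{F})$ on $\dx$ factors as (difference sheaf) $\mapsto$ (difference presheaf, via the forgetful functor $o$ from Remark \ref{leftoa}) $\mapsto\check{H}^0_{\sigma}(\mathcal{U},-)$, and it also equals $\Gamma^{\sigma}=\Hom_{\dx}(\Zz_X,-)$.

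The spectral sequence I want is then the Grothendieck spectral sequence for this composition: $R^p\check{H}^0_{\sigma}(\mathcal{U},-)$ applied to $R^q o$ converges to $R^{p+q}\Gamma^{\sigma}=H^{p+q}_\sigma(X,-)$. By Proposition \ref{cechder} the outer derived functors are $\check{H}^p_{\sigma}(\mathcal{U},-)$. For the inner composite I would identify $R^q$ of the difference-forgetful functor $o:\dx\to\pshx$ with the difference presheaf $\mathcal{H}^q(\mathcal{F})$ defined just before the statement: indeed $\mathcal{H}^q(\mathcal{F})(U)=H^q(X,\mathcal{F}|_U)$ is manufactured exactly to be the presheaf $U\mapsto R^q\Gamma(U,-)$ carrying its natural difference-presheaf structure (restriction of $\sigma_{\mathcal{F}}$ to opens, as spelled out in Remark \ref{leftoa}(2) and the paragraph around Proposition \ref{quickss}), and one checks that an injective resolution $\mathcal{F}\to I^\bullet$ in $\dx$ has $o(I^j)$ with vanishing higher $\mathcal{H}^q$ because each $I^j$ is, by Proposition \ref{improve}(6), of the form $i^!(\mathcal{J})$ with $\mathcal{J}$ an injective sheaf, and $i^!(\mathcal{J})=\prod_{k\geq 0}\sigma^k_*(\mathcal{J})$ has no higher sheaf cohomology (as $\sigma_*$ preserves injectives on the site level, $\sigma$ being a morphism of sites, and cohomology commutes with the product). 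Hence the objects $o(I^j)$ are acyclic for $\check{H}^0_\sigma(\mathcal{U},-)$ by Proposition \ref{cechder}, and the Grothendieck spectral sequence hypotheses are met.

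Concretely, to avoid invoking abstract Grothendieck-spectral-sequence hypothesis checks, I would instead run the argument with an explicit double complex: take an injective resolution $\mathcal{F}\to I^\bullet$ in $\dx$, form the difference \v{C}ech bicomplex $\check{C}^{\bullet\bullet}_\sigma(\mathcal{U},I^j)$ for each $j$, and assemble the resulting triple complex; its total complex computes $H^*_\sigma(X,\mathcal{F})$ along one filtration (since $\check{H}^0_\sigma(\mathcal{U},I^j)=\Gamma^\sigma(I^j)$ and the higher difference \v{C}ech groups of the $I^j$ vanish, the $\check{C}$-direction collapses) and yields the stated $E_2$-page along the other (for fixed \v{C}ech degree and taking cohomology in the $I^\bullet$-direction first one gets $\check{C}^{p}$ of the presheaf $\mathcal{H}^q(\mathcal{F})$, i.e.\ of the bicomplex computing $\check{H}^p_\sigma(\mathcal{U},\mathcal{H}^q(\mathcal{F}))$). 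The main obstacle I anticipate is not the spectral-sequence bookkeeping but verifying carefully that the higher difference \v{C}ech cohomology of an injective difference \emph{sheaf} vanishes: injectives in $\dx$ are of the form $i^!(\mathcal{J})$ only after an embedding (Proposition \ref{improve}(6) gives an embedding, and injectives are summands of such), so I must check that $\check{H}^s_\sigma(\mathcal{U},-)$ kills not just the $i^!(\mathcal{J})$ with $\mathcal{J}$ an injective presheaf — which is Proposition \ref{cechder} — but genuinely all injective difference sheaves; this requires the comparison between the injective sheaf resolution and the $i^!$ of an injective presheaf resolution, exactly the kind of presheaf-vs-sheaf subtlety the paper flags, and is the step where I would spend the most care.
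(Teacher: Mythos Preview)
Your overall strategy---factor $\Gamma^{\sigma}$ as $\check{H}^0_{\sigma}(\mathcal{U},-)\circ o$ and invoke the Grothendieck spectral sequence---is exactly what the paper does. The difference is in how you verify the hypothesis that $o$ sends injectives to $\check{H}^0_{\sigma}(\mathcal{U},-)$-acyclics.

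The paper dispatches this in one line: the difference forgetful functor $o:\dx\to\pshx$ has an exact left adjoint (difference sheafification, Remark~\ref{leftoa}(3), exact by Proposition~\ref{improve}(4)), hence $o$ preserves injectives. Since injective difference presheaves are automatically $\check{H}^0_{\sigma}(\mathcal{U},-)$-acyclic (that is what Proposition~\ref{cechder} says), the Grothendieck hypotheses are immediate. Your ``main obstacle''---worrying whether every injective difference sheaf, not just those of the form $i^!(\mathcal{J})$, has vanishing higher difference \v{C}ech cohomology---simply evaporates: $o(I)$ is itself injective in $\pshx$, full stop.

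Your hand-verification route is not wrong in principle, but the reasoning you sketch is muddled: you argue that $i^!(\mathcal{J})$ for $\mathcal{J}$ an injective \emph{sheaf} has vanishing higher $\mathcal{H}^q$ (a statement about sheaf cohomology), then write ``Hence the objects $o(I^j)$ are acyclic for $\check{H}^0_\sigma(\mathcal{U},-)$ by Proposition~\ref{cechder}''. That ``hence'' does not follow; vanishing of $\mathcal{H}^q$ is not the same as $\check{H}^0_\sigma$-acyclicity, and Proposition~\ref{cechder}'s proof works with $i^!$ of an injective \emph{presheaf}, not a sheaf. Even if you patch this (injectives are summands of $i^!(\mathcal{J})$ for $\mathcal{J}$ an injective presheaf, and summands of acyclics are acyclic), you are working much harder than necessary. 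The exact-left-adjoint observation is the missing shortcut.
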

\begin{proof}
We have a natural isomorphism of functors:
$$\check{H}^{0}_{\sigma}(\mathcal{U},-)\circ o\cong \Gamma^{\sigma},$$
where $o$ is the difference forgetful functor from Remark \ref{leftoa}. By  Remark \ref{leftoa}(3), the functor $o$ has a left adjoint (the difference sheafification), which is exact by Proposition \ref{improve}(4) (both the sheaf and the presheaf version).
Therefore, the functor $o$ preserves injectives. Thus we obtain our spectral sequence as the Grothendieck
spectral sequence.

To see that this spectral sequence converges to the correct object, it is enough to notice that $\mathcal{H}^q\cong R^q(o)$, which follows from its classical counterpart and the presheaf variant of Proposition \ref{improve}(6).
\end{proof}
The next result is a \v Cech cohomology version of Theorem \ref{mainss}.
\begin{theorem}\label{cechss}
For any $n\geqslant  0$,
there is a short exact sequence
	\[0\ra \check{H}^{n-1}(\mathcal{U},\mathcal{F})_{\sigma}\ra \check{H}^n_{\sigma}(\mathcal{U},\mathcal{F})\ra \check{H}^n(\mathcal{U},\mathcal{F})^{\sigma}\ra 0,\]
 where $\check{H}^{-1}(\mathcal{U},\mathcal{F}):=0$.
\end{theorem}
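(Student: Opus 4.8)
The plan is to transcribe the proof of Theorem~\ref{mainss}, replacing the bicomplex $D^{**}(\mcf)$ by the difference \v Cech bicomplex $\check{C}^{**}_{\sigma}(\mathcal{U},\mcf)$. First I would note that $\mathrm{Tot}(\check{C}^{**}_{\sigma}(\mathcal{U},\mcf))$ is, up to a shift, the mapping cone of the map of ordinary \v Cech complexes
\[
\mathrm{res}-\check{\sigma}_{\mcf}\colon\ \check{C}^{*}(\mathcal{U},\mcf)\ \longrightarrow\ \check{C}^{*}(\mathcal{U}\cap{}^{\sigma}\mathcal{U},\mcf);
\]
equivalently, $\check{C}^{**}_{\sigma}(\mathcal{U},\mcf)$ is a two-row bicomplex whose first spectral sequence (horizontal \v Cech cohomology taken first) has $E_{1}$-page
\[
E_{1}^{p,0}=\check{H}^{p}(\mathcal{U},\mcf),\qquad E_{1}^{p,1}=\check{H}^{p}(\mathcal{U}\cap{}^{\sigma}\mathcal{U},\mcf),
\]
with $d_{1}=\pm(\mathrm{res}-\check{\sigma}_{\mcf})_{*}$. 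Since there are only two rows, this spectral sequence degenerates at $E_{2}$ --- exactly as for $D^{**}(\mcf)$, the mapping-cone long exact sequence already exhibits the degeneration --- so for every $n$ one obtains a short exact sequence
\[
0\to\operatorname{coker}\!\big(d_{1}\colon E_{1}^{n-1,0}\to E_{1}^{n-1,1}\big)\to \check{H}^{n}_{\sigma}(\mathcal{U},\mcf)\to\ker\!\big(d_{1}\colon E_{1}^{n,0}\to E_{1}^{n,1}\big)\to0 .
\]

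The remaining (and only substantive) task is to identify this kernel and cokernel with $\check{H}^{n}(\mathcal{U},\mcf)^{\sigma}$ and $\check{H}^{n-1}(\mathcal{U},\mcf)_{\sigma}$. For this I would prove that the refinement map $\mathrm{res}_{*}\colon\check{H}^{\bullet}(\mathcal{U},\mcf)\to\check{H}^{\bullet}(\mathcal{U}\cap{}^{\sigma}\mathcal{U},\mcf)$ is an isomorphism. Granting this, $\sigma:=\mathrm{res}_{*}^{-1}\circ\check{\sigma}_{\mcf,*}$ is a well-defined endomorphism of $\check{H}^{\bullet}(\mathcal{U},\mcf)$ --- the action induced by $\sigma_{\mcf}$ --- and under $\mathrm{res}_{*}$ the differential $d_{1}$ becomes $\pm(\id-\sigma)$, whose kernel and cokernel are by definition $\check{H}^{\bullet}(\mathcal{U},\mcf)^{\sigma}$ and $\check{H}^{\bullet}(\mathcal{U},\mcf)_{\sigma}$. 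One should also verify the base case $n=0$ directly: unwinding definitions, $\check{H}^{0}_{\sigma}(\mathcal{U},\mcf)$ consists of the \v Cech $0$-cocycles $s$ with $\mathrm{res}(s)=\check{\sigma}_{\mcf}(s)$, and for a sheaf $\mcf$, using that $\mathcal{U}$ and ${}^{\sigma}\mathcal{U}$ both cover $X$, this glues to exactly $\Gamma(\mcf)^{\sigma}=\check{H}^{0}(\mathcal{U},\mcf)^{\sigma}$, consistently with the convention $\check{H}^{-1}(\mathcal{U},\mcf):=0$.

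I expect the isomorphism $\mathrm{res}_{*}$ to be the one point demanding real care; it is the \v Cech analogue of the identity $\Gamma\circ\sigma_{*}=\Gamma$ that underlies the sheaf-cohomology version. Concretely one must show that restriction along the projections $U_{\mathbf{i}}\times_{X}{}^{\sigma}U_{\mathbf{k}}\to U_{\mathbf{i}}$ induces an isomorphism on \v Cech cohomology. Since $\mathcal{U}\cap{}^{\sigma}\mathcal{U}$ is the common refinement $\mathcal{U}\times_{X}{}^{\sigma}\mathcal{U}$, its \v Cech complex is the diagonal of the bi-\v Cech complex $\check{C}^{**}(\mathcal{U},{}^{\sigma}\mathcal{U},\mcf)$, hence quasi-isomorphic to the total complex, and $\mathrm{res}$ corresponds to the inclusion of the $q=0$ column; using that $({}^{\sigma}U_{i}\to X)_{i}$ is again a covering (Assumption~\ref{mainass}(4)), so that its pullback to each $U_{\mathbf{i}}$ covers $U_{\mathbf{i}}$, I would try to contract this inclusion in the ${}^{\sigma}\mathcal{U}$-direction, in the spirit of the $\Zz_{V}$-computation in the proof of Proposition~\ref{cechder}. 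Should $\mathrm{res}_{*}$ fail to be an isomorphism for some badly chosen $\mathcal{U}$, the statement still holds verbatim once $\check{H}^{n}(\mathcal{U},\mcf)^{\sigma}$ and $\check{H}^{n}(\mathcal{U},\mcf)_{\sigma}$ are read as the kernel and cokernel of $(\mathrm{res}-\check{\sigma}_{\mcf})_{*}$; under either reading the rest of the proof is the diagram chasing above.
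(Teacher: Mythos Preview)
Your core argument is exactly the paper's: the proof there is a single sentence invoking the first spectral sequence of the two-row bicomplex $\check{C}^{**}_{\sigma}(\mathcal{U},\mcf)$, which is precisely your mapping-cone/spectral-sequence reduction. On that level the proposal is correct and matches the paper.

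Where you go further than the paper is in worrying about what $\check{H}^{n}(\mathcal{U},\mcf)^{\sigma}$ and $\check{H}^{n-1}(\mathcal{U},\mcf)_{\sigma}$ actually mean, given that the $E_1$-differential lands in $\check{H}^{*}(\mathcal{U}\cap{}^{\sigma}\mathcal{U},\mcf)$ rather than in $\check{H}^{*}(\mathcal{U},\mcf)$. The paper does not address this and simply adopts the (co)invariant notation; comparing with the right-difference analogues (see the conventions stated before Theorem~\ref{mainss2} and the statement of Theorem~\ref{cechssp}), the intended reading is your fallback: the superscript and subscript $\sigma$ denote the kernel and cokernel of the map induced by $\mathrm{res}-\check{\sigma}_{\mcf}$. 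With that reading the spectral-sequence argument is complete as stated, and nothing more is needed.

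Your attempt to upgrade this to a genuine action by proving that $\mathrm{res}_{*}\colon\check{H}^{*}(\mathcal{U},\mcf)\to\check{H}^{*}(\mathcal{U}\cap{}^{\sigma}\mathcal{U},\mcf)$ is an isomorphism is where I would be cautious. The theorem is stated for difference \emph{presheaves}, and for presheaves refinement maps on \v Cech cohomology of a fixed cover are essentially never isomorphisms. Even for sheaves, the bi-\v Cech contraction you sketch requires that the pulled-back cover ${}^{\sigma}\mathcal{U}|_{U_{\mathbf{i}}}$ be \v Cech-acyclic for $\mcf$ on each $U_{\mathbf{i}}$, which is not granted by Assumption~\ref{mainass}(4) alone; it holds in degree $0$ (your base-case check) but not in higher degrees without further hypotheses on $\mathcal{U}$. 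So this branch of the argument should be dropped, and you should rest on the kernel/cokernel interpretation, which is both what the spectral sequence literally produces and what the paper means.
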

\begin{proof}
Similarly as in the proof of Theorem \ref{mainss}, the existence of this short exact sequence follows immediately
from the first spectral sequence for the difference \v Cech bicomplex.
\end{proof}
At last we would like to compare difference \v Cech and difference sheaf cohomology. Let $\beta$ be the classical map from \v Cech to sheaf cohomology (see e.g. \cite[(I.3.4.5)]{tamme}).
\begin{theorem}\label{cechtoder}
We have  a natural map:
$$\alpha:\check{H}_{\sigma}^p(\mathcal{U},\mathcal{F})\ra H_{\sigma}^p(X,\mathcal{F}),$$
which  fits into the following commutative diagram:
\begin{equation*}
\xymatrix{0  \ar[r]^{}  & \check{H}^{n-1}(\mathcal{U},\mathcal{F})_{\sigma} \ar[r]^{} \ar[d]^{\beta_{\sigma}} & \check{H}^n_{\sigma}(\mathcal{U},\mathcal{F}) \ar[r]^{} \ar[d]^{\alpha} & \check{H}^n(\mathcal{U},\mathcal{F})^{\sigma} \ar[r]^{} \ar[d]^{\beta^{\sigma}} & 0 \\
0  \ar[r]^{}  & H^{n-1}(X,\mathcal{F})_{\sigma} \ar[r]^{} & H^n_{\sigma}(X,\mathcal{F}) \ar[r]^{} &  H^n(X,\mathcal{F})^{\sigma} \ar[r]^{} & 0 ,}
\end{equation*}
whose rows are given by Theorem \ref{mainss} and Theorem \ref{cechss}.

Therefore, if for a given $\mcf$, $\beta$ is an isomorphism in degrees $n-1$ and $n$, then $\alpha$ is an isomorphism for this $\mcf$ in degree $n$.
\end{theorem}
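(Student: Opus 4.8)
The plan is to exhibit both short exact sequences of Theorems~\ref{mainss} and~\ref{cechss} as the long exact sequences of two mapping cones, to construct $\alpha$ as a morphism between these cones refining the classical \v{C}ech-to-derived comparison, and to finish with the five lemma. First I would fix, functorially in $\mathcal{F}$, an injective resolution $\mathcal{F}\to I^{\bullet}$ in $\dx$. Since the forgetful functor $i_{*}$ is exact and preserves injectives (Proposition~\ref{improve}(3),(6)), $i_{*}\mathcal{F}\to i_{*}I^{\bullet}$ is an injective resolution in $\mathrm{Sh}(\mathbf{C}(X))$; in particular each $i_{*}I^{q}$ is an injective sheaf, so its \v{C}ech cohomology with respect to any covering $\mathcal{V}$ vanishes in positive degrees and equals $\Gamma(I^{q})$ in degree $0$. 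By Proposition~\ref{dcpxl}, the complex $\mathrm{Tot}(D(\mathcal{F}))$ computing $H^{*}_{\sigma}(X,\mathcal{F})$ is, up to a shift, the mapping cone of $\mathrm{id}-\sigma_{I^{\bullet}}\colon\Gamma(I^{\bullet})\to\Gamma(I^{\bullet})$, and its long exact sequence is the one of Theorem~\ref{mainss}; similarly $\mathrm{Tot}(\check C^{**}_{\sigma}(\mathcal{U},\mathcal{F}))$ is, up to a shift, the mapping cone of $\mathrm{res}-\check\sigma_{\mathcal{F}}\colon\check C^{\bullet}(\mathcal{U},\mathcal{F})\to\check C^{\bullet}(\mathcal{U}\cap{}^{\sigma}\mathcal{U},\mathcal{F})$, with long exact sequence the one of Theorem~\ref{cechss}.

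To construct $\alpha$, I would use, for $\mathcal{V}\in\{\mathcal{U},\mathcal{U}\cap{}^{\sigma}\mathcal{U}\}$, the classical double complex $\check C^{p}(\mathcal{V},I^{q})$; since the $i_{*}I^{q}$ are \v{C}ech-acyclic, the inclusion of global sections $\Gamma(I^{\bullet})\hookrightarrow\mathrm{Tot}(\check C^{\bullet}(\mathcal{V},I^{\bullet}))$ is a quasi-isomorphism, while the augmentation $\mathcal{F}\to I^{0}$ induces $\check C^{\bullet}(\mathcal{V},\mathcal{F})\to\mathrm{Tot}(\check C^{\bullet}(\mathcal{V},I^{\bullet}))$ realizing the classical $\beta$ for $\mathcal{V}$. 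Writing $M$ for the mapping cone of $\mathrm{res}-\check\sigma_{I^{\bullet}}\colon\mathrm{Tot}(\check C^{\bullet}(\mathcal{U},I^{\bullet}))\to\mathrm{Tot}(\check C^{\bullet}(\mathcal{U}\cap{}^{\sigma}\mathcal{U},I^{\bullet}))$, I observe that on global sections $\mathrm{res}$ acts as the identity and $\check\sigma_{I^{\bullet}}$ as $\sigma_{I^{\bullet}}$, so the inclusions above form a strictly commutative square and induce a map $\mathrm{Tot}(D(\mathcal{F}))\to M$, which is a quasi-isomorphism because both of its defining legs are; on the other hand $\mathcal{F}\to I^{0}$ commutes with the difference structures, so it forms another strictly commutative square and induces $\mathrm{Tot}(\check C^{**}_{\sigma}(\mathcal{U},\mathcal{F}))\to M$. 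I then set $\alpha$ to be the composite of the latter map with the inverse on cohomology of the former; it is natural in $\mathcal{F}$ by the functorial choice of resolution.

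The commutative diagram of the statement then follows from the naturality of the long exact sequence of a mapping cone, applied to the maps $\mathrm{Tot}(D(\mathcal{F}))\to M\leftarrow\mathrm{Tot}(\check C^{**}_{\sigma}(\mathcal{U},\mathcal{F}))$: both respect the evident two-step filtrations, and on the graded pieces the first induces the identity of $H^{*}(X,\mathcal{F})$ while the second induces the classical $\beta$ for $\mathcal{U}$, resp. for $\mathcal{U}\cap{}^{\sigma}\mathcal{U}$. Identifying the long exact sequence of $M$ with that of Theorem~\ref{mainss} (through the quasi-isomorphism $\mathrm{Tot}(D(\mathcal{F}))\to M$, noting that $\mathrm{res}$ induces the identity on $H^{*}(X,\mathcal{F})$) and the long exact sequence of the difference \v{C}ech cone with that of Theorem~\ref{cechss} (which uses that $\mathrm{res}$ is an isomorphism on \v{C}ech cohomology), and using that $\beta$ is compatible with refinement and natural in its sheaf argument — so that $\beta$ intertwines the $\sigma$-actions occurring in Theorems~\ref{mainss} and~\ref{cechss} and is thus a morphism in $\modd_{\Zz[\sigma]}$ — one identifies the maps induced on the outer terms as the maps $\beta_{\sigma}$ and $\beta^{\sigma}$ obtained by applying coinvariants, resp. invariants, to $\beta$. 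Finally, if $\beta$ is an isomorphism in degrees $n-1$ and $n$, then it is an isomorphism of $\Zz[\sigma]$-modules in those degrees, hence $\beta_{\sigma}$ in degree $n-1$ and $\beta^{\sigma}$ in degree $n$ are isomorphisms, and the five lemma applied to the map of short exact sequences gives that $\alpha$ is an isomorphism in degree $n$.

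I expect the delicate point to be the bookkeeping in the last two paragraphs: arranging both comparison morphisms into $M$ as \emph{strict} maps of complexes (so the mapping-cone long exact sequences are genuinely functorial in them), and then matching the resulting sequences on the nose with those of Theorems~\ref{mainss} and~\ref{cechss} — in particular tracking the passage between the coverings $\mathcal{U}$ and $\mathcal{U}\cap{}^{\sigma}\mathcal{U}$ and the compatibility of $\beta$ with the difference structure.
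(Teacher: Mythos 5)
Your proposal is correct and follows essentially the same route as the paper: the paper's (much terser) proof simply observes that the classical comparison map $\beta$ induces a morphism between the first pages of the first spectral sequences of the two two-row bicomplexes $\check{C}^{**}_{\sigma}(\mathcal{U},\mathcal{F})$ and $D^{**}(\mathcal{F})$, and takes $\alpha$ and the compatibility of the short exact sequences from the induced map on the abutments. Your intermediate mapping cone $M$ built from the double complexes $\check{C}^{p}(\mathcal{V},I^{q})$ is exactly the zigzag of filtered complexes that realizes this map of spectral sequences, so your argument is a more explicit implementation of the same idea rather than a different one.
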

\begin{proof}
We observe that $\beta$  induces the map between the first pages of the first spectral sequences of the corresponding  bicomplexes (see Proposition \ref{dcpxl})
$$ E_1(\check{C}^{**}(\mathcal{U},\mcf))\mapsto E_1(D^{**}(\mcf)),$$
 hence our
$\alpha$ together with the required compatibility comes as the map induced on the limits of these spectral sequences.
\end{proof}
We define now the limit  difference \v Cech cohomology:
\[
\check{H}^n_{\sigma}(X,\mcf):=
\coli_{\mathcal{U}}\check{H}^n_{\sigma}(\mc{U},\mcf),
\]
where the direct limit runs over the family of coverings of $X$.
In order to establish standard properties of the limit difference \v Cech cohomology we need, analogously to the classical context, the lemma allowing one to replace the system of coverings
with the cofiltered one.
\begin{lemma}\label{indep}
Let $f,g:\mathcal{U}\to\mathcal{V}$ be two refinement maps. Then they induce the same map
on the \v Cech cohomology.
\end{lemma}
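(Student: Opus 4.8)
The plan is to reduce the difference statement to its classical counterpart and to the behaviour of the vertical \v{C}ech differential. Recall from Proposition \ref{cechder} that the difference \v{C}ech cohomology $\check{H}^s_{\sigma}(\mathcal{U},\mathcal{F})$ is computed by the total complex of the difference \v{C}ech bicomplex $\check{C}^{**}_{\sigma}(\mathcal{U},\mathcal{F})$, whose two rows are the ordinary \v{C}ech complexes $\check{C}^*(\mathcal{U},\mathcal{F})$ and $\check{C}^*(\mathcal{U}\cap{}^{\sigma}\mathcal{U},\mathcal{F})$, joined by the vertical map $\pm(\mathrm{res}-\check{\sigma}_{\mathcal{F}})$. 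A refinement map $f:\mathcal{U}\to\mathcal{V}$ (a choice of index map together with the compatible scheme morphisms) induces, by functoriality of the \v{C}ech construction, a map of ordinary \v{C}ech complexes $f^{\bullet}:\check{C}^*(\mathcal{V},\mathcal{F})\to\check{C}^*(\mathcal{U},\mathcal{F})$; it also induces a refinement map ${}^{\sigma}\mathcal{U}\to{}^{\sigma}\mathcal{V}$ (apply the base change $\sigma^{-1}$ to the chosen morphisms) and hence a map on the lower rows, and one checks directly from the definitions of $\mathrm{res}$ and $\check{\sigma}_{\mathcal{F}}$ that these two assemble into a map of bicomplexes, so that $f$ induces a genuine map of total complexes.

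First I would invoke the classical fact (see e.g. \cite[proof of Theorem I.3.4.4]{tamme}): for two refinement maps $f,g:\mathcal{U}\to\mathcal{V}$, the induced maps $f^{\bullet},g^{\bullet}$ on the \emph{ordinary} \v{C}ech complex $\check{C}^*(\mathcal{V},\mathcal{F})\to\check{C}^*(\mathcal{U},\mathcal{F})$ are chain homotopic, via the standard explicit "alternating sum over the two index choices" homotopy $h^{\bullet}$. The same formula, applied to the covering $\mathcal{V}\cap{}^{\sigma}\mathcal{V}$ and the refinement ${}^{\sigma}\mathcal{U}\cap\mathcal{U}\to{}^{\sigma}\mathcal{V}\cap\mathcal{V}$ induced by $f,g$ together with their $\sigma$-twists, gives a chain homotopy $h'^{\bullet}$ on the lower row. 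The plan is then to package $h^{\bullet}$ and $h'^{\bullet}$ into a chain homotopy of \emph{bicomplex} maps, i.e. a chain homotopy on the total complex. For this one needs two things: that each of $h^{\bullet},h'^{\bullet}$ is a homotopy for the horizontal (\v{C}ech) differential in its own row — which is the classical statement — and that they are compatible with the vertical differential, namely that $(\mathrm{res}-\check{\sigma}_{\mathcal{F}})\circ h^{\bullet}$ and $h'^{\bullet}\circ(\mathrm{res}-\check{\sigma}_{\mathcal{F}})$ agree up to the correction term measuring the failure of $f^{\bullet}-g^{\bullet}$ to commute on the nose with $\mathrm{res}-\check{\sigma}_{\mathcal{F}}$. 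Since the explicit homotopy $h$ is built functorially out of the simplicial structure, and both $\mathrm{res}$ and $\check{\sigma}_{\mathcal{F}}$ are induced by morphisms of coverings (hence commute with all the functorial \v{C}ech data), this compatibility is a routine diagram chase once the definitions are unwound.

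I expect the main obstacle to be bookkeeping rather than anything deep: one must be careful that the $\sigma$-twist of a refinement map of $\mathcal{U}$ is again a refinement map of the right covering (using Assumption \ref{mainass}(2) that $\mathcal{C}$ is closed under the base change $\sigma^{-1}$, so that ${}^{\sigma}\mathcal{U}$, $\mathcal{U}\cap{}^{\sigma}\mathcal{U}$ and the induced index maps all live in the site), and that the signs in "$\pm(\mathrm{res}-\check{\sigma}_{\mathcal{F}})$" and in the total-complex differential are chosen consistently so that $h\oplus h'$ really is a nullhomotopy of $f^{\mathrm{tot}}-g^{\mathrm{tot}}$. A clean way to avoid sign gymnastics is to observe that a map of two-row bicomplexes that is a homotopy-equivalence (here, an equality up to homotopy) in each row and commutes with the vertical map up to the prescribed homotopy automatically induces the same map on total cohomology; alternatively, filter the total complex by rows and compare the two induced maps on the associated spectral sequence, where on $E_1$ they act as the classical $f^{\bullet},g^{\bullet}$ on ordinary \v{C}ech cohomology and therefore coincide by \cite[Theorem I.3.4.4]{tamme}, forcing equality of the induced maps on the abutment $\check{H}^*_{\sigma}(\mathcal{U},\mathcal{F})$. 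Either route gives the claim with no genuinely new input beyond the classical lemma.
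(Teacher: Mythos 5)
Your main argument takes a genuinely different route from the paper. The paper's proof is a two-line reduction: by Proposition \ref{cechder} the groups $\check{H}^*_{\sigma}(\mathcal{U},-)$ form a universal $\delta$-functor, so by the standard degree-shift argument it suffices to check the claim for $\check{H}^0_{\sigma}$, where it follows immediately from the inclusion $\check{H}^0_{\sigma}(\mathcal{V},\mathcal{F})\subset \check{H}^0(\mathcal{V},\mathcal{F})$ and the classical statement. You instead build an explicit chain homotopy on the total complex of the difference \v Cech bicomplex out of the classical refinement homotopies on the two rows. This is viable: the classical homotopy is given by a universal formula in the index maps, and both $\mathrm{res}$ and $\check{\sigma}_{\mathcal{F}}$ are induced by morphisms of coverings together with a presheaf morphism, so the compatibility $v\circ h^{0}=h^{1}\circ v$ (up to the sign needed for the total differential) does hold and is checkable by direct computation. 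You correctly identify this compatibility as the one nontrivial verification. The trade-off is that your route is self-contained and explicit but requires real bookkeeping, whereas the paper's route leans on the derived-functor interpretation already established and is essentially formal.

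One caveat: your proposed ``clean alternative'' via the spectral sequence does not work as stated. Filtering the total complex and observing that $f$ and $g$ induce the same map on $E_1$ (hence on $E_\infty$) only shows that they agree on the associated graded of the two-step filtration of $\check{H}^n_{\sigma}(\mathcal{U},\mathcal{F})$, i.e.\ on the sub $F^1\cong \check{H}^{n-1}(\mathcal{U},\mathcal{F})_{\sigma}$ and on the quotient $\check{H}^{n}(\mathcal{U},\mathcal{F})^{\sigma}$. The difference $f^*-g^*$ could still be a nonzero map from the quotient into the sub; agreement on $E_\infty$ does not determine the map on the abutment. So you should drop that shortcut and rely on the explicit homotopy (or on the paper's $\delta$-functor argument).
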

\begin{proof}
By the standard degree-shift argument, it suffices to show our assertion for
$\check{H}^0_{\sigma}$. However, since $\check{H}^0_{\sigma}(\mathcal{V},\mathcal{F})\subset
\check{H}^0(\mathcal{V},\mathcal{F})$ for any difference presheaf $\mathcal{F}$, our claim follows from the analogous fact for the usual \v Cech cohomology. \end{proof}
Once we have this lemma, we immediately deduce the statements below concerning limit difference \v Cech cohomology from their non-limit counterparts.
\begin{prop}\label{cechder2}
There is a natural isomorphism
\[
\check{H}^s_{\sigma}(X,\mcf)\cong R^s\left(\check{H}^{0}_{\sigma}(X,-)\right)(\mc{F}).
\]
\end{prop}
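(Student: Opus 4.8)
The plan is to exhibit $\check{H}^*_{\sigma}(X,-)$ as a universal $\delta$-functor with $0$-th term $\check{H}^0_{\sigma}(X,-)$ and then invoke uniqueness of universal $\delta$-functors, bootstrapping everything from the non-limit statement Proposition \ref{cechder}. First I would record that, by Lemma \ref{indep}, the map induced on difference \v Cech cohomology by a refinement $\mathcal{U}\to\mathcal{V}$ depends only on the pair $(\mathcal{U},\mathcal{V})$; hence the coverings of $X$ form a genuine directed system and $\check{H}^s_{\sigma}(X,\mcf)=\coli_{\mathcal{U}}\check{H}^s_{\sigma}(\mc{U},\mcf)$ is a filtered colimit. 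Since each $\check{H}^*_{\sigma}(\mc{U},-)$ is a $\delta$-functor (it is the cohomology of the difference \v Cech bicomplex, which is exact in $\mcf$, as used in the proof of Proposition \ref{cechder}) and filtered colimits of abelian groups are exact, the long exact sequences pass to the colimit, so $\check{H}^*_{\sigma}(X,-)$ is again a $\delta$-functor; in degree $0$ it is $\check{H}^0_{\sigma}(X,-)$, which is left exact, being a filtered colimit of the left exact functors $\check{H}^0_{\sigma}(\mc{U},-)$.

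It then remains to check effaceability in positive degrees. By the moreover clause of Proposition \ref{improve}(6) (in its presheaf version), every difference presheaf embeds into $i^!(\mcf)$ for some injective presheaf $\mcf$, so it suffices to show $\check{H}^s_{\sigma}(X,i^!(\mcf))=0$ for $s>0$. But this vanishing was already established covering by covering inside the proof of Proposition \ref{cechder}: there one shows $\check{H}^s_{\sigma}(\mc{U},i^!(\mcf))=0$ for $s>0$, and taking the filtered colimit over $\mathcal{U}$ yields the vanishing of $\check{H}^s_{\sigma}(X,i^!(\mcf))$. Consequently $\check{H}^*_{\sigma}(X,-)$ is a universal $\delta$-functor and therefore canonically isomorphic to $R^*\bigl(\check{H}^0_{\sigma}(X,-)\bigr)$, which is the assertion.

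The only genuine subtlety — and the step I would be most careful about — is the passage from the (non-filtered) category of coverings with all refinement maps to an honest directed system: this is precisely the role of Lemma \ref{indep}, and without it the exactness of the colimit (which is what lets a colimit of $\delta$-functors be a $\delta$-functor, and lets both the long exact sequences and the vanishing on $i^!(\mcf)$ descend to the limit) would be unavailable. Everything else is a routine transcription of Proposition \ref{cechder}, exactly as the sentence preceding the statement promises.
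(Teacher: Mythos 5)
Your proposal is correct and is essentially the argument the paper intends: the authors give no separate proof of this proposition, stating only that it is "immediately deduced" from Proposition \ref{cechder} once Lemma \ref{indep} makes the system of coverings cofiltered, and your write-up is exactly that deduction spelled out (filtered colimit of $\delta$-functors is a $\delta$-functor, effaceability via the presheaf version of Proposition \ref{improve}(6) and the vanishing $\check{H}^{s>0}_{\sigma}(\mathcal{U},i^{!}(\mcf))=0$ established in the proof of Proposition \ref{cechder}, then universality). No gaps.
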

 \begin{prop}\label{quickss2}
For any left difference sheaf $\mathcal{F}$, there is a spectral sequence:
	\[E^{pq}_2=\check{H}_{\sigma}^p(X,\mc{H}^q(\mcf)) \Rightarrow   H^{p+q}_{\sigma}(X,\mcf).\]
\end{prop}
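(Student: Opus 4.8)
The plan is to deduce this limit version of the spectral sequence from its finite-covering counterpart, Proposition \ref{quickss}, by passing to the colimit over the filtered system of coverings $\mathcal{U}$ of $X$. First I would recall that, by Lemma \ref{indep}, any two refinement maps between coverings induce the same map on difference \v{C}ech cohomology, so the assignment $\mathcal{U}\mapsto \check{C}^{**}_{\sigma}(\mathcal{U},\mcf)$ gives, after passing to cohomology, a genuine functor on the cofiltered system of coverings. This is precisely the ingredient that lets one take the direct limit of the spectral sequences of Proposition \ref{quickss} over $\mathcal{U}$ without worrying about compatibility of the transition maps.

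Next I would invoke exactness of filtered colimits in the category of abelian groups (and the fact that the underlying site is small, so the indexing system lives in our fixed universe): a filtered colimit of spectral sequences is again a spectral sequence, term by term, and it converges to the colimit of the abutments. Applying this to the system $\bigl(E^{pq}_2(\mathcal{U})=\check{H}_{\sigma}^p(\mathcal{U},\mc{H}^q(\mcf))\Rightarrow H^{p+q}_{\sigma}(X,\mcf)\bigr)_{\mathcal{U}}$, the $E_2$-term becomes $\coli_{\mathcal{U}}\check{H}_{\sigma}^p(\mathcal{U},\mc{H}^q(\mcf))=\check{H}_{\sigma}^p(X,\mc{H}^q(\mcf))$ by the very definition of limit difference \v{C}ech cohomology, while the abutment $H^{p+q}_{\sigma}(X,\mcf)$ does not depend on $\mathcal{U}$ and so is unchanged by the colimit. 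This yields exactly the asserted spectral sequence.

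Alternatively — and perhaps more cleanly — one can bypass the explicit colimit of spectral sequences and instead redo the Grothendieck spectral sequence argument from Proposition \ref{quickss} directly at the limit level: by Proposition \ref{cechder2} the limit difference \v{C}ech groups $\check{H}^s_{\sigma}(X,-)$ are the right derived functors of $\check{H}^0_{\sigma}(X,-)$, and one has the factorization $\check{H}^0_{\sigma}(X,-)\circ o\cong \Gamma^{\sigma}$ exactly as in the proof of Proposition \ref{quickss}, where $o$ is the difference forgetful functor of Remark \ref{leftoa}, which preserves injectives because its left adjoint (difference sheafification) is exact by Proposition \ref{improve}(4). The Grothendieck spectral sequence for this composite then gives $E^{pq}_2=\check{H}_{\sigma}^p(X,R^q(o)(\mcf))\Rightarrow H^{p+q}_{\sigma}(X,\mcf)$, and one identifies $R^q(o)\cong \mc{H}^q$ as in Proposition \ref{quickss}, using the presheaf variant of Proposition \ref{improve}(6).

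The only genuinely delicate point is the interchange of the colimit with the cohomology and with the derived-functor machinery; once Lemma \ref{indep} and Proposition \ref{cechder2} are in hand, this is the standard argument and I expect no real obstacle, the bookkeeping being entirely parallel to the classical (non-difference) case. The main thing to be careful about is simply that the system of coverings is filtered after passing to refinement-equivalence classes — which is what Lemma \ref{indep} provides — so that exactness of filtered colimits of abelian groups applies.
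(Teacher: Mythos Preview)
Your proposal is correct and matches the paper's own treatment: the paper simply states that once Lemma \ref{indep} is available, the limit statements (including this one) are ``immediately deduced from their non-limit counterparts,'' which is precisely your first approach of passing to the filtered colimit over coverings of the spectral sequences from Proposition \ref{quickss}. Your alternative route via Proposition \ref{cechder2} and a direct Grothendieck spectral sequence argument is also valid and arguably tidier, but the paper does not spell it out.
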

\begin{theorem}\label{cechss2}
For any $n\geqslant  0$,
there is a short exact sequence:
	\[0\ra \check{H}^{n-1}(X,\mathcal{F})_{\sigma}\ra \check{H}^n_{\sigma}(X,\mathcal{F})\ra \check{H}^n(X,\mathcal{F})^{\sigma}\ra 0,\]
 where $\check{H}^{-1}(X,\mathcal{F}):=0$.
\end{theorem}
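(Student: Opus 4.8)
The plan is to obtain this sequence by passing to the filtered colimit over coverings in the short exact sequence of Theorem \ref{cechss}, exactly as anticipated in the sentence preceding the statement. Fix $n$. For each covering $\mathcal{U}$ of $X$, Theorem \ref{cechss} supplies the short exact sequence
\[0\ra \check{H}^{n-1}(\mathcal{U},\mathcal{F})_{\sigma}\ra \check{H}^n_{\sigma}(\mathcal{U},\mathcal{F})\ra \check{H}^n(\mathcal{U},\mathcal{F})^{\sigma}\ra 0.\]
By Lemma \ref{indep}, a refinement map $\mathcal{U}\to\mathcal{V}$ induces a map of difference \v Cech bicomplexes, and hence of these short exact sequences, which is independent of the chosen refinement map; therefore these short exact sequences assemble into a directed system indexed by the (filtered) poset of coverings of $X$ ordered by refinement.

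Next I would apply the colimit functor. Since filtered colimits are exact in $\mathbf{Ab}$, the colimit of this directed system of short exact sequences is a short exact sequence
\[0\ra \coli_{\mathcal{U}}\check{H}^{n-1}(\mathcal{U},\mathcal{F})_{\sigma}\ra \coli_{\mathcal{U}}\check{H}^n_{\sigma}(\mathcal{U},\mathcal{F})\ra \coli_{\mathcal{U}}\check{H}^n(\mathcal{U},\mathcal{F})^{\sigma}\ra 0.\]
The middle term is $\check{H}^n_{\sigma}(X,\mathcal{F})$ by the definition of the limit difference \v Cech cohomology. For the outer terms I would recall that, at finite level, the groups $\check{H}^n(\mathcal{U},\mathcal{F})^{\sigma}$ and $\check{H}^{n-1}(\mathcal{U},\mathcal{F})_{\sigma}$ appear on the second page of the first spectral sequence of $\check{C}^{**}_{\sigma}(\mathcal{U},\mathcal{F})$ as, respectively, the kernel and the cokernel of the map
\[\mathrm{res}-\check{\sigma}_{\mathcal{F}}:\check{H}^{\bullet}(\mathcal{U},\mathcal{F})\ra \check{H}^{\bullet}(\mathcal{U}\cap{}^{\sigma}\mathcal{U},\mathcal{F})\]
induced on \v Cech cohomology (so the subscript/superscript $\sigma$ notation is abusive at finite level, but becomes genuine invariants and coinvariants in the limit). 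Using once more that filtered colimits are exact, so they commute with both kernels and cokernels, and that the coverings $\mathcal{U}\cap{}^{\sigma}\mathcal{U}$ are cofinal among all coverings (each refines $\mathcal{U}$, and $\mathcal{V}\cap{}^{\sigma}\mathcal{V}$ refines $\mathcal{V}$), the colimit of the displayed map is identified with the endomorphism $\id-\sigma$ of $\check{H}^{\bullet}(X,\mathcal{F})$, where $\mathrm{res}$ passes to the identity (it is a transition map of the colimit system) and $\check{\sigma}_{\mathcal{F}}$ passes to the induced endomorphism $\sigma$. Hence $\coli_{\mathcal{U}}\check{H}^n(\mathcal{U},\mathcal{F})^{\sigma}\cong\check{H}^n(X,\mathcal{F})^{\sigma}$ and $\coli_{\mathcal{U}}\check{H}^{n-1}(\mathcal{U},\mathcal{F})_{\sigma}\cong\check{H}^{n-1}(X,\mathcal{F})_{\sigma}$, which yields the claimed short exact sequence. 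A slightly cleaner variant would be to form the limit difference \v Cech bicomplex $\coli_{\mathcal{U}}\check{C}^{**}_{\sigma}(\mathcal{U},\mathcal{F})$ directly; since cohomology commutes with filtered colimits, its first spectral sequence is the two-row bicomplex with both rows $\check{H}^{\bullet}(X,\mathcal{F})$ and vertical differential $\pm(\id-\sigma)$, and reading off its second page gives the sequence at once, just as in the proof of Theorem \ref{mainss}.

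The step I expect to require the most care is the identification of the outer colimit terms: one must verify that $\coli_{\mathcal{U}}\check{H}^{\bullet}(\mathcal{U}\cap{}^{\sigma}\mathcal{U},\mathcal{F})\cong\check{H}^{\bullet}(X,\mathcal{F})$ via cofinality, and that under this identification $\mathrm{res}$ and $\check{\sigma}_{\mathcal{F}}$ genuinely become $\id$ and $\sigma$. This is precisely where Lemma \ref{indep} is used, to ensure the diagram of transition maps commutes and the colimit over the poset of coverings is well-defined; everything else is the formal exactness of filtered colimits in $\mathbf{Ab}$, so the result is indeed an ``immediate deduction from the non-limit counterpart'' Theorem \ref{cechss}.
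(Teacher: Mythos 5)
Your proposal is correct and follows exactly the route the paper intends: the paper's ``proof'' of Theorem \ref{cechss2} is simply the remark that, once Lemma \ref{indep} guarantees the system of coverings is filtered with well-defined transition maps, the statement follows from its non-limit counterpart (Theorem \ref{cechss}) by passing to the colimit. You have merely spelled out the details the paper leaves implicit --- exactness of filtered colimits in $\mathbf{Ab}$ and the cofinality argument identifying the outer terms --- and these details are right.
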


\begin{theorem}\label{cechtoder2}
We have  a natural map:
$$\alpha:\check{H}_{\sigma}^p(X,\mathcal{F})\ra H_{\sigma}^p(X,\mathcal{F}),$$
which  fits into the  commutative diagram with exact rows:
\begin{equation*}
\xymatrix{0  \ar[r]^{}  & \check{H}^{n-1}(X,\mathcal{F})_{\sigma} \ar[r]^{} \ar[d]^{\beta_{\sigma}} & \check{H}^n_{\sigma}(X,\mathcal{F}) \ar[r]^{} \ar[d]^{\alpha} & \check{H}^n(X,\mathcal{F})^{\sigma} \ar[r]^{} \ar[d]^{\beta^{\sigma}} & 0 \\
0  \ar[r]^{}  & H^{n-1}(X,\mathcal{F})_{\sigma} \ar[r]^{} & H^n_{\sigma}(X,\mathcal{F}) \ar[r]^{} &  H^n(X,\mathcal{F})^{\sigma} \ar[r]^{} & 0 ,}
\end{equation*}
where $\beta$ is the usual  map from \v Cech to sheaf cohomology \cite[(I.3.4.5)]{tamme}. Therefore, if for a given $\mcf$, $\beta$ is an isomorphism in degrees $n-1$ and $n$, then $\alpha$ is an isomorphism for this $\mcf$ in degree $n$.

In particular, $\alpha$ is always an isomorphism for $n=0,1$, or for any $n$ when
 $\mathbf{C}(X)=\mathbf{Z}(X)$, $X$ is a separated scheme, and $\mathcal{F}$ is a  difference quasi-coherent sheaf (see \cite[Proposition III.2.14]{milne1etale}).
\end{theorem}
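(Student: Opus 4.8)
The plan is to construct the natural map $\alpha$ from difference \v Cech cohomology to difference sheaf cohomology by comparing the two bicomplexes that compute them. Recall from Proposition \ref{dcpxl} that $H^*_\sigma(X,\mcf)$ is computed by the total complex of $D^{**}(\mcf)$, whose two non-trivial rows are both $\Gamma(I^*)$ for an injective resolution $I^*$ of $\mcf$ in $\dx$, with vertical differential $\pm(\id-\sigma_{I^*})$. On the other hand, I want to realize difference \v Cech cohomology through a similar two-row bicomplex. The key classical input is the map $\beta$ from \v Cech to sheaf cohomology, which at the level of complexes is induced by a map of (augmented) bicomplexes: one resolves $\mcf$ by the \v Cech resolution associated to $\mathcal U$ and compares it with the injective resolution. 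Concretely, I would first fix a functorial injective resolution $\mcf \to I^*$ in $\dx$, apply $i_*$ to get an injective resolution of $i_*\mcf$ in $\sx$, and then use the standard comparison between the \v Cech complex $\check C^*(\mathcal U, -)$ and $\Gamma(I^*)$ to get a map of complexes $\check C^*(\mathcal U,\mcf) \to \Gamma(I^*)$ (over the sheaf category), compatible with the action of $\sigma_{\mcf}$ and $\sigma_{I^*}$ respectively, hence compatible with $\check\sigma_{\mcf}$ and $\id - \sigma_{I^*}$ after passing to the $\mathcal U \cap {}^\sigma\mathcal U$ refinement.

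The central step is to check that $\beta$ induces a map between the \emph{first pages of the first spectral sequences} of the two bicomplexes $\check C^{**}_\sigma(\mathcal U,\mcf)$ and $D^{**}(\mcf)$. Both bicomplexes have only two non-trivial rows, and the first spectral sequence (filtering by columns and taking horizontal cohomology first) has, on its $E_1$-page, the horizontal cohomology of each row: for $D^{**}(\mcf)$ this is $H^*(X,\mcf)$ in each row; for $\check C^{**}_\sigma$ it is $\check H^*(\mathcal U,\mcf)$ in the bottom row and $\check H^*(\mathcal U \cap {}^\sigma\mathcal U, \mcf)$ in the top row. The classical $\beta$ gives the map $\check H^*(\mathcal U,\mcf) \to H^*(X,\mcf)$ on both rows (using that $\beta$ for $\mathcal U \cap {}^\sigma\mathcal U$ followed by restriction factors appropriately, or rather using naturality of $\beta$ in the covering). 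One then checks that this map of $E_1$-pages is a morphism of spectral sequences, i.e. commutes with the vertical $d_1$ differentials — which on the $D$-side is $\id - \sigma$ acting on $H^*(X,\mcf)$ and on the \v Cech side is $\mathrm{res} - \check\sigma_{\mcf}$ inducing the same $\id - \sigma$ on cohomology. This compatibility is exactly the statement that $\beta$ intertwines the $\sigma$-actions on $\check H^*$ and $H^*$, which is essentially formal from the construction of $\sigma_{\mcf}$ on both sides. The induced map on the limits (the abutments) of these spectral sequences is the desired $\alpha$, and the induced map on the $E_2$-pages (which by Theorem \ref{mainss} and Theorem \ref{cechss} give precisely the invariants/coinvariants terms of the short exact sequences) yields the commutative diagram, with $\beta^\sigma$ and $\beta_\sigma$ the restrictions of $\beta$ to invariants and the maps induced on coinvariants.

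For the limit version (Theorem \ref{cechtoder2}), everything follows by taking the filtered colimit over coverings $\mathcal U$: filtered colimits are exact, so they commute with cohomology and preserve short exact sequences, and Lemma \ref{indep} guarantees the colimit system is well-behaved (refinement maps are canonical). Thus the non-limit diagram of Theorem \ref{cechtoder} passes to the limit verbatim. For the final ``in particular'' clause, I simply invoke the last sentence of the theorem together with known vanishing/isomorphism results for the classical $\beta$: $\beta$ is always an isomorphism in degrees $0$ and $1$ for any site (so $\alpha$ is an isomorphism for $n=0,1$), and for a separated scheme with the Zariski topology and a quasi-coherent sheaf, $\beta$ is an isomorphism in all degrees by \cite[Proposition III.2.14]{milne1etale} — applied to $i_*\mcf$, which is quasi-coherent when $\mcf$ is a difference quasi-coherent sheaf — so $\alpha$ is an isomorphism in all degrees in that case.

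The main obstacle I anticipate is verifying carefully that the classical comparison map $\beta$, realized at the chain level, genuinely commutes with the difference structure — that is, that the chain-level map $\check C^*(\mathcal U,\mcf) \to \Gamma(I^*)$ can be chosen compatibly with $\check\sigma_{\mcf}$ on the source and $\sigma_{I^*}$ on the target, so that it extends to a map of the full two-row bicomplexes and not merely a map that is compatible up to homotopy on each row separately. This requires using the functoriality of the injective resolution in $\dx$ (so that the endomorphism $\sigma_{\mcf}$ of $\mcf$ lifts to an endomorphism of $I^*$ compatible with $\sigma_{I^*}$) and functoriality of the \v Cech-to-derived comparison in the sheaf, together with the observation that the map ${}^\sigma$ on coverings is functorial. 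Once this chain-level compatibility is pinned down, the spectral-sequence bookkeeping is routine.
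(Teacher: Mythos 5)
Your proposal is correct and follows essentially the same route as the paper: the non-limit statement is obtained by observing that $\beta$ induces a map between the first pages of the first spectral sequences of the bicomplexes $\check{C}^{**}_{\sigma}(\mathcal{U},\mathcal{F})$ and $D^{**}(\mathcal{F})$, yielding $\alpha$ on the abutments and the commutative diagram on the $E_2$-pages, and the limit version then follows by passing to the filtered colimit over coverings via Lemma \ref{indep}. Your extra care about realizing the comparison at the chain level (so that one genuinely has a morphism of bicomplexes, not just of $E_1$-pages) is a point the paper's proof glosses over, but it is the same argument.
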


\section{Difference torsors}\label{secdiftor}
In this section, we develop a general theory of difference torsors for left difference sheaves. We still work under Assumption \ref{mainass}. Since the choice of a site does not really matter for the very general Section \ref{ldstsec} below, we still skip ``$\mathbf{C}$'' from the notation there. However, the choice of site becomes crucial starting from Section \ref{contor} and we will change our notation there.

\subsection{Difference sheaf torsors}\label{ldstsec}
Let $\mc{G}$ be a  sheaf of (not necessarily abelian) groups on $\mathbf{C}(X)$ (we will just say ``sheaf on $X$'', it may be even a sheaf of sets). We recall here some definitions from \cite[Section III.4]{milne1etale}.

We say that  a sheaf of sets  $\mc{P}$ on $X$ is a \emph{sheaf of $\mc{G}$-sets}
(or a \emph{$\mc{G}$-sheaf}), if there is a morphism of sheaves
$$\mu:\mc{G}\times \mc{P}\ra\mc{P}$$
such that for any $U\in{\mathcal{C}}$, $\mu$ induces an action of the group $\mc{G}(U)$ on the set $\mc{P}(U)$, and these actions are compatible with the restriction maps. We say that a sheaf of $\mc{G}$-sets $\mc{P}$ is a \emph{trivial $\mc{G}$-torsor}, if $\mc{P}\cong \mc{G}$ as $\mc{G}$-sheaves.
We say that $\mc{P}$ is a \emph{sheaf $\mc{G}$-torsor}, if there is a covering $\{U_i\to X\}_{i\in I}$ such that for any $i\in I$, $\mc{P}|_{U_i}$ is a trivial $\mc{G}|_{U_i}$-torsor. The sheaf $\mc{G}$-torsors form the category $\mathrm{TSh}(\mc{G}/X)$ with the morphisms being the $\mc{G}$-invariant sheaf morphisms. We denote the set of isomorphism classes of sheaf $\mc{G}$-torsors by $\mathrm{PHSh}(\mc{G}/X)$.

If we have a scheme morphism $f:X\to X'$ (and there is an underlying site $\mathbf{C}'(X')$ such that $f$ induces a morphism of sites), then we get the pull-back functor:
\[
f^*:\mathrm{TSh}({\mc{G}}/X')\ra \mathrm{TSh}({f^*(\mc{G})}/X).
\]
A morphism of group sheaves $\alpha:\mc{G}\to\mc{H}$ on $X$ gives rise to the extension functor:
\[
\alpha_*:\mathrm{TSh}(\mc{G}/X)\ra \mathrm{TSh}({\mc{H}}/X),\ \ \ \ \ \alpha_*(\mathcal{P})(U)=\left(\mc{H}(U)\times \mathcal{P}(U)\right)/\mc{G}(U).
\]
We recall below the standard correspondence between torsors and \v Cech cohomology.
\begin{lemma}\label{clphs}
There is a bijection
$$h:\mathrm{PHSh}(\mc{G}/X)\ra \check{H}^1(X,\mathcal{G})$$
such that the following hold (we identify below a torsor with the corresponding isomorphism class).
\begin{enumerate}
\item $h(f^*(\mathcal{P}))=\check{H}^1(f,\mathcal{G})(h(\mathcal{P}))$;

\item $h(\alpha_*(\mathcal{P}))=\check{H}^1(X,\alpha)(h(\mathcal{P}))$.
\end{enumerate}
\end{lemma}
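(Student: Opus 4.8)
The plan is to reproduce the classical argument of \cite[Section III.4]{milne1etale}, adapted to the generality of Assumption \ref{mainass}: construct the two maps explicitly, check they are mutually inverse, and then read off the compatibilities (1)--(2) directly from the formulas.

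\emph{Constructing $h$.} Given a sheaf $\mc{G}$-torsor $\mc{P}$, I would first pick a covering $\mathcal{U}=(U_i\to X)_{i\in I}$ over which $\mc{P}$ is trivial, so that $\mc{P}(U_i)\neq\emptyset$, and choose $p_i\in\mc{P}(U_i)$. On $U_{ij}:=U_i\times_X U_j$ the action of $\mc{G}(U_{ij})$ on $\mc{P}(U_{ij})$ is simply transitive (this is where triviality of $\mc{P}|_{U_{ij}}$ enters), so there is a unique $g_{ij}\in\mc{G}(U_{ij})$ with $p_i|_{U_{ij}}=g_{ij}\cdot p_j|_{U_{ij}}$; restricting to $U_{ijk}$ and using uniqueness gives the cocycle identity $g_{ij}g_{jk}=g_{ik}$. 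Thus $(g_{ij})$ represents a class in $\check{H}^1(\mathcal{U},\mc{G})$. Then I would check the three well-definedness points: replacing $p_i$ by $h_i\cdot p_i$ replaces the cocycle by the cohomologous $(h_i g_{ij} h_j^{-1})$; refining $\mathcal{U}$ does not change the induced class in $\check{H}^1(X,\mc{G})=\colim_{\mathcal{U}}\check{H}^1(\mathcal{U},\mc{G})$; and an isomorphism $\mc{P}\to\mc{P}'$ of $\mc{G}$-sheaves transports a family of local sections to one with the same cocycle. Hence $h([\mc{P}]):=[(g_{ij})]$ is well defined.

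\emph{The inverse.} Given a cocycle $(g_{ij})$ on $\mathcal{U}$, I would build $\mc{P}$ by descent: glue the sheaves $\mc{G}|_{U_i}$ along the isomorphisms $x\mapsto g_{ij}\cdot x$ over $U_{ij}$, the cocycle condition being exactly the descent-datum condition. Since left and right translations on $\mc{G}$ commute, the right-translation action of $\mc{G}$ on each $\mc{G}|_{U_i}$ descends to a morphism $\mu\colon\mc{G}\times\mc{P}\to\mc{P}$, and by construction $\mc{P}|_{U_i}\cong\mc{G}|_{U_i}$ as $\mc{G}|_{U_i}$-sheaves, so $\mc{P}$ is a sheaf $\mc{G}$-torsor; a cohomologous cocycle gives an isomorphic torsor. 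Tracing definitions (the section of $\mc{P}$ over $U_i$ coming from $1\in\mc{G}(U_i)$) then shows this construction is a two-sided inverse of $h$.

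\emph{The compatibilities.} Both fall out once one observes that $f^*$ and $\alpha_*$ send trivial torsors to trivial torsors and act on cocycles in the evident way. For (1): if $\mc{P}$ is trivialised over $\mathcal{U}$ by $(p_i)$ with cocycle $(g_{ij})$, then $f^*(\mc{P})$ is trivialised over the pulled-back covering by $(f^*p_i)$, with cocycle $(f^*g_{ij})$, which is exactly $\check{H}^1(f,\mc{G})[(g_{ij})]$. For (2): the section $p_i\in\mc{P}(U_i)$ gives $[1,p_i]\in\alpha_*(\mc{P})(U_i)=\bigl((\mc{H}\times\mc{P})/\mc{G}\bigr)(U_i)$, and from $p_i=g_{ij}\cdot p_j$ one gets $[1,p_i]=[\alpha(g_{ij}),p_j]=\alpha(g_{ij})\cdot[1,p_j]$, so the cocycle of $\alpha_*(\mc{P})$ is $(\alpha(g_{ij}))=\check{H}^1(X,\alpha)[(g_{ij})]$.

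\emph{Expected main obstacle.} None of this is deep, and the write-up will largely be a recollection; the only genuinely delicate points are checking that the sheaf $\mc{P}$ assembled from a cocycle really is a sheaf and really is \emph{locally trivial} as a $\mc{G}$-torsor (i.e.\ that the gluing/descent is legitimate in the chosen site and interacts correctly with sheafification), and keeping the non-commutativity of $\mc{G}$ straight throughout — distinguishing the left-translation gluing maps from the right-translation $\mc{G}$-action, and bearing in mind that $\check{H}^1(X,\mc{G})$ is here only a pointed set, so ``bijection'' is the right level of claim.
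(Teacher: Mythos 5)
Your argument is correct and is essentially the paper's proof: the paper simply cites \cite[Proposition III.4.6]{milne1etale} for the bijection and notes that the explicit cocycle construction in Milne's proof yields the compatibilities (1)--(2), which is exactly the classical argument you have written out (local trivializations giving a cocycle, descent giving the inverse, and the formulas $f^*g_{ij}$ and $\alpha(g_{ij})$ giving functoriality). The only caveat is a harmless convention slip in your descent step --- with the paper's left action $\mu:\mc{G}\times\mc{P}\to\mc{P}$ and your normalization $p_i=g_{ij}\cdot p_j$, the transition maps on $\mc{G}|_{U_i}$ come out as \emph{right} translations and the surviving torsor action as \emph{left} translation, not the other way around --- but this is exactly the bookkeeping issue you flagged and does not affect the proof.
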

\begin{proof}
The bijection above appears in the statement of \cite[Proposition III.4.6]{milne1etale}. From the explicit construction in the proof of \cite[Proposition III.4.6]{milne1etale}, one gets items $(1)$ and $(2)$.
\end{proof}
We recall now that we are in the situation of Assumption \ref{mainass}, and we let $\sigma$ into the play.
Let $\mc{P}$ be a sheaf of $\mc{G}$-sets on $X$. Then $\sigma^*(\mc{P})$ is a sheaf of $\sigma^*(\mc{G})$-sets.
\begin{definition}\label{phssdef}
We assume that $(\mc{G},\sigma_{\mc{G}})$ is a left difference sheaf of groups (not necessarily abelian groups!) and $\mc{P}$ is a sheaf $\mc{G}$-torsor.
\begin{enumerate}
\item We introduce the following functor:
\[\sigma_T:\mathrm{TSh}(\mc{G}/X)\ra\mathrm{TSh}(\mc{G}/X),\ \ \ \ \ \   \sigma_T(\mc{P}):=\left(\widetilde{\sigma}_{\mc{G}}\right)_*(\sigma^*(\mc{P})).\]

\item A \emph{left difference sheaf $\mc{G}$-torsor} is a pair $(\mc{P},\sigma_{\mc{P}})$, where $\mc{P}\in \mathrm{TSh}(\mc{G}/X)$
and
\[
\sigma_{\mc{P}}:\mc{P} \ra \sigma_T(\mc{P})
\]
is an isomorphism of sheaf $\mc{G}$-torsors.

\item The left difference sheaf $\mc{G}$-torsors form a category and we denote the set of isomorphism classes of left difference sheaf $\mc{G}$-torsors by $\mathrm{PHSh}_{\sigma}(\mc{G}/X)$.

\end{enumerate}
\end{definition}
For a covering $\mathcal{U}$ of $X$ and $\mathcal{F}\in \dx$, it is easy to describe explicitly the total complex of the difference \v Cech bicomplex $\check{C}^{**}_{\sigma}(\mathcal{U},\mathcal{F})$, which defines the difference \v Cech cohomology $\check{H}^{*}_{\sigma}(\mathcal{U},\mathcal{F})$. This description (given in the remark below) will be also used to \emph{define} the first difference cohomology pointed set for a difference sheaf of not necessarily commutative groups.
\begin{remark}\label{totdiff1}
To simplify the notation, we write here $\check{C}^{n}$ instead of $\check{C}^{n}(\mathcal{U},\mathcal{F})$.
\begin{enumerate}
\item We have the following description of the total complex of the difference \v Cech bicomplex:
$$\mathrm{Tot}\left(\check{C}^{**}_{\sigma}\right)^n=\check{C}^{n-1} \oplus \check{C}^{n},$$
$$\check{\partial}^{n}_{\sigma-\mathrm{tot}}(c^{n-1},c^n)=\left(\check{\partial}^{n-1}(c^{n-1})+(-1)^n(\mathrm{res}-\check{\sigma})(c^n),\check{\partial}^{n}(c^n)\right),$$
where $\check{C}^{-1}:=0$ and $\check{\partial}$ is the differential from the standard \v{C}ech complex.

\item It is clear from the previous item that for $(c^{n-1},c^n)\in \mathrm{Tot}(\check{C}^{**}_{\sigma})^n$, we have that $(c^{n-1},c^n)\in \mathrm{Tot}(\check{Z}^{**}_{\sigma})^n$ if and only if the following two conditions hold:
    \begin{itemize}
      \item $c^n$ is a standard \v Cech cocycle;

      \item $(\mathrm{res}-\check{\sigma})(c^n)$ is a standard \v Cech coboundary which is witnessed by $(-1)^{n-1}c^{n-1}$.
    \end{itemize}
\end{enumerate}
\end{remark}

\begin{definition}\label{defcnon}
Let $\mathcal{G}$ be a left difference sheaf of not necessarily commutative groups. We define the first pointed set difference \v Cech cohomology $\check{H}^{1}_{\sigma}(\mathcal{U},\mathcal{G})$ as the set of equivalence classes of the relation:
$$(c^0,c^1)\sim (e^0,e^1) \ \ \ \Leftrightarrow\ \ \ \exists f^0\in \check{C}^{0}(\mathcal{U},\mathcal{G})\
\left(\mathrm{res}/\check{\sigma}\right)(f^0)=c^0/e^0,\ \check{\partial}^0(f^0)=c^1/e^1.$$
on the set $\check{C}^{0}(\mathcal{U},\mathcal{G})\times \check{C}^{1}(\mathcal{U},\mathcal{G})$. We also obtain the pointed set  $\check{H}^{1}_{\sigma}(X,\mathcal{G})$ as the usual colimit over all the coverings $\mathcal{U}$.
\end{definition}
By Remark \ref{totdiff1}, if $\mathcal{G}\in \dx$ then Definition \ref{defcnon} gives the same notion as the definition of the difference \v Cech cohomology from Section \ref{leftcechsec}.

We obtain below a difference version of the classical correspondence between torsors and the first cohomology group.
\begin{theorem}\label{lphsthm}
Let $\mathcal{G}$ be a left difference sheaf of not necessarily commutative groups. Then, there is an isomorphism of pointed sets, which is an isomorphism of abelian grups when $\mc{G}$ is commutative:
\[
\mathrm{PHSh}_{\sigma}(\mc{G}/X)\cong \check{H}^1_{\sigma}(X,\mc{G}).
\]
\end{theorem}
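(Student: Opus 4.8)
The strategy is to mimic the classical correspondence between $\mathcal{G}$-torsors and $\check{H}^1(X,\mathcal{G})$ (Lemma~\ref{clphs}), keeping track of the difference structure at every stage. Fix a covering $\mathcal{U}=(U_i\to X)_{i\in I}$ and let $\mc{P}$ be a sheaf $\mc{G}$-torsor which is trivialized over $\mathcal{U}$. Choosing trivializing sections $p_i\in\mc{P}(U_i)$, the classical cocycle $c^1=(c_{ij})$ is defined by $p_j|_{U_{ij}}=p_i|_{U_{ij}}\cdot c_{ij}$, and this gives Milne's bijection $h(\mc{P})=[c^1]$. I first need to produce, from a difference structure $\sigma_{\mc{P}}:\mc{P}\to\sigma_T(\mc{P})$, a $0$-cochain $c^0\in\check{C}^0(\mathcal{U}\cap{}^{\sigma}\mathcal{U},\mc{G})$ such that $(c^0,c^1)$ represents a class in $\check{H}^1_{\sigma}(\mathcal{U},\mc{G})$ in the sense of Definition~\ref{defcnon}. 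The point is that $\sigma_T(\mc{P})=(\widetilde{\sigma}_{\mc{G}})_*(\sigma^*(\mc{P}))$ carries the transported trivializing sections over ${}^{\sigma}U_i$, hence an induced cocycle which by Lemma~\ref{clphs}(1),(2) is exactly $\check{\sigma}(c^1)$ up to the restriction identification; the isomorphism $\sigma_{\mc{P}}$ of $\mc{G}$-torsors over $\mathcal{U}\cap{}^{\sigma}\mathcal{U}$ is then itself a $\mc{G}$-equivariant gadget, and comparing its effect on sections yields the $0$-cochain $c^0$ with $\check{\partial}^0$ and $(\mathrm{res}-\check{\sigma})$ behaving as required by Remark~\ref{totdiff1}(2) (in the nonabelian notation, $\check{\partial}^0(c^0)=c^1/\check{\sigma}(c^1)$ after the restriction identification). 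Conversely, given a cocycle pair $(c^0,c^1)$, the component $c^1$ builds a sheaf $\mc{G}$-torsor $\mc{P}$ by the classical gluing, and $c^0$ is precisely the gluing datum needed to define the isomorphism $\sigma_{\mc{P}}:\mc{P}\to\sigma_T(\mc{P})$, using that $\sigma_T$ sends the $\mathcal{U}$-cocycle $c^1$ to the $({}^{\sigma}\mathcal{U})$-cocycle attached to $\check{\sigma}(c^1)$.

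\textbf{Well-definedness and the colimit.} Next I would check that both assignments descend to isomorphism classes. If $\mc{P}\cong\mc{Q}$ as difference sheaf $\mc{G}$-torsors, a change of trivializing sections over $\mathcal{U}$ multiplies $c^1$ by a coboundary $\check{\partial}^0(f^0)$ and simultaneously changes $c^0$ by $(\mathrm{res}/\check{\sigma})(f^0)$ — exactly the equivalence relation of Definition~\ref{defcnon}; this is a direct unwinding using that $\sigma^*$ and the extension functor $(\widetilde{\sigma}_{\mc{G}})_*$ are functorial on sections. Passing to finer coverings corresponds to refinement of \v Cech cochains, and by Lemma~\ref{indep} the refinement maps on $\check{H}^1_{\sigma}$ are independent of choices, so the bijection is compatible with the colimit defining $\check{H}^1_{\sigma}(X,\mc{G})$. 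Since every sheaf $\mc{G}$-torsor is locally trivial, every difference sheaf $\mc{G}$-torsor arises this way, giving surjectivity; injectivity follows from the coboundary analysis just described. In the commutative case, one checks that the Baer sum of difference torsors (componentwise tensor/product of $\mc{G}$-sheaves with the induced difference structure) corresponds to addition of the pairs $(c^0,c^1)$ in $\mathrm{Tot}(\check{C}^{**}_{\sigma})$, so the bijection is a group isomorphism; here Remark~\ref{totdiff1}(1) gives the explicit additive structure to match against.

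\textbf{Main obstacle.} The routine parts are the classical torsor bookkeeping and the functoriality of $\sigma^*$, $(\widetilde{\sigma}_{\mc{G}})_*$ on \v Cech cochains. The genuinely delicate step is matching the \emph{restriction} discrepancies: $\sigma_T(\mc{P})$ is naturally trivialized over ${}^{\sigma}\mathcal{U}$, not over $\mathcal{U}$, so to get a datum on $\mathcal{U}\cap{}^{\sigma}\mathcal{U}$ one must carefully compose with the restriction maps $\mathrm{res}$ and verify that the resulting $0$-cochain $c^0$ lands in $\check{C}^0(\mathcal{U}\cap{}^{\sigma}\mathcal{U},\mc{G})$ with the correct coboundary, reconciling the two sides of Remark~\ref{totdiff1}(2). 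This is where the asymmetry built into the difference \v Cech bicomplex (the covering $\mathcal{U}\cap{}^{\sigma}\mathcal{U}$ and the map $\check{\sigma}_{\mc{F}}=\mathrm{res}\circ(\text{transport})$) has to be handled with care, and where the nonabelian versus abelian formulations must be kept notationally consistent. Once that identification is pinned down, Lemma~\ref{clphs} supplies the $h$ on the $c^1$-component and the rest is formal.
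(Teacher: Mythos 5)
Your proposal is correct and follows essentially the same route as the paper's proof: extract the classical cocycle $c^1$ via Lemma \ref{clphs}, interpret the isomorphism $\sigma_{\mc{P}}:\mc{P}\to\sigma_T(\mc{P})$ as a $0$-cochain $c^0$ witnessing that $(\mathrm{res}-\check{\sigma})(c^1)$ is a coboundary, and recognize the pair $(c^1,c^0)$ as a $1$-cocycle of the total complex via Remark \ref{totdiff1}(2). The paper compresses the well-definedness, refinement/colimit, and group-structure checks into a single "routine verification," which you spell out in more detail, but the underlying argument is the same.
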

\begin{proof}
Let $(\mc{P},\sigma_{\mc{P}})$ be a left difference sheaf $\mc{G}$-torsor and $c(\mc{P})\in \check{Z}^1(X,\mc{G})$ be a cocycle corresponding to the cohomology class $h(\mc{P})$ from Lemma \ref{clphs}. The fact that the $\mc{G}$-torsors $\mc{P}$ and $\sigma_T(\mc{P})$ are isomorphic means that the \v Cech cocycle $(\id-\check{\sigma})(c(\mc{P}))$ is a \v Cech coboundary, and the choice of the isomorphism $\sigma_{\mc{P}}:\mc{P}\to \sigma_T(\mc{P})$ corresponds
to  the choice of $c(\sigma_{\mc{P}})\in\check{C}^0(X,\mc{G}) $ such that:
$$\check{\partial}^0\left(c(\sigma_{\mc{P}})\right)=(\mathrm{res}-\check{\sigma})(c(\mc{P})).$$
By Remark \ref{totdiff1}(2) (or Definition \ref{defcnon}), the pair $(c(\mc{P}),c(\sigma_{\mc{P}}))$ is a $1$-cocycle in the total complex of $\check{C}^{**}_{\sigma}(X,\mc{G})$, and it is a matter of routine verification to check that this construction gives the desired
isomorphism.
\end{proof}
The short exact sequence from Theorem \ref{cechss} (relating the difference cohomology with the standard cohomology) may be interpreted geometrically here. We need one more definition.

\begin{definition}\label{asdef}
Let $G$ be a group with an endomorphism $s$.
\begin{enumerate}
\item If $G$ is commutative (the group operation is written additively), then we define:
$$\mathrm{AS}(G,s):=\coker(s-\id)=G_s,$$
where $G_s$ denotes the coinvariants of the action on $G$ by $s$. In this case, we also call the map $s-\id$, the \emph{Artin-Schreier map}.

\item If $G$ is not commutative, then
$\mathrm{AS}(G,s)$ is defined as the set of orbits of the following action (of $G$ on $G$):
\[g\cdot x:=s(g) x g^{-1},\]
which clearly coincides with the definition given in the commutative case.
\end{enumerate}
\end{definition}
\begin{example}\label{asexam}
\begin{enumerate}
\item Let $(\mathcal{G},\sigma_{\mathcal{G}})$ be a left difference sheaf of (not necessarily commutative) groups. Then $\Gamma(\sigma_{\mathcal{G}})$ is an endomorphism of $\Gamma(\mathcal{G})$ and we denote:
$$\mathrm{AS}(\mathcal{G},\sigma_{\mathcal{G}}):=\mathrm{AS}\left(\Gamma(\mathcal{G}),\Gamma(\sigma_{\mathcal{G}})\right).$$

\item Let $G$ be a group scheme over $X$ defined over constants of $\sigma$ (see Section \ref{igs}). Then $\sigma$ induces a group homomorphism $G(\sigma):G(X)\to G(X)$, and we define:
    $$\mathrm{AS}(G,\sigma):=\mathrm{AS}(G(X),G(\sigma)).$$
If $G=\ga$, $X=\spec(\ka)$, $\mathrm{char}(\ka)>0$ and $\sigma$ comes from $\fr_{\ka}$, then we obtain the original ``Artin-Schreier situation''.
\end{enumerate}
\end{example}
In the following result, we use the notation from Example \ref{asexam}(1).
\begin{prop}\label{asphs}
There exists a short exact sequence (of pointed sets, in the case when $\mathcal{G}$ is a sheaf of non-commutative groups):
\[
1\ra \mathrm{AS}\left(\mathcal{G},\sigma_{\mathcal{G}}\right)\ra \mathrm{PHSh}_{\sigma}(\mc{G}/X)\ra \mathrm{PHSh}(\mc{G}/X)^{\sigma_T}\ra 1.
\]
\end{prop}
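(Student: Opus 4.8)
The plan is to deduce this from Theorem~\ref{lphsthm}, which identifies $\mathrm{PHSh}_{\sigma}(\mc{G}/X)$ with $\check{H}^1_{\sigma}(X,\mc{G})$, together with the (pointed-set version of the) short exact sequence from Theorem~\ref{cechss2}. Recall that Theorem~\ref{cechss2} gives, for commutative $\mc{G}$,
\[
0\ra \check{H}^{0}(X,\mathcal{G})_{\sigma}\ra \check{H}^1_{\sigma}(X,\mathcal{G})\ra \check{H}^1(X,\mathcal{G})^{\sigma}\ra 0,
\]
and $\check{H}^{0}(X,\mathcal{G})=\Gamma(\mathcal{G})$, so that $\check{H}^{0}(X,\mathcal{G})_{\sigma}=\coker(\Gamma(\sigma_{\mathcal{G}})-\id)=\mathrm{AS}(\mathcal{G},\sigma_{\mathcal{G}})$ by Definition~\ref{asdef}(1). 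Moreover $\check{H}^1(X,\mathcal{G})\cong\mathrm{PHSh}(\mc{G}/X)$ via Lemma~\ref{clphs}, and under this identification the $\sigma$-action induced on cohomology corresponds exactly to the action of the functor $\sigma_T$ on isomorphism classes of torsors (using Lemma~\ref{clphs}(1),(2) applied to $\sigma^*$ and $\widetilde{\sigma}_{\mc{G}*}$, whose composite is $\sigma_T$), so $\check{H}^1(X,\mathcal{G})^{\sigma}\cong\mathrm{PHSh}(\mc{G}/X)^{\sigma_T}$. Assembling these identifications termwise turns the sequence of Theorem~\ref{cechss2} into the asserted one, and naturality of all the identifications guarantees the maps are the expected ones (the injection is induced by $g\mapsto$ the cocycle $(g,1)$, and the surjection forgets the $0$-cochain component).

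For the non-commutative case there is no spectral sequence to invoke, so I would argue directly from the explicit description in Remark~\ref{totdiff1}(2) and Definition~\ref{defcnon}. A class in $\check{H}^1_{\sigma}(\mathcal{U},\mathcal{G})$ is represented by a pair $(c^0,c^1)$ with $c^1$ an ordinary \v Cech $1$-cocycle and $(\mathrm{res}-\check{\sigma})(c^1)=\check{\partial}^0(c^0)$ in the non-abelian sense, modulo the equivalence of Definition~\ref{defcnon}. The map to $\mathrm{PHSh}(\mc{G}/X)^{\sigma_T}$ sends $(c^0,c^1)$ to the class of $c^1$; it is well-defined into the $\sigma_T$-fixed classes precisely because the existence of $c^0$ witnesses that $(\mathrm{res}-\check{\sigma})(c^1)$ is a coboundary, i.e. that $c^1$ and $\check{\sigma}(c^1)$ are cohomologous, which is the statement $\sigma_T([c^1])=[c^1]$; surjectivity onto the fixed set is immediate since any $\sigma_T$-fixed torsor admits such a witnessing $c^0$. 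The ``kernel'' (fiber over the base point) consists of classes $(c^0,1)$; two such, $(c^0,1)$ and $(e^0,1)$, are equivalent iff there is $f^0\in\check{C}^0$ with $\check{\partial}^0(f^0)=1$ (so $f^0$ is a global section $g\in\Gamma(\mathcal{G})$) and $(\mathrm{res}/\check{\sigma})(f^0)=c^0/e^0$, i.e. $e^0=\check{\sigma}(g)^{-1}c^0 g$ after taking a colimit over coverings. This is exactly the orbit relation $g\cdot c^0=\Gamma(\sigma_{\mathcal{G}})(g)\,c^0\,g^{-1}$ of Definition~\ref{asdef}(2), so the fiber is $\mathrm{AS}(\Gamma(\mathcal{G}),\Gamma(\sigma_{\mathcal{G}}))=\mathrm{AS}(\mathcal{G},\sigma_{\mathcal{G}})$, and one checks this identification is injective on the fiber (two classes mapping to the same $\mathrm{AS}$-orbit are equivalent). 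Exactness in the middle — that a class maps to the base point of $\mathrm{PHSh}(\mc{G}/X)$ iff it comes from $\mathrm{AS}$ — is built into the description: $[c^1]$ is trivial iff $c^1=\check{\partial}^0(f^0)$ for some $f^0$, and then $(c^0,c^1)\sim((\mathrm{res}/\check{\sigma})(f^0)^{-1}c^0,1)$, landing in the fiber.

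The main obstacle I anticipate is the bookkeeping in the non-commutative case: one must be careful that the equivalence relation in Definition~\ref{defcnon} really does cut the fiber down to the $\mathrm{AS}$-orbit set and no further (injectivity of $\mathrm{AS}(\mathcal{G},\sigma_{\mathcal{G}})\hookrightarrow\mathrm{PHSh}_{\sigma}(\mc{G}/X)$), and that passing to the colimit over coverings does not collapse anything — here one uses Lemma~\ref{indep} and the fact that global sections do not depend on the covering. The commutative refinement of the statement (that the sequence is one of abelian groups) follows once one observes that Theorem~\ref{lphsthm} already provides the group isomorphism $\mathrm{PHSh}_{\sigma}(\mc{G}/X)\cong\check{H}^1_{\sigma}(X,\mc{G})$ and that the maps in Theorem~\ref{cechss2} are group homomorphisms. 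I would present the commutative case first as a short deduction from Theorem~\ref{cechss2} and Lemma~\ref{clphs}, then indicate the direct verification above for the general pointed-set statement.
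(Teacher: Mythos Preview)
Your proposal is correct, but the paper's proof is markedly more direct. Rather than passing through Theorem~\ref{lphsthm} and the \v Cech description, the paper works entirely on the torsor side: it takes the forgetful map $\alpha:\mathrm{PHSh}_{\sigma}(\mc{G}/X)\to \mathrm{PHSh}(\mc{G}/X)^{\sigma_T}$ (which is obviously onto, since a difference torsor structure exists iff the underlying torsor is $\sigma_T$-fixed up to isomorphism) and identifies $\ker(\alpha)$ geometrically. A difference structure on the trivial torsor $\mc{G}$ is a $\mc{G}$-torsor automorphism $s:\mc{G}\to\mc{G}$, and such automorphisms biject with $\Gamma(\mc{G})$ via $s\mapsto s_X(1)$; two such are isomorphic as difference torsors iff the corresponding elements $g_1,g_2\in\Gamma(\mc{G})$ satisfy $g_1\cdot\Gamma(\sigma_{\mc{G}})(g)=g\cdot g_2$ for some $g$, which is exactly the $\mathrm{AS}$-orbit relation. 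This argument is uniform in the commutative and non-commutative cases, avoids all cocycle bookkeeping, and does not rely on the spectral-sequence machinery of Theorem~\ref{cechss2}.

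Your route has the virtue of making the statement visibly a special case of Theorem~\ref{cechss2} in the commutative case, so the short exact sequence of groups comes for free with no separate verification; the cost is the non-commutative half, where you end up redoing by hand on cocycles what the paper does in one line with torsors. Both arguments are fine, but if you present yours, note that the torsor-level proof is shorter and treats both cases at once.
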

\begin{proof}
Let
$$\alpha:\mathrm{PHSh}_{\sigma}(\mc{G}/X)\ra \mathrm{PHSh}(\mc{G}/X)^{\sigma_T}$$
be the obvious epimorphism of forgetting the left difference structure. We only need to describe $\ker(\alpha)$, that is to prove that the difference structures on the trivial $\mc{G}$-torsor are classified by $\mathrm{AS}\left(\mathcal{G},\sigma_{\mathcal{G}}\right)$. Hence, we need to classify the $\mathcal{G}$-torsor endomorphisms $s:\mathcal{G}\to \mathcal{G}$ up to isomorphisms of them. Such endomorphisms $s$ are in a bijection with $\Gamma(\mathcal{G})$ via:
$$s\mapsto s_X\left(1_{\Gamma(\mathcal{G})}\right).$$
Two elements $g_1,g_2\in \Gamma(\mathcal{G})$ correspond to isomorphic $\mathcal{G}$-torsor endomorphisms
 if and only if there exists $g\in\Gamma(\mc{G})$ such that
 \[
 g_1 \cdot \Gamma(\sigma_{\mathcal{G}})(g)=g \cdot g_2,
 \]
which gives our assertion.
\end{proof}
\begin{remark}
In the case of a non-commutative $\mc{G}$, there is some extra information, which was not mentioned in the statement of Proposition \ref{asphs}. Namely, the fiber of the epimorphism $\alpha$ (see the proof of Proposition \ref{asphs}) over the isomorphism class of a torsor $\mathcal{P}$
is isomorphic as an $\mathrm{Aut}(\mathcal{P})$-set to $\mathrm{AS}\left(\mathrm{Aut}(\mathcal{P}), \bar{\sigma}_{\mathcal{P}}\right)$, where the endomorphism $\bar{\sigma}_{\mathcal{P}}$ is given by the following formula:
 \[
 f\mapsto   (\sigma_{\mathcal{P}})^{-1}\circ \sigma_T(f)\circ \sigma_{\mathcal{P}}.
 \]
 \end{remark}

\subsection{Difference torsors defined over constants}\label{contor}
We focus now on a certain special case which covers many of the examples occurring in practice.
We assume that $G$ is a flat group scheme over $X$. Then there is a classical notion of a scheme $G$-torsor over $X$ (locally trivial in the flat topology $\mathbf{F}(X)$), see \cite[Chapter III.3]{milne1etale}. We denote the category of scheme $G$-torsors over $X$ by $\mathrm{Tors}(G/X)$, and the pointed set (the commutative group, if $G$ is commutative) of isomorphism classes of scheme $G$-torsors over $X$ by $\mathrm{PHS}(G/X)$. Similarly as in the previous subsection, we have the following functor induced by $\sigma$:
$$\mathrm{Tors}(G/X)\ni P\mapsto {}^{\sigma}P\in \mathrm{Tors}({}^{\sigma}G/X),$$
and if $f:H\to G$ is a morphism group schemes over $X$, then we get the extension functor:
$$f_*:\mathrm{Tors}(H/X)\ra \mathrm{Tors}(G/X).$$
We recall from Section \ref{secintro} that for an $X$-scheme $Y$, $\mathcal{R}(Y)$ denotes the corresponding representable functor, which is a sheaf thanks to Assumption \ref{mainass}(3). Hence, we have one more functor:
$$\mathcal{R}:\mathrm{Tors}(G/X)\ra \mathrm{TSh}(\mathcal{R}(G)/\mathbf{F}(X)).$$
Let us assume now that $G$ is defined over constants (see Section \ref{igs}). Then, there is an isomorphism $\tau:{}^{\sigma}G\to G$ of group schemes over $X$ (see Lemma \ref{defoverc}). We recall that the representable sheaf $\mathcal{R}(G)$ has a natural structure of a left difference sheaf, which is given by the following commutative diagram:
\begin{equation*}
\xymatrix{\sigma^*(\mathcal{R}(G))  \ar[rr]^{\widetilde{\sigma}_{\mathcal{R}(G)}}  \ar[rd]_{\phi_G} & & \mathcal{R}(G)  \\
& \mathcal{R}({}^{\sigma}G). \ar[ru]_{\mathcal{R}(\tau)}  & & }
\end{equation*}
We assume now that $G$ satisfies one of the assumptions of \cite[Theorem III.4.3]{milne1etale}, which guarantee that any
sheaf of $\mathcal{R}(G)$-torsors on $\mathbf{F}(X)$ comes from a ``scheme $G$-torsor'' as in \cite[Prop. III.4.1]{milne1etale}, that is the map:
$$\mathcal{R}:\mathrm{PHS}(G/X)\ra \mathrm{PHSh}(\mathcal{R}(G)/\mathbf{F}(X)).$$
is a bijection. We aim to extend this bijective correspondence to the difference context (it will done in a much greater generality in Section \ref{secrdt}, however, this greater generality will require the notion of a \emph{right} difference sheaf).
\begin{definition}\label{defdgt}
A pair $(P,\sigma_P)$ is called a \emph{difference $G$-torsor} (it is a special case of Definition \ref{dtordef}), if the following holds:
\begin{itemize}
\item $P$ is a scheme $G$-torsor over $X$;

\item  $\sigma_P:P\to \sigma_{\tau}(P)$ is a morphism of $G$-torsors over $X$, where
$$\sigma_{\tau}(P):=\tau_*({}^{\sigma}P).$$
\end{itemize}
We denote the set of isomorphism classes of difference $G$-torsors by $\mathrm{PHS}_{\sigma}(G/X)$.
\end{definition}
The following example comes from \cite[Example 1.4]{BW}, however, it is slightly modified here to fit into our terminology. More comments on the set-up from \cite{BW}, and the comparison to the situation from this paper will be given in Remark \ref{remdifsch}.
\begin{example}\label{exbw1}
Let $(\ka,s)$ be a difference field and $X:=\spec(\ka),\sigma:=\spec(s)$. We assume that $\mathrm{char}(\ka)\neq 2$ and consider the group scheme $G:=\mu_2$ (second roots of unity) over $\ka$, which is clearly defined over constants. Following \cite[Example 1.4]{BW}, for $a,b\in \gm(\ka)$ such that $s(a)=ab^2$, we define the difference $\mu_2$-torsor $(\mu_{2,a},\varphi_b)$ in the following way. Firstly:
$$\mu_{2,a}:=\spec\left(\ka[x]/(x^2-a)\right)$$
is an algebraic $\mu_2$-torsor. Since $s(a)=ab^2$, we get:
$${}^{\sigma}\mu_{2,a}=\spec\left(\ka[x]/(x^2-ab^2)\right),$$
and we define:
$$\varphi_b:\mu_{2,a}\ra {}^{\sigma}\mu_{2,a},\ \ \ \ \ \ \ x\mapsto bx.$$
As proved in \cite[Example 5.3]{BW}, the above difference torsors exhaust all the possibilities (up to an isomorphism), which we will also show in Example \ref{exbw1calc} by a quick application of our general methods.
\end{example}
Thanks to the following result, we can apply the results of Section \ref{ldstsec}.
 \begin{lemma}\label{torbij}
Difference $G$-torsors are in a natural bijection with left difference sheaf $\mathcal{R}(G)$-torsors with respect to the flat site $\mathbf{F}(X)$.
 \end{lemma}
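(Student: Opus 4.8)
The statement to prove is Lemma \ref{torbij}: difference $G$-torsors (Definition \ref{defdgt}) are in natural bijection with left difference sheaf $\mathcal{R}(G)$-torsors on the flat site $\mathbf{F}(X)$. The natural strategy is to upgrade the already-available bijection $\mathcal{R}:\mathrm{PHS}(G/X)\to\mathrm{PHSh}(\mathcal{R}(G)/\mathbf{F}(X))$ (coming from \cite[Theorem III.4.3]{milne1etale} together with the hypothesis imposed on $G$) to the difference setting by carefully tracking how the extra data $\sigma_P$ on a torsor corresponds to the extra data $\sigma_{\mathcal{P}}$ on a sheaf torsor. The key point is that the sheafification functor $\mathcal{R}$ intertwines the two endofunctors defining the difference structures: namely $\sigma_\tau$ on $\mathrm{Tors}(G/X)$ (given by $P\mapsto\tau_*({}^{\sigma}P)$) and $\sigma_T$ on $\mathrm{TSh}(\mathcal{R}(G)/X)$ (given by $\mathcal{P}\mapsto(\widetilde\sigma_{\mathcal{R}(G)})_*(\sigma^*(\mathcal{P}))$, Definition \ref{phssdef}(1)).

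\textbf{First step: compatibility of $\mathcal{R}$ with pullback and extension.} I would first record that $\mathcal{R}$ is compatible with both operations used to build the two functors. On one hand, $\mathcal{R}({}^{\sigma}P)\cong\sigma^*(\mathcal{R}(P))$ as $\mathcal{R}({}^{\sigma}G)$-sheaves — this is exactly the content of \cite[Remark II.3.1.(e)]{milne1etale} applied to the $G$-torsor $P$, and it is the same identification used earlier to put a left difference structure on $\mathcal{R}(G)$ in Section \ref{igs}. On the other hand, for a morphism of group schemes $f:H\to G$ one has $\mathcal{R}(f_*(Q))\cong f_*(\mathcal{R}(Q))$, i.e. $\mathcal{R}$ commutes with the extension-of-structure-group functor; this holds because $\mathcal{R}$ is a left adjoint (sheafification of the representable presheaf) hence commutes with the quotients defining $f_*$, and can be checked on sections. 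Applying these with $f=\tau:{}^{\sigma}G\to G$ yields a natural isomorphism
\begin{equation*}
\mathcal{R}(\sigma_\tau(P))=\mathcal{R}(\tau_*({}^{\sigma}P))\cong\tau_*(\mathcal{R}({}^{\sigma}P))\cong\tau_*(\sigma^*(\mathcal{R}(P)))\cong(\mathcal{R}(\tau))_*(\sigma^*(\mathcal{R}(P)))=\sigma_T(\mathcal{R}(P)),
\end{equation*}
where the last identification uses that $\widetilde\sigma_{\mathcal{R}(G)}=\mathcal{R}(\tau)\circ\phi_G$ by the commutative triangle displayed just before Definition \ref{defdgt}, together with the fact that $\sigma_T$ could equally well be computed by first applying $\phi_G$ and then $\mathcal{R}(\tau)_*$.

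\textbf{Second step: transport of the difference datum and bijectivity.} Given a difference $G$-torsor $(P,\sigma_P)$ with $\sigma_P:P\to\sigma_\tau(P)$ an isomorphism of $G$-torsors, applying $\mathcal{R}$ and composing with the canonical isomorphism of the first step produces an isomorphism $\mathcal{R}(\sigma_P):\mathcal{R}(P)\to\sigma_T(\mathcal{R}(P))$ of sheaf $\mathcal{R}(G)$-torsors, i.e. a left difference sheaf $\mathcal{R}(G)$-torsor structure on $\mathcal{R}(P)$. This assignment is functorial (a morphism of difference $G$-torsors is a $G$-torsor morphism commuting with the $\sigma$'s, and $\mathcal{R}$ together with the naturality of the first-step isomorphism carries this to the analogous condition for sheaf torsors). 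To see it is a bijection on isomorphism classes, I would invoke that $\mathcal{R}:\mathrm{Tors}(G/X)\to\mathrm{TSh}(\mathcal{R}(G)/\mathbf{F}(X))$ is an equivalence of categories onto its essential image, and that under the running hypotheses on $G$ every sheaf $\mathcal{R}(G)$-torsor lies in that image (this is precisely what \cite[Theorem III.4.3]{milne1etale} plus \cite[Prop. III.4.1]{milne1etale} give, as recalled before Definition \ref{defdgt}); hence $\mathcal{R}$ is an equivalence $\mathrm{Tors}(G/X)\simeq\mathrm{TSh}(\mathcal{R}(G)/\mathbf{F}(X))$, and an equivalence of categories together with the compatible identification $\mathcal{R}\circ\sigma_\tau\cong\sigma_T\circ\mathcal{R}$ of the two endofunctors automatically induces an equivalence of the categories of ``objects with an isomorphism to their image under the endofunctor'', in particular a bijection on isomorphism classes. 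Naturality in $\sigma$ is inherited from naturality in the non-difference statement.

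\textbf{Main obstacle.} The only genuinely delicate point is the first step: verifying that the canonical isomorphism $\mathcal{R}(\sigma_\tau(P))\cong\sigma_T(\mathcal{R}(P))$ is natural in $P$ \emph{and} compatible with the $\mathcal{R}(G)$-actions, and in particular that the identification $\widetilde\sigma_{\mathcal{R}(G)}=\mathcal{R}(\tau)\circ\phi_G$ is exactly the one that makes the diagram commute — i.e. that the ``abstract'' difference structure on $\mathcal{R}(G)$ defined in Section \ref{igs} matches the one forced by the torsor picture. This is a compatibility-of-canonical-maps check (comparing the adjunction unit used to define $\phi_G$ via \cite[Remark II.3.1.(e)]{milne1etale} with the base-change map for $\tau$), unwinding several adjunctions; it is routine but is where all the care is needed. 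Everything downstream — functoriality, bijectivity, naturality in $\sigma$ — then follows formally from the non-difference equivalence already in the literature.
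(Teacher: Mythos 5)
Your proposal is correct and follows essentially the same route as the paper: apply $\mathcal{R}$ to the datum $\sigma_P$, use that $\mathcal{R}$ commutes with the extension functors and that $\mathcal{R}({}^{\sigma}P)\cong\sigma^*(\mathcal{R}(P))$, and identify $\mathcal{R}(\tau)_*\circ\sigma^*$ with $(\widetilde{\sigma}_{\mathcal{R}(G)})_*\circ\sigma^*$ via the commutative triangle defining $\widetilde{\sigma}_{\mathcal{R}(G)}=\mathcal{R}(\tau)\circ\phi_G$. You are in fact slightly more explicit than the paper on the bijectivity on isomorphism classes (which the paper leaves to the ambient assumption that $\mathcal{R}$ is a bijection on $\mathrm{PHS}$) and on the diagram chase the paper dismisses as routine.
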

\begin{proof} Let $(P,\sigma_P)$ be a difference $G$-torsor. Applying the functor $\mathcal{R}$, we get a pair $(\mathcal{R}(P),\mathcal{R}(\sigma_P))$, where
$$\mathcal{R}(\sigma_P):\mathcal{R}(P)\ra \mathcal{R}(\tau_*({}^{\sigma}P))=\mathcal{R}(\tau)_*(\mathcal{R}({}^{\sigma}P))$$
and the equality follows, since the representability functor $\mathcal{R}$ commutes with the extension functors. It is enough to show that the pair $(\mathcal{R}(P),\mathcal{R}(\sigma_P))$ is a left difference sheaf $\mathcal{R}(G)$-torsor in the sense of Definition \ref{phssdef}(2). Hence, it is enough to notice that:
$$\mathcal{R}(\tau)_*(\mathcal{R}({}^{\sigma}P))= \left(\widetilde{\sigma}_{\mathcal{R}(G)}\right)_*(\sigma^*(\mc{P})),$$
which can be easily observed by chasing an appropriate diagram.
\end{proof}
Using Theorem \ref{cechtoder2}, Theorem \ref{lphsthm} and Lemma \ref{torbij}, we obtain the following.
\begin{theorem}\label{dtiso}
	There is the following isomorphism of pointed sets:
	\[
	\mathrm{PHS}_{\sigma}(G/X)\cong \check{H}^1_{\sigma}(\mathbf{F}(X),\mathcal{R}(G)).
	\]
If $G$ is commutative, then we also get the following isomorphism of abelian groups:
\[
	\mathrm{PHS}_{\sigma}(G/X)\cong H^1_{\sigma}(\mathbf{F}(X),\mathcal{R}(G)).
	\]
\end{theorem}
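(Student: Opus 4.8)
The plan is to chain together the three results cited in the statement, so that the bulk of the work is bookkeeping about base points and group structures rather than new mathematics. First I would invoke Lemma \ref{torbij} to replace difference $G$-torsors by left difference sheaf $\mathcal{R}(G)$-torsors on the big flat site $\mathbf{F}(X)$; this is legitimate because $G$ is defined over constants, so $\mathcal{R}(G)$ genuinely carries the left difference sheaf structure exhibited in the commutative triangle preceding Definition \ref{defdgt} (built from Lemma \ref{defoverc}). The point to nail down here is that the natural bijection of Lemma \ref{torbij} is an isomorphism of pointed sets: since $\mathcal{R}$ preserves the $G$-action and intertwines $\sigma_\tau$ with $\sigma_T$, it sends the trivial difference $G$-torsor to the trivial left difference sheaf $\mathcal{R}(G)$-torsor. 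When $G$ is commutative I would additionally check that this bijection respects the contracted-product group law, which follows from the fact that $\mathcal{R}$ commutes with the extension functors, already used in the proof of Lemma \ref{torbij}. This yields
\[
\mathrm{PHS}_{\sigma}(G/X)\cong \mathrm{PHSh}_{\sigma}(\mathcal{R}(G)/\mathbf{F}(X)),
\]
as pointed sets in general, and as abelian groups when $G$ is commutative.

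Next I would apply Theorem \ref{lphsthm} to the left difference sheaf of groups $\mathcal{G}=\mathcal{R}(G)$ on $\mathbf{F}(X)$. This gives an isomorphism of pointed sets $\mathrm{PHSh}_{\sigma}(\mathcal{R}(G)/\mathbf{F}(X))\cong \check{H}^1_{\sigma}(\mathbf{F}(X),\mathcal{R}(G))$, which is an isomorphism of abelian groups precisely when $\mathcal{R}(G)$ is commutative, i.e. when $G$ is commutative. Composing with the identification of the first paragraph establishes the first assertion of the theorem.

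For the second (commutative) assertion I would feed the output through the comparison map $\alpha$ of Theorem \ref{cechtoder2},
\[
\alpha:\check{H}^1_{\sigma}(\mathbf{F}(X),\mathcal{R}(G))\ra H^1_{\sigma}(\mathbf{F}(X),\mathcal{R}(G)),
\]
which is always an isomorphism of abelian groups in cohomological degrees $0$ and $1$. Composing all the maps produced so far gives the desired group isomorphism.

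The only non-formal point — hence the step I expect to be the main obstacle — is making sure the whole chain is compatible with the extra structure at each stage: that base points (trivial torsors, zero cohomology classes) are matched throughout, and that in the commutative case the various group laws (contracted product of torsors, the group operation on $\mathrm{PHSh}$, addition of \v{C}ech cocycles, addition in sheaf cohomology) are carried to one another. Most of this is already packaged into the functoriality statements on hand (Lemma \ref{clphs}(1)--(2), the naturality built into Lemma \ref{torbij}, and the functoriality of the difference \v{C}ech bicomplex underlying Theorem \ref{lphsthm} and Theorem \ref{cechtoder2}), so I expect it to reduce to a routine diagram chase; but it is the portion that genuinely requires care.
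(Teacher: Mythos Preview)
Your proposal is correct and follows exactly the same approach as the paper, which simply cites Lemma \ref{torbij}, Theorem \ref{lphsthm}, and Theorem \ref{cechtoder2} in one line. Your extra care about base points and compatibility of group structures is a welcome elaboration but introduces no new ideas beyond the paper's own argument.
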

\begin{remark}\label{ftoetoz}
If $G$ is a smooth commutative quasi-projective group scheme over $X$, then we can replace the flat site $\mathbf{F}(X)$ in the theorem above with the \'{e}tale site $\mathbf{E}(X)$, see \cite[Theorem III.3.19]{milne1etale}. Specializing further, if $G=\gm\times X$, then we can replace the flat site $\mathbf{F}(X)$ with the Zariski site $\mathbf{Z}(X)$, see \cite[Proposition III.4.9]{milne1etale}.
\end{remark}
By Proposition \ref{asphs} and Theorem \ref{dtiso}, we get the following.
\begin{prop}\label{sesas}
	There exists a short exact sequence (see Example \ref{asexam}(2)):
	\[
	0\ra \mathrm{AS}(G,\si)\ra \mathrm{PHS}_{\sigma}(G/X)\ra \mathrm{PHS}_G(X)^{\sigma_T}\ra 0.
	\]
\end{prop}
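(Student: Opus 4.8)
The plan is to specialize Proposition \ref{asphs} to the left difference sheaf of groups $\mathcal{G}:=\mathcal{R}(G)$ on the big flat site $\mathbf{F}(X)$, equipped with the left difference structure $\phi_G$ recalled at the beginning of Section \ref{contor}, and then translate each of the three terms of the resulting exact sequence from sheaf torsors to scheme torsors using the dictionary already set up. Applying Proposition \ref{asphs} gives the short exact sequence
\[
1\ra \mathrm{AS}\left(\mathcal{R}(G),\sigma_{\mathcal{R}(G)}\right)\ra \mathrm{PHSh}_{\sigma}(\mathcal{R}(G)/\mathbf{F}(X))\ra \mathrm{PHSh}(\mathcal{R}(G)/\mathbf{F}(X))^{\sigma_T}\ra 1 .
\]

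First I would handle the middle term: by Lemma \ref{torbij} (which is exactly the difference refinement of the bijection $\mathcal{R}\colon\mathrm{PHS}(G/X)\to\mathrm{PHSh}(\mathcal{R}(G)/\mathbf{F}(X))$, available because $G$ is assumed to satisfy the hypotheses of \cite[Theorem III.4.3]{milne1etale}) we have $\mathrm{PHSh}_{\sigma}(\mathcal{R}(G)/\mathbf{F}(X))\cong\mathrm{PHS}_{\sigma}(G/X)$; this is the content of Theorem \ref{dtiso}. For the quotient term, the non-difference bijection $\mathcal{R}$ identifies $\mathrm{PHSh}(\mathcal{R}(G)/\mathbf{F}(X))$ with $\mathrm{PHS}(G/X)$, and one must check that under this identification the self-functor $\sigma_T$ of $\mathrm{TSh}(\mathcal{R}(G)/\mathbf{F}(X))$ corresponds to the self-functor $P\mapsto\sigma_{\tau}(P)=\tau_*({}^{\sigma}P)$ of $\mathrm{Tors}(G/X)$; but this compatibility is precisely the identity $\mathcal{R}(\tau)_*(\mathcal{R}({}^{\sigma}P))=\left(\widetilde{\sigma}_{\mathcal{R}(G)}\right)_*(\sigma^*(\mathcal{R}(P)))$ established in the proof of Lemma \ref{torbij}, so it passes to isomorphism classes and gives $\mathrm{PHSh}(\mathcal{R}(G)/\mathbf{F}(X))^{\sigma_T}\cong\mathrm{PHS}(G/X)^{\sigma_T}$ (with the abuse of writing $\sigma_T$ for both functors). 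Finally, for the kernel: $\Gamma(\mathcal{R}(G))=\mathcal{R}(G)(X)=G(X)$, and since $\Gamma\circ\sigma_*=\Gamma$, the description of $\phi_G$ in Section \ref{igs} shows that $\Gamma(\sigma_{\mathcal{R}(G)})\colon G(X)\to G(X)$ is the map $G(\sigma)$ from Example \ref{asexam}(2); hence $\mathrm{AS}(\mathcal{R}(G),\sigma_{\mathcal{R}(G)})=\mathrm{AS}(G(X),G(\sigma))=\mathrm{AS}(G,\sigma)$.

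Putting these three identifications together converts the sequence above into the asserted one. The only point requiring care beyond this bookkeeping is the compatibility of the identifications with the maps in the sequence -- that the kernel inclusion of Proposition \ref{asphs} becomes the natural map sending an element of $G(X)$ to the difference structure it defines on the trivial $G$-torsor, and that the surjection becomes forgetting the difference structure. Since all the identifying bijections ($\mathcal{R}$, Lemma \ref{torbij}, and $\Gamma(\mathcal{R}(G))=G(X)$) are natural, this is a routine diagram chase and I do not expect a genuine obstacle; the main thing is simply to be sure that it is Lemma \ref{torbij}, rather than only the non-difference bijection $\mathcal{R}$, that is used to make the middle column commute with $\sigma_T$.
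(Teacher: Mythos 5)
Your proposal is correct and follows exactly the paper's route: the paper derives Proposition \ref{sesas} directly from Proposition \ref{asphs} together with the torsor dictionary of Theorem \ref{dtiso} (i.e.\ Lemma \ref{torbij} and the bijection $\mathcal{R}$), which is precisely the specialization and term-by-term translation you carry out. Your identification of the kernel as $\mathrm{AS}(G(X),G(\sigma))$ and of the quotient via the compatibility of $\sigma_T$ with $\sigma_{\tau}$ matches the paper's (much terser) argument.
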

\begin{example}\label{isocoho}
Let $(\ka,s)$ be a difference field and $G$ be a group scheme over $\ka$ defined over $\ka^s$. Then we have:
$$H^1_{\sigma}(\ka,G)\cong \check{H}^1_{\sigma}(\mathbf{F}(\spec(\ka)),\mathcal{R}(G)),$$
where $H^1_{\sigma}(\ka,G)$ is the ``difference Galois cohomology'' introduced in \cite{BW}.

In the case when $G$ is commutative, quasi-projective and smooth, we also have:
\begin{IEEEeqnarray*}{rCl}
H^1_{\sigma}(\ka,G) & \cong & H^1_{\sigma}(\mathbf{F}(\spec(\ka)),\mathcal{R}(G)) \\
 &\cong & H^1_{\sigma}(\mathbf{E}(\spec(\ka)),\mathcal{R}(G)) \\
 &\cong & H^1_{\sigma-\mathrm{top}}\left((\gal(\ka),\phi),(\Psi(\mathcal{F}),\Psi(\widetilde{\sigma_\mathcal{F}}))\right),
\end{IEEEeqnarray*}
where the second isomorphism comes from Remark \ref{ftoetoz} and the last one (as well as the description of the last object) from Example \ref{sreptoet}.
Therefore, in the case when $G$ is commutative, quasi-projective and smooth, we have the following short exact sequence:
$$0\ra G(\ka)_{\sigma}\ra H^1_{\sigma}(\ka,G) \ra H^1(\ka,G)^{\sigma}\ra 0,$$
where $H^1(\ka,G)$ is the usual Galois cohomology, and the action of $\sigma$ on the Galois cohomology is induced by the difference structure explained in Section \ref{etaleexam}. In Theorem \ref{wibdesc}, we give a two-fold generalization of the above exact sequence: to an arbitrary $n>0$ (instead of $n=1$), and to an arbitrary difference group scheme $(G,\sigma_G)$ (see Remark \ref{remdifsch}).
\end{example}
The above general example nicely specifies to particular situations, and it allows explicit descriptions of difference torsors. We provide such a description in the case of the difference torsors from Example \ref{exbw1}.
\begin{example}\label{exbw1calc}
We consider the group scheme $\mu_2$ (second roots of unity) over the field $\ka$ such that $\mathrm{char}(\ka)\neq 2$. Using Example \ref{isocoho}, we obtain the following short exact sequence:
$$0\to \mu_2(\ka)_{\sigma}\to H^1_{\sigma}(\ka,\mu_2) \to H^1(\ka,\mu_2)^{\sigma}\to 0.$$
By the classical description of the Galois cohomology of $\mu_2$ (see e.g. \cite[Proposition III.4.11]{milne1etale}), we get:
$$0\to \Zz/2\Zz \to H^1_{\sigma}(\ka,\mu_2) \to \left(\ka^*/(\ka^*)^2\right)^{\sigma}\to 0,$$
which coincides with the description from \cite[Example 5.3]{BW}.
\end{example}
\subsection{Difference bundles and difference Picard group}\label{secdbdpg}
In this subsection, we specialize the situation from Section \ref{contor} further, that is we assume:
$$G=\gl_{n,X}:=X\times \gl_n.$$
We also let $\mathbb{G}_{m,X}$ stand for $\mathrm{GL}_{1,X}$.

We recall several classical facts and definitions. A sheaf of $\mathcal{O}_X$-modules $\mathcal{F}$ is locally trivial in $\mathbf{Z}(X)$ if and only if $\mathcal{F}_{\mathbf{E}}$ is locally trivial in $\mathbf{E}(X)$ if and only if $\mathcal{F}_{\mathbf{F}}$ is locally trivial in $\mathbf{F}(X)$ (see Section \ref{shmodsec} for the notation $\mathcal{F}_{\mathbf{E}},\mathcal{F}_{\mathbf{F}}$). We sometimes call such sheaves \emph{vector bundles} on $X$. A vector bundle is of \emph{rank $n$}, if it is locally isomorphic to $\mathcal{O}_X^n$ in the Zariski topology. We denote the category of vector bundles of rank $n$ by $\mathrm{Vect}^n(X)$. By the associated fiber bundle construction (see e.g. the paragraph just after \cite[Exp. XI Corollaire 4.3]{sga1}), we have the following functor (we denote $\mathrm{Tors}(\gl_{n,X}/X)$ just by $\mathrm{Tors}(\gl_{n,X})$ and similarly for PHS):
$$\mathrm{Ass}: \mathrm{Tors}(\gl_{n,X})\ra \mathrm{Vect}^n(X),$$
which identifies the category of $\gl_{n,X}$-torsors in $\mathbf{Z}(X)$
with the grupoid of isomorphisms of  vector bundles of rank $n$ on $X$.
We denote the set of isomorphism classes of vector bundles of rank $n$ on $X$ by $\mathrm{Bun}^n(X)$. In the special case of $n=1$, we have $\mathrm{Pic}(X)=\mathrm{Bun}^1(X)$ and this set has a natural structure of an abelian group
with respect to the tensor product. Therefore, we have an isomorphism of pointed sets:
\[
\mathrm{PHS}(\gl_{n,X})\cong \mathrm{Bun}^n(X),
\]
and for $n=1$, we have an isomorphism of abelian groups:
\[
\mathrm{PHS}(\mathbb{G}_{m,X})\cong \mathrm{Pic}(X).
\]
We introduce now the difference counterparts. We advice the reader to recall from Section \ref{shmodsec} the notions of left difference sheaves of $\mathcal{O}_X$-modules and the tensor product of them. We also recall that $\si^{\diamond}$ denotes the inverse image in the category of $\mathcal{O}_X$-modules.
\begin{definition}\label{defldb}
A \emph{left difference bundle} is a left difference sheaf $(E,\sigma_E)$ such that $E$ is a locally free sheaf of $\mathcal{O}_X$-modules and we have
$$\widetilde{\si_E}: \si^{\diamond}(E)\cong E.$$
We denote the category of left difference bundles by $\mathrm{Vect}^n_{\sigma}(X)$, and the corresponding set of isomorphism classes by $\mathrm{Bun}^n_{\sigma}(X)$.
\end{definition}
We obtain the following.
\begin{prop}\label{obvrem}
We have the following isomorphism of pointed sets:
$$\mathrm{PHS}_{\sigma}(\gl_{n,X})\cong \mathrm{Bun}_{\sigma}^n(X).$$
\end{prop}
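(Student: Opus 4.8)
The plan is to promote the classical associated-bundle equivalence $\mathrm{Ass}\colon\mathrm{Tors}(\gl_{n,X})\to\mathrm{Vect}^n(X)$ to the difference setting and then pass to isomorphism classes. Note first that $\gl_{n,X}=X\times\gl_n$ is defined over constants (take the structure morphism $t\colon X\to F$ with $F$ terminal, so that $\sigma\circ t=t$ automatically and $X\times_F\gl_{n,F}=\gl_{n,X}$), so Lemma \ref{defoverc} supplies the canonical isomorphism $\tau\colon{}^{\sigma}\gl_{n,X}\cong\gl_{n,X}$ and Definition \ref{defdgt} applies verbatim: a difference $\gl_{n,X}$-torsor is a pair $(P,\sigma_P)$ with $P$ a $\gl_{n,X}$-torsor and $\sigma_P\colon P\to\sigma_\tau(P)=\tau_*({}^{\sigma}P)$ an isomorphism of torsors (every torsor morphism being an isomorphism). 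Recall also from Section \ref{secdbdpg} that for $\gl_{n,X}$ local triviality is the same in $\mathbf{Z}(X)$, $\mathbf{E}(X)$ and $\mathbf{F}(X)$, and $\sigma^{\diamond}$ is compatible with all three, so the ambient site plays no role in what follows.

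The key step is to establish a natural isomorphism
\[\mathrm{Ass}\bigl(\sigma_\tau(P)\bigr)\;\cong\;\sigma^{\diamond}\bigl(\mathrm{Ass}(P)\bigr)\]
of rank-$n$ bundles. Since the associated-fibre-bundle construction commutes with arbitrary base change, $\mathrm{Ass}({}^{\sigma}P)=\mathrm{Ass}(\sigma^{*}P)\cong\sigma^{\diamond}(\mathrm{Ass}(P))$ as a bundle equipped with its ${}^{\sigma}\gl_n$-action; and extending the structure group along the constant isomorphism $\tau$ leaves the underlying $\mathcal{O}_X$-module untouched, merely re-identifying the acting group on the fibre $\mathcal{O}_X^{n}$. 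Unwinding all of these identifications ---those coming from $\tau$, from the adjunction $\sigma^{\diamond}\dashv\sigma_*$ in the category of $\mathcal{O}_X$-modules, and from the compatibility of $\sigma^{\diamond}$ with $\mathrm{Ass}$--- so that everything matches the description of a left difference bundle through $\widetilde{\sigma_E}$ is the only genuine content of the proof; it is a diagram chase, but it is the step I expect to require the most care.

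Granting the compatibility, the correspondence is immediate. Given $(P,\sigma_P)$, set $E:=\mathrm{Ass}(P)$ and let $\widetilde{\sigma_E}\colon\sigma^{\diamond}(E)\to E$ be $\mathrm{Ass}(\sigma_P)^{-1}$ followed by the inverse of the isomorphism above; it is an isomorphism, and its adjunct $\sigma_E\colon E\to\sigma_*(E)$ makes $(E,\sigma_E)$ a left difference bundle in the sense of Definition \ref{defldb}. Conversely, since $\mathrm{Ass}$ is an equivalence of groupoids, any left difference bundle is isomorphic to one of this form, the datum $\widetilde{\sigma_E}$ being transported back through $\mathrm{Ass}$ and the compatibility isomorphism to produce $\sigma_P$. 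Moreover a torsor morphism $\alpha\colon P\to P'$ satisfies $\sigma_{P'}\circ\alpha=\sigma_\tau(\alpha)\circ\sigma_P$ exactly when $\mathrm{Ass}(\alpha)$ intertwines $\widetilde{\sigma_E}$ and $\widetilde{\sigma_{E'}}$, by the naturality of the compatibility isomorphism. Hence $\mathrm{Ass}$ restricts to an equivalence between the groupoid of difference $\gl_{n,X}$-torsors and the groupoid of left difference bundles of rank $n$; taking isomorphism classes yields the asserted bijection, which is a map of pointed sets since the trivial torsor with $\sigma_P=\mathrm{id}$ corresponds to $(\mathcal{O}_X,\sigma^{\sharp})^{\oplus n}$.
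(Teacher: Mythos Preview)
Your proposal is correct and follows exactly the paper's approach: the paper's one-line proof is that ``the Ass-functor takes $\sigma_{\tau}$ to $\sigma^{\diamond}$'', which is precisely your natural isomorphism $\mathrm{Ass}(\sigma_\tau(P))\cong\sigma^{\diamond}(\mathrm{Ass}(P))$, after which both you and the paper simply match Definitions~\ref{defdgt} and~\ref{defldb}. One tiny slip: in defining $\widetilde{\sigma_E}$ you wrote ``$\mathrm{Ass}(\sigma_P)^{-1}$ followed by the inverse of the isomorphism above'', but the domains force the opposite order of composition; the intent is clear and the argument is unaffected.
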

\begin{proof}
Since the Ass-functor takes $\sigma_{\tau}$ to $\sigma^{\diamond}$,
we obtain the result using Definitions \ref{defdgt} and \ref{defldb}.
\end{proof}

We focus now on the (still undefined) difference Picard group. The next result says that invertible left difference sheaves of $\mathcal{O}_X$-modules are the same as difference vector bundles of rank $1$.
\begin{lemma}\label{invdifm}
Let $(\mathcal{F},\sigma_{\mathcal{F}})$ be a left difference sheaf of $\mathcal{O}_X$-module. Then $(\mathcal{F},\sigma_{\mathcal{F}})$ is \emph{invertible}, that is there is a left difference sheaf of $\mathcal{O}_X$-modules $(\mathcal{G},\sigma_{\mathcal{G}})$ such that (see Example \ref{moexam})
$$(\mathcal{F},\sigma_{\mathcal{F}})\otimes (\mathcal{G},\sigma_{\mathcal{G}})\cong (\mathcal{O}_X,s^{\sharp})$$
if and only if the following two conditions hold:
\begin{enumerate}
  \item the sheaf of $\mathcal{O}_X$-modules $\mathcal{F}$ is invertible;
  \item the adjoint map
$$\widetilde{\sigma}_{\mathcal{F}}:\sigma^{\diamond}(\mathcal{F})\ra \mathcal{F}$$
is an isomorphism.
\end{enumerate}
\end{lemma}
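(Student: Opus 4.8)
The plan is to prove the two implications separately: the forward one by applying forgetful functors together with a Zariski-local computation, and the converse one by exhibiting the dual sheaf, equipped with a suitable difference structure, as an inverse.

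First I would dispose of the forward implication. Suppose $(\mathcal{F},\sigma_{\mathcal{F}})$ admits an inverse $(\mathcal{G},\sigma_{\mathcal{G}})$, so that $(\mathcal{F},\sigma_{\mathcal{F}})\otimes(\mathcal{G},\sigma_{\mathcal{G}})\cong(\mathcal{O}_X,s^{\sharp})$. Forgetting the difference structure and recalling from Section~\ref{shmodsec} that this tensor product lies over the ordinary tensor product of $\mathcal{O}_X$-modules, we get $\mathcal{F}\otimes\mathcal{G}\cong\mathcal{O}_X$, whence both $\mathcal{F}$ and $\mathcal{G}$ are invertible; this is condition~(1). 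For condition~(2) I would unwind the definition of the difference structure on a tensor product: since that structure is the composite $\sigma^{\diamond}(\mathcal{F}\otimes\mathcal{G})\cong\sigma^{\diamond}(\mathcal{F})\otimes\sigma^{\diamond}(\mathcal{G})\xrightarrow{\widetilde{\sigma}_{\mathcal{F}}\otimes\widetilde{\sigma}_{\mathcal{G}}}\mathcal{F}\otimes\mathcal{G}$, the existence of the difference isomorphism with $(\mathcal{O}_X,s^{\sharp})$ (whose adjoint form is the canonical isomorphism $\widetilde{s^{\sharp}}\colon\sigma^{\diamond}(\mathcal{O}_X)\xrightarrow{\sim}\mathcal{O}_X$) forces $\widetilde{\sigma}_{\mathcal{F}}\otimes\widetilde{\sigma}_{\mathcal{G}}$ to be an isomorphism. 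Being an isomorphism is a Zariski-local question, so I would restrict to an affine open $\spec(R)$ small enough to trivialize simultaneously the four invertible sheaves $\mathcal{F},\mathcal{G},\sigma^{\diamond}(\mathcal{F}),\sigma^{\diamond}(\mathcal{G})$; there $\widetilde{\sigma}_{\mathcal{F}}$ and $\widetilde{\sigma}_{\mathcal{G}}$ are multiplication by elements $a,b\in R$, and invertibility of $\widetilde{\sigma}_{\mathcal{F}}\otimes\widetilde{\sigma}_{\mathcal{G}}$ means $ab\in R^{\times}$, hence $a\in R^{\times}$. Thus $\widetilde{\sigma}_{\mathcal{F}}$ is an isomorphism.

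For the converse, assume (1) and (2) and set $\mathcal{G}:=\mathcal{F}^{\vee}=\mathcal{H}om_{\mathcal{O}_X}(\mathcal{F},\mathcal{O}_X)$, so evaluation yields $\mathcal{F}\otimes\mathcal{G}\cong\mathcal{O}_X$ as $\mathcal{O}_X$-modules. Since $\mathcal{F}$ is finite locally free by (1), the functor $\sigma^{\diamond}$, which commutes with $\otimes$ by Section~\ref{shmodsec}, also commutes with passing to the dual, so $\sigma^{\diamond}(\mathcal{G})\cong(\sigma^{\diamond}\mathcal{F})^{\vee}$; and by (2) the transpose $\widetilde{\sigma}_{\mathcal{F}}^{\vee}\colon\mathcal{F}^{\vee}\to(\sigma^{\diamond}\mathcal{F})^{\vee}$ is an isomorphism. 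I would then equip $\mathcal{G}$ with the difference structure
\[
\widetilde{\sigma}_{\mathcal{G}}\colon\sigma^{\diamond}(\mathcal{G})\cong(\sigma^{\diamond}\mathcal{F})^{\vee}\xrightarrow{\ (\widetilde{\sigma}_{\mathcal{F}}^{\vee})^{-1}\ }\mathcal{F}^{\vee}=\mathcal{G},
\]
and verify that the evaluation isomorphism is a morphism of left difference sheaves of $\mathcal{O}_X$-modules $(\mathcal{F},\sigma_{\mathcal{F}})\otimes(\mathcal{G},\sigma_{\mathcal{G}})\to(\mathcal{O}_X,s^{\sharp})$. In adjoint form this is the commutativity of the square relating $\widetilde{\sigma}_{\mathcal{F}}\otimes\widetilde{\sigma}_{\mathcal{G}}$, the evaluation maps, and $\widetilde{s^{\sharp}}$, and after feeding in the naturality of the canonical isomorphisms for $\sigma^{\diamond}$ it reduces to the elementary identity that, for an isomorphism $\phi\colon V\to W$ of finite locally free $\mathcal{O}_X$-modules, the composite $\mathrm{ev}_W\circ(\phi\otimes(\phi^{\vee})^{-1})$ equals $\mathrm{ev}_V$. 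This produces the required inverse.

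The step I expect to be the main obstacle is this last compatibility check in the converse direction: one must organize a diagram chase through several canonical isomorphisms at once — the monoidality of $\sigma^{\diamond}$ for both $\otimes$ and duality, and the identification $\widetilde{s^{\sharp}}\colon\sigma^{\diamond}(\mathcal{O}_X)\cong\mathcal{O}_X$ — and confirm that, with the specific choice of $\widetilde{\sigma}_{\mathcal{G}}$ above, they assemble into one commuting square. Everything else is routine bookkeeping.
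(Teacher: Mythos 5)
Your proof is correct and takes essentially the same route as the paper's: in the forward direction you reduce invertibility of $\widetilde{\sigma}_{\mathcal{F}}$ to the observation that a local scalar whose product with another scalar is a unit must itself be a unit (the paper phrases this on stalks rather than on simultaneously trivializing affine opens, a cosmetic difference), and in the converse direction you equip the dual sheaf $\mathcal{H}om_{\mathcal{O}_X}(\mathcal{F},\mathcal{O}_X)$ with the difference structure induced by the inverse of $\widetilde{\sigma}_{\mathcal{F}}$, exactly as the paper does. Your write-up is merely more explicit about the final compatibility check, which the paper leaves implicit.
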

\begin{proof}
By the discussion at the beginning of this subsection, we are under the assumption $\mathbf{C}(X)=\mathbf{Z}(X)$ (Zariski topology).

Regarding the implication ``$\Rightarrow$'', it is clear that $\mathcal{F}$ is invertible. To show that the sheaf morphism $\widetilde{\sigma}_{\mathcal{F}}$ is an isomorphism, it is enough to show that it is an isomorphism on stalks. Take $x\in X$ and consider
$$\left(\widetilde{\sigma}_{\mathcal{F}}\right)_x:\sigma^{\diamond}(\mathcal{F})_x\ra \mathcal{F}_x.$$
Since the stalk over $x$ of any invertible sheaf is isomorphic (as an $\mathcal{O}_{X,x}$-module) to $\mathcal{O}_{X,x}$, the map $\left(\widetilde{\sigma}_{\mathcal{F}}\right)_x$ corresponds to a scalar, which we denote by $r_x\in \mathcal{O}_{X,x}$. The map $\left(\widetilde{\sigma}_{\mathcal{F}}\right)_x$ is an isomorphism if and only if $r_x\in (\mathcal{O}_{X,x})^*$. Let $\left(\widetilde{\sigma}_{\mathcal{G}}\right)_x$ correspond to the scalar $s_x\in \mathcal{O}_{X,x}$. Since
$$\widetilde{s^{\sharp}}=\id_{\mathcal{O}_X},$$
and $\id_{\mathcal{O}_{X,x}}$ obviously corresponds to $1_{\mathcal{O}_{X,x}}$, we get that $r_xs_x=1_{\mathcal{O}_{X,x}}$, and $\left(\widetilde{\sigma}_{\mathcal{F}}\right)_x$ is an isomorphism indeed.

For the implication ``$\Leftarrow$'', we recall the inverse of a sheaf $\mathcal{F}$ in the usual Picard group is given
as $\mathrm{Hom}_{\mathcal{O}_X}(\mathcal{F},\mathcal{O}_X)$, hence
$\widetilde{\sigma_{\mathcal{F}}}^{-1}$ induces the structure of a left difference sheaf
on $\mathcal{F}^{-1}$.
\end{proof}
\begin{definition}
We define the \emph{difference Picard group} of $(X,\sigma)$, denoted $\mathrm{Pic}_{\sigma}(X)$, as the set of isomorphism classes of invertible left difference sheaves of $\mathcal{O}_X$-modules with the group operation being the tensor product.
\end{definition}
\begin{remark}\label{rempic}
We note here two facts about the difference Picard group.
\begin{enumerate}
\item By Lemma \ref{invdifm}, we obtain:
$$\mathrm{Pic}_{\sigma}(X)=\mathrm{Bun}^1_{\sigma}(X).$$

\item By Theorem \ref{dtiso} and Remark \ref{ftoetoz}, we get the following isomorphisms of abelian groups:
\begin{IEEEeqnarray*}{rCl}
\mathrm{Pic}_{\sigma}(X) & \cong & H^1_{\sigma}\left(\mathbf{F}(X),\mathcal{R}(\mathbb{G}_{m,X})\right) \\
 &\cong & H^1_{\sigma}\left(\mathbf{E}(X),\mathcal{R}(\mathbb{G}_{m,X})\right) \\
 &\cong & H^1_{\sigma}\left(\mathbf{Z}(X),\mathcal{R}(\mathbb{G}_{m,X})\right).
\end{IEEEeqnarray*}
\end{enumerate}
\end{remark}
Using Remark \ref{rempic}(2), Proposition \ref{sesas} specializes to the following.
\begin{theorem}\label{difpicthm}
We have the following short exact sequence:
\[
0\ra \mathrm{AS}(\mathbb{G}_{m,X},\si)\ra \mathrm{Pic}_{\sigma}(X)\ra \mathrm{Pic}(X)^{\sigma_T}\ra 0.
\]
\end{theorem}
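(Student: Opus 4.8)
The plan is to obtain this statement as the specialization of Proposition \ref{sesas} to the group scheme $G=\mathbb{G}_{m,X}=\mathrm{GL}_{1,X}$, translating each of the three terms of that short exact sequence into its Picard-theoretic counterpart. Recall that Proposition \ref{sesas} furnishes, for a flat group scheme $G$ over $X$ which is defined over constants (and satisfies the running hypotheses of Section \ref{contor}), the short exact sequence
\[
0\ra \mathrm{AS}(G,\si)\ra \mathrm{PHS}_{\sigma}(G/X)\ra \mathrm{PHS}_G(X)^{\sigma_T}\ra 0;
\]
since $\mathbb{G}_{m,X}=X\times \gm$ is affine, smooth and flat over $X$ and manifestly defined over constants (it is even defined over $\Zz$), this sequence applies to it verbatim.

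It then remains to identify the three terms with the ones appearing in the statement. The left-hand term $\mathrm{AS}(\mathbb{G}_{m,X},\si)$ is already in the desired form (this is exactly the notation of Example \ref{asexam}(2)). For the middle term, Proposition \ref{obvrem} with $n=1$ gives $\mathrm{PHS}_{\sigma}(\mathbb{G}_{m,X})\cong \mathrm{Bun}^1_{\sigma}(X)$, which equals $\mathrm{Pic}_{\sigma}(X)$ by Remark \ref{rempic}(1); alternatively, one may read this identification off Remark \ref{rempic}(2) directly. For the right-hand term, the isomorphism of abelian groups $\mathrm{PHS}(\mathbb{G}_{m,X})\cong \mathrm{Pic}(X)$ was recalled at the beginning of Section \ref{secdbdpg}.

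The only point that needs a word of justification is that these identifications are compatible with the operator $\sigma_T$, so that $\mathrm{PHS}_G(X)^{\sigma_T}$ indeed goes over to $\mathrm{Pic}(X)^{\sigma_T}$. Here one uses that the associated-bundle equivalence $\mathrm{Ass}$ intertwines $\sigma_{\tau}$ with $\sigma^{\diamond}$, exactly as observed in the proof of Proposition \ref{obvrem}; consequently the endomorphism of $\mathrm{Pic}(X)$ through which the invariants are taken is the one induced by $L\mapsto \sigma^{\diamond}(L)$, which is precisely what $\sigma_T$ means on the Picard group in this context. Substituting the three identifications into the displayed sequence — and passing to $\sigma_T$-invariants on the right-hand side — then yields the asserted short exact sequence. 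This compatibility check is the only (and essentially bookkeeping) obstacle in the argument.
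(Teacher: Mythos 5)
Your proposal is correct and follows essentially the same route as the paper, which derives the theorem by specializing Proposition \ref{sesas} to $G=\mathbb{G}_{m,X}$ and invoking Remark \ref{rempic} (together with $\mathrm{PHS}(\mathbb{G}_{m,X})\cong\mathrm{Pic}(X)$) to identify the terms. Your explicit check that the identifications intertwine $\sigma_T$ with $\sigma^{\diamond}$ via the $\mathrm{Ass}$-functor is a welcome piece of bookkeeping that the paper leaves implicit.
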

\subsubsection{Difference Picard group in the affine case}\label{secdpaffine}
We consider now the affine case which is a source of interesting examples. Let $(R,s)$ be a commutative difference ring. Then we have the notions of difference modules over $(R,s)$ and tensor products of them  (see Section \ref{shmodsec}). Therefore, we get a natural notion of an invertible difference module, and we obtain the group of their isomorphism classes, denoted $\mathrm{Pic}_{s}(R)$, with the group operation being the tensor product.

If $(M,s_M)$ is a difference module and $M\cong R^n$, then $s_M$ is given by a matrix $A\in M_n(R)$, and we will often write $s_A$ for $s_M$. We note below an easy result (originating from \cite[Fact 3.4]{KP4}), whose proof we leave to the reader.
\begin{lemma}\label{freesmod}
For $A,B\in M_n(R)$, we have:
$$(R^n,\sigma_A)\cong_{(R,\sigma)}(R^n,\sigma_B)\ \ \ \ \ \Longleftrightarrow\ \ \ \ \ \text{there is $C\in \gl_n(R)$ such that }BC=s(C)A.$$
\end{lemma}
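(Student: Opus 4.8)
The plan is to reduce the statement to the problem of computing when two morphisms of free difference modules are isomorphic, and then to track down what an isomorphism of difference modules actually is. Recall that a difference module $(M,s_M)$ consists of an $R$-module $M$ together with an $R$-linear map $s_M\colon M\to M^{(1)}$, equivalently (using that $M^{(1)}=R\ot_{s,R}M$ and $R$-linearity) a map which is $s$-semilinear as a map $M\to M$. An isomorphism $h\colon (R^n,\sigma_A)\to(R^n,\sigma_B)$ of difference modules is by definition an isomorphism $h\colon R^n\to R^n$ of $R$-modules — hence given by a matrix $C\in\gl_n(R)$ — which commutes with the difference structures, i.e. which makes the appropriate square commute. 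So the whole content of the lemma is to write out that commuting-square condition in terms of matrices.

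First I would fix the conventions precisely. Identify an $R$-linear endomorphism of $R^n$ with left multiplication by a matrix, so that $h(v)=Cv$ for $v\in R^n$. The difference structure $\sigma_A$ on $R^n$, viewed as an $s$-semilinear self-map, sends $v=(v_1,\dots,v_n)^{\mathrm t}$ to $A\cdot s(v)$, where $s(v)=(s(v_1),\dots,s(v_n))^{\mathrm t}$ is the entrywise application of $s$; likewise $\sigma_B(v)=B\cdot s(v)$. The morphism condition $\sigma_{R^n,B}\circ h = h^{(1)}\circ\sigma_{R^n,A}$ (the difference-module version of the condition in Definition \ref{defdifs}(2), transported through the equivalence with $\st$-modules in the affine case recalled in Section \ref{shmodsec}) unwinds, for every $v\in R^n$, to
$$B\cdot s\big(Cv\big) = C\cdot\big(A\cdot s(v)\big).$$
Since $s$ is a ring endomorphism applied entrywise, $s(Cv)=s(C)\,s(v)$, where $s(C)$ denotes the entrywise image of $C$. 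Thus the condition becomes $B\,s(C)\,s(v)=C\,A\,s(v)$ for all $v$. Taking $v$ to range over the standard basis vectors, $s(v)$ ranges over the standard basis vectors as well, so this holds for all $v$ if and only if $B\,s(C)=C\,A$ as matrices. Because $h$ is an isomorphism of $R$-modules exactly when $C\in\gl_n(R)$, this is precisely the asserted equivalence.

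The only genuine point requiring care — and the step I expect to be the main (minor) obstacle — is keeping the twisting conventions straight: one must check that the direction of the semilinearity and the placement of the matrix (left versus right multiplication, $s(C)A$ versus $As(C)$, and which of $A$, $B$ gets twisted) match the sign/side conventions fixed for $\sigma_*$ versus $\sigma^*$ and for $M^{(1)}$ in Section \ref{shmodsec}, and in particular agree with the form of the Artin--Schreier-type relation $g\cdot x=s(g)xg^{-1}$ appearing in Definition \ref{asdef}. With those conventions pinned down as above, the computation is the short one just given; this is exactly why the authors remark that the proof is left to the reader.
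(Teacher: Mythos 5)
Your overall strategy is the right one and is exactly what the authors intend (the paper gives no proof of this lemma, explicitly leaving it to the reader): an isomorphism of difference modules is an invertible $R$-linear map, hence a matrix $C\in\gl_n(R)$, intertwining the two $s$-semilinear operators, and the lemma is just the matrix transcription of that intertwining condition. The problem is that your transcription does not produce the identity in the statement. Under the conventions you fix ($h(v)=Cv$ and $\sigma_A(v)=A\,s(v)$ on column vectors), the condition $B\,s(C)\,s(v)=C\,A\,s(v)$ for all $v$ gives $B\,s(C)=C\,A$, i.e.\ $B=CA\,s(C)^{-1}$; the lemma asserts $BC=s(C)A$, i.e.\ $B=s(C)AC^{-1}$. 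For $n\geqslant 2$ these are two genuinely different twisted-conjugation relations: they correspond to one another under $A\mapsto A^{\mathrm{t}}$, $B\mapsto B^{\mathrm{t}}$, and one cannot pass from one to the other by substituting $C\mapsto C^{-1}$, since that substitution only exchanges the roles of $A$ and $B$ within the \emph{same} relation. So the closing claim that your identity ``is precisely the asserted equivalence'' is false as written; you have proved the lemma in a different matrix convention from the one the statement uses.

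The fix is exactly the convention check you flagged as ``the main (minor) obstacle'' but did not carry out. If the matrix of the semilinear operator is attached via right multiplication on row vectors, $\sigma_A(v)=s(v)A$ and $h(v)=vC$, the same computation gives $s(C)B=AC$; since $C$ is invertible and isomorphism is symmetric, replacing $C$ by $C^{-1}$ turns this into $BC=s(C)A$, which is the stated form. That this is the convention the paper intends can be confirmed internally: the orbit of $A$ under the relation $BC=s(C)A$ is $\{s(C)AC^{-1}\}$, matching the Artin--Schreier action $g\cdot x=s(g)xg^{-1}$ of Definition \ref{asdef}(2), and this is what makes the lemma mesh with the proofs of Proposition \ref{asphs} and Proposition \ref{picrseq}. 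Either redo the computation in that convention, or keep yours and add the explicit observation that the two statements are exchanged by transposing all matrices (so that the resulting classification is unaffected) --- but as it stands the last sentence of your argument asserts an identification that does not hold.
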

We easily obtain below a local version of Proposition \ref{asphs}.
\begin{prop}\label{picrseq}
We have the following exact sequence:
\[1\ra\mathrm{AS}(\gm(R),\gm(s))\ra \mathrm{Pic}_{s}(R)\ra {\rm Pic}(R)^s\ra 1.\]
\end{prop}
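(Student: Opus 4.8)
The plan is to mimic the proof of Proposition \ref{asphs}, but now in the affine, module-theoretic setting, using Lemma \ref{freesmod} as the concrete replacement for the cocycle manipulations. The relevant ``sheaf'' is the representable sheaf of the group scheme $\gm_{,R}=\spec(R)\times\gm$, whose global sections are $\gm(R)=R^*$, and the ``difference structure'' on it is induced by $s$; the point is that a left difference sheaf of $\mathcal{O}_X$-modules structure on a rank-one free module is exactly the data of the scalar $a\in R^*$ such that $\widetilde{\sigma}$ is multiplication by $a$, and Lemma \ref{freesmod} in the case $n=1$ tells us when two such structures are isomorphic.

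First I would define the obvious forgetful map $\mathrm{Pic}_s(R)\to \mathrm{Pic}(R)^s$ sending an invertible difference module $(M,s_M)$ to the isomorphism class of the underlying invertible $R$-module $M$; by Lemma \ref{invdifm} (its affine incarnation) this is well-defined, and its image lands in the $s$-invariants because the existence of the isomorphism $\widetilde{\sigma}_M\colon \sigma^{\diamond}(M)\cong M$ forces $[M]=s^*[M]$ in $\mathrm{Pic}(R)$. Surjectivity onto $\mathrm{Pic}(R)^s$ is the statement that every invertible $R$-module fixed by $s$ admits at least one difference structure: given $[M]$ with $\sigma^{\diamond}(M)\cong M$, any choice of such an isomorphism serves as $\widetilde{\sigma}_M$, so the map is onto. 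Then I would identify the kernel: a class in the kernel is represented by a difference structure on the trivial invertible module $R$ itself, i.e.\ by a choice of $a\in\gm(R)$ (the scalar by which $\widetilde{\sigma}$ acts), and by Lemma \ref{freesmod} with $n=1$ two such scalars $a,b$ give isomorphic difference modules precisely when there is $c\in\gm(R)$ with $b c = s(c) a$, i.e.\ $b = s(c)c^{-1} a$ — exactly the orbit relation of the action $c\cdot a = s(c)\,a\,c^{-1}$ defining $\mathrm{AS}(\gm(R),\gm(s))$ (which, $\gm(R)$ being commutative, is just $\coker(s-\id)$ on $\gm(R)=R^*$). Hence the kernel is identified with $\mathrm{AS}(\gm(R),\gm(s))$, and one checks this identification is a group homomorphism for the tensor-product structure, since tensoring rank-one difference modules multiplies the defining scalars.

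I would then observe that the sequence is exact in the middle as well: if $[(M,s_M)]$ maps to the trivial class in $\mathrm{Pic}(R)^s$, then $M\cong R$ as an $R$-module, and transporting $s_M$ through this isomorphism exhibits the class as coming from $\mathrm{AS}(\gm(R),\gm(s))$, which is precisely the image of the left-hand map. Exactness at the left end is the injectivity of $a\mapsto[(R,s_a)]$ modulo the Artin--Schreier relation, which is again immediate from Lemma \ref{freesmod}. Alternatively — and this is perhaps the cleanest route — one can simply invoke Proposition \ref{asphs} directly with $\mathcal{G}=\mathcal{R}(\gm_{,R})$ on the flat site of $\spec(R)$ together with the affine identifications $\mathrm{PHS}(\gm_{,R}/\spec R)\cong\mathrm{Pic}(R)$ and $\mathrm{PHS}_{\sigma}(\gm_{,R}/\spec R)\cong\mathrm{Pic}_s(R)$ (the latter via Theorem \ref{dtiso}, Proposition \ref{obvrem} and the affine case of Section \ref{shmodsec}), noting $\Gamma(\mathcal{R}(\gm_{,R}))=\gm(R)$; then the displayed sequence is literally the one from Proposition \ref{asphs}. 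I expect the only genuine point requiring care — the ``main obstacle,'' such as it is — to be the bookkeeping that matches the sheaf-theoretic action $g\cdot x = s(g)xg^{-1}$ from Definition \ref{asdef} with the module-isomorphism condition $BC = s(C)A$ from Lemma \ref{freesmod}, i.e.\ checking that ``isomorphism of rank-one free difference modules'' and ``same class in $\mathrm{PHS}_{\sigma}$'' really do coincide on the nose and that all identifications are compatible with tensor products; everything else is formal and, as the paper says, left to the reader.
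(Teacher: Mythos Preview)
Your proposal is correct and follows essentially the same approach as the paper: the paper's proof simply says ``Using Lemma \ref{freesmod} (to describe the kernel), the argument is the same as in the proof of Proposition \ref{asphs},'' which is exactly your main route of mimicking Proposition \ref{asphs} in the affine module-theoretic setting with Lemma \ref{freesmod} supplying the kernel computation. Your alternative of invoking Proposition \ref{asphs} directly via the sheaf-theoretic identifications is a legitimate shortcut, though the paper instead uses Proposition \ref{picrseq} together with Theorem \ref{difpicthm} and the Five Lemma to deduce the global-local comparison (Proposition \ref{2picsame}), so a direct module-level argument is what is intended here.
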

\begin{proof}
Using Lemma \ref{freesmod} (to describe the kernel), the argument is the same as in the proof of Proposition \ref{asphs}.
\end{proof}
Similarly as in the classical case, we have a local-global isomorphism of difference Picard groups.
\begin{prop}\label{2picsame}
We have the following isomorphism:
$$\mathrm{Pic}_{s}(R)\cong \mathrm{Pic}_{\sigma}(\spec(R)).$$
\end{prop}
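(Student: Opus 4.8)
The plan is to exhibit a functor from invertible difference $(R,s)$-modules to invertible left difference sheaves of $\mathcal{O}_{\spec(R)}$-modules, check that it is an equivalence of (Picard) groupoids, and observe that it is compatible with tensor products, so that it descends to a group isomorphism on isomorphism classes. The candidate functor is the ``associated sheaf'' construction already described in Section \ref{shmodsec}: given a difference $R$-module $(M,s_M)$ with $s_M\colon M\to M^{(1)}$, we form the quasi-coherent sheaf $\widetilde{M}$ on $X=\spec(R)$, and the identification $\sigma_*(\widetilde{M})\cong\widetilde{M^{(1)}}$ from \cite[Prop. II.5.2(d)]{ha} turns $\widetilde{s_M}$ into a morphism $\sigma_{\widetilde M}\colon\widetilde{M}\to\sigma_*(\widetilde{M})$, i.e.\ a left difference sheaf of $\mathcal{O}_X$-modules. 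Since we are working in the Zariski topology here (as in the discussion opening Section \ref{secdbdpg} and in Lemma \ref{invdifm}), there is nothing to extend to a bigger site.

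The first step is to recall that $M\mapsto\widetilde{M}$ is already an equivalence between $R$-modules and quasi-coherent $\mathcal{O}_X$-modules, and that under this equivalence the twist functor $M\mapsto M^{(1)}$ on the module side corresponds to $\sigma_*(-)$ on the sheaf side (this is exactly \cite[Prop. II.5.2(d)]{ha}, and it is how the difference structure was transported in Section \ref{shmodsec}). Hence the assignment $(M,s_M)\mapsto(\widetilde{M},\sigma_{\widetilde M})$ is an equivalence between the category of difference $R$-modules with quasi-coherent underlying module and the category of left difference sheaves of $\mathcal{O}_X$-modules with quasi-coherent underlying sheaf; this is a routine consequence of the classical equivalence together with functoriality of the comparison isomorphism $\sigma_*\widetilde{M}\cong\widetilde{M^{(1)}}$.

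The second step is to check that this equivalence restricts to the invertible objects on both sides. On the module side, $(M,s_M)$ is invertible (for the tensor product of difference modules recalled in Section \ref{secdpaffine}) iff $M$ is an invertible $R$-module and the adjoint map $s^{\diamond}(M)=M^{(1)}\to M$ (equivalently $s_M$ itself viewed appropriately) is an isomorphism — this is the affine, module-level avatar of Lemma \ref{invdifm}; indeed Lemma \ref{invdifm} already states precisely this characterization on the sheaf side: $(\mathcal F,\sigma_{\mathcal F})$ is invertible iff $\mathcal F$ is an invertible $\mathcal{O}_X$-module and $\widetilde{\sigma}_{\mathcal F}\colon\sigma^{\diamond}(\mathcal F)\to\mathcal F$ is an isomorphism. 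Since $\widetilde{M}$ is invertible iff $M$ is, and $\widetilde{\phantom{M}}$ carries $s^{\diamond}$ to $\sigma^{\diamond}$ and carries isomorphisms to isomorphisms (it is an equivalence), the two invertibility conditions match under the functor; therefore the equivalence of the previous step restricts to an equivalence between invertible difference $(R,s)$-modules and invertible left difference sheaves of $\mathcal{O}_X$-modules.

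The final step is to observe that the functor is monoidal: by \cite[Prop. II.5.2(b)]{ha} the functor $M\mapsto\widetilde M$ commutes with tensor products, and, as noted at the end of Section \ref{shmodsec}, this compatibility is compatible with the difference structures (the difference structure on a tensor product of difference modules is defined exactly so as to be carried to the difference structure on the tensor product of the associated sheaves, both being built from $\widetilde{s_M}\otimes\widetilde{s_N}$ via the canonical isomorphism $\sigma^{\diamond}(\mathcal F\otimes\mathcal G)\cong\sigma^{\diamond}(\mathcal F)\otimes\sigma^{\diamond}(\mathcal G)$). Hence passing to isomorphism classes we get a bijection $\mathrm{Pic}_s(R)\to\mathrm{Pic}_\sigma(\spec(R))$ which is a group homomorphism, and since it comes from an equivalence of categories it is an isomorphism. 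I do not expect a genuine obstacle here; the only point requiring a little care is the bookkeeping identifying the adjoint $\widetilde{\sigma}_{\widetilde M}\colon\sigma^{\diamond}(\widetilde M)\to\widetilde M$ with $\widetilde{s_M}$ under the comparison isomorphisms, so that Lemma \ref{invdifm}'s criterion transfers verbatim — that is the step I would write out in most detail.
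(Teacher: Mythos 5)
Your proof is correct, but it takes a genuinely different route from the paper's. You construct a monoidal equivalence between difference $(R,s)$-modules (with quasi-coherent underlying module) and quasi-coherent left difference sheaves on $\spec(R)$, check that it restricts to invertible objects, and read off the isomorphism of Picard groups. The paper instead only constructs the group homomorphism $\mathrm{Pic}_s(R)\to\mathrm{Pic}_{\sigma}(\spec(R))$ coming from the tensor-compatibility noted at the end of Section \ref{shmodsec}, and then proves it is an isomorphism by comparing the two Artin--Schreier short exact sequences already in hand --- Proposition \ref{picrseq} on the module side and Theorem \ref{difpicthm} on the sheaf side --- via the Five Lemma, using $\pic(R)\cong\pic(\spec(R))$ and $\mathcal{O}_{\spec(R)}(\spec(R))=R$ to identify the outer terms. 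The paper's argument is shorter because it piggybacks on the cohomological machinery and never needs full faithfulness or essential surjectivity of the associated-sheaf functor at the difference level; yours is independent of the torsor classification and yields the stronger conclusion that the functor is an equivalence on invertible objects. Two points of care in your write-up: the step that makes the difference-compatibility conditions on morphisms correspond is the naturality in $M$ of the comparison isomorphism $\sigma_*(\widetilde{M})\cong\widetilde{M^{(1)}}$, so that is indeed the part to write out; and you conflate $s^{\diamond}(M)$ (extension of scalars along $s$, the module analogue of $\sigma^{\diamond}$) with $M^{(1)}$ (restriction of scalars) --- these are adjoint, not equal --- though this slip is harmless, since invertibility transfers automatically under any monoidal equivalence without invoking the characterization of Lemma \ref{invdifm} on either side.
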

\begin{proof}
By the last comment in Section \ref{shmodsec}, we have a homomorphism
$$\mathrm{Pic}_{s}(R)\ra \mathrm{Pic}_{\sigma}(\spec(R)).$$
Since we have
$$\pic(R)\cong \pic(\spec(R)),\ \ \ \ \mathcal{O}_{\spec(R)}(\spec(R))=R,$$
we get the result by Theorem \ref{difpicthm}, Proposition \ref{picrseq} and the Five Lemma.
\end{proof}
The next result relates the difference Picard group of a difference field with the difference Galois cohomology from Example \ref{isocoho}. As mentioned in the Introduction, this was also our original observation which started this line of research.
\begin{cor}\label{lineres}
If $(\ka,s)$ is a difference field, then we have the following:
$$\gm(\ka)_s\cong \mathrm{Pic}_{s}(\ka)\cong H^1_{\sigma}\left(\ka,\gm\right).$$
\end{cor}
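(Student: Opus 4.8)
The plan is to chain together the identifications already established for difference Picard groups with the description of difference Galois cohomology, specializing to the case $R=\ka$ a field and $G=\gm$.

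First I would invoke Proposition \ref{2picsame} with $R=\ka$: since $\ka$ is in particular a commutative difference ring, we get an isomorphism $\mathrm{Pic}_{s}(\ka)\cong \mathrm{Pic}_{\sigma}(\spec(\ka))$, which takes care of the middle identification in the displayed chain. Next, Remark \ref{rempic}(2) (or, directly, Theorem \ref{dtiso} together with Remark \ref{ftoetoz}) gives $\mathrm{Pic}_{\sigma}(\spec(\ka))\cong H^1_{\sigma}(\mathbf{F}(\spec(\ka)),\mathcal{R}(\gm))$, and since $\gm=\gl_{1}$ is smooth, commutative and quasi-projective, Example \ref{isocoho} identifies this last group with $H^1_{\sigma}(\ka,\gm)$, the difference Galois cohomology. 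This yields the second isomorphism $\mathrm{Pic}_{s}(\ka)\cong H^1_{\sigma}(\ka,\gm)$.

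For the first isomorphism $\gm(\ka)_s\cong \mathrm{Pic}_{s}(\ka)$, I would apply Proposition \ref{picrseq} with $R=\ka$: it produces the exact sequence
\[
1\ra\mathrm{AS}(\gm(\ka),\gm(s))\ra \mathrm{Pic}_{s}(\ka)\ra \mathrm{Pic}(\ka)^s\ra 1.
\]
Since $\ka$ is a field, $\mathrm{Pic}(\ka)=0$, so the outer right term vanishes and the left map is an isomorphism. By Definition \ref{asdef}(1) we have $\mathrm{AS}(\gm(\ka),\gm(s))=\coker(\gm(s)-\id)=\gm(\ka)_s$, the coinvariants; here $\gm(s)$ is the endomorphism $x\mapsto s(x)$ of $\gm(\ka)=\ka^{*}$, so $\mathrm{AS}(\gm(\ka),\gm(s))$ is precisely the cokernel of the Artin--Schreier-type map $x\mapsto s(x)x^{-1}$. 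Assembling the three isomorphisms gives the stated chain.

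I do not expect any genuine obstacle here, as this corollary is a direct specialization of machinery fully developed earlier; the only point requiring a word of care is that all three sites (Zariski, \'etale, flat) give the same answer for $\gm$-torsors, which is exactly what Remark \ref{ftoetoz} provides, and that Proposition \ref{picrseq} applies verbatim since a field is a commutative difference ring with trivial ordinary Picard group. If anything, the mild subtlety is purely bookkeeping: making sure the isomorphism $\mathrm{Pic}_s(\ka)\cong H^1_\sigma(\ka,\gm)$ obtained via Proposition \ref{2picsame} and Remark \ref{rempic}(2) agrees with the one obtained by composing through Example \ref{isocoho}, but both are induced by the same representability functor $\mathcal{R}$, so they coincide.
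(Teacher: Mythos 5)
Your proposal is correct and follows exactly the route the paper intends: the corollary is stated without proof as an immediate consequence of Proposition \ref{picrseq} (with $\mathrm{Pic}(\ka)=0$ for a field, identifying $\mathrm{Pic}_{s}(\ka)$ with $\mathrm{AS}(\gm(\ka),\gm(s))=\gm(\ka)_s$), Proposition \ref{2picsame}, Remark \ref{rempic}(2), and Example \ref{isocoho}. Nothing is missing, and your remark about the compatibility of the identifications via the representability functor is a reasonable (if not strictly required) piece of bookkeeping.
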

It is clear that in the case of difference number fields our results should have a number-theoretic flavour. We give one example below.
\begin{example}\label{picex}
Let $L/K$ be a cyclic extension of number fields and let $s$ be a generator of the Galois group $G:=\gal(L/K)$. Then, $(\mathcal{O}_L,s)$ is a difference ring and we can compute its difference Picard group (which could be called the \emph{difference class group} of $(K,s)$) $\mathrm{Pic}_{s}(\mathcal{O}_L)$. We clearly have:
$$\mathrm{AS}(\gm(\mathcal{O}_L),s)=L^G\cap \gm(\mathcal{O}_L)=\gm(\mathcal{O}_K),$$
hence the exact sequence from Proposition \ref{picrseq} takes the following form:
\[1\ra\gm(\mathcal{O}_K)\ra \mathrm{Pic}_{s}(\mathcal{O}_L)\ra \mathrm{Cl}(L)^G\ra 1.\]
We would like to point out that the order of the group  $\mathrm{Cl}(L)^G$ (appearing in the short exact sequence above) is classically given by the
 \emph{Chevalley's ambiguous class number formula} (see e.g. \cite[Remark II.6.2.3]{grasclass}).
\end{example}

\section{Right difference sheaves}\label{secrds}
In this section, we develop a theory of right difference sheaves (pairs of the form $(\mcf,\si^{\mcf}: \mcf\to \si^*(\mcf))$, which is,  to some extent, parallel to theory of left difference sheaves from the previous sections. The current situation is much less smooth than the one considered before. The main reason for that seems to be the fact that the functor $\si^*$ does not have a left adjoint, which results in some loss of the flexibility, which we enjoyed in Section \ref{secleftcat}. In particular, we can hardly say anything on presheaves in the present context, since the relation between the right difference sheaves and the right difference presheaves is more complicated and hence less useful than the one in the left difference case.

Our motivation for considering the category of right difference sheaves is the fact that they provide a cohomological description of a much wider class of difference torsors (see Example \ref{mainrex}) than the one from Section \ref{contor}.

We still work under Assumption \ref{mainass}.
\subsection{Definitions and examples}\label{secrde}
In this subsection, we define right difference sheaves and describe our main motivating example.
\begin{definition}\label{defdifsr}
\begin{enumerate}
\item We call a sheaf $\mathcal{F}$ of abelian groups on $\mathbf{C}(X)$ together with a sheaf morphism
$$\sigma^{\mathcal{F}}: \mathcal{F} \ra \sigma^{*}(\mathcal{F}),$$
a {\it right difference sheaf} (of abelian groups on $\mathbf{C}(X)$).
	
\item  A morphism between right difference sheaves $(\mathcal{F},\sigma^{\mathcal{F}})$ and $(\mathcal{G},\sigma^{\mathcal{G}})$ is a sheaf morphism $\alpha:\mathcal{F} \to \mathcal{G}$ such that
    $$\sigma^{\mathcal{G}}\circ\alpha=\sigma^{*}(\alpha)\circ \sigma^{\mathcal{F}}.$$

\item The difference sheaves of abelian groups on $X$ with their morphisms form a category, which we denote $\shxr$.
\end{enumerate}
\end{definition}
Of course, any left difference sheaf $(\mcf,\si_{\mcf})$ for which the adjoint map $\widetilde{\si}_{\mcf}:\si^*({\mcf})\to\mcf$ is invertible naturally gives a right difference sheaf. Thus, for example, the constant sheaf $\Zz_X$ (see Example \ref{constex}) becomes a right difference sheaf as well.

The main source of new examples is provided by sheaves which are represented by differences group schemes. They yield right difference sheaves not coming from left difference sheaves, which is (as was already mentioned above) the main reason why the notion of a right difference sheaf deserves consideration despite all the difficulties.
\begin{remark}\label{remdifsch}
We comment below on a version of the notion of a difference group scheme, which we consider in this paper.
\begin{enumerate}
\item Let $(\ka,s)$ be a difference ring. A \emph{difference group scheme} is defined in \cite{Wib1} as a representable functor from the category of difference $(\ka,s)$-algebras to the category of groups (these objects are actually called ``affine difference algebraic groups'' in \cite{Wib1}). It is clear that a difference group scheme is represented by a pair $(G,\sigma_G)$, where $G$ is an affine group scheme over $\ka$ and $\sigma_G:G\to {}^{\sigma}G$ is a morphism of group schemes over $\ka$.


\item The definition from item $(1)$ above (or, rather, its interpretation from the last sentence there) naturally generalizes to an arbitrary difference base scheme $(X,\sigma)$ instead of the affine one $(\spec(\ka),\spec(s))$, and to arbitrary group schemes instead of the affine ones.  Therefore, when we say a   ``difference group scheme'' in this paper, we mean a pair $(G,\sigma_G)$, where $G$ is a group scheme over $X$ and $\sigma_G:G\to {}^{\sigma}G$ is a morphism of group schemes over $X$.

\item We also have a natural notion of a difference scheme over $(X,\sigma)$, which is a pair $(Y,\sigma_Y)$, where $Y$ is a scheme over $X$ and $\sigma_Y:Y\to {}^{\sigma}Y$ is a morphism of schemes over $X$.
\end{enumerate}
\end{remark}
We describe below our motivating example.
\begin{example}\label{mainrex}
Let $(G,\si_G)$ be a commutative difference group scheme. We assume that $G\in \mathrm{Ob}(\mathcal{C})$. Then $\si_G$ induces the following morphism of sheaves:
\[
\mathcal{R}(\si_G):\mathcal{R}(G)\ra \mathcal{R}({}^{\sigma}G)\cong \sigma^{*}(\mathcal{R}(G)),
\]
where the isomorphism above is $\phi_G^{-1}$ (the morphism $\phi_G$ is from Section \ref{igs}, and it is an isomorphism indeed by \cite[Remark II.3.1.(e)]{milne1etale} and the assumption $G\in \mathrm{Ob}(\mathcal{C})$). Therefore, $\mathcal{R}(G)$ has a natural structure of a right difference sheaf.
\end{example}

\subsection{Properties of the category of right difference sheaves}
Some structures on the category $\shxr$ can be borrowed from the category $\mathrm{Sh}(\mathbf{C}(X))$. Let
\[
o^{\sigma}: \shxr\ra \mathrm{Sh}(\mathbf{C}(X))
\]
be the forgetful functor. This functor was called $i_*$ in the left difference case, since it was induced by a morphism of sites. However, in the present context, we do not have a natural choice of a right difference site, hence we use $o^{\sigma}$ instead of $i_*$ to prevent a possible confusion.
\begin{prop}\label{improve2}
The  category $\shxr$ has the following  properties.
\begin{enumerate}
\item $\shxr$ has all   colimits and finite limits.

\item The functor $o^{\sigma}$  commutes with all   colimits and with finite limits.

\item $\shxr$ is an abelian category.

\item A sequence of difference sheaves
$$0\ra \mathcal{F}\ra \mathcal{G}\ra\mathcal{H}\ra 0$$
is exact if and only if the sequence of sheaves
$$0\ra o^{\sigma}(\mathcal{F})\ra o^{\sigma}(\mathcal{G})\ra o^{\sigma}(\mathcal{H})\ra 0$$
is exact.

\item $\shxr$ satisfies the axiom AB5, that is (in our context): filtered colimits of exact sequences in $\shxr$ are exact.
\end{enumerate}
\end{prop}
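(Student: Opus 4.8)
The plan is to leverage the forgetful functor $o^{\sigma}$ as much as possible, since the category $\mathrm{Sh}(\mathbf{C}(X))$ is already known to be a Grothendieck category, and to reduce each statement to the corresponding statement there by equipping the relevant construction with a right difference structure. The fundamental difficulty, flagged already in the introduction, is that $\sigma^*$ has no left adjoint, so we cannot mimic the clean argument of Proposition \ref{improve}; in particular the forgetful functor $o^{\sigma}$ has no obvious adjoints and we cannot simply identify $\shxr$ with sheaves on a site. So the work has to be done more by hand.

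\textbf{Colimits and finite limits.} First I would observe that $\sigma^*$ is exact (Assumption \ref{mainass}(3) and the discussion following it) and commutes with all colimits (being a left adjoint to $\sigma_*$) and with finite inverse limits. Given this, for any small diagram $(\mathcal{F}_i, \sigma^{\mathcal{F}_i})$ of right difference sheaves, form $\mathcal{F} := \operatorname{colim}_i \mathcal{F}_i$ in $\mathrm{Sh}(\mathbf{C}(X))$; then the maps $\mathcal{F}_i \to \sigma^*(\mathcal{F}_i) \to \sigma^*(\mathcal{F})$ are compatible and, using $\sigma^*(\mathcal{F}) \cong \operatorname{colim}_i \sigma^*(\mathcal{F}_i)$, induce a canonical morphism $\sigma^{\mathcal{F}} : \mathcal{F} \to \sigma^*(\mathcal{F})$; one checks this is the colimit in $\shxr$ and that $o^{\sigma}$ preserves it. The same recipe works for a \emph{finite} limit, now using that $\sigma^*$ commutes with finite inverse limits so that $\sigma^*(\lim_i \mathcal{F}_i) \cong \lim_i \sigma^*(\mathcal{F}_i)$ and the structure maps again glue; this gives items (1) and (2). (This is precisely where the restriction to \emph{finite} limits is forced: for an infinite product $\sigma^*$ need not commute with it, matching the remark in the introduction.)

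\textbf{Abelian structure and exactness.} For item (3), having additive structure on hom-sets is immediate, and kernels and finite biproducts exist by (1)--(2); since $o^{\sigma}$ is additive, faithful, and preserves kernels and cokernels (cokernels being colimits, handled above; note $\sigma^*$ exact is used to see the induced map on a cokernel lands correctly), a monomorphism (resp. epimorphism) in $\shxr$ is exactly one whose image under $o^{\sigma}$ is mono (resp. epi), and the canonical map $\operatorname{coim} \to \operatorname{im}$ in $\shxr$ maps to the corresponding isomorphism in $\mathrm{Sh}(\mathbf{C}(X))$, hence is itself an isomorphism. Item (4) is then the statement that $o^{\sigma}$ reflects exact sequences, which follows from faithfulness plus the fact that $o^{\sigma}$ preserves and reflects kernels and cokernels. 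For item (5), AB5, take a filtered colimit of short exact sequences in $\shxr$; apply $o^{\sigma}$, which commutes with filtered colimits by (2), to get a filtered colimit of short exact sequences in $\mathrm{Sh}(\mathbf{C}(X))$, which is exact there because that category is AB5; then reflect exactness back along $o^{\sigma}$ using (4).

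\textbf{Main obstacle.} The genuinely delicate point is not any single item but making sure the ``structure map glues'' arguments are legitimate, i.e. that the canonically induced $\sigma^{\mathcal{F}}$ really is a morphism \emph{of sheaves} and that $o^{\sigma}$ preserves the universal property, rather than merely the underlying object; this is where one must use exactness of $\sigma^*$ and its commutation with the relevant (co)limits carefully, and where the failure for infinite limits becomes visible. The proof should therefore isolate a single lemma — roughly, ``if $T$ is an endofunctor of an abelian category $\mathcal{A}$ that is exact and commutes with a class $\Lambda$ of (co)limits, then the category of pairs $(A, A \to T A)$ inherits those $\Lambda$-(co)limits, computed in $\mathcal{A}$'' — and apply it with $T = \sigma^*$ and $\Lambda$ = all colimits together with finite limits. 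Everything else is then bookkeeping.
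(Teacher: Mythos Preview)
Your proposal is correct and follows essentially the same approach as the paper: both use that $\sigma^*$ commutes with all colimits and with finite limits to build the (co)limit in $\mathrm{Sh}(\mathbf{C}(X))$ and then equip it with the induced structure map, deduce (2) from this construction, obtain (3) and (4) from the fact that $o^{\sigma}$ is exact, faithful, and reflects isomorphisms, and reduce AB5 to the corresponding property of $\mathrm{Sh}(\mathbf{C}(X))$ via (2) and (4). Your write-up is more explicit about the coimage-to-image verification and abstracts the mechanism into a lemma about pairs $(A,A\to TA)$, but this is a cosmetic repackaging of the paper's argument rather than a different route.
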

\begin{proof}
We recall from Section \ref{secconv} that the functor  $\si^*$ commutes with all colimits and finite limits. Then it follows that for a direct system $(\mc{F}_i,\sigma^{\mc{F}_i})_{i\in I}$ in the category $\shxr$, the pair of the  colimits $({\rm colim}_i
\mc{F}_i,{\rm colim}_i\sigma^{\mc{F}_i})$ in the category $\shxr$  is a colimit
in $\rm{Sh}^{\sigma}(X)$. An analogous argument shows that finite limits exist in  $\rm{Sh}_{\sigma}(X)$. This construction also immediately gives item $(2)$.

For item (3) we observe that a morphism $f$ in $\shxr$ is an isomorphism if and only if $o^{\sigma}(f)$ is. This shows the rest of axioms of abelian categories.

We already know by item $(2)$ that the functor $o^{\sigma}$ is exact. Thus item $(4)$ follows from the fact that $o^{\sigma}$ does not kills objects.

Item $(5)$ follows from the fact that $\mathrm{Sh}(\mathbf{C}(X))$ satisfies the axiom AB5 together with item $(4)$.
 \end{proof}
Therefore, in order to show that  $\shxr$ is a Grothendieck category, we only need to show that it has a generator. To do that, it is convenient to consider a more general situation than the one from Assumption \ref{mainass}, thus we leave Assumption \ref{mainass} for a while. Let $\mathbf{S}=(\mathcal{C},J)$ be a small site
and $\mathcal{F}\in \mathrm{Sh}(\mathbf{S})$. For technical reasons, we define the following notion below, which makes sense in an arbitrary category satisfying the appropriate assumptions. Let $\kappa$ be a cardinal number.
\begin{definition}
We say that the \emph{size of $\mathcal{F}$ is at most $\kappa$}, if for any family $\mathfrak{F}$ of subobjects of $\mathcal{F}$ such that $\mathfrak{F}$ covers $\mathcal{F}$ (i.e. $\sum \mathfrak{F} = \mathcal{F}$), there is a subfamily $\mathfrak{F}_0\subseteq \mathfrak{F}$ such that $|\mathfrak{F}_0|\leqslant \kappa$, and $\mathfrak{F}_0$ still covers $\mathcal{F}$.
\end{definition}
We have the following obvious observation.
\begin{lemma}\label{bound}
We assume that:
\begin{itemize}
\item $(\mathcal{F}_i)_{i\leqslant \alpha}$ is a family of sheaves such that each $\mathcal{F}_i$ is of size at most $\kappa$;

\item $\alpha\leqslant \kappa$;

\item $\mathcal{H}$ is a subsheaf of $\bigoplus_{i<\alpha}\mathcal{F}_i$.
\end{itemize}
Then the size of the quotient sheaf $(\bigoplus_{i<\alpha}\mathcal{F}_i)/\mathcal{H}$ is at most $\kappa$.
\end{lemma}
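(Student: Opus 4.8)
The plan is to reduce the statement about the quotient to a counting argument over an appropriate "generating" subfamily. First I would unwind the definition: given a family $\mathfrak{F}$ of subsheaves of $(\bigoplus_{i<\alpha}\mathcal{F}_i)/\mathcal{H}$ covering the whole quotient, I want to extract a subfamily of size at most $\kappa$ that still covers. Let $\pi:\bigoplus_{i<\alpha}\mathcal{F}_i\to(\bigoplus_{i<\alpha}\mathcal{F}_i)/\mathcal{H}$ be the quotient map. For each $\mathcal{G}\in\mathfrak{F}$, its preimage $\pi^{-1}(\mathcal{G})$ is a subsheaf of $\bigoplus_{i<\alpha}\mathcal{F}_i$ containing $\mathcal{H}$, and the family $\{\pi^{-1}(\mathcal{G})\}_{\mathcal{G}\in\mathfrak{F}}$ covers $\bigoplus_{i<\alpha}\mathcal{F}_i$. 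So it suffices to prove the size bound for the direct sum $\bigoplus_{i<\alpha}\mathcal{F}_i$ itself (a subsheaf containing $\mathcal{H}$ maps onto a subsheaf of the quotient, and a covering family pushes forward to a covering family), and then transport back along $\pi$.

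Next I would handle $\bigoplus_{i<\alpha}\mathcal{F}_i$ directly. For each index $i<\alpha$, let $\iota_i:\mathcal{F}_i\hookrightarrow\bigoplus_{j<\alpha}\mathcal{F}_j$ be the canonical inclusion. Given a covering family $\mathfrak{F}$ of the direct sum, for each $i$ consider the family $\{\iota_i^{-1}(\mathcal{G})\}_{\mathcal{G}\in\mathfrak{F}}$ of subsheaves of $\mathcal{F}_i$; I claim it covers $\mathcal{F}_i$, because the images $\iota_i(\iota_i^{-1}(\mathcal{G}))=\mathcal{G}\cap\iota_i(\mathcal{F}_i)$ sum (inside the direct sum) to $\iota_i(\mathcal{F}_i)\cap\sum\mathfrak{F}=\iota_i(\mathcal{F}_i)$, using that intersection with the subobject $\iota_i(\mathcal{F}_i)$ commutes with the sum of subobjects in the Grothendieck category $\mathrm{Sh}(\mathbf{S})$ (AB5; this is where the hypothesis that we are in a nice sheaf category enters, and it is the only slightly delicate categorical point). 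Since $\mathcal{F}_i$ has size at most $\kappa$, pick a subfamily $\mathfrak{F}_i\subseteq\mathfrak{F}$ with $|\mathfrak{F}_i|\leqslant\kappa$ whose restriction still covers $\mathcal{F}_i$. Set $\mathfrak{F}_0:=\bigcup_{i<\alpha}\mathfrak{F}_i$. Then $|\mathfrak{F}_0|\leqslant\alpha\cdot\kappa\leqslant\kappa\cdot\kappa=\kappa$ using $\alpha\leqslant\kappa$ (for $\kappa$ infinite; the finite case is trivial). Finally, $\sum\mathfrak{F}_0\supseteq\sum_i\iota_i(\mathcal{F}_i)=\bigoplus_{i<\alpha}\mathcal{F}_i$, so $\mathfrak{F}_0$ covers the direct sum.

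Pulling this back through $\pi$: if $\mathfrak{F}$ covers the quotient, apply the above to $\{\pi^{-1}(\mathcal{G})\}_{\mathcal{G}\in\mathfrak{F}}$ to get a subfamily $\{\pi^{-1}(\mathcal{G})\}_{\mathcal{G}\in\mathfrak{F}_0}$ of cardinality at most $\kappa$ covering $\bigoplus_{i<\alpha}\mathcal{F}_i$; applying $\pi$ and using that $\pi$ is an epimorphism (so it carries a covering family of the source to a covering family of the target, again by AB5-style exactness) shows $\mathfrak{F}_0$ covers the quotient. Hence the quotient has size at most $\kappa$. The main obstacle, such as it is, is just the bookkeeping verifying that "intersection with a fixed subobject distributes over arbitrary sums of subobjects" and that epimorphisms and preimages interact correctly with the covering condition; both are standard consequences of working in a Grothendieck (AB5) category, and since $\mathrm{Sh}(\mathbf{S})$ is such, there is nothing genuinely hard here — which is presumably why the authors call it "obvious."
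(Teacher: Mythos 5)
The paper offers no proof of this lemma at all (it is introduced as an ``obvious observation''), so there is nothing to compare against; judged on its own terms, your proposal has one sound half and one broken half. The reduction of the quotient to the direct sum is correct: the preimages $\pi^{-1}(\mathcal{G})$ all contain $\mathcal{H}=\ker\pi$, their sum surjects onto $\sum\mathfrak{F}$, hence equals the whole direct sum by the lattice correspondence, and a $\kappa$-subfamily covering the source pushes forward to one covering the quotient.

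The direct-sum step, however, rests on a false identity. You claim that $\sum_{\mathcal{G}\in\mathfrak{F}}\bigl(\mathcal{G}\cap\iota_i(\mathcal{F}_i)\bigr)=\bigl(\sum\mathfrak{F}\bigr)\cap\iota_i(\mathcal{F}_i)$ because ``intersection with a fixed subobject distributes over arbitrary sums of subobjects'' in an AB5 category. It does not: subobject lattices of Grothendieck categories are modular but not distributive, and AB5 gives distributivity of $\cap$ only over \emph{directed} unions. Already in $\mathbf{Ab}$, the family $\{\langle(1,1)\rangle,\langle(1,0)\rangle\}$ covers $\Zz\oplus\Zz$, yet both members meet the summand $0\oplus\Zz$ trivially; so $\{\iota_i^{-1}(\mathcal{G})\}_{\mathcal{G}\in\mathfrak{F}}$ need not cover $\mathcal{F}_i$, the size hypothesis on $\mathcal{F}_i$ cannot be invoked, and the final inclusion $\sum\mathfrak{F}_0\supseteq\iota_i(\mathcal{F}_i)$ has no justification. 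The repair is close in spirit to what you wrote but must route through a directed family: for finite $F\subseteq\mathfrak{F}$ set $\mathcal{G}_F:=\sum_{\mathcal{G}\in F}\mathcal{G}$; then $\{\mathcal{G}_F\cap\iota_i(\mathcal{F}_i)\}_F$ is a directed family whose union is $\iota_i(\mathcal{F}_i)$ by AB5, so by the size bound on $\mathcal{F}_i$ there are at most $\kappa$ finite sets $F$ with $\sum_F\bigl(\mathcal{G}_F\cap\iota_i(\mathcal{F}_i)\bigr)=\iota_i(\mathcal{F}_i)$; the union $J_i$ of these finite sets has cardinality at most $\kappa$ and satisfies $\sum_{\mathcal{G}\in J_i}\mathcal{G}\supseteq\iota_i(\mathcal{F}_i)$. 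Taking $\mathfrak{F}_0:=\bigcup_{i<\alpha}J_i$ then finishes the argument exactly as in your last step.
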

We fix $\mathcal{G}\in \mathrm{Sh}(\mathbf{S})$, which is a generator for $\mathrm{Sh}(\mathbf{S})$, and we also fix an infinite cardinal number $\kappa$ such that the size of $\mathcal{G}$ is at most $\kappa$. Let $\mathcal{G}^{(\kappa)}$ stand for $\bigoplus_{i<\kappa}\mathcal{G}$.

Let $\sigma:\mathbf{S}\to \mathbf{S}$ be a morphism of sites. There is an obvious notion of a right difference sheaf in this context and right difference sheaves form a category, which we denote by  $\mathrm{Sh}^{\sigma}(\mathbf{S})$.
\begin{theorem}\label{generators}
Let $\mathfrak{G}$ be the family consisting of $(\mathcal{F},\sigma^{\mathcal{F}})\in \mathrm{Sh}^{\sigma}(\mathbf{S})$ such that the sheaf $\mathcal{F}$ is a quotient of $\mathcal{G}^{(\kappa)}$. Then $\mathfrak{G}$ is a generating family for the category $\mathrm{Sh}^{\sigma}(\mathbf{S})$.
\end{theorem}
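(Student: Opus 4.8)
The plan is to verify the defining property of a generating family by hand: for every $(\mathcal{A},\sigma^{\mathcal{A}})\in\mathrm{Sh}^{\sigma}(\mathbf{S})$ and every proper right difference subsheaf $(\mathcal{B},\sigma^{\mathcal{B}})\subsetneq(\mathcal{A},\sigma^{\mathcal{A}})$, I want to produce an object of $\mathfrak{G}$ together with a morphism of right difference sheaves into $(\mathcal{A},\sigma^{\mathcal{A}})$ whose image is not contained in $\mathcal{B}$. First I would record the routine observation that, since $\mathcal{G}$ generates $\mathrm{Sh}(\mathbf{S})$, has size at most $\kappa$, and $\kappa$ is infinite, a sheaf is a quotient of $\mathcal{G}^{(\kappa)}$ \emph{if and only if} its size is at most $\kappa$: one direction is Lemma \ref{bound} with $\alpha=\kappa$, and the other follows by covering the sheaf with the images of maps from $\mathcal{G}$ and extracting a subfamily of cardinality at most $\kappa$. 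Consequently it suffices to find a right difference subsheaf $\mathcal{S}\subseteq\mathcal{A}$ of size at most $\kappa$ with $\mathcal{S}\not\subseteq\mathcal{B}$: then $(\mathcal{S},\sigma^{\mathcal{A}}|_{\mathcal{S}})$ lies in $\mathfrak{G}$ and the inclusion is the desired morphism. Since exactness in $\shxr$ is tested by the forgetful functor (Proposition \ref{improve2}(4)), $\mathcal{B}\subsetneq\mathcal{A}$ is also a proper inclusion of sheaves, so the generator property of $\mathcal{G}$ yields a sheaf morphism $\psi\colon\mathcal{G}\to\mathcal{A}$ with $\im(\psi)\not\subseteq\mathcal{B}$; note $\im(\psi)$ has size at most $\kappa$.

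The technical heart I would isolate is a ``descent of size along $\sigma^{*}$'': for every subsheaf $\mathcal{W}\subseteq\sigma^{*}(\mathcal{A})$ of size at most $\kappa$ there is a subsheaf $\mathcal{T}\subseteq\mathcal{A}$ of size at most $\kappa$ with $\sigma^{*}(\mathcal{T})\supseteq\mathcal{W}$. To prove it, observe that $\mathcal{A}$ is the directed union of its subsheaves of size at most $\kappa$ (it is covered by the images of the maps $\mathcal{G}\to\mathcal{A}$, whose partial sums over $\kappa$-small index sets remain of size at most $\kappa$ by Lemma \ref{bound}); applying $\sigma^{*}$, which commutes with colimits and is exact, exhibits $\sigma^{*}(\mathcal{A})$ as the directed union of the subsheaves $\sigma^{*}(\mathcal{T})$. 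Intersecting this union with $\mathcal{W}$ and using that $\mathcal{W}$ has size at most $\kappa$ produces subsheaves $\mathcal{T}_{i}\subseteq\mathcal{A}$, $i\in I$ with $|I|\leqslant\kappa$, such that $\mathcal{W}\subseteq\sum_{i}\sigma^{*}(\mathcal{T}_{i})=\sigma^{*}\!\bigl(\sum_{i}\mathcal{T}_{i}\bigr)$; then $\mathcal{T}:=\sum_{i}\mathcal{T}_{i}$ has size at most $\kappa$ by Lemma \ref{bound} again. Here I use that $\mathrm{Sh}(\mathbf{S})$ is a Grothendieck category (so AB5 holds and it is well-powered) and the exactness and cocontinuity of $\sigma^{*}$.

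With this lemma available, I would build $\mathcal{S}$ by an $\omega$-step closure. Put $\mathcal{S}_{0}:=\im(\psi)$; given $\mathcal{S}_{n}$ of size at most $\kappa$, the subsheaf $\sigma^{\mathcal{A}}(\mathcal{S}_{n})\subseteq\sigma^{*}(\mathcal{A})$ has size at most $\kappa$, so the lemma gives $\mathcal{T}_{n}\subseteq\mathcal{A}$ of size at most $\kappa$ with $\sigma^{*}(\mathcal{T}_{n})\supseteq\sigma^{\mathcal{A}}(\mathcal{S}_{n})$, and I set $\mathcal{S}_{n+1}:=\mathcal{S}_{n}+\mathcal{T}_{n}$ (still of size at most $\kappa$). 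Let $\mathcal{S}:=\bigcup_{n<\omega}\mathcal{S}_{n}$, which has size at most $\kappa$ by Lemma \ref{bound} (with $\alpha=\omega\leqslant\kappa$). Using AB5, $\sigma^{\mathcal{A}}(\mathcal{S})=\bigcup_{n}\sigma^{\mathcal{A}}(\mathcal{S}_{n})\subseteq\bigcup_{n}\sigma^{*}(\mathcal{T}_{n})\subseteq\bigcup_{n}\sigma^{*}(\mathcal{S}_{n+1})\subseteq\sigma^{*}(\mathcal{S})$, so $\sigma^{\mathcal{A}}$ restricts to a morphism $\mathcal{S}\to\sigma^{*}(\mathcal{S})$; this makes $(\mathcal{S},\sigma^{\mathcal{A}}|_{\mathcal{S}})$ a right difference subsheaf of $(\mathcal{A},\sigma^{\mathcal{A}})$. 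It contains $\im(\psi)$, hence is not contained in $\mathcal{B}$, and it has size at most $\kappa$, hence belongs to $\mathfrak{G}$ by the first paragraph; the inclusion is a morphism of right difference sheaves not factoring through $\mathcal{B}$. (That $\mathfrak{G}$ is a set follows from well-poweredness of $\mathrm{Sh}(\mathbf{S})$.)

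The main obstacle is exactly the size-descent lemma: in contrast to the left difference case there is no endomorphism of $\mathcal{A}$ under which one can ``close up'' a small subsheaf, since the structure map points into $\sigma^{*}(\mathcal{F})$; one is forced to pull small subsheaves of $\sigma^{*}(\mathcal{A})$ back to small subsheaves of $\mathcal{A}$, and the only leverage for this is the compatibility of $\sigma^{*}$ with directed unions and monomorphisms together with the smallness packaged in Lemma \ref{bound}. Everything else — the reduction to finding a small invariant subsheaf, and the (merely $\omega$-indexed) closure process — is then formal.
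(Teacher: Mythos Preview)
Your proposal is correct and follows essentially the same strategy as the paper's proof: both construct an $\omega$-indexed increasing chain of subsheaves of size at most $\kappa$, using at each step that $\sigma^{*}$ commutes with directed colimits to ensure $\sigma^{\mathcal{A}}(\mathcal{S}_n)\subseteq\sigma^{*}(\mathcal{S}_{n+1})$, and then take the union. The only differences are presentational: you isolate the ``descent of size along $\sigma^{*}$'' step as a separate lemma and work with the directed system of \emph{all} small subsheaves, whereas the paper works directly with the specific covering family $\{\mathcal{A}_j=\im(f_j)\}$ of images of maps from $\mathcal{G}$; you also make explicit the equivalence ``quotient of $\mathcal{G}^{(\kappa)}$ $\Leftrightarrow$ size $\leqslant\kappa$'', which the paper uses implicitly (its $\mathcal{A}^i$ are by construction generated by $\leqslant\kappa$ many $\mathcal{A}_j$'s, hence visibly quotients of $\mathcal{G}^{(\kappa)}$).
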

\begin{proof}
Let $(\mathcal{H},\sigma^{\mathcal{H}})\in \mathrm{Sh}^{\sigma}(\mathbf{S})$. There is a family of sheaf morphisms:
$$\{f_j:\mathcal{G}\ra \mathcal{H}\ |\ j\in J\}$$
such that the sheaf $\mathcal{H}$ is generated by the family $(\mathcal{A}_j:=\im(f_j))_{j\in J}$. For each $j\in J$, it is enough to find  $(\mathcal{F},\sigma^{\mathcal{F}})\in \mathfrak{F}$ such that $(\mathcal{F},\sigma^{\mathcal{F}})$ is a \emph{difference subsheaf} of $\mathcal{H}$ (which means that $\mathcal{F}$ is a subsheaf of $\mathcal{H}$ and $\sigma^\mathcal{F}=\sigma^\mathcal{H}|_{\sigma^*({\mathcal{F}})}$) and $\mathcal{F}$ contains $\mathcal{A}_j$. Let us fix $j_0\in J$ and denote $\mathcal{A}^0:=\mathcal{A}_{j_0}$.

Since the functor $\sigma^*$ commutes with direct limits, the family $\sigma^*(\mathcal{A}_j)_{j\in J}$ covers the sheaf $\sigma^*(\mathcal{H})$. By Lemma \ref{bound}, the size of $\sigma^{\mathcal{G}}(\mathcal{A}^0)$ is at most $\kappa$. Hence there is $J_0\subseteq J$ such that $|J_0|\leqslant \kappa$ and $\sigma^{\mathcal{G}}(\mathcal{A}^0)$ is covered by the family $\sigma^*(\mathcal{A}_j)_{j\in J_0}$. Let us define:
$$\mathcal{A}^1:=\left\langle \mathcal{A}^0\cup \bigcup_{j\in J_0} \mathcal{A}_j \right\rangle .$$
Then we have the following:
\begin{itemize}
\item $\mathcal{A}^0\leqslant \mathcal{A}^1\leqslant \mathcal{H}$;

\item $\sigma^{\mathcal{G}}(\mathcal{A}^0)\leqslant \sigma^{*}(\mathcal{A}^1)$;

\item the size of $\mathcal{A}^1$ is at most $\kappa$ (by Lemma \ref{bound}).
\end{itemize}
Continuing like this, we obtain the desired difference subsheaf $\mathcal{G}:=\bigcup_{i<\omega}\mathcal{A}^i$, where $\sigma^\mathcal{G}:=\sigma^\mathcal{H}|_{\sigma^*({\mathcal{G}})}$.
\end{proof}
By Proposition \ref{improve2}(5) and Theorem \ref{generators}, we get the following.
\begin{cor}
$\shxr$ is a Grothendieck category. In particular, it has all limits and enough injectives.
\end{cor}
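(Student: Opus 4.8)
The plan is to check the definition of a Grothendieck category head‑on: an abelian category that satisfies AB5 and admits a generator. Two of the three ingredients are already available — Proposition \ref{improve2}(3) says that $\shxr$ is abelian and Proposition \ref{improve2}(5) says that it satisfies AB5 — so the only remaining task is to promote the generating \emph{family} $\mathfrak{G}$ furnished by Theorem \ref{generators} to a single generating \emph{object}.

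First I would note that, since every construction in this paper is carried out inside a fixed Grothendieck universe (see Section \ref{secconv}), the family $\mathfrak{G}$ is in fact a set; as $\shxr$ has all colimits by Proposition \ref{improve2}(1), the coproduct $\mathcal{U}:=\bigoplus_{\mathcal{F}\in\mathfrak{G}}\mathcal{F}$ exists in $\shxr$. Then I would run the routine argument that a coproduct of a generating family is a generator: given a nonzero morphism $\alpha$ in $\shxr$, some $\mathcal{F}\in\mathfrak{G}$ carries a map that $\alpha$ does not annihilate, and composing with the canonical inclusion $\mathcal{F}\to\mathcal{U}$ produces a map out of $\mathcal{U}$ witnessing $\alpha\neq 0$. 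This shows that $\mathcal{U}$ is a generator, hence that $\shxr$ is a Grothendieck category.

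For the ``in particular'' clause I would appeal to the standard structure theory of Grothendieck categories: any such category is automatically complete — which upgrades the ``finite limits'' statement of Proposition \ref{improve2}(1) to ``all limits'' — and has enough injectives by Grothendieck's theorem. I do not expect a genuine obstacle here: the only delicate point is the set‑theoretic one, which is already dispatched by the universe convention, after which the passage from a generating family to a generator and the invocation of the general theory are entirely routine.
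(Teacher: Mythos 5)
Your proposal is correct and follows exactly the route the paper takes: the paper derives the corollary in one line from Proposition \ref{improve2}(5) (AB5, with abelianness from Proposition \ref{improve2}(3)) and Theorem \ref{generators} (the generating family), leaving implicit the standard steps you spell out — forming the coproduct of the generating family to get a single generator, and invoking Grothendieck's theorem for completeness and enough injectives.
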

We have shown that $\shxr$ is a reasonable abelian category. However, the reader should be aware that some parts of its structure are quite elusive. For example, although infinite products exist in $\shxr$ (since it is a Grothendieck category), the forgetful functor
$$o^{\sigma}:\shxr\to \sx$$
need not preserve them (and it probably does not, at least we do not see a reasonable right difference structure on the infinite product of ordinary sheaves). By the Special Adjoint Functor Theorem, the functor $o^{\sigma}$ has a right adjoint, but it is not clear how to obtain it by any simple construction, which was the case for the left difference sheaves. Therefore, we were not able to  obtain a generator for the category $\shxr$ by simply dragging a generator from  the category  $\mathrm{Sh}(\mathbf{C}(X))$.

\section{Right difference cohomology}\label{secrdcoh}
In this section, we develop a theory of right difference sheaf cohomology. The main results we obtain are right difference sheaf versions of the results from Section \ref{secleftdc}, but the methods and the proofs are often different that the ones from Section \ref{secleftdc}.

For the same reasons as in Section  \ref{secleftdc}, we often remove $\mathbf{C}$ from our notation and, for example, we abbreviate $\shxr$ just to $\rx$.
\subsection{Right difference sheaf cohomology}
We recall from Section \ref{secrde} that the constant sheaf $\Zz_X$ has a natural structure of a right difference sheaf. We define the {\it right difference global section functor} (denoted by the same symbol as in Section \ref{secldifcoh}) as:
$$\Gamma^{\sigma}(\mcf):=\mathrm{Hom}_{\rx}(\Zz_X,\mcf).$$
Explicitly, we have:
$\Gamma^{\sigma}(\mcf)=\ker(\mathrm{res}-\si^{\mcf})$ for the evident maps:
\[
\mathrm{res}, \si^{\mcf}:\Gamma(\mcf)\ra\Gamma(\si^*(\mcf)).
\]
\begin{definition}
For $\mc{F}\in \rx$, we define the \emph{$n$-th right difference sheaf cohomology} (still denoted by the same symbol as in Section \ref{secldifcoh}):
\[H^n_{\sigma}(X,\mc{F}):=R^n\Gamma^{\sigma}(X)={\rm Ext}^n_{\rx}(\Zz_X,\mc{F}).
\]
\end{definition}
We shall factorize the functor $\Gamma^{\sigma}$ in order to obtain the short exact sequence corresponding to the one from Theorem \ref{mainss}. Unfortunately,  in the present context the map $\si^{\mcf}$ does not act on $\Gamma(\mcf)$, therefore we also need to modify the intermediate category used in our factorization.

Let $\mathcal{Q}$ stands for the category of integral representations of the quiver  $\mathcal{A}_2$ (see \cite[Definition 1.2 and Example 1.4]{quiver}).
Explicitly, the objects of $\mathcal{Q}$ are triples $(Q_0, Q_1,x)$ such that $Q_0, Q_1$ are abelian groups and $x:Q_0\to Q_1$ is a homomorphism.
We define the functor
$$\gamma=(\gamma_0,\gamma_1,x): \rx\to\mathcal{Q}$$
by the following formula:
$$\gamma_0\left(\mcf,\si^{\mcf}\right):=\Gamma(\mcf),\ \ \ \gamma_1\left(\mcf,\si^{\mcf}\right):=\Gamma(\si^*(\mcf)),\ \ \  x\left(\mcf,\si^{\mcf}\right)=\mathrm{res}-\si^{\mcf}.$$
We also define the functor:
$$k:\mathcal{Q}\to \mathbf{Ab},\ \ \ \ \ \ k(Q_0, Q_1,x)=\ker(x).$$
Then we have a natural isomorphism $\Gamma^{\sigma}\cong k\circ\gamma$, which is the factorization we were looking for. We should show that $\gamma$ takes injective objects to $k$-acyclic objects. To this end, we observe the following first.
\begin{lemma}\label{xepic}
An object $(Q_1,Q_2,x)\in\mathcal{Q}$ is $k$-acyclic if and only if $x$ is epic.
\end{lemma}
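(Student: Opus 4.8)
The plan is to recognize $k$ as a representable functor on $\mathcal{Q}$ and to compute its right derived functors by means of a short projective resolution. First I would recall that $\mathcal{Q}$, the category of $\Zz$-linear representations of the quiver $\mathcal{A}_2$, is (equivalent to) a module category, hence a Grothendieck category, so $R^{\bullet}k$ is defined. Then I would observe that, directly from the description of morphisms of representations,
\[
k(Q_0,Q_1,x)=\ker(x)\;\cong\;\Hom_{\mathcal{Q}}\bigl(P,(Q_0,Q_1,x)\bigr)
\]
naturally in $(Q_0,Q_1,x)$, where $P:=(\Zz,0,0)$ is the representation with $\Zz$ at the source vertex and $0$ at the target. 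Consequently $R^ik\cong \mathrm{Ext}^i_{\mathcal{Q}}(P,-)$.

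Next I would introduce the representations $P_0:=(\Zz,\Zz,\id)$ and $P_1:=(0,\Zz,0)$. Each represents an evaluation functor $\mathcal{Q}\to\mathbf{Ab}$ (at the source and at the target vertex, respectively), and since evaluation functors are exact, $P_0$ and $P_1$ are projective. There is then a short exact sequence in $\mathcal{Q}$
\[
0\longrightarrow P_1\longrightarrow P_0\longrightarrow P\longrightarrow 0,
\]
with $P_0\to P$ the identity at the source vertex; this is a projective resolution of $P$ of length one, so at once $\mathrm{Ext}^i_{\mathcal{Q}}(P,-)=0$ for $i\geq 2$. Applying $\Hom_{\mathcal{Q}}(-,M)$ with $M=(Q_0,Q_1,x)$, using the canonical identifications $\Hom_{\mathcal{Q}}(P_0,M)\cong Q_0$ and $\Hom_{\mathcal{Q}}(P_1,M)\cong Q_1$, and noting that the inclusion $P_1\hookrightarrow P_0$ induces exactly $x$ on these $\Hom$-groups, I obtain the four-term exact sequence
\[
0\to\ker(x)\to Q_0\xrightarrow{\ x\ }Q_1\to \mathrm{Ext}^1_{\mathcal{Q}}(P,M)\to 0 .
\]
Hence $R^1k(M)\cong\coker(x)$ while $R^ik(M)=0$ for $i\geq 2$, so $M$ is $k$-acyclic precisely when $\coker(x)=0$, i.e. when $x$ is epic.

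There is no serious obstacle here; the computation is mechanical once the right objects are in hand. The only points that demand a little care are bookkeeping ones: fixing the orientation convention for $\mathcal{A}_2$ and, accordingly, which of $(\Zz,\Zz,\id)$, $(0,\Zz,0)$ is the indecomposable projective at which vertex; checking the natural isomorphism $k\cong\Hom_{\mathcal{Q}}(P,-)$ and the two $\Hom$-computations above; and verifying that the map received from the resolution is genuinely $x$ (and not $-x$ or a twist of it), which reduces to tracking a generator through $P_1\hookrightarrow P_0$. If one prefers to avoid naming the projectives of $\mathcal{Q}$, an equally short alternative is to take an injective resolution $M\hookrightarrow J^{\bullet}$, use that every injective of $\mathcal{Q}$ is a product of copies of $(I,0,0)$ and $(I,I,\id)$ with $I$ divisible (so its structure map is surjective), and read off $H^{\bullet}\bigl(\ker(x_{J^{\bullet}})\bigr)$ directly.
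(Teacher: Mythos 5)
Your proof is correct and follows essentially the same route as the paper: both identify $k$ as the functor represented by $(\Zz,0,0)$ and compute $\mathrm{Ext}^{*}_{\mathcal{Q}}\bigl((\Zz,0,0),-\bigr)$ via the length-one projective resolution $0\to(0,\Zz,0)\to(\Zz,\Zz,\id)\to(\Zz,0,0)\to 0$, obtaining the two-term complex $Q_0\xrightarrow{x}Q_1$ whose $H^1$ is $\coker(x)$. Your write-up is, if anything, slightly more explicit than the paper's (which cites the standard resolution from the quiver reference rather than verifying projectivity via evaluation functors), and it correctly lands on $\coker(x)=0$ as the acyclicity criterion.
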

\begin{proof} Let $\Zz_0, \Zz_1, \Lambda(x)\in\mathcal{Q}$ be such that:
$$\Zz_0:=(\Zz,0,0),\ \ \ \Zz_1:=(0,\Zz,0),\ \ \ \Lambda(x)=(\Zz,\Zz,\id).$$
We observe that $\Zz_0$ represents the functor $k$, that is:
$$k(Q)\cong \Hom_{\mathcal{Q}}(\Zz_0,Q)$$
for any $Q\in \mathcal{Q}$. Hence, for each $Q=(Q_1,Q_2,x)\in\mathcal{Q}$, we need to show that $\mathrm{Ext}^{>0}_{\mathcal{Q}}(\Zz_0,Q)=0$ if and only if $\ker(x)=0$.

The following obvious short exact sequence in $\mathcal{Q}$:
\[ 0\ra \Zz_1\ra\Lambda(x)\ra\Zz_0\ra 0
\]
is the standard resolution of $\Zz_0$ (see \cite[Section 1.4]{quiver}); in particular, it is a projective resolution. Applying the functor $\Hom_{\mathcal{Q}}(-,Q)$ to this last exact sequence, we obtain the complex
\[0\ra Q_0\stackrel{x}{\ra} Q_1\ra 0\]
computing $\mathrm{Ext}^*_{\mathcal{Q}}(\Zz_0,Q)$. Hence, the result follows.
\end{proof}
We can conclude now the result below, which will be crucial in the sequel.
\begin{lemma}\label{embkacy}
Any $\mcf\in\rx$ embeds into $\mathcal{G}\in\rx$ such that $\gamma(\mathcal{G})$ is $k$-acyclic.
\end{lemma}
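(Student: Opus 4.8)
The plan is to produce the embedding by a two-step construction that first makes the underlying sheaf "large enough" to kill the obstruction coming from $\sigma^*$, and then observes that the map $\mathrm{res}-\sigma^{\mathcal{F}}$ becomes epic on global sections. Recall from Lemma~\ref{xepic} that $\gamma(\mathcal{G})$ is $k$-acyclic precisely when the homomorphism $\mathrm{res}-\sigma^{\mathcal{G}}:\Gamma(\mathcal{G})\to\Gamma(\sigma^*(\mathcal{G}))$ is surjective. So the whole problem reduces to: given $(\mathcal{F},\sigma^{\mathcal{F}})$, embed it into a right difference sheaf $(\mathcal{G},\sigma^{\mathcal{G}})$ for which this particular map on sections is onto.

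First I would look for a "universal" gadget doing this, analogous to the $i^{!}$ and $i^{*}$ constructions used for left difference sheaves in Proposition~\ref{improve}. Given any sheaf $\mathcal{H}\in\mathrm{Sh}(\mathbf{C}(X))$, consider $\bigoplus_{j=0}^{\infty}\sigma^{j*}(\mathcal{H})$, equipped with the right difference structure $\sigma^{\bigoplus}:\bigoplus_{j\geqslant 0}\sigma^{j*}(\mathcal{H})\to\sigma^*\bigl(\bigoplus_{j\geqslant 0}\sigma^{j*}(\mathcal{H})\bigr)=\bigoplus_{j\geqslant 1}\sigma^{j*}(\mathcal{H})$ given by the projection (using that $\sigma^*$ commutes with direct sums). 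For this object the map $\mathrm{res}-\sigma^{\bigoplus}$ on sections is manifestly split surjective: it sends the $j$-th summand of $\Gamma(\bigoplus_{j\geqslant 0}\sigma^{j*}(\mathcal{H}))$ identically onto the $j$-th summand of $\Gamma(\bigoplus_{j\geqslant 1}\sigma^{j*}(\mathcal{H}))$ minus its image in the $(j-1)$-st, exactly the telescoping pattern that appeared in the proofs of Proposition~\ref{improve}(6) and Proposition~\ref{cechder} (the map $1_n\mapsto 1_n-1_{n+1}$), so it is injective with cokernel $\Gamma(\mathcal{H})$ sitting in degree $0$ — in particular $\mathrm{res}-\sigma^{\bigoplus}$ is surjective onto $\Gamma(\sigma^*(-))$. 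Then I would produce a difference monomorphism $(\mathcal{F},\sigma^{\mathcal{F}})\hookrightarrow \bigl(\bigoplus_{j\geqslant 0}\sigma^{j*}(o^{\sigma}\mathcal{F}),\sigma^{\bigoplus}\bigr)$: on the level of sheaves this is the map whose $j$-th component is $\sigma^{j*}(\sigma^{\mathcal{F}})\circ\sigma^{(j-1)*}(\sigma^{\mathcal{F}})\circ\cdots\circ\sigma^{\mathcal{F}}$ (with the degree-$0$ component the identity), and compatibility with the difference structures is the same bookkeeping as in the left-hand case; since the degree-$0$ component is $\mathrm{id}_{\mathcal F}$, the composite is a monomorphism by Proposition~\ref{improve2}(4) (its $o^\sigma$ is split mono). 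This gives an embedding into an object on which $\mathrm{res}-\sigma$ is epic on sections, hence $k$-acyclic by Lemma~\ref{xepic}, so we may take $\mathcal{G}:=\bigoplus_{j\geqslant 0}\sigma^{j*}(o^{\sigma}\mathcal{F})$.

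The one point that needs care — and which I expect to be the main obstacle — is whether the infinite direct sum $\bigoplus_{j\geqslant 0}\sigma^{j*}(\mathcal H)$ really carries the right difference structure I want \emph{as a sheaf}, i.e.\ whether $\sigma^*$ genuinely commutes with this countable coproduct in $\mathrm{Sh}(\mathbf{C}(X))$; this is exactly the kind of subtlety the authors flag when they warn that $o^{\sigma}$ need not preserve infinite \emph{products}. For coproducts, though, $\sigma^*$ is a left adjoint, so it commutes with all colimits (as recorded in Section~\ref{secconv} and Proposition~\ref{improve2}(2)), and the identification $\sigma^*(\bigoplus_j\sigma^{j*}\mathcal H)\cong\bigoplus_j\sigma^{(j+1)*}\mathcal H=\bigoplus_{j\geqslant 1}\sigma^{j*}\mathcal H$ is legitimate — so the construction goes through. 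The remaining verifications (that the proposed map is a morphism of right difference sheaves, and that the telescoping map on sections is surjective) are routine diagram chases of the type already carried out for $i^{!}$ in Proposition~\ref{improve}, and I would simply refer to that computation rather than repeat it.
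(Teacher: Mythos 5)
Your construction is the paper's own: the same auxiliary object $\mathcal{G}=\bigoplus_{j\geqslant 0}(\sigma^j)^*(\mathcal{F})$ carrying a shift as its right difference structure, the same telescoping (graded, bounded below) argument for the surjectivity of $\mathrm{res}-\sigma^{\mathcal{G}}$ on global sections, and the same appeals to Lemma \ref{xepic} and Proposition \ref{improve2}(4). So in strategy there is nothing to compare.

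The one step you spell out more explicitly than the paper does --- the embedding --- is, however, exactly where a real problem sits, and your explicit formula does not give a morphism. A family of sheaf maps $\iota_j:\mathcal{F}\to(\sigma^j)^*(\mathcal{F})$ determines a morphism into the \emph{product} $\prod_j(\sigma^j)^*(\mathcal{F})$, not into the coproduct; it factors through $\bigoplus_j(\sigma^j)^*(\mathcal{F})$ only if the components are (locally) almost all zero. Your components are the iterated composites of $\sigma^{\mathcal{F}}$, which are all nonzero in general (for instance whenever $\sigma^{\mathcal{F}}$ is injective), so the displayed map is not defined. Moreover this is forced rather than fixable by a better formula: with the pure shift structure on $\mathcal{G}$, the compatibility condition $\sigma^{\mathcal{G}}\circ\iota=\sigma^*(\iota)\circ\sigma^{\mathcal{F}}$ reads $\iota_j=\sigma^*(\iota_{j-1})\circ\sigma^{\mathcal{F}}$ for $j\geqslant 1$, so once $\iota_0=\mathrm{id}$ every higher component is determined and nonzero; in particular the honest coproduct inclusion of the $0$-th summand (all higher components zero) is \emph{not} a difference morphism unless $\sigma^{\mathcal{F}}=0$. (The paper's phrase ``the obvious sheaf embedding on the $0$-th summand'' suffers from this complementary defect.) The repair is to twist the difference structure on $\mathcal{G}$: set $\sigma^{\mathcal{G}}(a_0,a_1,a_2,\ldots):=(a_1+\sigma^{\mathcal{F}}(a_0),\,a_2,\,a_3,\ldots)$. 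With this structure the genuine inclusion of the $0$-th summand is a morphism of right difference sheaves, it is a monomorphism by Proposition \ref{improve2}(4), and on global sections $\mathrm{res}-\sigma^{\mathcal{G}}$ is still the sum of a surjective degree-$0$ map and a degree-$(+1)$ map on a grading bounded below, so your telescoping argument applies verbatim and Lemma \ref{xepic} gives the $k$-acyclicity.
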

\begin{proof}
We take $\mcf\in\rx$ and define $\mathcal{G}:=\bigoplus_{j\geqslant 0} (\si^j)^*(\mcf)$.
Then we have $\si^*(\mathcal{G})\cong \bigoplus_{j\geqslant 1} (\si^j)^*(\mcf)$, and we set:
$$\si^{\mathcal{G}}:\mathcal{G}\ra \si^*(\mathcal{G}),\ \ \ \si^{\mathcal{G}}(a_0,a_1,a_2,\ldots)=(a_1,a_2,\ldots)$$
(the reader may compare this construction with its ``left variant'' from the proof of Proposition \ref{improve}).

By Proposition \ref{improve2}(4), the obvious sheaf embedding $\mathcal{F}\to \mathcal{G}$ (on the $0$-th summand) is a monomorphism in $\rx$.
The fact that  $\mathrm{res}-\si^{\mathcal{G}}$ is  onto on global sections follows from the  elementary fact true for graded bounded below abelian groups, which is: if
$$f_0, f_1: \bigoplus_{n=0}^{\infty}A_n\ra \bigoplus_{n=0}^{\infty}B_n$$
are homomorphisms, $f_i$ is of degree $i$ for $i=0,1$,  and $f_0$ is epimorphic; then
$f_0+f_1$ is epimorphic as well. In our case, we take:
$$A_i=B_i:=\Gamma((\si^j)^*(\mcf)),\ \ f_0=\id,\ \ f_1=\mathrm{res}.$$
Then,  Lemma \ref{xepic}  shows that $\gamma(\mathcal{G})$ is $k$-acyclic.
\end{proof}
We derive from the lemmas above the result we need.
\begin{prop}\label{rgrok}
The functor $\gamma$ takes the injective objects from the category $\rx$ to the $k$-acyclic ones in the category $\mathcal{Q}$.
\end{prop}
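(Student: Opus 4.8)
The plan is to deduce the statement from the two preceding lemmas together with the injectivity hypothesis. By Lemma \ref{xepic}, an object of $\mathcal{Q}$ is $k$-acyclic precisely when its structure homomorphism is epic, so it suffices to show that for an injective $\mcf\in\rx$ the map $\mathrm{res}-\sigma^{\mcf}\colon \Gamma(\mcf)\to\Gamma(\sigma^*(\mcf))$ is surjective.

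First I would apply Lemma \ref{embkacy} to the injective object $\mcf$, obtaining a monomorphism $\mcf\hookrightarrow\mathcal{G}$ in $\rx$ with $\gamma(\mathcal{G})$ $k$-acyclic. The key point is that, since $\mcf$ is injective \emph{in the category $\rx$}, this monomorphism splits: there is a morphism $r\colon\mathcal{G}\to\mcf$ in $\rx$ whose composite with the embedding is $\mathrm{id}_{\mcf}$. Hence $\mcf$ is a direct summand (a retract) of $\mathcal{G}$ in $\rx$.

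Now, since $\gamma=(\gamma_0,\gamma_1,x)$ is an additive functor (each of $\gamma_0$, $\gamma_1$ is a composite of additive functors, and $x$ is a natural transformation between them), $\gamma$ preserves retractions, so $\gamma(\mcf)$ is a direct summand of $\gamma(\mathcal{G})$ in $\mathcal{Q}$. It then remains to observe that the class of $k$-acyclic objects is closed under direct summands: by Lemma \ref{xepic} this amounts to the elementary fact that a direct summand of an object whose structure map is epic again has epic structure map, since the image of a direct sum of homomorphisms is the direct sum of the images (alternatively, this is immediate from additivity of the derived functors $R^i k$). Therefore $\gamma(\mcf)$ is $k$-acyclic, which is what we wanted.

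I do not expect a genuine obstacle here; the only point requiring care is that the splitting of $\mcf\hookrightarrow\mathcal{G}$ must be taken in the category $\rx$, where $\mcf$ is injective, rather than merely in $\sx$, so that the additive functor $\gamma$ may be applied to a retraction actually living in $\rx$.
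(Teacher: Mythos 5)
Your proof is correct, and it takes a genuinely different route from the paper's. Both arguments start from the same two ingredients: Lemma \ref{embkacy} supplies a monomorphism $\mcf\hookrightarrow\mathcal{G}$ in $\rx$ with $\gamma(\mathcal{G})$ $k$-acyclic, and Lemma \ref{xepic} reduces $k$-acyclicity to epimorphicity of the structure map. The paper then extends the embedding to a short exact sequence $0\to\mcf\to\mathcal{G}\to\mathcal{H}\to 0$ and runs a diagram chase (a case of the Snake Lemma) on a three-row commutative diagram, using injectivity of $\mcf$ only to get exactness of the bottom row $0\to\Gamma^{\sigma}(\mcf)\to\Gamma^{\sigma}(\mathcal{G})\to\Gamma^{\sigma}(\mathcal{H})\to 0$. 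You instead use injectivity at the very start to split the embedding, so that $\mcf$ is a retract of $\mathcal{G}$ in $\rx$; since $\gamma$ is additive (its functoriality on morphisms of $\rx$ is exactly the compatibility condition $\sigma^{\mathcal{G}}\circ\alpha=\sigma^*(\alpha)\circ\sigma^{\mcf}$ together with naturality of $\mathrm{res}$), $\gamma(\mcf)$ is a direct summand of $\gamma(\mathcal{G})$ in the abelian category $\mathcal{Q}$, and a direct summand of an object with epic structure map again has epic structure map. Your version is shorter, avoids introducing the cokernel $\mathcal{H}$ and the diagram chase altogether, and the one point you flag --- that the splitting must be taken in $\rx$, not merely in $\sx$, so that $\gamma$ can be applied to the retraction --- is indeed the only place where care is needed, and you handle it correctly.
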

\begin{proof} Let $\mcf\in\rx$ be injective. By Lemma \ref{embkacy}, there is a monomorphism $\mcf \to \mc{G}$ in the category $\rx$ such that $\gamma(\mc{G})$ is $k$-acyclic. We extend this monomorphism to a short exact sequence $0\to \mathcal{F}\to \mathcal{G}\to \mathcal{H}\to 0$, and consider the following commutative diagram:
\begin{equation*}
\xymatrix{ & & 0 & & \\
0  \ar[r]^{}  & \Gamma(\si^*(\mcf)) \ar[r]^{} &  \Gamma(\si^*(\mc{G})) \ar[r]^{} \ar[u]^{} & \Gamma(\si^*(\mc{H}))  &  \\
0  \ar[r]^{}  & \Gamma(\mcf) \ar[r]^{} \ar[u]^{?} & \Gamma(\mc{G}) \ar[r]^{} \ar[u]^{} & \Gamma(\mc{H}) \ar[u]^{}  &  \\
0  \ar[r]^{}  & \Gamma^{\sigma}(\mcf) \ar[r]^{} \ar[u]^{} & \Gamma^{\sigma}(\mc{G}) \ar[r]^{} \ar[u]^{} & \Gamma^{\sigma}(\mc{H}) \ar[r]^{} \ar[u]^{}  & 0 \\
  & 0  \ar[u]^{} & 0 \ar[u]^{} & 0, \ar[u]^{}  & }
\end{equation*}	
where the arrows from $\Gamma(-)$ to $\Gamma\circ\sigma^*(-)$ are of the form $\mathrm{res}-\sigma^{(-)}$.
We claim that the rows and columns in this diagram are exact. Indeed, the exactness of the columns follows from the definition of $\Gamma^{\sigma}$ and the $k$-acyclicity of $\gamma(\mc{G})$, while   the exactness of the lower row follows from the injectivity of ${\mcf}$. The exactness of the middle row follows from the left exactness of $\Gamma$, and the exactness of the upper row is given by the exactness of $\si^*$.

It is enough to show the epimorphicity of the vertical arrow with the question mark inserted, which follows from a diagram chasing (it is a special case of the Snake Lemma).
\end{proof}
Analogously to Section \ref{secleftdc}, we will describe now the right difference cohomology as the cohomology of a certain bicomplex. Let $(\mcf, \sigma^{\mcf})\in\rx$  and  $\iota:\mcf\to I^{*}$ be  its injective resolution in $\rx$. Let further $\si^*(\mcf)\to J^*$ be an injective resolution of $\si^*(\mcf)$. Since the functor $\sigma^*$ is exact (see Section \ref{secconv}), by the injectivity of $J^*$ we get a cochain morphism $c: \sigma^*(I^*)\ra J^*$ such that the following diagram commutes:
\begin{equation*}
\xymatrix{\si^*(\mcf) \ar[rd]^{} \ar[r]^{\sigma^*(\iota)} & \si^*(I^*) \ar[d]^{c}\\
 &  J^*.}
\end{equation*}
Then, we have the following map
\[ \bar{\si}^{\mcf}:=c\circ \si^{I^*}:\Gamma(I^*)\ra\Gamma(J^*)\]
and we clearly get:
$$H^*\left(\bar{\si}^{\mcf}\right)=H^*\left(X,\si^{\mcf}\right):H^*(X,\mathcal{F})\ra H^*(X,\mathcal{G})$$
on the sheaf cohomology. On the other hand, we have the following composition map:
\begin{equation*}
\xymatrix{\Gamma(I^*) \ar[r]^{\mathrm{res}_{I^*}\ \ \ \ } & \Gamma(\sigma^*(I^*)) \ar[r]^{\Gamma(c)} &  \Gamma(J^*),}
\end{equation*}
which on the sheaf cohomology induces the same map as the map induced by the restriction morphism $\mathrm{res}:\Gamma(\mcf)\to\Gamma(\si^*(\mcf))$ considered above.
\\
We define the bicomplex $D^{**}(\mathcal{F})$  with two non-trivial rows:
$\Gamma(I^*)$ and $\Gamma(J^*)$, with the vertical (upward) differential $\pm(\mathrm{res}-\bar{\sigma}^{\mcf})$, and with the horizontal differentials induced by the maps from the chosen injective resolutions. Then, as in Section \ref{secldifcoh}, we have the following.
\begin{prop}\label{dcpxr}
 There is a natural in $\mathcal{F}$ isomorphism:
\[ H^*_{\sigma}(X,\mathcal{F})\cong H^*({\rm Tot}(D(\mathcal{F}))).\]	
\end{prop}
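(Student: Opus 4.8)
The plan is to imitate the proof of Proposition \ref{dcpxl}, with Proposition \ref{rgrok} and Lemma \ref{embkacy} playing the roles that Lemma \ref{dscobv}(2) and Proposition \ref{improve}(6) played in the left difference case.

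First I would check the degree‑zero statement. Writing out $\mathrm{Tot}(D(\mcf))$ in low degrees, $\mathrm{Tot}(D(\mcf))^{0}=\Gamma(I^{0})$, and a class there is a cocycle iff it is killed by the horizontal differential of $I^{*}$ — so it lies in $\Gamma(\mcf)\subseteq\Gamma(I^{0})$ — and by the vertical differential, which on $\Gamma(\mcf)$ is $\mathrm{res}-\si^{\mcf}\colon\Gamma(\mcf)\to\Gamma(\si^{*}(\mcf))$ followed by $\Gamma$ of the injection $\si^{*}(\mcf)\hookrightarrow J^{0}$. By the explicit description of $\Gamma^{\sigma}$ recalled before the definition of $H^{n}_{\sigma}$, this kernel is exactly $\Gamma^{\sigma}(\mcf)$, so $H^{0}(\mathrm{Tot}(D(\mcf)))\cong H^{0}_{\sigma}(X,\mcf)$, naturally in $\mcf$. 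Next, fixing functorial choices of the injective resolution $I^{*}$ in $\rx$, of the injective resolution $J^{*}$ of $\si^{*}(\mcf)$ in $\mathrm{Sh}(X)$, and of the comparison map $c$, the assignment $\mcf\mapsto D^{**}(\mcf)$ becomes an exact functor to bicomplexes: a short exact sequence in $\rx$ yields, via the horseshoe lemma, levelwise split short exact sequences of the $I^{*}$‑resolutions, and — using the exactness of $\si^{*}$ — also of the $J^{*}$‑resolutions, and these are preserved by $\Gamma$; the comparison maps can be chosen compatibly. Hence $\mcf\mapsto H^{*}(\mathrm{Tot}(D(\mcf)))$ is a $\delta$‑functor. (As usual, $H^{*}(\mathrm{Tot}(D(\mcf)))$ does not depend on these choices up to natural isomorphism, so for the vanishing check below one is free to use any convenient resolution.)

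Since $H^{*}_{\sigma}(X,-)=R^{*}\Gamma^{\sigma}$ is a universal $\delta$‑functor agreeing with $H^{*}(\mathrm{Tot}(D(-)))$ in degree $0$, it suffices to show that $H^{s}(\mathrm{Tot}(D(\mcf)))=0$ for $s>0$ whenever $\mcf\in\rx$ is injective; then $H^{*}(\mathrm{Tot}(D(-)))$ is effaceable, hence universal, and the degree‑$0$ isomorphism extends uniquely. For such $\mcf$ I would take $I^{*}$ to be $\mcf$ concentrated in degree $0$; then $\si^{*}(I^{*})=\si^{*}(\mcf)$ in degree $0$, the comparison map $c$ is the augmentation $\si^{*}(\mcf)\hookrightarrow J^{0}$, and $\mathrm{Tot}(D(\mcf))$ collapses to
\[
0\longrightarrow\Gamma(\mcf)\xrightarrow{\ \mathrm{res}-\si^{\mcf}\ }\Gamma(J^{0})\longrightarrow\Gamma(J^{1})\longrightarrow\cdots .
\]
Its $H^{0}$ is $\Gamma^{\sigma}(\mcf)$ as above. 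Its $H^{1}$ is $\Gamma(\si^{*}(\mcf))/\mathrm{im}(\mathrm{res}-\si^{\mcf})$, using the left exactness of $\Gamma$ to identify $\ker(\Gamma(J^{0})\to\Gamma(J^{1}))$ with $\Gamma(\si^{*}(\mcf))$; this vanishes because $\gamma(\mcf)$ is $k$‑acyclic by Proposition \ref{rgrok}, i.e. $\mathrm{res}-\si^{\mcf}$ is epic on global sections (this is the precise analogue of the appeal to Lemma \ref{dscobv}(2) in the left case, with Lemma \ref{embkacy} and Lemma \ref{xepic} as the inputs feeding into it). Finally $H^{s}(\mathrm{Tot}(D(\mcf)))\cong H^{s-1}(X,\si^{*}(\mcf))$ for $s\geqslant 2$.

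The argument therefore reduces to the vanishing $H^{\geqslant 1}(X,\si^{*}(\mcf))=0$ for injective $\mcf\in\rx$, that is, to a right difference analogue of Proposition \ref{improve}(6); this is the step I expect to be the main obstacle. In the left case it was immediate, since the forgetful functor $i_{*}$ has the \emph{exact} left adjoint $i^{*}$ of Proposition \ref{improve}(2); here $o^{\sigma}$ has only the right adjoint $r$ supplied by the Special Adjoint Functor Theorem and no well‑behaved left adjoint, so a priori $o^{\sigma}$ need not preserve injectives. The route I would take is to note that every injective $\mcf\in\rx$ is a direct summand of $r(\mathcal{J})$ for some injective sheaf $\mathcal{J}$ — the monomorphism $\mcf\to r(\mathcal{J})$ being adjoint to an embedding of $o^{\sigma}(\mcf)$ into its injective hull, and splitting because $\mcf$ is injective — and then to analyse the underlying sheaf of $r(\mathcal{J})$ (or to use whatever explicit description of the cofree right difference sheaf is at hand) in order to conclude that $o^{\sigma}(r(\mathcal{J}))$ and its image under $\si^{*}$ are $\Gamma$‑acyclic. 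Once this acyclicity is in place, the collapse above — equivalently, the first spectral sequence of $D^{**}(\mcf)$, whose two rows then compute $H^{*}(X,\mcf)$ and $H^{*}(X,\si^{*}(\mcf))$ and whose $d_{1}$ is $\mathrm{res}^{*}-(\si^{\mcf})^{*}$ — finishes the proof exactly as in Proposition \ref{dcpxl}.
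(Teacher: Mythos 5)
Your reconstruction follows the paper's strategy exactly --- the paper's own proof consists of the single remark that this is an ``obvious modification of the proof of Proposition \ref{dcpxl}'', with Proposition \ref{rgrok} and the exactness of $\si^*$ named as the crucial ingredients --- and your degree-zero identification, the $\delta$-functor step, and the use of Proposition \ref{rgrok} to kill $H^1(\mathrm{Tot}(D(\mcf)))$ for injective $\mcf$ are all correct and are precisely what the authors intend. The step you flag as ``the main obstacle'' is, however, a genuine gap in the proposal as written: you correctly reduce the vanishing of $H^s(\mathrm{Tot}(D(\mcf)))$ for $s\geqslant 2$ to the $\Gamma$-acyclicity of $\si^*(\mcf)$ for injective $\mcf\in\rx$, but then only sketch a route (realizing $\mcf$ as a direct summand of $r(\mathcal{J})$ for the right adjoint $r$ of $o^{\sigma}$ and ``analysing'' the underlying sheaf of $r(\mathcal{J})$). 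That analysis is not carried out, and it is not clear it can be along these lines: the paper itself stresses that $r$ is produced abstractly by the Special Adjoint Functor Theorem with no explicit description, and that $o^{\sigma}$ has no well-behaved left adjoint, so none of the mechanisms behind Proposition \ref{improve}(6) in the left case are available. Proposition \ref{rgrok} (via Lemmas \ref{embkacy} and \ref{xepic}) controls only the cokernel of $\mathrm{res}-\si^{\mcf}$ on global sections, i.e.\ the $E_2^{0,1}$-term; it says nothing about $H^{t}(X,\si^*(\mcf))$ for $t\geqslant 1$.

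You should also be aware that the paper's two-line proof does not supply this step either: nothing in Sections \ref{secrds}--\ref{secrdcoh} shows that injective objects of $\rx$ have $\Gamma$-acyclic underlying sheaves, nor that their images under $\si^*$ do (the same issue resurfaces in Theorem \ref{mainss2}, whose bottom row is identified with $H^*(X,\mcf)$ only if $o^{\sigma}$ takes injectives to $\Gamma$-acyclic sheaves). So your diagnosis of where the difficulty sits is accurate, but as a proof the proposal is incomplete until you either establish that $o^{\sigma}(\mcf)$ and $\si^*(o^{\sigma}(\mcf))$ are $\Gamma$-acyclic for injective $\mcf$, or reorganize the argument so that the top row of $D^{**}(\mcf)$ is $\Gamma(\si^*(I^*))$ rather than $\Gamma(J^*)$ --- in which case the collapse on injectives follows from Proposition \ref{rgrok} alone, at the price of having to re-identify the $E_1$-page of the first spectral sequence when deducing Theorem \ref{mainss2}.
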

\begin{proof} It is an obvious modification of the proof of Proposition \ref{dcpxl}.
The crucial ingredient is Proposition \ref{rgrok} (we also use the exactness of $\sigma^*$).
\end{proof}
In the following theorem,
$H^n(X,\mathcal{F})^{\sigma}$ (resp. $H^n(X,\si^*(\mathcal{F}))_{\sigma}$) stands
 for the kernel (resp. cokernel) of the map:
 $$  H^n(X,\mathcal{F})\ra H^n(X,\si^*(\mathcal{F})),$$
 which is  induced on cohomology by the map $\mathrm{res}-\si^{\mcf}$.
\begin{theorem}\label{mainss2}
For any $n\geqslant  0$, there is a short exact sequence:
	\[0\ra H^{n-1}(X,\si^*(\mathcal{F}))_{\sigma}\ra H^n_{\sigma}(X,\mathcal{F})\ra H^n(X,\mathcal{F})^{\sigma}\ra 0,\]
where $H^{-1}(X,\mathcal{F}):=0$.
\end{theorem}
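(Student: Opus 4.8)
The plan is to imitate the proof of Theorem \ref{mainss}, reading off the short exact sequence from the first spectral sequence of the bicomplex $D^{**}(\mathcal{F})$ constructed immediately before the statement (the one built from an injective resolution $\mathcal{F}\to I^*$ in $\rx$, an injective resolution $\sigma^*(\mathcal{F})\to J^*$ in $\mathrm{Sh}(X)$, and the comparison morphism $c$). By Proposition \ref{dcpxr} we have $H^*_{\sigma}(X,\mathcal{F})\cong H^*(\mathrm{Tot}(D(\mathcal{F})))$, so everything reduces to a routine analysis of that two-row bicomplex.

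First I would run the spectral sequence obtained by taking horizontal cohomology first. The two non-trivial rows are $\Gamma(I^*)$ and $\Gamma(J^*)$; their horizontal cohomologies are $n\mapsto H^n(X,\mathcal{F})$ and $n\mapsto H^n(X,\sigma^*(\mathcal{F}))$ respectively, the latter because $J^*$ is an injective resolution of $\sigma^*(\mathcal{F})$. The induced vertical map on the $E_1$ page is $\mathrm{res}-\sigma^{\mathcal{F}}$: this is exactly the content of the two observations recorded just before Proposition \ref{dcpxr}, namely that $\bar{\sigma}^{\mathcal{F}}$ induces $H^*(X,\sigma^{\mathcal{F}})$ on sheaf cohomology and that the composite $\Gamma(I^*)\to\Gamma(\sigma^*(I^*))\to\Gamma(J^*)$ induces the map coming from $\mathrm{res}$. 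Taking vertical cohomology then gives the $E_2$ page: $H^n(X,\mathcal{F})^{\sigma}$ in the bottom row (the kernel of $\mathrm{res}-\sigma^{\mathcal{F}}$) and $H^n(X,\sigma^*(\mathcal{F}))_{\sigma}$ in the top row (its cokernel).

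Since the bicomplex has only two non-trivial rows, the spectral sequence degenerates at $E_2$, and the resulting two-step filtration on $H^n(\mathrm{Tot}(D(\mathcal{F})))\cong H^n_{\sigma}(X,\mathcal{F})$ is precisely the asserted short exact sequence
\[0\ra H^{n-1}(X,\sigma^*(\mathcal{F}))_{\sigma}\ra H^n_{\sigma}(X,\mathcal{F})\ra H^n(X,\mathcal{F})^{\sigma}\ra 0,\]
the term $H^{-1}$ vanishing because the bottom row lives in non-negative degrees. I do not expect a genuine obstacle: the only delicate point is the identification of the $E_1$-differential with $\mathrm{res}-\sigma^{\mathcal{F}}$, and this was already arranged by introducing the auxiliary resolution $J^*$ and the cochain map $c$ — which is exactly why the bicomplex $D^{**}(\mathcal{F})$ in the right difference setting is built from two different resolutions rather than from a single one as in Section \ref{secldifcoh}.
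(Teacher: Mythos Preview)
Your proposal is correct and follows exactly the paper's approach: the proof in the paper consists of the single sentence ``The result follows from the first spectral sequence associated to the bicomplex $D^{**}(\mcf)$,'' and you have simply unpacked what that sentence means.
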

\begin{proof}
The result follows from the first spectral sequence associated to the bicomplex $D^{**}(\mcf)$.
\end{proof}

\subsection{Right difference \v Cech cohomology}\label{rightcechsec}
Unlike in Section \ref{leftcechsec}, we consider here \v Cech cohomology only for right difference \emph{sheaves} (and not for right difference presheaves).
Let $\mcf\in\rx$ and let $\mathcal{U}$ be a covering of $X$. We have the following two composition maps:
\begin{equation*}
\xymatrix{\check{C}^*(\mathcal{U},\mcf) \ar[rr]^{\check{C}^*\left(\sigma^{\mathcal{F}}\right)\ \ \ \ } & & \check{C}^*(\mathcal{U},\si^*(\mcf)) \ar[rr]^{\mathrm{res}\ \ \ \ } & &  \check{C}^*(\mathcal{U}\times_X {}^{\sigma}\mathcal{U},\si^*(\mcf)),}
\end{equation*}
\begin{equation*}
\xymatrix{\check{C}^*(\mathcal{U},\mcf) \ar[rr]^{\check{\sigma}\ \ } & & \check{C}^*({}^{\sigma}\mathcal{U},\si^*(\mcf)) \ar[rr]^{\mathrm{res}\ \ \ \ } & &  \check{C}^*(\mathcal{U}\times_X {}^{\sigma}\mathcal{U},\si^*(\mcf));}
\end{equation*}
where $\check{\sigma}$ is the map induced by the fixed scheme morphism $\sigma:X\to X$. We denote the first composition above by $\check{\sigma}^{\mcf}$ and the second one by $\mathrm{res}_{\sigma}$.

Analogously to Section 3.2, we define the bicomplex
$\check{C}^{**}_{\sigma}(\mathcal{U},\mc{F})$ by putting:
$$\check{C}^{*0}_{\sigma}(\mathcal{U},\mc{F})=\check{C}^{*}(\mathcal{U},\mc{F}),$$
$$\check{C}^{*1}_{\sigma}(\mathcal{U},\mc{F})
=\check{C}^{*}(\mathcal{U},\si^*(\mc{F}))$$
and trivial elsewhere.
The horizontal differential comes from the standard \v{C}ech complex and the vertical differential is the map $\pm(\mathrm{res}_{\sigma}-\check{\sigma}^{\mcf})$. We call this bicomplex the \emph{right difference \v Cech bicomplex} and we call its cohomology by the \emph{right difference \v Cech cohomology}, which we denote by $\check{H}^*_{\sigma}(\mc{U},\mcf)$.

In the present context we make no claims on the properties of \v Cech cohomology except those following directly from the properties of the defining bicomplex $\check{C}^{**}_{\sigma}(\mathcal{U},\mc{F})$. In particular, we have the following.
\begin{theorem}\label{cechssp}
For any $n\geqslant  0$,
there is a short exact sequence
	\[0\ra \check{H}^{n-1}(\mathcal{U},\mathcal{F})_{\sigma}\ra \check{H}^n_{\sigma}(\mathcal{U},\mathcal{F})\ra \check{H}^n(\mathcal{U},\mathcal{F})^{\sigma}\ra 0,\]
 where $\check{H}^{-1}(\mathcal{U},\mathcal{F}):=0$.
\end{theorem}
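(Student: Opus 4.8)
The plan is to proceed exactly as in the proofs of Theorem~\ref{mainss}, Theorem~\ref{cechss} and Theorem~\ref{mainss2}: run the spectral sequence of the right difference \v Cech bicomplex $\check{C}^{**}_{\sigma}(\mathcal{U},\mc{F})$ constructed just before the statement, filtering so that one takes the horizontal (ordinary \v Cech) cohomology first. Since this bicomplex is concentrated in the two adjacent rows $q=0$ and $q=1$, the spectral sequence will degenerate at $E_2$ and produce the desired three-term exact sequence directly from the abutment.

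First I would identify the $E_1$-page. In the row $q=0$ the horizontal differential is the ordinary \v Cech differential of $\mcf$, and in the row $q=1$ it is the ordinary \v Cech differential of $\si^*(\mcf)$; hence $E_1^{p,0}=\check{H}^p(\mathcal{U},\mcf)$ and $E_1^{p,1}=\check{H}^p(\mathcal{U},\si^*(\mcf))$, and the only nonzero differential $d_1\colon E_1^{p,0}\to E_1^{p,1}$ is the map induced on \v Cech cohomology by the vertical map $\mathrm{res}_{\sigma}-\check{\sigma}^{\mcf}$ of the bicomplex. Passing to $E_2$, the two surviving rows are therefore $E_2^{p,0}=\check{H}^p(\mathcal{U},\mcf)^{\sigma}$ (the kernel of that map) and $E_2^{p,1}=\check{H}^p(\mathcal{U},\si^*(\mcf))_{\sigma}$ (its cokernel), in the notation analogous to the one fixed before Theorem~\ref{mainss2}. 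Because only two adjacent rows are nonzero there is no room for higher differentials, so the sequence degenerates at $E_2=E_\infty$.

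It then remains to read off the abutment. The total cohomology $\check{H}^n_{\sigma}(\mathcal{U},\mcf)=H^n(\mathrm{Tot}(\check{C}^{**}_{\sigma}(\mathcal{U},\mcf)))$ carries a two-step filtration with associated graded pieces $E_\infty^{n-1,1}$ (a subobject) and $E_\infty^{n,0}$ (the quotient), exactly as in Theorem~\ref{mainss}; this yields the short exact sequence $0\to \check{H}^{n-1}(\mathcal{U},\si^*(\mcf))_{\sigma}\to \check{H}^n_{\sigma}(\mathcal{U},\mcf)\to \check{H}^n(\mathcal{U},\mcf)^{\sigma}\to 0$, with the convention $\check{H}^{-1}=0$ taking care of $n=0$. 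Matching this with the displayed statement only requires noting that the term written there as $\check{H}^{n-1}(\mathcal{U},\mathcal{F})_{\sigma}$ is, by definition, the cokernel of $\check{H}^{n-1}(\mathcal{U},\mcf)\to\check{H}^{n-1}(\mathcal{U},\si^*(\mcf))$, parallel to Theorem~\ref{mainss2}.

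There is really no substantive obstacle here once the bicomplex $\check{C}^{**}_{\sigma}(\mathcal{U},\mcf)$ is in place; the proof is the obvious modification of the proof of Theorem~\ref{mainss2} (or of Theorem~\ref{cechss}) and the argument can be stated in one or two sentences. The only point that deserves a line of care — and the closest thing to a difficulty — is the filtration bookkeeping: one must use the \emph{first} spectral sequence (horizontal \v Cech cohomology before the vertical $\mathrm{res}_{\sigma}-\check{\sigma}^{\mcf}$), so that the $E_2$-entries are genuine $\sigma$-invariants and $\sigma$-coinvariants of \v Cech cohomology rather than \v Cech cohomology of kernels and cokernels of sheaf maps, and one must track that the shift $n\mapsto n-1$ in the sub-object comes precisely from the row $q=1$ sitting in total degree $p+1$.
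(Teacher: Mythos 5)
Your proposal is correct and follows exactly the paper's (one-line) argument: the short exact sequence is read off from the first spectral sequence of the bicomplex $\check{C}^{**}_{\sigma}(\mathcal{U},\mathcal{F})$, which degenerates at $E_2$ because only two adjacent rows are nonzero. Your closing remark is also the right reading of the statement: the subobject is genuinely the cokernel of $\check{H}^{n-1}(\mathcal{U},\mathcal{F})\to\check{H}^{n-1}(\mathcal{U},\sigma^*(\mathcal{F}))$, consistent with the notation fixed before Theorem \ref{mainss2} and with the diagram in Theorem \ref{cechtoder1r}.
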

\begin{proof} It follows immediately
from the first spectral sequence for the difference \v Cech bicomplex.
\end{proof}
The proof of the next result follows the lines of the proof of Theorem \ref{cechtoder}, so we skip it.
\begin{theorem}\label{cechtoder1r}
We have  a natural map:
$$\alpha:\check{H}_{\sigma}^p(\mathcal{U},\mathcal{F})\ra H_{\sigma}^p(X,\mathcal{F}),$$
which  fits into the following commutative diagram:
\begin{equation*}
\xymatrix{0  \ar[r]^{}  & \check{H}^{n-1}(\mathcal{U},\si^*(\mathcal{F}))_{\sigma} \ar[r]^{} \ar[d]^{\beta_{\sigma}} & \check{H}^n_{\sigma}(\mathcal{U},\mathcal{F}) \ar[r]^{} \ar[d]^{\alpha} & \check{H}^n(\mathcal{U},\mathcal{F})^{\sigma} \ar[r]^{} \ar[d]^{\beta^{\sigma}} & 0 \\
0  \ar[r]^{}  & H^{n-1}(X,\si^*(\mathcal{F}))_{\sigma} \ar[r]^{} & H^n_{\sigma}(X,\mathcal{F}) \ar[r]^{} &  H^n(X,\mathcal{F})^{\sigma} \ar[r]^{} & 0 ,}
\end{equation*}
where $\beta$ is the usual  map from \v Cech to sheaf cohomology \cite[(I.3.4.5)]{tamme}. \\
Therefore, if $\beta$ is an isomorphism for  some $\mcf$ in degrees: $n, n-1$, then $\alpha$ is an isomorphism for this $\mcf$ in degree $n$.
\end{theorem}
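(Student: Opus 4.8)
The plan is to imitate the proof of Theorem~\ref{cechtoder} verbatim, with Proposition~\ref{dcpxr} playing the role that Proposition~\ref{dcpxl} played there. By Proposition~\ref{dcpxr}, the group $H^*_{\sigma}(X,\mathcal{F})$ is the cohomology of $\mathrm{Tot}(D^{**}(\mathcal{F}))$, the bicomplex with rows $\Gamma(I^*)$ and $\Gamma(J^*)$ (for chosen injective resolutions $\mathcal{F}\to I^*$ in $\rx$ and $\sigma^*(\mathcal{F})\to J^*$) and vertical differential $\pm(\mathrm{res}-\bar{\sigma}^{\mathcal{F}})$; while $\check{H}^*_{\sigma}(\mathcal{U},\mathcal{F})$ is by definition the cohomology of $\mathrm{Tot}(\check{C}^{**}_{\sigma}(\mathcal{U},\mathcal{F}))$, whose two rows are the ordinary \v Cech complexes of $\mathcal{F}$ and of $\sigma^*(\mathcal{F})$, with vertical differential $\pm(\mathrm{res}_{\sigma}-\check{\sigma}^{\mathcal{F}})$. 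So both sides are abutments of the first spectral sequence of the first filtration of a two-row bicomplex, and the task is to produce a morphism between these bicomplexes (or at least between the first pages of these spectral sequences) realising the classical $\beta$ row by row.

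First I would recall that, for a fixed covering $\mathcal{U}$ and a chosen injective resolution, the classical comparison map $\beta$ of \cite[(I.3.4.5)]{tamme} is induced by an actual morphism of complexes from the \v Cech complex to $\Gamma$ of the resolution, well defined up to chain homotopy and natural in the sheaf argument and with respect to refinements. Applying this simultaneously to $\mathcal{F}$ (landing in $\Gamma(I^*)$) and to $\sigma^*(\mathcal{F})$ (landing in $\Gamma(J^*)$) covers the two rows of $\check{C}^{**}_{\sigma}(\mathcal{U},\mathcal{F})\to D^{**}(\mathcal{F})$; what remains is to match the vertical maps. The two ``$\mathrm{res}$'' parts correspond by naturality of $\beta$, using the identification, recorded just before the statement of Proposition~\ref{dcpxr}, of the map induced on sheaf cohomology by $\mathrm{res}_{I^*}$ followed by $\Gamma(c)$ with the one induced by $\mathrm{res}:\Gamma(\mathcal{F})\to\Gamma(\sigma^*(\mathcal{F}))$; the $\check{\sigma}^{\mathcal{F}}$ and $\bar{\sigma}^{\mathcal{F}}$ parts correspond because $\bar{\sigma}^{\mathcal{F}}=c\circ\sigma^{I^*}$ was defined precisely to lift $\sigma^{\mathcal{F}}$ through the chosen resolutions, so that it induces $H^*(X,\sigma^{\mathcal{F}})$ on sheaf cohomology, which is exactly the map that $\check{\sigma}^{\mathcal{F}}$ induces on \v Cech cohomology.

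Since these identifications hold only up to chain homotopy, the correct formulation -- exactly as in the proof of Theorem~\ref{cechtoder} -- is at the level of the first pages: $\beta$ induces a morphism of spectral sequences $E_1(\check{C}^{**}_{\sigma}(\mathcal{U},\mathcal{F}))\to E_1(D^{**}(\mathcal{F}))$, and on $E_1$ the horizontal cohomology has already turned the rows into $\check{H}^*(\mathcal{U},-)$ and $H^*(X,-)$, where the comparison is literally given by $\beta$, $\beta^{\sigma}$ and $\beta_{\sigma}$. Passing to the abutments yields the natural map $\alpha:\check{H}^p_{\sigma}(\mathcal{U},\mathcal{F})\to H^p_{\sigma}(X,\mathcal{F})$, and the two-step filtration on the abutments -- whose associated graded pieces are exactly the outer terms of the short exact sequences of Theorem~\ref{cechssp} and Theorem~\ref{mainss2} -- produces the asserted commutative ladder with $\alpha$ in the middle and $\beta_{\sigma},\beta^{\sigma}$ on the sides. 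The final sentence then follows from the five lemma applied to this ladder.

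The main obstacle, and the only non-formal point, is the vertical compatibility just described: one has to be slightly careful about the base changes hidden in $\check{\sigma}^{\mathcal{F}}$ and $\mathrm{res}_{\sigma}$ (the covering $\mathcal{U}\times_X{}^{\sigma}\mathcal{U}$ and the restriction maps between coverings), since these are what the \v Cech-level vertical differential actually involves. However, because $\beta$ is natural both in the sheaf and under refinements, and because $\bar{\sigma}^{\mathcal{F}}$ and the $D$-side $\mathrm{res}$ were constructed to lift exactly the sheaf maps $\sigma^{\mathcal{F}}$ and $\mathrm{res}$, the required squares commute up to homotopy compatibly with the choices; and on the $E_1$-page, where everything becomes honest sheaf and \v Cech cohomology and the base-change ambiguities collapse, nothing is left to verify beyond the naturality of the classical $\beta$. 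Hence the proof reduces, as claimed in the text, to the lines of Theorem~\ref{cechtoder}.
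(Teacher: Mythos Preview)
Your proposal is correct and follows exactly the approach the paper intends: the paper explicitly skips the proof of this theorem, stating that it ``follows the lines of the proof of Theorem~\ref{cechtoder}'', which is precisely what you do, substituting Proposition~\ref{dcpxr} for Proposition~\ref{dcpxl} and working with the first spectral sequences of the two bicomplexes. Your additional care about the vertical compatibility (the refinement to $\mathcal{U}\times_X{}^{\sigma}\mathcal{U}$ and the chain-homotopy-level identification of $\bar{\sigma}^{\mathcal{F}}$ with $\sigma^{\mathcal{F}}$) is a welcome elaboration of a point the paper leaves implicit, but it does not depart from the paper's argument.
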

Analogously to Section \ref{leftcechsec} again, we define the limit  difference \v Cech cohomology:
\[
\check{H}^n_{\sigma}(X,\mcf):=
\coli_{\mathcal{U}}\check{H}^n_{\sigma}(\mc{U},\mcf).
\]
(where the limit runs through the family of  coverings of $X$), and we have the following limit counterparts of Theorems \ref{cechssp} and \ref{cechtoder1r}.
\begin{theorem}\label{cechss1}
For any $n\geqslant  0$,
there is a short exact sequence
	\[0\ra \check{H}^{n-1}(X,\sigma^*(\mathcal{F}))_{\sigma}\ra \check{H}^n_{\sigma}(X,\mathcal{F})\ra \check{H}^n(X,\mathcal{F})^{\sigma}\ra 0,\]
 where $\check{H}^{-1}(X,\mathcal{F}):=0$.
\end{theorem}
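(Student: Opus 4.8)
The plan is to obtain Theorem~\ref{cechss1} as the filtered direct limit, over the coverings of $X$, of the short exact sequences supplied by Theorem~\ref{cechssp}, in exact parallel with the way the left-difference limit statement was deduced from Theorem~\ref{cechss}. A refinement map $\mathcal{U}'\to\mathcal{U}$ induces a morphism of right difference \v Cech bicomplexes $\check{C}^{**}_{\sigma}(\mathcal{U},\mathcal{F})\to\check{C}^{**}_{\sigma}(\mathcal{U}',\mathcal{F})$, hence a morphism of the associated first spectral sequences, hence a morphism of short exact sequences as in Theorem~\ref{cechssp}. Applying $\coli_{\mathcal{U}}$ and using that filtered colimits of abelian groups are exact, one obtains a short exact sequence whose middle term is $\check{H}^{n}_{\sigma}(X,\mathcal{F})$ by the very definition of the limit right difference \v Cech cohomology.

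The first thing to check is that the relevant diagram of coverings is filtered, i.e. the right-difference analogue of Lemma~\ref{indep}: two refinement maps $\mathcal{U}'\to\mathcal{U}$ induce the same map on $\check{H}^{*}_{\sigma}(-,\mathcal{F})$. I would prove this directly at the level of total complexes. Classically, two refinement maps induce chain maps of \v Cech complexes that are chain homotopic, the homotopy being given by the standard alternating-sum formula, which is natural in the coefficient sheaf and compatible with restriction maps and with the map $\check{\sigma}$ induced by $\sigma\colon X\to X$. Consequently the classical homotopies on the two rows $\check{C}^{*}(\mathcal{U},\mathcal{F})$ and $\check{C}^{*}(\mathcal{U},\sigma^{*}(\mathcal{F}))$ of the right difference bicomplex are compatible with the vertical differentials $\pm(\mathrm{res}_{\sigma}-\check{\sigma}^{\mathcal{F}})$ and assemble, with the usual signs, to a chain homotopy of the associated total complexes; hence the two induced maps agree on $\check{H}^{*}_{\sigma}$ in all degrees at once.

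Granting refinement-independence, the identification of the outer terms is formal. Filtered colimits in $\mathbf{Ab}$ commute with cokernels and with kernels, and, by the description of the sequence in Theorem~\ref{cechssp}, its outer terms are respectively the cokernel of $\check{H}^{n-1}(\mathcal{U},\mathcal{F})\to\check{H}^{n-1}(\mathcal{U},\sigma^{*}(\mathcal{F}))$ and the kernel of $\check{H}^{n}(\mathcal{U},\mathcal{F})\to\check{H}^{n}(\mathcal{U},\sigma^{*}(\mathcal{F}))$ induced by $\mathrm{res}_{\sigma}-\check{\sigma}^{\mathcal{F}}$. Since $\coli_{\mathcal{U}}\check{H}^{n}(\mathcal{U},\mathcal{G})=\check{H}^{n}(X,\mathcal{G})$ for every right difference sheaf $\mathcal{G}$ and the maps $\mathrm{res}$, $\check{\sigma}$ are compatible with refinements, passing to the colimit gives $\coli_{\mathcal{U}}\check{H}^{n}(\mathcal{U},\mathcal{F})^{\sigma}=\check{H}^{n}(X,\mathcal{F})^{\sigma}$ and $\coli_{\mathcal{U}}\check{H}^{n-1}(\mathcal{U},\sigma^{*}(\mathcal{F}))_{\sigma}=\check{H}^{n-1}(X,\sigma^{*}(\mathcal{F}))_{\sigma}$, which are precisely the outer terms of the asserted short exact sequence; exactness is inherited from Theorem~\ref{cechssp}.

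The one point that I expect to need genuine care is the refinement-independence step. In the left-difference setting one had the explicit embedding of a difference (pre)sheaf into $i^{!}(\mathcal{G})$ and the homological interpretation of difference \v Cech cohomology from Proposition~\ref{cechder}, so a degree-shift argument as in the proof of Lemma~\ref{indep} was available. Here $\sigma^{*}$ has no left adjoint, we only have the bicomplex definition of Section~\ref{rightcechsec}, and $\mathcal{F}\mapsto\check{C}^{**}_{\sigma}(\mathcal{U},\mathcal{F})$ is merely left exact on $\rx$ (section functors are not exact on sheaves), so $\check{H}^{*}_{\sigma}(\mathcal{U},-)$ is not visibly an effaceable $\delta$-functor and the degree-shift reduction is not readily applicable; this is why I would instead rely on the naturality of the classical \v Cech homotopies as above. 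Once that is in place, the theorem follows formally from Theorem~\ref{cechssp} together with exactness of filtered colimits.
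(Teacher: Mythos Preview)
Your approach is essentially the one the paper takes: the paper states Theorem~\ref{cechss1} simply as the ``limit counterpart'' of Theorem~\ref{cechssp}, with no further argument, so your plan of passing to the filtered colimit over coverings and using exactness of filtered colimits in $\mathbf{Ab}$ is exactly what is intended.

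Where you go further than the paper is in the refinement-independence step. The paper does not state or prove a right-difference analogue of Lemma~\ref{indep}; it tacitly assumes the passage to the colimit is unproblematic. You correctly observe that the degree-shift argument used for Lemma~\ref{indep} relied on the derived-functor interpretation of Proposition~\ref{cechder}, which is unavailable here, and you replace it by a direct chain-homotopy argument: the classical \v Cech homotopy between two refinement maps is given by an explicit alternating-sum formula which is natural in the coefficient sheaf and compatible with restriction, hence commutes with the vertical differential $\pm(\mathrm{res}_{\sigma}-\check{\sigma}^{\mathcal{F}})$ and assembles to a homotopy of total complexes. This is a cleaner and more self-contained argument than the one the paper gives in the left case, and it is the right thing to do here given the limitations of $\mathrm{Sh}^{\sigma}(\mathbf{C}(X))$ that the paper itself stresses.
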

\begin{theorem}\label{cechtoder2r}
We have  a natural map:
$$\alpha:\check{H}_{\sigma}^p(X,\mathcal{F})\ra H_{\sigma}^p(X,\mathcal{F}),$$
which  fits into the  commutative diagram:
\begin{equation*}
\xymatrix{0  \ar[r]^{}  & \check{H}^{n-1}(X,\si^*(\mathcal{F}))_{\sigma} \ar[r]^{} \ar[d]^{\beta_{\sigma}} & \check{H}^n_{\sigma}(X,\mathcal{F}) \ar[r]^{} \ar[d]^{\alpha} & \check{H}^n(X,\mathcal{F})^{\sigma} \ar[r]^{} \ar[d]^{\beta^{\sigma}} & 0 \\
0  \ar[r]^{}  & H^{n-1}(X,\si^*(\mathcal{F}))_{\sigma} \ar[r]^{} & H^n_{\sigma}(X,\mathcal{F}) \ar[r]^{} &  H^n(X,\mathcal{F})^{\sigma} \ar[r]^{} & 0 ,}
\end{equation*}
where $\beta$ is the usual  map from \v Cech to sheaf cohomology \cite[(I.3.4.5)]{tamme}.\\
Therefore, if $\beta$ is an isomorphism for  some $\mcf$ in degrees: $n, n-1$, then $\alpha$ is an isomorphism for this $\mcf$ in degree $n$.\\
In particular, $\alpha$ is
always an isomorphism for $n=0,1$, or for any $n$ when
 $\mathbf{C}(X)=\mathbf{Z}(X)$, $X$ is a separated scheme, and $\mathcal{F}$ is a  difference quasi-coherent sheaf.
\end{theorem}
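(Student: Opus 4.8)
The plan is to obtain this statement from its non-limit version, Theorem \ref{cechtoder1r}, by passing to the colimit over the system of coverings of $X$. Before doing so I would record the right difference analogue of Lemma \ref{indep}: any two refinement maps $\mathcal{U}\to\mathcal{V}$ induce the same map on right difference \v Cech cohomology. This is proved exactly as Lemma \ref{indep}, by the degree-shift reduction to $\check{H}^0_\sigma$ together with the observation that $\check{H}^0_\sigma(\mathcal{V},\mathcal{F})=\check{H}^0(\mathcal{V},\mathcal{F})^{\sigma}$ is a subgroup of $\check{H}^0(\mathcal{V},\mathcal{F})$, so the claim follows from its classical counterpart. Consequently $\mathcal{U}\mapsto\check{H}^{*}_\sigma(\mathcal{U},\mathcal{F})$ is a functor on a (co)filtered system and the colimit defining $\check{H}^{*}_\sigma(X,\mathcal{F})$ is well behaved.

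Next I would take the colimit over $\mathcal{U}$ of the commutative diagram with exact rows provided by Theorem \ref{cechtoder1r}. Since filtered colimits are exact in $\mathbf{Ab}$, the colimit of the top row remains short exact and, by definition of the limit \v Cech cohomology, is the sequence of Theorem \ref{cechss1}; the bottom row does not involve $\mathcal{U}$ and is the sequence of Theorem \ref{mainss2}. The colimit of the maps $\alpha_{\mathcal{U}}$ gives the asserted natural map $\alpha\colon \check{H}^p_\sigma(X,\mathcal{F})\to H^p_\sigma(X,\mathcal{F})$, while the colimits of $\beta_\sigma$ and $\beta^\sigma$ are the maps induced on cokernels and on kernels by the classical limit \v Cech-to-derived map $\beta$; this yields the desired commutative diagram.

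For the ``in particular'' clause I would apply the Five Lemma to this diagram. If $\beta$ is an isomorphism for $\mathcal{F}$ in degree $n$, then $\beta^\sigma$, being the induced map of kernels of isomorphic arrows, is an isomorphism; if in addition $\beta$ is an isomorphism for $\mathcal{F}$ and for $\sigma^*(\mathcal{F})$ in degree $n-1$, then $\beta_\sigma$, the induced map of cokernels, is an isomorphism; hence $\alpha$ is an isomorphism in degree $n$. The three concrete cases follow at once: $\beta$ is always an isomorphism in degrees $0$ and $1$ (for $n=0$ one in fact has $\check{H}^0_\sigma=\Gamma^\sigma=H^0_\sigma$ directly), so $\alpha$ is an isomorphism for $n=0,1$; and when $\mathbf{C}(X)=\mathbf{Z}(X)$, $X$ is separated and $\mathcal{F}$ is difference quasi-coherent, then $\sigma^*(\mathcal{F})$ is quasi-coherent as well, so $\beta$ is an isomorphism in all degrees by \cite[Proposition III.2.14]{milne1etale} and therefore so is $\alpha$. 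The only step requiring an actual argument here is the refinement-independence statement in the first paragraph; the rest is a routine colimit and diagram chase.
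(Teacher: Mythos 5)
Your proposal matches the paper's approach: the paper gives no separate proof of Theorem \ref{cechtoder2r}, presenting it as the limit counterpart of Theorem \ref{cechtoder1r} obtained by passing to the filtered colimit over coverings, which is exactly your argument, and your extra care about needing $\beta$ to be an isomorphism for $\sigma^*(\mathcal{F})$ (not just $\mathcal{F}$) in degree $n-1$ is a point the paper's own statement glosses over. One caveat on the single step you flag as needing an argument: justifying refinement-independence ``exactly as Lemma \ref{indep}'' is not quite available, because the degree-shift reduction there rests on the derived-functor interpretation of Proposition \ref{cechder} for difference \emph{presheaves}, and the paper explicitly makes no presheaf-level or derived-functor claims in the right-difference setting of Section \ref{rightcechsec}; instead one should use the explicit classical \v Cech homotopy between the two refinement maps, whose naturality makes it compatible with the vertical differential $\mathrm{res}_{\sigma}-\check{\sigma}^{\mcf}$ of the bicomplex and hence yields a homotopy of total complexes.
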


\section{Right difference sheaf torsors}\label{rdstsec}
Let $\mc{G}$ be a sheaf of groups on $\mathbf{C}(X)$. We recall from Section \ref{ldstsec} the category $\mathrm{TSh}(\mc{G}/\mathbf{F}(X))$, and the functors
\[
f^*:\mathrm{TSh}(\mc{G}/Y)\ra \mathrm{TSh}(f^*(\mc{G})/X),\ \ \ \
\alpha_*:\mathrm{TSh}(\mc{G}/X)\ra \mathrm{TSh}(\mc{H}/X);
\]
where $\alpha:\mc{G}\to \mc{H}$ is a morphism of group sheaves on $\mathbf{F}(X)$, and $f:X\to Y$ is a morphism of schemes. Let $\mc{P}$ be a sheaf of $\mc{G}$-sets on $\mathbf{C}(X)$. Then $\sigma^*(\mc{P})$ is a sheaf of $\sigma^*(\mc{G})$-sets.
\begin{definition}\label{defrtor}
We assume that $(\mc{G},\sigma^{\mc{G}})$ is a right difference sheaf of groups (not necessarily abelian groups!).
\begin{enumerate}
\item A \emph{right difference sheaf $\mc{G}$-torsor} is a pair $(\mc{P},\sigma^{\mc{P}})$, where $\mc{P}\in \mathrm{TSh}(\mc{G}/\mathbf{F}(X))$
and
\[
\sigma^{\mc{P}}: (\sigma^{\mc{G}})_*(\mc{P})\ra \sigma^*(\mc{P})
\]
is an isomorphism of sheaf $\sigma^*(\mc{G})$-torsors.

\item The right difference sheaf $\mc{G}$-torsors on $\mathbf{F}(X)$ form a category,
and we denote the set of isomorphism classes of right difference sheaf $\mc{G}$-torsors on $\mathbf{F}(X)$ by $\mathrm{PHSh}^{\sigma}(\mc{G}/\mathbf{F}(X))$.
\end{enumerate}
\end{definition}
We obtain below a difference version of the classical correspondence between torsors and the first cohomology group. Its proof is entirely analogous to the proof of Theorem \ref{lphsthm}, so we skip it.
\\
By adjusting the constructions from Section 4.1 to the context of right difference torsors we
obtain  a difference version of the classical correspondence between sheaf torsors and the first cohomology group. We define the notion of the first pointed-set right difference cohomology with non-commutative coefficients in an analogous way as in Definition \ref{defcnon}.
\begin{theorem}\label{rphsthm}
There is an isomorphism of pointed sets, which is an isomorphism of abelian groups when $\mc{G}$ is commutative:
\[
\mathrm{PHSh}^{\sigma}(\mc{G}/X)\cong \check{H}^1_{\sigma}(X,\mc{G}).
\]
\end{theorem}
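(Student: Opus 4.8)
The plan is to transcribe the proof of Theorem \ref{lphsthm}, replacing the functor $\sigma_T$ and the direct image $\sigma_*$ by the two operations appearing in Definition \ref{defrtor} (extension of structure group along $\sigma^{\mc{G}}$ and inverse image along $\sigma$), and to work throughout at the level of explicit $1$-cocycles for a fixed trivializing covering before passing to the colimit over coverings; since, as emphasized in Section \ref{secrds}, there is no usable presheaf-level description in the right difference setting, this direct route is the appropriate one, and it suffices because only $\check{H}^1_{\sigma}$ is involved. First I would fix a right difference sheaf $\mc{G}$-torsor $(\mc{P},\sigma^{\mc{P}})$, choose a covering $\mathcal{U}$ of $X$ trivializing $\mc{P}$, and let $c(\mc{P})\in\check{Z}^1(\mathcal{U},\mc{G})$ be a $1$-cocycle representing the class $h(\mc{P})$ supplied by Lemma \ref{clphs}.

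Next I would unwind the isomorphism $\sigma^{\mc{P}}\colon(\sigma^{\mc{G}})_*(\mc{P})\to\sigma^*(\mc{P})$ of sheaf $\sigma^*(\mc{G})$-torsors at the cocycle level. By Lemma \ref{clphs}(2) the extension $(\sigma^{\mc{G}})_*(\mc{P})$ is classified by the image of $c(\mc{P})$ under the map induced by $\sigma^{\mc{G}}$, that is, by $\check{\sigma}^{\mc{G}}(c(\mc{P}))$ in the notation of Section \ref{rightcechsec}; by Lemma \ref{clphs}(1) the inverse image $\sigma^*(\mc{P})$ is classified by $\mathrm{res}_{\sigma}(c(\mc{P}))$, the cocycle obtained from $c(\mc{P})$ by pulling back along $\sigma$ and restricting to the covering $\mathcal{U}\times_X{}^{\sigma}\mathcal{U}$. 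Thus the existence of $\sigma^{\mc{P}}$ forces the quotient cocycle $(\mathrm{res}_{\sigma}-\check{\sigma}^{\mc{G}})(c(\mc{P}))$ to be a $1$-coboundary, and a choice of $\sigma^{\mc{P}}$ corresponds exactly to a choice of $0$-cochain $c(\sigma^{\mc{P}})$ with $\check{\partial}^0(c(\sigma^{\mc{P}}))=(\mathrm{res}_{\sigma}-\check{\sigma}^{\mc{G}})(c(\mc{P}))$. By the explicit formula for the total differential of the right difference \v Cech bicomplex (the evident analogue of Remark \ref{totdiff1}), the pair $(c(\mc{P}),c(\sigma^{\mc{P}}))$ is then a $1$-cocycle in $\mathrm{Tot}(\check{C}^{**}_{\sigma}(\mathcal{U},\mc{G}))$, hence an element of $\check{H}^1_{\sigma}(\mathcal{U},\mc{G})$ in the sense of the right difference analogue of Definition \ref{defcnon}.

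The remaining steps are bookkeeping. I would check that replacing $\mathcal{U}$ by a refinement, changing the representing cocycle, or applying an isomorphism of right difference torsors alters $(c(\mc{P}),c(\sigma^{\mc{P}}))$ only within its class in $\check{H}^1_{\sigma}(X,\mc{G})=\coli_{\mathcal{U}}\check{H}^1_{\sigma}(\mathcal{U},\mc{G})$ --- using Lemma \ref{clphs} and the compatibility of the \v Cech constructions with refinement, exactly as in the classical case --- so that the assignment descends to a well-defined map $\mathrm{PHSh}^{\sigma}(\mc{G}/X)\to\check{H}^1_{\sigma}(X,\mc{G})$; and conversely that every class arises this way, by reconstructing $\mc{P}$ from the first component via Lemma \ref{clphs} and reading off the right difference structure from the second component. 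This gives a bijection of pointed sets. When $\mc{G}$ is commutative all the maps above are additive, so the bijection is a group homomorphism, whence the stated isomorphism of abelian groups.

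The one point that needs genuine care beyond copying the left-difference argument is verifying that the classical torsor--cocycle dictionary of Lemma \ref{clphs} interacts correctly with the two operations in Definition \ref{defrtor}: namely that, after the necessary passage to the common refinement $\mathcal{U}\times_X{}^{\sigma}\mathcal{U}$, the $\sigma^*(\mc{G})$-torsors $(\sigma^{\mc{G}})_*(\mc{P})$ and $\sigma^*(\mc{P})$ are classified by precisely $\check{\sigma}^{\mc{G}}(c(\mc{P}))$ and $\mathrm{res}_{\sigma}(c(\mc{P}))$, with no stray discrepancy between the identifications used. Once this compatibility is pinned down, the cocycle computation matches the total differential of $\check{C}^{**}_{\sigma}(\mathcal{U},\mc{G})$ term by term and the rest is routine.
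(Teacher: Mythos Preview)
Your proposal is correct and follows exactly the route the paper intends: the paper states that the proof is ``entirely analogous to the proof of Theorem \ref{lphsthm}'' and skips it, and what you have written is precisely that analogue, carried out with the maps $\check{\sigma}^{\mc{G}}$ and $\mathrm{res}_{\sigma}$ of Section \ref{rightcechsec} in place of $\check{\sigma}$ and $\mathrm{res}$, and with Lemma \ref{clphs}(1),(2) supplying the cocycles for $\sigma^*(\mc{P})$ and $(\sigma^{\mc{G}})_*(\mc{P})$. Your explicit attention to the passage to the common refinement $\mathcal{U}\times_X{}^{\sigma}\mathcal{U}$ and to the compatibility check in the last paragraph is, if anything, more careful than the paper's own treatment.
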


\subsection{Right difference torsors}\label{secrdt}
In this subsection, we are finally arriving to our main motivation for introducing the notion of a right difference sheaf. We make the same assumptions here as in Section \ref{contor}, that is: $\mathbf{C}(X)=\mathbf{F}(X)$, $G$ is a flat group scheme over $X$ satisfying one of the assumptions of \cite[Theorem III.4.3]{milne1etale}, which guarantee that any sheaf of $\mathcal{R}(G)$-torsors on $\mathbf{F}(X)$ come from a ``usual $G$-torsor.''

We recall the following natural definition (\cite[Definition 1.2]{BW}). It is phrased in \cite{BW} in terms of representable functors, however we give an equivalent definition which fits into our terminology of schemes and morphisms between them.
\begin{definition}\label{dtordef}
Let $(G,\sigma_G)$ be a difference group scheme over $(X,\sigma)$ as in Remark \ref{remdifsch}(2).
\begin{enumerate}
\item A \emph{difference $G$-torsor} on $X$ is a $G$-torsor $P$ together with a structure $(P,\sigma_P)$ of a difference scheme (see Remark \ref{remdifsch}(3)) over $(X,\sigma)$ such that the $G$-torsor action $\mu:G\times_XP\to P$ is a morphism of difference schemes over $(X,\sigma)$, that is, the following diagram of $X$-scheme morphisms commutes:
\begin{equation*}
\xymatrix{G\times_XP  \ar[rr]^{\mu} \ar[d]_{\sigma_G\times \sigma_P}  & & P  \ar[d]_{\sigma_P} \\
{}^{\sigma}G\times_X{}^{\sigma}P \ar[rr]^{^{\sigma}\mu} & & ^{\sigma}P.}
\end{equation*}

\item There is a natural notion of a morphism of difference torsors, and such torsors form a category. We denote the set of isomorphism classes of difference torsors over $(G,\sigma_G)$ by $\mathrm{PHS}^{\sigma}(G/X)$.
\end{enumerate}
\end{definition}
\begin{remark}\label{anotdt}
\begin{enumerate}
\item It is easy to see that a difference $G$-torsor on $X$ is the same as  a $G$-torsor $P$ together with the choice of a structure $(P,\sigma_P)$ of a difference scheme over $(X,\sigma)$ such that $\sigma_P$ gives rise to the isomorphism of ${}^{\sigma}G$-torsors (denoted by the same symbol):
\[
\sigma_P: (\sigma_G)_*(P)\cong {}^{\sigma}P.\]

\item Let $(P,\sigma_P)$ be a difference $G$-torsor on $X$. Then $P$ is trivial as a non-difference torsor if and only if $P(X)\neq \emptyset$ and, moreover, the set $P(X)$ corresponds to the set of $G$-torsor isomorphisms between $P$ and $G$ (see e.g. \cite[page 120]{milne1etale}). Let us now consider the corresponding functor of ``$\sigma$-rational points'' (see Remark \ref{remdifsch}), which we denote by $P^{\sharp}$. Then $P^{\sharp}((X,\sigma))$ is a subset of $P(X)$ corresponding to those isomorphisms of $G$-torsors, which also preserve the difference structure. In particular, analogously to the classical context, $(P,\sigma_P)$ is a trivial difference $G$-torsor on $X$ if and only if $P^{\sharp}((X,\sigma))\neq \emptyset$.
\end{enumerate}
\end{remark}
\begin{example}\label{exbw2}
We give one more example from \cite{BW} rephrased in our terminology (see Example \ref{exbw1}). Let $X=\spec(\ka)$, where $\ka$ is a field. We fix $\lambda_0,\lambda_1,\ldots,\lambda_{n-1}\in \ka$, and consider the following ``Picard-Vessiot'' difference group scheme:
$$\left(\ga^n,\sigma_{\ga^n}\right),\ \ \ \ \sigma_{\ga^n}(a_1,\ldots,a_{n-1},a_n)=(a_2,\ldots,a_{n},\lambda_0a_1+\ldots+\lambda_{n-1}a_{n}).$$
Since $\ga^n$ is defined over constants of $s$, we can assume that ${}^{\sigma}\ga^n=\ga^n$ (see Section \ref{igs}). For any $\lambda\in \ka$, we define the following difference $\ga^n$-torsor:
$$\left(\ga^n,\sigma_{\lambda}\right),\ \ \ \ \sigma_{\lambda}(a_1,\ldots,a_{n-1},a_n)=(a_2,\ldots,a_{n},\lambda_0a_1+\ldots+\lambda_{n-1}a_{n}+\lambda).$$
Then the group operation morphism on $\ga^n$ gives $(\ga^n,\sigma_{\lambda})$ the structure of a difference $\ga^n$-torsor as in \cite[Example 1.4]{BW}.
\end{example}
For a difference group scheme $(G,\sigma_G)$, we would like to classify the difference $G$-torsors by applying Theorem \ref{rphsthm}.
By Example \ref{mainrex}, $\mathcal{R}(G)$ is a right difference sheaf of possibly non-commutative groups.
\begin{theorem}\label{rdiso}
There is the following isomorphism of pointed sets:
$$\mathrm{PHS}^{\sigma}(G/X)\cong \mathrm{PHSh}^{\sigma}({\mathcal{R}(G)}/\mathbf{F}(X)),$$
which is an isomorphism of abelian groups in the case when $G$ is commutative.
\end{theorem}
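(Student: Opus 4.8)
The plan is to reduce the statement to Theorem \ref{rphsthm} by constructing an equivalence between the category of difference $G$-torsors on $X$ (in the sense of Definition \ref{dtordef}) and the category of right difference sheaf $\mathcal{R}(G)$-torsors on $\mathbf{F}(X)$ (in the sense of Definition \ref{defrtor}), and then observing that this equivalence induces the desired bijection on isomorphism classes, compatible with the group structure when $G$ is commutative. First I would recall that under our standing assumptions on $G$ (one of the hypotheses of \cite[Theorem III.4.3]{milne1etale}), the representability functor
$$\mathcal{R}:\mathrm{Tors}(G/X)\ra \mathrm{TSh}(\mathcal{R}(G)/\mathbf{F}(X))$$
is an equivalence of categories, and moreover $\mathcal{R}$ commutes with the base-change functors (so $\mathcal{R}({}^{\sigma}P)\cong \sigma^*(\mathcal{R}(P))$, via the isomorphism $\phi_P$ of \cite[Remark II.3.1.(e)]{milne1etale}, using $P\in\mathrm{Ob}(\mathcal{C})$, which holds as $P$ is affine over $X$ when $G$ is) and with the extension functors $(\sigma_G)_*$. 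These are the same facts already invoked in Section \ref{contor} and in Example \ref{mainrex}.

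The main step is to match the difference structures. Given a difference $G$-torsor $(P,\sigma_P)$, Remark \ref{anotdt}(1) tells us that $\sigma_P$ is equivalently an isomorphism of ${}^{\sigma}G$-torsors $(\sigma_G)_*(P)\cong {}^{\sigma}P$. Applying $\mathcal{R}$ and using the two compatibilities above, this becomes an isomorphism of sheaves of $\mathcal{R}({}^{\sigma}G)\cong \sigma^*(\mathcal{R}(G))$-torsors
$$\mathcal{R}(\sigma_P):\left(\sigma^{\mathcal{R}(G)}\right)_*(\mathcal{R}(P))\ra \sigma^*(\mathcal{R}(P)),$$
where $\sigma^{\mathcal{R}(G)}=\mathcal{R}(\sigma_G)$ followed by $\phi_G^{-1}$ is exactly the right difference sheaf structure on $\mathcal{R}(G)$ from Example \ref{mainrex}. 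Thus $(\mathcal{R}(P),\mathcal{R}(\sigma_P))$ is a right difference sheaf $\mathcal{R}(G)$-torsor, and this assignment is functorial and fully faithful (since $\mathcal{R}$ is fully faithful on torsors and a morphism of difference torsors is just a morphism of torsors intertwining the $\sigma$'s). Essential surjectivity follows because any sheaf $\mathcal{R}(G)$-torsor is of the form $\mathcal{R}(P)$ for a unique $P$ by \cite[Theorem III.4.3]{milne1etale}, and then the given sheaf-torsor isomorphism $\left(\sigma^{\mathcal{R}(G)}\right)_*(\mathcal{R}(P))\cong \sigma^*(\mathcal{R}(P))$ corresponds, again by full faithfulness of $\mathcal{R}$, to an isomorphism $(\sigma_G)_*(P)\cong {}^{\sigma}P$, i.e. to a difference structure $\sigma_P$ on $P$; one must check the torsor-action compatibility of Definition \ref{dtordef}(1), but this is automatic since it holds after applying the faithful functor $\mathcal{R}$. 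This gives the isomorphism of pointed sets
$$\mathrm{PHS}^{\sigma}(G/X)\cong \mathrm{PHSh}^{\sigma}(\mathcal{R}(G)/\mathbf{F}(X)),$$
and combined with Theorem \ref{rphsthm} also with $\check{H}^1_{\sigma}(\mathbf{F}(X),\mathcal{R}(G))$.

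The part requiring care — the main obstacle — is the compatibility of all the coherence isomorphisms: one has to verify that the identification $\mathcal{R}({}^{\sigma}G)\cong \sigma^*(\mathcal{R}(G))$ used to interpret $\sigma_P$ as a \emph{right} difference structure is the same one ($\phi_G$) used to define $\sigma^{\mathcal{R}(G)}$, so that $\left(\sigma^{\mathcal{R}(G)}\right)_*$ and $\mathcal{R}\bigl((\sigma_G)_*(-)\bigr)$ genuinely coincide as endofunctors of $\mathrm{TSh}$ — essentially the right-difference analogue of the diagram chase in the proof of Lemma \ref{torbij}. Once this bookkeeping is in place, the equivalence of categories is immediate, so I would carry out that diagram chase explicitly and leave the remaining verifications (functoriality of the assignment on morphisms, and agreement of the group structures in the commutative case, which both sides inherit from the contracted-product / Baer-sum operation that $\mathcal{R}$ preserves) to routine checking.
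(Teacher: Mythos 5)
Your proposal is correct and follows essentially the same route as the paper: apply the representability functor $\mathcal{R}$ to the isomorphism $(\sigma_G)_*(P)\cong {}^{\sigma}P$ furnished by Remark \ref{anotdt}, and identify $\left(\phi_G^{-1}\right)_*\circ\mathcal{R}(\sigma_G)_*$ with $\left(\sigma^{\mathcal{R}(G)}\right)_*$ via Example \ref{mainrex} and the diagram chase of Lemma \ref{torbij}. The only difference is that you spell out the inverse direction (full faithfulness and essential surjectivity of the induced functor), which the paper leaves implicit after constructing the forward assignment.
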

\begin{proof}
Let $(P,\sigma_P)$ be a difference $G$-torsor on $X$. By Remark \ref{anotdt}, $\sigma_P$ gives rise
to the isomorphism of ${}^{\sigma}G$-torsors:
\[
\sigma_P: (\sigma_G)_*(P)\ra {}^{\sigma}P.\]
After applying the ``representability functor'' $\mathcal{R}$, we get the following isomorphism:
$$\mathcal{R}(\sigma_G)_*(\mathcal{R}(P))=\mathcal{R}((\sigma_G)_*(P))\ra \mathcal{R}({}^{\sigma}P),$$
where the equality follows in the same way as in the first displayed line in the proof of Lemma \ref{torbij}. We apply the functor $(\phi_G^{-1})_*$ to this last isomorphism, and obtain the following isomorphism:
$$(\phi_G^{-1})_*\circ \mathcal{R}(\sigma_G)_*(\mathcal{R}(P))\ra (\phi_G^{-1})_*\circ \mathcal{R}({}^{\sigma}P)=\sigma^*(\mathcal{R}(P)),$$
where the equality comes from the upper of the commutative diagram appearing at the end of the proof of Lemma \ref{torbij}. Using Example \ref{mainrex} (for the second equality below), we obtain the following:
$$(\phi_G^{-1})_*\circ \mathcal{R}(\sigma_G)_*=\left(\phi_G^{-1}\circ \mathcal{R}(\sigma_G)\right)_*=\left(\sigma^{\mathcal{R}(G)}\right)_*.$$
Hence, we get an isomorphism:
$$\left(\sigma^{\mathcal{R}(G)}\right)_*(\mathcal{R}(P))\ra \sigma^*(\mathcal{R}(P)),$$
which gives $\mathcal{R}(P)$ the structure of a right difference sheaf $\mathcal{R}(G)$-torsor.
\end{proof}
\begin{cor}\label{fdc}
Using Theorems \ref{rphsthm} and \ref{rdiso}, we get the following isomorphism:
$$\mathrm{PHS}^{\sigma}(G/X)\cong \check{H}^1_{\sigma}(\mathbf{F}(X),\mathcal{R}(G)).$$
If $X=\spec(\ka)$, where $\ka$ is a field, then we get:
$$H^1_{\sigma}(\ka,G)\cong \check{H}^1_{\sigma}(\mathbf{F}(\spec(\ka)),\mathcal{R}(G)),$$
where the group $H^1_{\sigma}(\ka,G)$ was introduced in \cite{BW}.
\end{cor}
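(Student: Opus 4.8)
The plan is simply to concatenate the two isomorphisms already established and then match up the classical terminology in the affine case. First I would invoke Theorem \ref{rdiso} to identify $\mathrm{PHS}^{\sigma}(G/X)$ with $\mathrm{PHSh}^{\sigma}(\mathcal{R}(G)/\mathbf{F}(X))$; this uses the standing assumption of Section \ref{secrdt} that $G$ satisfies one of the hypotheses of \cite[Theorem III.4.3]{milne1etale}, so that sheaf $\mathcal{R}(G)$-torsors on $\mathbf{F}(X)$ all come from scheme $G$-torsors, and it uses Example \ref{mainrex} to see $\mathcal{R}(G)$ as a right difference sheaf of (possibly non-commutative) groups. Then I would apply Theorem \ref{rphsthm} with $\mc{G}:=\mathcal{R}(G)$ to identify $\mathrm{PHSh}^{\sigma}(\mathcal{R}(G)/\mathbf{F}(X))$ with $\check{H}^1_{\sigma}(\mathbf{F}(X),\mathcal{R}(G))$. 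Composing gives the first displayed isomorphism of pointed sets; when $G$ is commutative both cited isomorphisms are isomorphisms of abelian groups, so the composite is as well.

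For the affine statement, I would take $X=\spec(\ka)$ and observe that the group $H^1_{\sigma}(\ka,G)$ of \cite{BW} is, by its very definition there, the set (group, in the commutative case) of isomorphism classes of difference $G$-torsors over $(\spec(\ka),\spec(s))$. Since Definition \ref{dtordef} was chosen precisely to be the scheme-theoretic reformulation of that definition (as already noted just above Definition \ref{dtordef} and illustrated in Examples \ref{exbw1} and \ref{exbw2}), we have $H^1_{\sigma}(\ka,G)=\mathrm{PHS}^{\sigma}(G/\spec(\ka))$. Feeding this equality into the first part with $X=\spec(\ka)$ and $\mathbf{C}(X)=\mathbf{F}(\spec(\ka))$ yields the second displayed isomorphism.

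The only point requiring any care — and the closest thing to an obstacle — is the verification that the \cite{BW}-definition of $H^1_{\sigma}(\ka,G)$ really does coincide, as a pointed set (and as a group in the commutative case), with $\mathrm{PHS}^{\sigma}(G/\spec(\ka))$ in the sense of Definition \ref{dtordef}; this is a matter of unwinding the representable-functor language of \cite{BW} into the language of schemes and morphisms of difference schemes used here, together with checking that the base points (the trivial difference torsor, equivalently the class with $P^{\sharp}((X,\sigma))\neq\emptyset$ as in Remark \ref{anotdt}(2)) and the group operations match. Everything else is a formal composition, so I expect the corollary to follow with essentially no further computation.
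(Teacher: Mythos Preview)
Your proposal is correct and matches the paper's approach exactly: the paper gives no separate proof for this corollary, since the statement itself already says ``Using Theorems \ref{rphsthm} and \ref{rdiso}'', and the result is obtained precisely by composing those two isomorphisms and then, in the affine case, identifying $H^1_{\sigma}(\ka,G)$ from \cite{BW} with $\mathrm{PHS}^{\sigma}(G/\spec(\ka))$ via the reformulation in Definition \ref{dtordef}. Your extra care about matching the \cite{BW} definition with Definition \ref{dtordef} is appropriate but is already handled by the paper's remark preceding Definition \ref{dtordef} that the two are equivalent.
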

\begin{remark}
The above result looks similar to Example \ref{isocoho}, but one should note that only the case of isotrivial difference group schemes (i.e. of the form $(G,\id)$ for $G$ defined over constants) was considered in Example \ref{isocoho}.
\end{remark}

\subsection{Higher difference Galois cohomology}
As we already mentioned in Example \ref{isocoho} and Corollary \ref{fdc}, the first ``difference Galois cohomology'' group was introduced in \cite{BW}. Using the methods which we have  already developed, we introduce in this subsection higher difference Galois cohomology groups.

Let $(G,\sigma_G)$ be a commutative difference group scheme over a difference field $(\ka,s)$. We still assume that $\mathbf{C}(X)=\mathbf{F}(X)$. By Example \ref{mainrex}, $\mathcal{R}(G)$ has a natural structure of a right difference sheaf.
We generalize the notion of the first ``difference Galois cohomology'' from \cite{BW} in the following way.
\begin{definition}\label{hdgc}
For $n\in \Nn$, we define the \emph{$n$-th difference Galois cohomology} of $(\ka,\sigma)$ with coefficients in $(G,\sigma_G)$ as:
$$H^n_{\sigma}(\ka,G):=H^n_{\sigma}\left(\mathbf{F}(\spec(\ka)),\mathcal{R}(G)\right).$$
If $G$ is not necessarily commutative, then we define:
$$H^1_{\sigma}(\ka,G):=\check{H}^1_{\sigma}\left(\mathbf{F}(\spec(\ka)),\mathcal{R}(G)\right).$$
\end{definition}
\begin{remark}
We notice below that the above definition does not contradict itself, and that it generalizes the one from \cite{BW} indeed.
\begin{enumerate}
\item By Theorem \ref{cechtoder2r}, in the case of a commutative $G$, the two definitions above coincide.

\item By Corollary \ref{fdc}, our definition of difference Galois cohomology coincides with the one from \cite{BW} in the case of $n=1$.
\end{enumerate}
\end{remark}
We generalize below (to the case of an arbitrary difference group scheme) the short exact sequence from Example \ref{isocoho}. Similarly as in Section \ref{etaleexam}, the homomorphism $s:\ka\to \ka$ extends to a homomorphism $\tilde{s}:\ka^{\sep}\to \ka^{\sep}$, which, using  \cite[Chapter II.1, Remark 2]{serre2002galois}, induces the following homomorphism not depending on the choice of $\tilde{s}$:
$$H^m(s,G):H^m(\ka,G)\to H^m(\ka,{}^{\sigma}G).$$
Similarly as for the sheaf cohomology, we define the group of ``(co)invariants'' below:
$$H^m(\ka,G)^{\sigma}:=\ker\left(H^m(s,G)-H^m(\ka,\sigma_G)\right),$$
$$H^m(\ka,G)_{\sigma}:=\mathrm{coker}\left(H^m(s,G)-H^m(\ka,\sigma_G)\right).$$
\begin{theorem}\label{wibdesc}
We assume that $G$ is a commutative group scheme over $\ka$, which is smooth and quasi-projective. Then for each $n>0$, we have the following exact sequence:
$$0\ra H^{n-1}(\ka,G)_{\sigma}\ra H^n_{\sigma}(\ka,G)\ra H^n(\ka,G)^{\sigma}\ra 1,$$
where for each $m\in \Nn$, $H^m(\ka,G)$ is the classical Galois cohomology,
\end{theorem}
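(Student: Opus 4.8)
The plan is to obtain the sequence by specializing Theorem \ref{mainss2} and then translating each term into classical Galois cohomology. Put $X=\spec(\ka)$ and consider the sheaf $\mathcal{R}(G)$ on $\mathbf{F}(X)$; by Example \ref{mainrex} it carries a right difference structure $\sigma^{\mathcal{R}(G)}=\phi_G^{-1}\circ\mathcal{R}(\sigma_G)$, which makes sense because $G$, being quasi-projective over $\ka$, lies in $\mathrm{Ob}(\mathbf{F}(X))$. By Definition \ref{hdgc} we have $H^n_\sigma(\ka,G)=H^n_\sigma(\mathbf{F}(X),\mathcal{R}(G))$, so Theorem \ref{mainss2} applied to $\mathcal{F}=\mathcal{R}(G)$ directly yields
\[
0\ra H^{n-1}\!\left(\mathbf{F}(X),\sigma^{*}(\mathcal{R}(G))\right)_{\sigma}\ra H^n_{\sigma}(\ka,G)\ra H^n\!\left(\mathbf{F}(X),\mathcal{R}(G)\right)^{\sigma}\ra 0,
\]
where, as in the paragraph preceding Theorem \ref{mainss2}, the superscript (resp. subscript) $\sigma$ denotes the kernel (resp. cokernel) of the map induced on cohomology by $\mathrm{res}-\sigma^{\mathcal{R}(G)}\colon H^{*}(\mathbf{F}(X),\mathcal{R}(G))\to H^{*}(\mathbf{F}(X),\sigma^{*}(\mathcal{R}(G)))$. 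What remains is to recognise all the groups and the connecting map here as their classical counterparts.

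First I would identify $\sigma^{*}(\mathcal{R}(G))\cong\mathcal{R}({}^{\sigma}G)$ via $\phi_G$ (Section \ref{igs} and \cite[Remark II.3.1.(e)]{milne1etale}, valid since $G\in\mathrm{Ob}(\mathbf{F}(X))$). As $G$ is smooth and quasi-projective, so is ${}^{\sigma}G$ (both properties are stable under the base change along $\sigma$), and hence fppf cohomology agrees with étale cohomology for both sheaves in every degree: this is the higher-degree analogue of Remark \ref{ftoetoz}, following from the comparison of fppf and étale cohomology for smooth group schemes (for $H^1$ it is \cite[Theorem III.3.19]{milne1etale}). Next, via the equivalence $\Psi$ of Section \ref{etaleexam} (i.e. \cite[Theorem II.1.9]{milne1etale}), étale cohomology over $\spec(\ka)$ is continuous Galois cohomology; since $G$ is of finite type over $\ka$ one has $\Psi(\mathcal{R}(G))=\coli_{K/\ka}G(K)=G(\ka^{\sep})$ and $\Psi(\mathcal{R}({}^{\sigma}G))={}^{\sigma}G(\ka^{\sep})$, so
\[
H^{m}(\mathbf{F}(X),\mathcal{R}(G))\cong H^{m}(\ka,G),\qquad H^{m}(\mathbf{F}(X),\sigma^{*}(\mathcal{R}(G)))\cong H^{m}(\ka,{}^{\sigma}G)
\]
are the classical Galois cohomology groups.

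It then remains to check that, under these identifications, $\mathrm{res}-\sigma^{\mathcal{R}(G)}$ becomes $H^{m}(s,G)-H^{m}(\ka,\sigma_G)$, so that the kernel and cokernel appearing in the specialized sequence are exactly $H^{n}(\ka,G)^{\sigma}$ and $H^{n-1}(\ka,G)_{\sigma}$. For the $\sigma^{\mathcal{R}(G)}$-summand this is immediate: the diagram in Example \ref{mainrex} gives $\phi_G\circ\sigma^{\mathcal{R}(G)}=\mathcal{R}(\sigma_G)$, so $\sigma^{\mathcal{R}(G)}$ induces on $H^m$ the map $H^{m}(\ka,\sigma_G)$ coming from the group scheme morphism $\sigma_G\colon G\to{}^{\sigma}G$. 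For the $\mathrm{res}$-summand, $\phi_G\circ\mathrm{res}\colon\mathcal{R}(G)\to\mathcal{R}({}^{\sigma}G)$ is on sections the pullback $f\mapsto{}^{\sigma}f$, i.e. the natural transformation $\mathrm{id}\to\sigma_*\sigma^{*}$ on global sections; tracing this through $\Psi$ and the étale/Galois comparison one recognises it as the ``restriction along $s$'' homomorphism $H^{m}(s,G)$ introduced before the theorem (with independence of the choice of the extension $\tilde s$, as in \cite[Chapter II.1, Remark 2]{serre2002galois}). Granting this, the specialized sequence reads $0\to H^{n-1}(\ka,G)_{\sigma}\to H^n_{\sigma}(\ka,G)\to H^n(\ka,G)^{\sigma}\to 0$, as claimed.

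The main obstacle I expect is precisely this last compatibility: chasing the unit transformation $\mathrm{id}\to\sigma_*\sigma^{*}$ (whose effect on global sections is the map $\mathrm{res}$ of Theorem \ref{mainss2}) through $\Psi$, the fppf/étale comparison, and the description of $\phi$ in Section \ref{etaleexam}, and matching it on cohomology with the base-change map $H^{*}(s,-)$ defined via $\tilde s$ — this is bookkeeping but needs care, and it is where the independence of $\tilde s$ really matters. A secondary point requiring an explicit citation is the higher-degree fppf-to-étale comparison, since the excerpt records Remark \ref{ftoetoz} only for $H^1$.
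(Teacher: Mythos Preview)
Your proposal is correct and follows the same route as the paper: apply Theorem \ref{mainss2}, then invoke the fppf/\'etale and \'etale/Galois comparisons (the paper cites \cite[Theorem III.3.19]{milne1etale} and \cite[Example III.1.7(a)]{milne1etale} for these, which resolves your secondary concern about the higher-degree comparison). Your treatment is in fact more careful than the paper's own proof, which simply identifies the cohomology groups and leaves the matching of $\mathrm{res}-\sigma^{\mathcal{R}(G)}$ with $H^m(s,G)-H^m(\ka,\sigma_G)$ implicit.
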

\begin{proof}
By Definition \ref{hdgc}, we have:
$$H^n_{\sigma}(\ka,G):=H^n_{\sigma}\left(\mathbf{F}(\spec(\ka)),\mathcal{R}(G)\right).$$
Therefore, by Theorem \ref{mainss2}, we get the following exact sequence:
\[0\ra H^{n-1}(\mathbf{F}(\spec(\ka)),\mathcal{R}(G))_{\sigma}\ra H^n_{\sigma}(\ka,G)\ra
H^n(\mathbf{F}(\spec(\ka)),\mathcal{R}(G))^{\sigma}\ra 0,\]
in which the invariants and coinvariants are defined as before the statement of Theorem \ref{mainss2}. By \cite[Theorem III.3.19]{milne1etale}, for each $m\in \Nn$ we have:
$$H^m\left(\mathbf{F}(\spec(\ka)),\mathcal{R}(G)\right)\cong H^m\left(\mathbf{E}(\spec(\ka)),\mathcal{R}(G)\right).$$
By \cite[Example III.1.7(a)]{milne1etale}, for each $m\in \Nn$, we have:
$$H^m\left(\mathbf{E}(\spec(\ka)),\mathcal{R}(G)\right)\cong H^m(\ka,G),$$
which finishes the proof.
\end{proof}
\begin{example}\label{exbw2calc}
We classify here the difference torsors of the difference group scheme $(\ga^n,\sigma_{\ga^n})$ from Example \ref{exbw2}. By Theorem \ref{wibdesc}, we have the following short exact sequence:
$$0\ra \ga^n(\ka)_{\sigma} \ra H^1_{\sigma}\left(\ka,\ga^n\right) \ra H^1(\ka,\ga^n)^{\sigma}\ra 0.$$
By the (additive) Hilbert 90, we get:
$$H^1_{\sigma}(\ka,\ga^n)\cong\ga^n(\ka)_{\sigma}\cong \mathrm{coker}\left(x\mapsto \lambda_0x+\ldots+\lambda_{n-1}\sigma^{n-1}(x)\right).$$
Hence, we immediately see (as it was also shown in \cite[Example 3.8]{BW}) that all the difference $\ga^n$-torsors are of the form $(\ga^n,\sigma_{\lambda})$ for some $\lambda\in \ka$ (see Example \ref{exbw2}), and that for all $\lambda_1,\lambda_2\in \ka$ we have the following:
$$\left(\ga^n,\sigma_{\lambda_1}\right)\cong \left(\ga^n,\sigma_{\lambda_2}\right)\  \ \ \  \ \ \Longleftrightarrow\  \ \ \  \ \ \lambda_1-\lambda_2\in \mathrm{im}\left(x\mapsto \lambda_0x+\ldots+\lambda_{n-1}\sigma^{n-1}(x)\right).$$
\end{example}

\subsection{Difference cohomology and Picard-Vessiot extensions}\label{left1}
In this subsection, we analyze two well-known constructions and interpret some results from \cite{CP} in terms of difference Galois cohomology. Let $G$ be a group scheme over $X$. We can make a difference group scheme out of it in the following two ways. We will both describe the difference group schemes and the corresponding representable functors (see Remark \ref{remdifsch}).
\begin{enumerate}
\item We consider the forgetful functor:
$$\mathbf{Sch}_{(X,\sigma)}\ni (Y,\sigma_Y)\mapsto G(Y)\in \mathbf{Gps}.$$
If the structure morphism $G\to X$ is affine, then (see \cite[Lemma 32.3.1]{stacks}) this functor is represented by the difference group scheme $G^{\infty}$,
where:
$$G^{\infty}:=\prod_{n=0}^{\infty}{}^{\sigma^n}G,\ \ \ \ \sigma_{G^{\infty}}:=\text{left shift}$$
(we are grateful to Michael Wibmer for pointing out to us some issues related with existence of the infinite products in the category of schemes).
The functor $G\mapsto G^{\infty}$ is right adjoint to the forgetful functor from the category of affine difference group schemes to the category of group schemes.

\item If $G$ is defined over constants (see Section \ref{igs}), then we look at the difference group scheme $(G,\id)$. For any difference scheme $(Y,\sigma_Y)$ over $(X,\sigma)$, the group of rational points $G(Y)$ is acted on by $\sigma_Y$ (since $G$ is defined over constants), and the difference group scheme $(G,\id)$ represents the following functor:
$$\mathbf{Sch}_{(X,\sigma)}\ni (Y,\sigma_Y)\mapsto G(Y)^{\sigma_Y}\in \mathbf{Gps}.$$

\end{enumerate}
Let us compute the right difference cohomology groups in each of these cases. We tacitly assume that the group scheme $G$ is commutative, but, in the case of $n=1$, one easily gets the corresponding results for an arbitrary $G$ as well.
\begin{enumerate}
\item By Theorem \ref{mainss2}, we have the following short exact sequence:
	\[0\ra H^{n-1}(X,G^{\infty})_{\sigma}\ra H^n_{\sigma}(X,G^{\infty})\ra H^n(X,G^{\infty})^{\sigma}\ra 0.\]
Since sheaf cohomology commutes with products of the coefficient sheaves, we get that:
$$H^n(X,G^{\infty})\cong \prod_{n=0}^{\infty}H^n\left(X,{}^{\sigma^n}G\right),$$
where the group of ``invariants'' coincides with the equalizer of the left-shift map on the cohomology and the restriction map induced on $\sigma$ on $X$ (similarly with the ``coinvariants'' and the corresponding coequalizer). Therefore, we get:
$$H^n(X,G^{\infty})^{\sigma}\cong H^n(X,G), \ \ \ H^n(X,G^{\infty})_{\sigma}=0,$$
which implies the following (in the special case of $n=1$ as well as both $G$ and $X$ being affine, it is \cite[Proposition 3.5]{BW}):
$$H^n_{\sigma}(X,G^{\infty})\cong H^n(X,G).$$

\item By Theorem \ref{mainss2}, we have the following short exact sequence:
$$0\ra H^{n-1}(X,G)_{\sigma}\ra H^n_{\sigma}(X,G) \ra H^n(X,G)^{\sigma}\ra 0.$$
The kernel part can be understood as the group $\as(H^{n-1}(X,G),\sigma)$ (see Definition \ref{asdef}).


\end{enumerate}
We will use the second construction above to interpret in terms of difference cohomology some results about difference Picard-Vessiot extensions from \cite{CP}. A difference field $(\ka,\sigma)$ is called \emph{strongly PV-closed} (\cite[Definition 5.1]{CP}), if all linear difference equations over $\ka$ have solutions in $(\ka,\sigma)$ (with the extra assumption of $\ka$ being algebraically closed, this notion goes by the name \emph{linearly closed} in \cite[Definition 5.1]{KP4}). We give below an easy cohomological interpretation.
\begin{lemma}\label{pvrem}
The following are equivalent:
\begin{itemize}
\item the difference field $(\ka,\sigma)$ is strongly PV-closed;

\item for all $n>0$, the group $H^1_{\sigma}(\ka,(\gl_n,\id))$ is trivial.
\end{itemize}
\end{lemma}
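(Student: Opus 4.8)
The plan is to unwind the definitions on both sides and translate ``solvability of all linear difference equations'' into the triviality of the relevant torsor sets. First I would recall what a difference $\gl_n$-torsor for the isotrivial difference group scheme $(\gl_n,\id)$ is: by the second construction discussed just above, $(\gl_n,\id)$ represents the functor $(Y,\sigma_Y)\mapsto \gl_n(Y)^{\sigma_Y}$, and by Corollary \ref{fdc} (together with Theorem \ref{rphsthm}) we have
\[
H^1_{\sigma}(\ka,(\gl_n,\id))\cong \mathrm{PHS}^{\sigma}(\gl_n/\spec(\ka)).
\]
So the second bullet says exactly that every difference $\gl_n$-torsor over $(\ka,\sigma)$ is trivial, for every $n$. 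Next I would use Remark \ref{anotdt}(2): a difference $\gl_n$-torsor $(P,\sigma_P)$ is trivial precisely when its functor of $\sigma$-rational points $P^{\sharp}$ has a $\ka$-point, i.e. there is a $G$-torsor isomorphism $P\to \gl_n$ which also intertwines the difference structures.

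The heart of the argument is then the classical dictionary between $\gl_n$-torsors and linear difference equations. A linear difference equation over $\ka$ is given by a matrix $A\in \gl_n(\ka)$, prescribing the rule $\sigma(v)=Av$ on an $n$-dimensional vector. Starting from such an $A$ one builds a difference $\gl_n$-torsor: take the trivial torsor $P=\gl_n$ over $\spec(\ka)$ but twist the difference structure, so that $\sigma_P$ on $\gl_n(\bar\ka)$ is $X\mapsto \sigma_G(X)$ followed by left-multiplication by $A$ — concretely the ``twisted'' torsor whose $\sigma$-rational points are the fundamental solution matrices $X\in\gl_n$ with $\sigma(X)=AX$. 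Conversely, since $\gl_n$ is smooth and connected, every $\gl_n$-torsor over a field is trivial as a plain torsor (Hilbert 90 / Remark \ref{ftoetoz}), so every difference $\gl_n$-torsor, after choosing a trivialization of the underlying torsor, is of exactly this twisted form for some $A\in\gl_n(\ka)$, with $A$ well-defined up to the equivalence $A\sim \sigma(C)AC^{-1}$ coming from changing the trivialization (this is the $\check{H}^1$ in $\gl_n$-coefficients, compare Lemma \ref{freesmod}). Then I would observe that such a twisted torsor is \emph{trivial as a difference torsor} if and only if it has a $\sigma$-rational point, i.e. if and only if there is $X\in\gl_n(\ka)$ with $\sigma(X)=AX$ — equivalently, the equation $\sigma(v)=Av$ has a full set of $n$ linearly independent solutions in $\ka^n$.

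Assembling these: if $(\ka,\sigma)$ is strongly PV-closed, then for every $A$ the equation $\sigma(v)=Av$ has a fundamental solution matrix in $\gl_n(\ka)$, so every difference $\gl_n$-torsor is trivial, hence $H^1_{\sigma}(\ka,(\gl_n,\id))$ is trivial for all $n$. Conversely, given an arbitrary linear difference equation, normalize it (possibly after passing to a companion-matrix form, and rescaling so that the matrix lies in $\gl_n$) so that solutions correspond to $\sigma$-rational points of the twisted $\gl_n$-torsor attached to its matrix $A$; triviality of $H^1_{\sigma}(\ka,(\gl_n,\id))$ forces this torsor to be trivial, producing the required solutions. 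I expect the main obstacle to be purely bookkeeping rather than conceptual: one must be careful that ``strongly PV-closed'' as defined in \cite[Definition 5.1]{CP} refers to solvability of all linear difference \emph{systems} (equivalently, to the existence of a Picard–Vessiot extension equal to $\ka$ itself), and that the passage between an order-$n$ scalar equation and a first-order $n\times n$ system, and the insistence that the coefficient matrix be invertible, exactly matches the $\gl_n$-torsor picture; once that matching is pinned down, the equivalence is the torsor dictionary above.
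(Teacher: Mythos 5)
Your proof is correct and is in substance the same as the paper's: the paper specializes the short exact sequence $0\to H^{0}(\ka,\gl_n)_{\sigma}\to H^1_{\sigma}(\ka,(\gl_n,\id))\to H^1(\ka,\gl_n)^{\sigma}\to 0$ from the discussion preceding the lemma, kills the right-hand term by Hilbert 90, and identifies $H^1_{\sigma}(\ka,(\gl_n,\id))$ with $\mathrm{AS}(\gl_n(\ka),\gl_n(\sigma))$, whose triviality is exactly strong PV-closedness. Your torsor-level version (trivialize the underlying torsor by Hilbert 90, classify the remaining difference structures by $A\in\gl_n(\ka)$ modulo $A\sim\sigma(C)AC^{-1}$) is precisely the explicit description of that Artin--Schreier set, so the two arguments coincide.
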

\begin{proof}
By the short exact sequence from Item $(2)$ above and Hilbert 90, we get:
$$ H^1_{\sigma}(\ka,(\gl_n,\id))\cong \mathrm{AS}\left(\gl_n(\ka),\gl_n(\sigma)\right).$$
It is easy to see that the difference field $(\ka,\sigma)$ is strongly PV-closed if and only if the corresponding Artin-Schreier set is trivial, which finishes the proof.
\end{proof}
In \cite[Prop 5.6]{CP}, there is an example of a strongly PV-closed difference field with non-trivial linear difference torsors (such a situation can not happen in the differential case, see \cite{Pi6}). Using Corollary \ref{fdc} and Remark \ref{pvrem}, one can interpret this result in terms of difference cohomology in the following way.
\begin{theorem}
There is a difference field $(\ka,\sigma)$ such that:
\begin{enumerate}
  \item $H^1_{\sigma}\left(\ka,(\gl_n,\id)\right)=0$, for all $n>0$;
  \item $H^1_{\sigma}\left(\ka,(\gm,x\mapsto x^2)\right) \neq 0$.
\end{enumerate}
\end{theorem}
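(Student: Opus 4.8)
The plan is to take for $(\ka,\sigma)$ the difference field constructed in \cite[Prop.\ 5.6]{CP} and to re-read, through the dictionary set up above, the two properties established there. Property $(1)$ is essentially immediate: by Lemma~\ref{pvrem}, the vanishing of $H^1_{\sigma}(\ka,(\gl_n,\id))$ for every $n>0$ is equivalent to $(\ka,\sigma)$ being strongly PV-closed, and this is exactly what \cite[Prop.\ 5.6]{CP} provides. So all the real work lies in re-interpreting the ``non-trivial linear difference torsor'' produced in \cite[Prop.\ 5.6]{CP} as a non-vanishing difference Galois cohomology class, namely property $(2)$.

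For $(2)$, I would first make the relevant group explicit. By Definition~\ref{hdgc} together with Corollary~\ref{fdc} (and Theorems~\ref{rphsthm} and~\ref{rdiso}),
\[
H^1_{\sigma}\bigl(\ka,(\gm,x\mapsto x^2)\bigr)\;\cong\;\check H^1_{\sigma}\bigl(\mathbf{F}(\spec(\ka)),\mathcal{R}(\gm)\bigr)\;\cong\;\mathrm{PHS}^{\sigma}\bigl((\gm,x\mapsto x^2)/\spec(\ka)\bigr),
\]
the pointed set of isomorphism classes of difference $(\gm,x\mapsto x^2)$-torsors over $\spec(\ka)$. A concrete description is also available: since $\gm$ is commutative, smooth and quasi-projective and $H^1(\ka,\gm)=0$ by Hilbert 90, the sequence of Theorem~\ref{wibdesc} for $n=1$ collapses to an isomorphism $H^1_{\sigma}(\ka,(\gm,x\mapsto x^2))\cong \gm(\ka)_{\sigma}$, where $\gm(\ka)_{\sigma}$ is the cokernel of the twisted Artin-Schreier map $x\mapsto\sigma(x)x^{-2}$ on $\ka^{*}$. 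Thus $(2)$ is equivalent to the assertion that some $a\in\ka^{*}$ is not of the form $\sigma(x)x^{-2}$, i.e.\ that there is a difference $(\gm,x\mapsto x^2)$-torsor with no $\sigma$-rational point (cf.\ Remark~\ref{anotdt}(2)).

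It then remains to match this with \cite[Prop.\ 5.6]{CP}, where a strongly PV-closed $(\ka,\sigma)$ is built for which a linear difference equation of ``squaring'' type has non-trivial solution torsor. Unwinding that construction, together with the identification of Corollary~\ref{fdc}, should show that this torsor represents a non-zero class in $\mathrm{PHS}^{\sigma}((\gm,x\mapsto x^2)/\spec(\ka))$, so that $H^1_{\sigma}(\ka,(\gm,x\mapsto x^2))\neq 0$. I expect this last translation to be the main obstacle: one has to verify carefully that the object called a ``non-trivial linear difference torsor'' in \cite[Prop.\ 5.6]{CP} corresponds, under Definition~\ref{dtordef} and Corollary~\ref{fdc}, precisely to a non-zero element of $H^1_{\sigma}(\ka,(\gm,x\mapsto x^2))$ and not of some neighbouring invariant (such as a $\mu_2$- or $\gl_2$-cohomology group); by contrast, property $(1)$ and the cohomological reductions above are formal consequences of the machinery already developed.
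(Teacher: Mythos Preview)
Your proposal is correct and follows essentially the same route as the paper: invoke Lemma~\ref{pvrem} for item~(1), and for item~(2) use the identification of $H^1_{\sigma}$ with isomorphism classes of difference torsors (Corollary~\ref{fdc}) together with Remark~\ref{anotdt}(2) to conclude that the torsor from \cite[Prop.~5.6]{CP} with no $\sigma$-rational point gives a nonzero class. The paper's proof is in fact terser than yours---it omits the detour through Theorem~\ref{wibdesc} and simply asserts (without the caution you express) that the object built in \cite[Prop.~5.6]{CP} is a difference $(\gm,x\mapsto x^2)$-torsor, so your worry about this last translation is legitimate but is treated as a black-box citation in the paper as well.
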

\begin{proof}
In the proof of \cite[Prop 5.6]{CP}, there is a construction of a strongly PV-closed difference field $(\ka,\sigma)$ and a difference torsor of $(\gm,x\mapsto x^2)$, which has no difference $(\ka,\sigma)$-rational points. By Remark \ref{anotdt}(2), this difference torsor is not trivial, hence we get the result by Corollary \ref{fdc} (and Definition \ref{hdgc}).
\end{proof}

\bibliographystyle{plain}
\bibliography{harvard}

\end{document}